\newcommand{\RR}{\mathbb{R}}
\providecommand{\argmax}{\mathop\mathrm{arg max}} 
\providecommand{\argmin}{\mathop\mathrm{arg min}}
\newcommand{\proj}{\mathsf{proj}}
\renewenvironment{proof}{\noindent\textbf{Proof.}\hspace*{.3em}}{\qed\\}
\newenvironment{proof-sketch}{\noindent\textbf{Proof Sketch}
  \hspace*{0.em}}{\qed\bigskip\\}
\newenvironment{proof-idea}{\noindent\textbf{Proof Idea}
  \hspace*{0.em}}{\qed\bigskip\\}
\newenvironment{proof-of-lemma}[1][{}]{\noindent\textbf{Proof of Lemma {#1}.}
  \hspace*{0.em}}{\qed\\}
\newenvironment{proof-of-corollary}[1][{}]{\noindent\textbf{Proof of Corollary {#1}.}
  \hspace*{0.em}}{\qed\\}
\newenvironment{proof-of-theorem}[1][{}]{\noindent\textbf{Proof of Theorem {#1}.}
  \hspace*{0.em}}{\qed\\}
\newenvironment{proof-attempt}{\noindent\textbf{Proof Attempt}
  \hspace*{0.em}}{\qed\bigskip\\}
\newtheorem{theorem}{Theorem}[section]
\newtheorem{lemma}{Lemma}[section]
\newtheorem{corollary}{Corollary}[section]
\newtheorem{proposition}{Proposition}[section]
\newtheorem{assumption}{Assumption}[section]
\newtheorem{remark}{Remark}[section]
\newtheorem{example}{Example}[section]
\newtheorem{definition}{Definition}[section]
\renewcommand*{\backref}[1]{\ifx#1\relax \else Page #1 \fi}
\renewcommand*{\backrefalt}[4]{%
  \ifcase #1 \footnotesize{(Not cited.)}%
  \or        \footnotesize{(Cited on page~#2.)}%
  \else      \footnotesize{(Cited on pages~#2.)}%
  \fi
}
\newcommand*{\colorboxed}{}
\def\colorboxed#1#{%
  \colorboxedAux{#1}%
}
\newcommand*{\colorboxedAux}[3]{%
  \begingroup
    \colorlet{cb@saved}{.}%
    \color#1{#2}%
    \boxed{%
      \color{cb@saved}%
      #3%
    }%
  \endgroup
}
\numberwithin{equation}{section}
\newcommand{\todol}[2][]{{%
 \let\marginpar\marginnote
 \reversemarginpar
 \renewcommand{\baselinestretch}{0.8}%
 \todo[color=yellow]{#2}}}
\title{A Barrier Function Approach for Bilevel Optimization with Coupled Lower-Level Constraints: Formulation, Approximation and Algorithms}
\author{Xiaotian Jiang, Jiaxiang Li, Mingyi Hong, Shuzhong Zhang\footnote{X.~Jiang and S.~Zhang are with Department of Industrial and System Engineering, University of Minnesota, Minneapolis, MN, USA. E-mails: \texttt{jian0851@umn.edu}, \texttt{zhangs@umn.edu}. J.~Li and M.~Hong are with Department of Electrical and Computer Engineering, University of Minnesota, Minneapolis, MN, USA. E-mails: \texttt{li003755@umn.edu}, \texttt{mhong@umn.edu}.}}
\date{\today}
\begin{document}
\maketitle

\begin{abstract}
    In this paper, we consider bilevel optimization problem where the lower-level has coupled constraints, i.e. the constraints depend both on the upper- and lower-level variables. In particular, we consider two settings for the lower-level problem. The first is when the objective is strongly convex and the constraints are convex with respect to the lower-level variable; The second is when the lower-level is a linear program. We propose to utilize a barrier function reformulation to translate the problem into an unconstrained problem. By developing a series of new techniques, we proved that both the hyperfunction value and hypergradient of the barrier reformulated problem (uniformly) converge to those of the original problem under minimal assumptions. Further, to overcome the non-Lipschitz smoothness of hyperfunction and lower-level problem for barrier reformulated problems, we design an adaptive algorithm that ensures a non-asymptotic convergence guarantee. We also design an algorithm that converges to the stationary point of the original problem asymptotically under certain assumptions. The proposed algorithms require minimal assumptions, and to our knowledge, they are the first with convergence guarantees when the lower-level problem is a linear program. Numerical experiments are conducted to show the effectiveness of the proposed method.
\end{abstract}

\section{Introduction}

Bilevel optimization (BLO) is drawing wide attention in the communities including machine learning, operations research and signal processing. It observes wide applications in various machine learning problems, such as hyperparameter optimization~\citep{Maclaurin2015GradientbasedHO,2018Bilevel}, meta learning~\citep{finn2017model,2018Bilevel,ji2021bilevel} and reinforcement learning~\citep{stadie2020learning, zeng2024demonstrations,li2024joint,li2024getting}. In the field of operations research, BLO finds its applications in pricing, transportation design, game theory, among others \citep{labbe2016bilevel,Silvrio2022ABO}. Standard BLO takes the form:
\begin{align}\label{eq:blo_unconstrained}
    \min_{x, y^*(x)}\ f(x,y^\ast(x))\quad \text{ s.t.}\quad x\in\mathcal{X},\ y^\ast(x)\in \argmin_{y\in \mathcal{Y}(x)} g(x,y),
\end{align}
where $f$ is called the upper-level objective and $g$ is called the lower-level objective; $\mathcal{X}\subseteq \mathbb{R}^n$, and $\mathcal{Y}(x)\subseteq \mathbb{R}^m$ are feasible sets for the upper- and lower-problems, respectively. In game theory, the bilevel problem can be thought of as a two-player (Stackelberg) game, the lower-level and upper-level problems are also called the follower and the leader, respectively; see e.g.~\cite{kohli2012optimality,liu2018pessimistic}. 

{{In BLO with {\it unconstrained} lower-level problems where $\mathcal{Y}\equiv \mathbb{R}^m$, a popular class of algorithms are referred to as the {\it implicit gradient descent} method \citep{ghadimi2018approximationmethodsbilevelprogramming,yang2021provablyfasteralgorithmsbilevel,hong2022twotimescaleframeworkbileveloptimization,pedregosa2022hyperparameteroptimizationapproximategradient}. Let us denote the {\it hyperfunction} $\phi$ as}} 
\begin{align}\label{eq:hyperfunction}
    \phi(x):=\min_{y^\ast(x)}f(x,y^\ast(x)), \text{ where }y^\ast(x)\in\argmin_{y\in\RR^m} g(x,y).
\end{align}
 The  {\it implicit gradient descent} method operates by using the implicit function theorem to compute (some approximated version of) the gradient of $\phi(x)$, which we refer to as the hypergradient $\nabla_x\phi(x)$\footnote{Note, such a function exists under certain conditions, which can be broader than the one stated above; We will discuss those conditions in detail later.}, thus allowing us to apply gradient descent-type of methods to $\phi(x)$. {{Indeed, as we will discuss shortly, there has been tremendous recent success in developing efficient implicit gradient decent-type methods and analytical approaches for {\it unconstrained} problems where $\mathcal{Y}(x) \equiv \mathbb{R}^m$. However, it is still unclear how to effectively leverage these developments for problems with lower-level constraints}}. In particular, the constrained lower-level solution $y^*(x)\in\argmin_{y\in\mathcal{Y}(x)} g(x,y)$ could be nonsmooth, discontinuous or even set-valued based on the specific form of $\mathcal{Y}(x)$ and $g(x,y)$ (see \citet[Page 3]{pmlr-v202-khanduri23a} and our Example \ref{counterexample1}).

In this work, we consider the following BLO problem with lower-level constraints:
\begin{align}\label{eq:blo_constrained}
    \min\ f(x,y^\ast(x))\quad \text{s.t.}\quad x\in\mathcal{X},\ y^\ast(x)\in\argmin_{\{y:h_i(x,y)\leq 0,\ i=1,...,k\}} g(x,y),
\end{align}
where $f(x,y):\RR^n\times\RR^m\to\RR$ is (possibly) a non-convex function, also with the lower-level objective function $g(x,y):\RR^n\times\RR^m\to\RR$ and the constraints $h_i(x,y):\RR^n\times\RR^m\to\RR$ and $\mathcal{X}$ is a compact set. We refer to lower-level constraints as {\it coupled}, meaning that (at least one) of the constraint functions $h_i(x,y)$, $i\in[k]$, depends on both $x$ and $y$. 
Otherwise, they are referred to as {\it uncoupled}. 

If the constraints of the lower-level problem are uncoupled, many algorithms have been studied to deal with such problems. However, these algorithms fail to solve coupled constrained problems. While there are studies addressing coupled constrained BLO, they typically rely on strong assumptions and often only guarantee asymptotic convergence. For more details about these existing works, see Section \ref{sec:relatedwork}. To avoid assumptions that are difficult to verify and to achieve non-asymptotic convergence results, in this work we propose a barrier function-based method for BLO. More specifically, we consider a smooth approximation of the original problem \eqref{eq:blo_constrained} as follows
\begin{align}\label{eq:blo_barrier_reformulation}
    \min\ f(x,y)\quad \text{ s.t. }\quad x\in\mathcal{X},\; y=\argmin_{y\in \RR^m} \widetilde{g}_t(x,y):=g(x,y)-t\sum_{i=1}^k\log(-h_i(x,y)),
\end{align}
where $t$ is a sufficiently small fixed constant. That is, we transform the lower-level constraints into a log-barrier added to the lower-level objective function, thereby converting the lower-level problem into an unconstrained one. Theoretically, this transformation makes the gradient of the hyperfunction computable, allowing us to use the standard implicit function approach to solve the resulting BLO problem efficiently and with theoretical guarantees. Practically, log-barrier reformulation models decision-making processes more realistically: instead of abruptly rejecting a decision at the boundary, it gradually decreases acceptance as the decision variable approaches the boundary, simulating a continuous and natural decline in feasibility. 

Next, we will review related works for solving various forms of BLO problems discussed so far.

\subsection{Related Works}\label{sec:relatedwork}

\begin{table}
  \centering
\resizebox{0.99\textwidth}{!}{%
\begin{tabular}{c|c|c|c|c}
    \hline
    Algorithm & $g(x,y)\&h(x,y)$ & Reformulation & Complexity & Nonstandard Assumptions \\
    \hline\hline
    IGBA$^\dag$ & S-C\&C & BR & non-asymptotic & needed \\
    \hline
    GAM & S-C\&C & OP & asymptotic & not needed \\
    \hline
    BSG$^\dag$ & S-C\&C & KR & asymptotic & needed \\
    \hline
    iP-DwCA$^\dag$ & J-W-C\&J-C & MVFR & asymptotic & needed \\
    \hline
    APPM$^\dag$ & C\&C & VFR & non-asymptotic & needed \\
    \hline
    BiC-GAFFA$^\dag$ & C\&C & LPR & non-asymptotic & needed \\
    \hline
    IGOGIC & S-C\&L & LPR & non-asymptotic & needed \\
    \hline
    BLOCC$^\dag$ & S-C\&C & LPR & non-asymptotic & needed \\
    \hline
    BFBM (proposed) & S-C\&C or L\&L & BR & non-asymptotic & not needed \\
    \hline
  \end{tabular}
}
  \caption{\footnotesize Comparison of BFBM algorithm (our proposed Algorithm \ref{algo:fo}) with different algorithms for addressing coupled constrained BLO problem:  IGBA \citep{tsaknakis2023implicit}, GAM (Algorithm 1 in \cite{xu2023efficient}), BSG \citep{giovannelli2021inexact}, iP-DwCA \citep{Gao2023MoreauEB}, APPM (Algorithm 4 in \cite{lu2024firstorderpenaltymethodsbilevel}), BiC-GAFFA \citep{yao2024overcominglowerlevelconstraintsbilevel}, IGOGIC (Algorithm 5 in \cite{kornowski2024firstordermethodslinearlyconstrained}), BLOCC (Algorithm 1 and 2 in \cite{jiang2024primal}). BR means barrier reformulation; KR means KKT-based reformulation; MVFR means relaxed Moreau envelope-based Value function reformulation; VFR means Value function reformulation; LPR means Lagrangian-based penalized reformulation; OP means original problem. C means convex in $y$; S-C means strongly convex in $y$; N-C means non-convex in $y$; J-C means jointly convex in $(x,y)$; J-W-C means jointly weakly convex in $(x,y)$; L means linear in $y$. \dag\ means that authors did not consider the relation of the stationary point between reformulation and original problem, or the relation of hypergradient between reformulation and original problem. Nonstandard assumptions refer to assumptions beyond LICQ, Lipschitz smoothness, compactness, and Slater condition.}
  \label{table:related_works}
\end{table}

\noindent \textbf{Bilevel Optimization without lower-level constraints.} BLO without lower-level constraints has seen remarkable advancements through gradient-based approaches such as approximate implicit differentiation (AID) and iterative differentiation (ITD). AID, utilizing the implicit function theorem, approximates the hypergradient and has demonstrated effective finite-time, and finite sample (when the problem is stochastic) convergence for unconstrained, strongly convex lower-level problems \citep{ghadimi2018approximationmethodsbilevelprogramming,yang2021provablyfasteralgorithmsbilevel,hong2022twotimescaleframeworkbileveloptimization,pedregosa2022hyperparameteroptimizationapproximategradient}. ITD further enhances convergence by differentiating the entire iterative algorithm used at the lower-level, supported by substantial subsequent works \citep{maclaurin2015gradientbasedhyperparameteroptimizationreversible,franceschi2017forwardreversegradientbasedhyperparameter,nichol2018firstordermetalearningalgorithms,ji2021bilevel}. Utilizing AID and momentum-based algorithms, \cite{khanduri2021near,yang2023achieving} achieves $\mathcal{O}(\epsilon^{-3})$ rate of convergence to achieve an $\epsilon$-stationary point for stochastic BLO. Additionally, penalty-based methods have become popular, simplifying computational complexities by transforming BLO into single-level problems using various penalty terms \citep{mehra2021penaltymethodinversionfreedeep,shen2023penaltybasedbilevelgradientdescent,kwon2024penaltymethodsnonconvexbilevel}. Both AID and ITD utilize the Hessian of the lower-level objective for estimating the hypergradients, and a recent line of works \citep{kwon2023fully,yang2023achieving} develops fully first order algorithms by leveraging finite-difference to estimate the Hessian-vector product needed to compute the hypergradients.

\noindent \textbf{Bilevel Optimization with lower-level constraints.} BLO with lower-level constraints is significantly more complex as compared with their unconstrained counterparts. Previous studies have typically addressed upper-level constraints in various works \citep{chen2022singletimescalemethodstochasticbilevel,chen2022fastconvergentproximalalgorithm}. Approaches for uncoupled lower-level constraints include the SIGD method \citep{pmlr-v202-khanduri23a}, which targets the constraint $A y\leq b$ and demonstrates asymptotic convergence. \cite{shi2024double} proposes a double-momentum based algorithm for lower-level constrained BLO and provides convergence analysis toward the so-called $(\delta,\epsilon)$-stationary point, where they use a finite-difference technique to avoid the computation of Hessian-vector product and yield a dimension-dependent rate of convergence. Works such as \cite{shen2023penaltybasedbilevelgradientdescent,kwon2024penaltymethodsnonconvexbilevel} utilize penalty reformulations to handle both upper and lower uncoupled constraints. In particular, if the lower-level constraint is a smooth manifold (for example, a sphere), \cite{li2024riemannian,han2024framework,dutta2024riemannian} consider manifold optimization techniques for efficiently solving the lower-level constrained BLO problems. However, none of these works are able to handle lower-level coupled constraints, i.e.\ the situation where the lower-level constraints vary according to $x$. For example, in \cite{shen2023penaltybasedbilevelgradientdescent}, the authors introduce a penalty-based bilevel gradient descent (PBGD) algorithm that reformulates the bilevel problem into a single-loop penalized problem solved using the projected gradient descent method. However, despite the convexity of the feasible set $\mathcal{Y}(x):=\{y : h_i(x,y)\leq 0,\, i=1,
...,k\}$ for each $x$, the product set $\mathcal{X} \times \mathcal{Y}(x)$ may not retain convexity. Therefore, in the case of coupled constraints, the PBGD algorithm that transforms the original problem into a single loop algorithm will no longer be applicable, as the domain may now be non-convex. Moreover, the method of computing the gradient of the value function, which is necessary for the PBGD algorithm, is also not applicable in the case of coupled constraints. In \cite{pmlr-v202-khanduri23a}, the authors employed an implicit gradient descent-based method, but the calculation of the hypergradient is no longer applicable in the coupled case. 

When we have lower-level coupled equality constraints which are affine in $y$ (e.g.\ $\{y:h(x)+Ay=c\}$), \cite{pmlr-v206-xiao23a} considers a direct extension of AID-based method for hypergradient estimation. The key in their analysis is that equality constraints will not affect the differentiability of the lower-level solver $y^*(x)$ (see Lemma 2 in \cite{pmlr-v206-xiao23a}). Another work that deals with lower-level equality constraints is \cite{kornowski2024firstordermethodslinearlyconstrained}, where the authors propose a finite-difference approximation method for the hypergradient, provided that the lower-level equality constraint is linear for both $x$ and $y$.

However, lower-level (coupled) inequality constraints are much harder to deal with since inequality constraints will result in a nonsmooth lower-level solver $y^*(x)$. In \cite{giovannelli2021inexact}, the authors consider stochastic BLO with lower-level coupled inequality constraints and provide an asymptotic convergence analysis to the stationary point under the assumption that for each $x$, there exists a lower-level solver $y^*(x)$ satisfying the lower-level KKT conditions such that LICQ, strict complementarity slackness condition (SCSC) and second-order sufficient condition (SOSC) are also satisfied. \cite{xu2023efficient} proposed a gradient approximation scheme for the hypergradient also under LICQ assumptions on the lower-level constraints and proved its asymptotic convergence to the stationary points. \cite{Gao2023MoreauEB} consider a Moreau Envelope-based algorithm and obtain an asymptotic convergence result. Recently, \cite{jiang2024primal} considered a penalty-based primal-dual reformulation to translate the BLO with lower-level coupled constraints problem into a single-level minimax optimization. The authors proved a non-asymptotic rate of convergence for the reformulated problem toward $\epsilon$-stationary points under the assumptions that LICQ is satisfied at each point and an upper bound for the dual variable (see Lemma 2 in \cite{jiang2024primal}). Beyond LICQ, they also need a curvature condition assumption on the multipliers to obtain this convergence result (see Assumption 5 in \cite{jiang2024primal}), {{and such an assumption is often not easy to verify}}. Another recent work \cite{yao2024overcominglowerlevelconstraintsbilevel} also considers a penalty-type reformulation and provides a non-asymptotic rate of convergence toward the $\epsilon$-stationary point of the reformulated problem. However, this algorithm is largely an infeasible algorithm and needs an explicit assumption on the upper bound of the function value at each iteration in order to achieve feasibility. \cite{lu2024firstorderpenaltymethodsbilevel} also considers a penalty reformulated problem of the lower-level constrained BLO and shows the asymptotic convergence toward stationary points under appropriate assumptions. When each of the coupled constraints is linear to both $x$ and $y$, \cite{kornowski2024firstordermethodslinearlyconstrained} proposes a penalty and finite-difference based algorithm which converges to $(\delta,\epsilon)$-stationary point with a non-asymptotic rate under the assumption of having access to the optimal dual variable, which is difficult in practice. We summarize these literature on BLO with lower-level coupled inequality constraints along with our proposed algorithm in Table \ref{table:related_works}.

A recent work \cite{tsaknakis2023implicit} proposed a certain barrier approximation approach for BLO problems. Despite the fact that the idea of using barrier approximation is close to the current work,  the relationship between the reformulation and the original problem is not fully investigated. Further, an explicit and unconventional assumption on the lower-level constraint function values is imposed to achieve theoretical convergence. Moreover, a black-box lower-level solver is required to avoid the non-Lipschitz smooth nature of the barrier reformulated lower-level problem.  

Additionally, it is worth mentioning that the coupled constrained problem has recently been investigated for a special subclass of the BLO problem: the minimax problem. \cite{tsaknakis2023minimax} defined a type of local minimum point for minimax problems with coupled constraints and designed an algorithm that converges to such points.

\subsection{Contributions}

As mentioned before, in this work we propose a reformulation \eqref{eq:blo_barrier_reformulation}, and show that it approximates well to the original problem \eqref{eq:blo_constrained}, in terms of both function values and gradients. Further, we propose an algorithm that optimizes the reformulated problem, and analyze its non-asymptotic convergence under mild conditions. Our main contributions are summarized below:
\begin{enumerate}
\vspace{-0.2cm}
    \item To our knowledge, this is the first work that systematically study the approximation error between the barrier reformulated problem \eqref{eq:blo_barrier_reformulation} and the original problem \eqref{eq:blo_constrained} in terms of both hyperfunction values and hypergradients\footnote{Note that the approximation error in terms of hyperfunction values when $g$ is strongly convex is first studied in \cite{tsaknakis2023implicit}, whereas the hypergradient is not studied before.}. In particular, we show that approximation errors of hyperfunction values can be properly quantified not only when the lower-level objective $g$ is strongly convex in $y$ and the constraints $h_i(x,y)$'s are convex in $y$, but also when the lower-level objective $g$ and constraints $h_i(x,y)$'s are all linear in $y$. To the best of our knowledge, we are the first to prove such a result for the latter setting. We also proved the asymptotic convergence of hypergradient at \textbf{SCSC} point (see Definition \ref{def:scsc_point}) as the barrier coefficient $t$ goes to 0 within these two settings. To show these results, we develop several novel proof techniques. In particular, we first show $\lim_{t\to 0}y_t^\ast(x)=y^\ast(x)$ where $y^\ast(x)$ and $y_t^\ast(x)$ are the optimal lower-level solution for \eqref{eq:blo_constrained} and \eqref{eq:blo_barrier_reformulation} respectively. We further show that differentiation and limits are interchangeable, i.e. $\lim_{t\to 0}\nabla_x y_t^\ast(x)=\nabla_x \lim_{t\to 0}y_t^\ast(x)=\nabla_x y^\ast(x)$, by establishing the uniform convergence of $\nabla_x y_t^\ast(x)$. This interchangeability is crucial for the convergence of the hypergradient.
    
    \vspace{-0.2cm}
    \item Based on the previous contribution, we develop a new adaptive algorithm that theoretically achieves non-asymptotic $\widetilde{\mathcal{O}}(1/(\epsilon^2 t^{4.5}))$ convergence rate to the $\epsilon$ stationary point of the barrier reformulated problem \eqref{eq:blo_barrier_reformulation} under mild assumptions. One challenge in our algorithm design and analysis is that $\widetilde{g}_t(x,y_t^\ast(x))$ explodes when $h_i(x,y_t^\ast(x))$ is close to zero, so the hyperfunction and lower-level problem of \eqref{eq:blo_barrier_reformulation} may not enjoy global Lipschitz gradient property. To overcome this difficulty, we present a key result (in Theorem \ref{lem:local_M}), which states that if $h_i(x,y^\ast_t(x))$ tends towards zero as $t$ approaches zero, it does so at most linearly with respect to $t$, and the linear coefficient is {\it independent} of both $t$ and $x$. This result allows us to transform the original non-Lipschitz smooth lower-level problem into a Lipschitz smooth one by shrinking the feasible set of the lower-level problem, and to design the step sizes for the outer loop adaptively. Additionally, under strongly convex setting, when assuming that the upper-bounds for some provably bounded terms are known, the convergence rate can be improved significantly to $\tilde{\mathcal{O}}(1/(\epsilon^2 t^{1.5}))$.
\vspace{-0.2cm}
    \item We provide numerical experiments on a class of strongly convex lower-level problems and a class of linear lower-level problems, both with linear inequality constraints. We compare our method with a number of existing works and verify the effectiveness of the proposed method. Our algorithm is the only one that always guarantees that the obtained solution is feasible for the lower-level problem, and it is the most effective algorithm among those we tested when lower-level is a linear program.
\end{enumerate}

\subsection{Notations and Terminology}
The basic notations used throughout this paper are introduced in this section. Additional notations can be found in Appendix \ref{sec:notations} for clarity and reference:
\begin{itemize}
    \item $\mathbb{R}^n$ is the Euclidean space of dimension $n$, $\left<\cdot,\cdot\right>$ and $\|\cdot\|$ are the canonical inner product and norm of Euclidean space;
    \item $B_x(r)$ is the ball centered at the point $x$ with radius $r$, i.e. $B_x(r)=\{z\in\mathbb{R}^n:\|z-x\|\leq r\}$;
    \item Let $y^\ast(x)$ and $y^\ast_t(x)$ be the optimal solution set of the original problem (\ref{eq:blo_unconstrained}) and the barrier reformulation problem (\ref{eq:blo_barrier_reformulation}) respectively; 
    \item Let $\phi(x)$ and $\widetilde{\phi}_t(x)$ be hyperfunction (see (\ref{eq:hyperfunction}) and (\ref{eq:barrierhyperfunction}) for definition) of the original problem (\ref{eq:blo_unconstrained}) and the barrier reformulation problem (\ref{eq:blo_barrier_reformulation}) respectively, and $\widetilde{g}_t(x,y):=g(x,y)-t\sum_{i=1}^k\log(-h_i(x,y))$ be the reformulated lower-level objective function; 
    \item Define $\mathcal{Y}(x):=\{y: h_i(x,y)\leq 0,\ i=1,...,k\}$ as the feasible set of lower-level problem, and $k$ is the number of constraints for lower-level problem;
    \item $\widetilde{\mathcal{O}}(\cdot)$ represents $\mathcal{O}(\cdot)$  with logarithmic term omitted.
\end{itemize}

\section{Motivating Application}\label{sec:MotivatingApplication}
Let us briefly discuss a few applications for the considered BLO problems. 

\noindent\textbf{Linear Setting:} In the first setting, both $g(x,y)$ and $h_i(x,y)$ are linear functions of $y$ for any $i$. One relevant application is the bilevel price-setting problem in the transportation context (see \cite{labbe2016bilevel}), which can be formulated as: 
\begin{align}\label{eq:blo_price_setting}
    \max_{T}&\quad T^\top x\\
    \rm{s.t.}\quad\ (x,y)\in\arg\;min_{x,y}&\quad  (c_1+T)^\top x + c_2^\top y  \nonumber\\
    \rm{s.t.}& \quad A_1(T) x+A_2(T) y \geq b(T),\quad  x,y\geq 0\nonumber
\end{align}
where $T\in\RR^{d_x}$ is the upper-level decision variable, representing the tax value, and $(x, y)\in\RR^{d_x}\times\RR^{d_y}$ are the lower-level decision variables, corresponding to the allocation of transportation activities across taxable and untaxable routes. The matrices $A_1(T)\in\RR^{k\times d_x}$ and $A_2(T)\in\RR^{k\times d_y}$ represent the constraints that govern the feasibility of transportation along taxed and untaxed routes.

In this specific transportation setting, the leader is a regulatory authority or city planner, who decides the tax value $T$ on transportation activities. The leader's goal is to maximize the revenue from the tax, represented by $T^\top x$. The follower is a freight transporter who needs to minimize their total transportation cost, which includes both the tax on taxable routes and the cost of using taxed and untaxed routes. The cost function is $(c_1+T)^\top x + c_2^\top y$, where $c_1$ and $c_2$ are the cost coefficients for the taxable and nontaxable routes, respectively, and $T$ adjusts the cost of the taxable routes.
The matrices $A_1(T)$ and $A_2(T)$ represent coefficients that influence the capacity or feasibility of using taxed and untaxed routes for transportation, and $b(T)$ means demands from different customers. These matrices depend on $T$ because the tax may impact the availability or cost of using taxed routes. The inequality $A_1(T)^\top x + A_2(T)^\top y \geq b(T)$ ensures that customer demands are met. The constraints $x, y \geq 0$ enforce that the quantities of goods transported along both types of routes cannot be negative, reflecting the real-world constraint that transportation volumes must be non-negative.

\noindent\textbf{Strongly Convex Setting:} In the second setting, $g(x,y)$ is strongly convex, and $h_i(x,y)$ are convex in $y$ for any $i$. The primary applications in this setting arise from machine learning. For example, support vector machines (SVM) inherently possess a bilevel optimization structure, where hyper-parameter optimization can be framed as a constrained BLO problem; we refer the readers to recent works such as \citep{jiang2024primal, xu2023efficient} for more detailed discussions.

\section{Barrier Reformulation of Bilevel Problems}\label{sec:convergence_of_hyperfunction}

In this section, we will explore the hyperfunction of the barrier reformulation (\ref{eq:blo_barrier_reformulation}), defined below, and its relationship with the hyperfunction of the original problem (\ref{eq:blo_constrained}):
    \begin{align}\label{eq:barrierhyperfunction}
        \widetilde{\phi}_t(x)=f(x,y_t^\ast(x)),\text{ where }y^\ast_t(x)=\argmin_{y\in \RR^m} \widetilde{g}_t(x,y):=g(x,y)-t\sum_{i=1}^k\log(-h_i(x,y)).
    \end{align}
 Specifically, we first demonstrate that the barrier reformulation is differentiable under certain conditions. This is mainly because the lower-level problem is strictly convex (see Proposition \ref{prop:diff_hyperfun}). Subsequently, we investigate the convergence of the function values and gradients of the hyperfunction as $t$ approaches 0, under various different assumptions of the lower-level problem.
\subsection{Preliminaries}

In this section, we study some basic properties of the barrier reformulation problem (\ref{eq:blo_barrier_reformulation}). We make the following assumption throughout this section.

\begin{assumption}\label{assumption:general1}The following holds for every $x\in\mathcal{X},y\in\mathcal{Y}(x)$, and $i\in\{1,...,k\}$
\begin{enumerate}
    \item $f(x,y)$ is once while $g(x,y)$ and $h_i(x,y)$ are twice continuously differentiable;\label{assumption:general1(1)}
    \item $\mathcal{X}$ is convex and compact, and for any $x\in\mathcal{X}$ there exists $y\in\mathcal{Y}(x)$ such that $h_i(x,y)<0$ for any $i\in\{1,2,...,k\}$;\label{assumption:general1(2)}
    \item Linear Independence Constraint Qualification (LICQ) is satisfied for any $(x, y)$ where $x \in \mathcal{X}$ and $y \in y^\ast(x) = \arg\min_{y \in \mathcal{Y}(x)} g(x, y)$, i.e. the gradients $\{\nabla_{y} h_i(x, y) : i \text{ is active}\}$ are linearly independent for any $x \in \mathcal{X}$ and $y \in y^\ast(x)$. \label{assumption:general1(3)}
\end{enumerate}
\end{assumption}

Assumption \ref{assumption:general1}(\ref{assumption:general1(1)}) is a basic assumption in the BLO literature \citep{ghadimi2018approximationmethodsbilevelprogramming,hong2022twotimescaleframeworkbileveloptimization,xu2023efficient}. Assumption \ref{assumption:general1}(\ref{assumption:general1(2)}) says that Slater's condition is satisfied for any $x$ which is a common assumption for studying BLO problems under constraints \citep{tsaknakis2023implicit,schmidt2023gentle,beck2023survey}. Assumption \ref{assumption:general1}(\ref{assumption:general1(3)}) is also a standard requirement, commonly used across various studies in BLO~\citep{jiang2024primal,kornowski2024firstordermethodslinearlyconstrained,pmlr-v202-khanduri23a}. 

Next, we have two different set of assumptions on $g(x,y)$ and $h_i(x,y)$'s.

\begin{assumption}\label{assumption:nonlinear} $g(x,y)$ is $\mu_g$-strongly convex in $y$ for any $x\in\mathcal{X}$; $h_i(x,y)$ is convex in $y$ for any $x\in\mathcal{X}$ and $i\in\{1,...,k\}$.
\end{assumption}

\begin{assumption}\label{assumption:linear}$g(x,y)$, $h_i(x,y)$'s are all linear in $y$ for for any $x\in\mathcal{X}$ and $i\in\{1,...,k\}$, i.e. the lower-level problem is a linear program, and  $\mathcal{Y}(x)$ is compact for any $x\in\mathcal{X}$.
\end{assumption}

\begin{remark}
    When the lower-level problem is a linear program, to make sure the feasible set $\mathcal{Y}(x)$ is compact, the number of constraints should be greater than the dimension of the lower-level problem, i.e. $k\geq m$.
\end{remark}

The strong convexity requirement in Assumption \ref{assumption:nonlinear} has been the focus of most recent research on BLO, such as \cite{xu2023efficient,pmlr-v202-khanduri23a,hong2022twotimescaleframeworkbileveloptimization,yao2024overcominglowerlevelconstraintsbilevel}. In contrast, Assumption \ref{assumption:linear}, which requires that the lower-level problem is a linear program, has not yet been explored. The compactness of the feasible set in the lower-level problem is a technical assumption made to ensure that the hyperfunction in the barrier reformulation is differentiable, which is observed in the following proposition. See Appendix \ref{proof:prop:diff_hyperfun} for proof.

\begin{proposition}\label{prop:diff_hyperfun}
    Suppose that Assumption \ref{assumption:general1}(\ref{assumption:general1(1)}) and Assumption \ref{assumption:general1}(\ref{assumption:general1(2)}) hold, and either Assumption \ref{assumption:nonlinear} or Assumption  \ref{assumption:linear} holds. 
    Then $\widetilde{g}_t(x,y)$ is strictly convex in $y$, and $\widetilde{\phi}_t(x)$ is differentiable for any $t>0$ and $x\in\mathcal{X}$.
\end{proposition}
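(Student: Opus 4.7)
The plan is to prove the two claims in sequence: first, strict convexity of $\widetilde{g}_t(x,\cdot)$ via a Hessian computation; second, differentiability of $\widetilde{\phi}_t$ via the Implicit Function Theorem (IFT) applied to the first-order optimality condition of the (strictly convex) lower-level problem, followed by the chain rule.

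For strict convexity, I would compute
\[
\nabla_{yy}^2 \widetilde{g}_t(x,y) \;=\; \nabla_{yy}^2 g(x,y) \;+\; t\sum_{i=1}^k\!\left(\frac{\nabla_y h_i(x,y)\,\nabla_y h_i(x,y)^\top}{h_i(x,y)^2} \;-\; \frac{\nabla_{yy}^2 h_i(x,y)}{h_i(x,y)}\right),
\]
using that $h_i<0$ on the effective domain $\{y : h_i(x,y)<0\ \forall i\}$. Under Assumption~\ref{assumption:nonlinear}, $\nabla_{yy}^2 g\succeq \mu_g I$ and each summand in the sum is PSD (the rank-one term is manifestly PSD, and $-\nabla_{yy}^2 h_i/h_i$ is PSD since $h_i<0$ and $h_i$ is convex in $y$), so $\nabla_{yy}^2 \widetilde{g}_t\succeq \mu_g I$ gives strong, hence strict, convexity in $y$. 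Under Assumption~\ref{assumption:linear}, both $\nabla_{yy}^2 g$ and $\nabla_{yy}^2 h_i$ vanish, so strict convexity reduces to positive definiteness of $\sum_i h_i^{-2}\,a_i a_i^\top$, where $a_i=\nabla_y h_i$ is constant in $y$. This holds precisely when $\mathrm{span}\{a_i\}_{i=1}^k=\RR^m$; compactness of $\mathcal{Y}(x)$ in Assumption~\ref{assumption:linear} forces the recession cone $\{v : a_i^\top v\leq 0\ \forall i\}$ to be $\{0\}$, which in turn implies that the $a_i$'s span $\RR^m$.

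For differentiability of $\widetilde{\phi}_t$, I would first establish existence and uniqueness of $y_t^*(x)$. Uniqueness is immediate from strict convexity. Existence follows because a Slater point (guaranteed by Assumption~\ref{assumption:general1}(\ref{assumption:general1(2)})) yields a finite $\widetilde{g}_t$-value, while the barrier term blows up at the boundary of $\mathcal{Y}(x)$ and, in Case 1, strong convexity provides coercivity at infinity; so the infimum is attained in the interior of $\mathcal{Y}(x)$. The map $F(x,y):=\nabla_y \widetilde{g}_t(x,y)$ is then $C^1$ on the open set $\{(x,y):h_i(x,y)<0\ \forall i\}$ by Assumption~\ref{assumption:general1}(\ref{assumption:general1(1)}), satisfies $F(x,y_t^*(x))=0$, and has Jacobian $\partial_y F=\nabla_{yy}^2 \widetilde{g}_t(x,y_t^*(x))$ which is positive definite (hence invertible) by the previous step. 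The IFT then yields a $C^1$ selection $x\mapsto y_t^*(x)$ locally around each $x\in\mathcal{X}$, and the chain rule applied to $\widetilde{\phi}_t(x)=f(x,y_t^*(x))$ gives its differentiability.

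The main obstacle is the linear case. In the strongly convex setting, $\nabla_{yy}^2 g\succeq \mu_g I$ essentially carries the argument for free. In the linear case, however, all curvature must come from the barrier, and the step requiring genuine care is translating compactness of $\mathcal{Y}(x)$ into the spanning property of the gradients $\{\nabla_y h_i\}$, which is what makes the barrier Hessian positive definite and enables IFT. A minor subtlety is ensuring $y_t^*(x)$ lies strictly in the interior of $\mathcal{Y}(x)$ so that $F$ is smooth there; this is handled by the barrier's blow-up at the boundary together with the Slater assumption, which certifies the interior is nonempty.
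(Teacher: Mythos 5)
Your proposal is correct and follows essentially the same route as the paper's proof: positive definiteness of $\nabla_{yy}^2\widetilde{g}_t$ (using $\mu_g$-strong convexity in the first setting, and in the linear setting the fact that compactness of $\mathcal{Y}(x)$ forces the gradients $\nabla_y h_i$ to span $\RR^m$, so the barrier's rank-one terms sum to a positive definite matrix), followed by the implicit-function-theorem justification of the hypergradient formula for $\widetilde{\phi}_t$. Your added remarks on existence of $y_t^*(x)$ and the explicit IFT setup merely spell out what the paper invokes implicitly via the standard implicit-gradient formula.
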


We note that the above result holds regardless of the number of the lower-level optimal solutions for the original problem \eqref{eq:blo_constrained}. The differentiability of $\widetilde{\phi}_t(x)$ allows us to design algorithms for the barrier reformulated problem. However, before we design algorithms, we need to understand the relationship between $\widetilde{\phi}_t(x)$ and the original hyperfunction $\phi(x)$.

\subsection{Convergence of Hyperfunction Value for Barrier Reformulation}

In this section, we study the relationship between the hyperfunction of problems \eqref{eq:blo_constrained} and its barrier reformulation \eqref{eq:blo_barrier_reformulation}. Such a study is critical as it helps reveal the utility of the proposed reformulation.

Under Assumption \ref{assumption:nonlinear} where the lower-level problem is strongly convex in $y$, it can be shown that hyperfunction of the original problem and that of the barrier reformulated problem can be bounded uniformly (see \cite{tsaknakis2023implicit}):

\begin{theorem}[Lemma 1 in \cite{tsaknakis2023implicit}]\label{thm:Ioannis}
    Suppose that Assumption \ref{assumption:general1}(\ref{assumption:general1(1)}), \ref{assumption:general1}(\ref{assumption:general1(2)}), and Assumption \ref{assumption:nonlinear} hold. If $f$ is Lipschitz continuous with coefficient $L_f$, then the following holds:
     \begin{align*}
        |\widetilde{\phi}_t(x)-\phi(x)|\leq L_f \left(\frac{2kt}{\mu_g}\right)^{1/2}.
    \end{align*}
\end{theorem}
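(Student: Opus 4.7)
The plan is to reduce the bound on $|\widetilde{\phi}_t(x)-\phi(x)|$ to a bound on $\|y_t^\ast(x)-y^\ast(x)\|$ using Lipschitz continuity of $f$, and then to control that distance through a Lagrangian argument that exploits the strong convexity of $g$ together with the KKT system satisfied by the barrier minimizer.

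First I would note that since $f$ is $L_f$-Lipschitz, and since both $\phi$ and $\widetilde{\phi}_t$ evaluate $f$ at the same first argument, $|\widetilde{\phi}_t(x)-\phi(x)| = |f(x,y_t^\ast(x)) - f(x,y^\ast(x))| \leq L_f \|y_t^\ast(x)-y^\ast(x)\|$. It therefore suffices to prove $\|y_t^\ast(x)-y^\ast(x)\|^2 \leq 2kt/\mu_g$.

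Next, because the log-barrier forces $h_i(x,y_t^\ast(x))<0$ for all $i$, the barrier minimizer is strictly feasible for the original lower-level problem, and the first-order condition for $\widetilde{g}_t(x,\cdot)$ gives the stationarity identity $\nabla_y g(x,y_t^\ast) + \sum_{i=1}^k \lambda_i^t \nabla_y h_i(x,y_t^\ast) = 0$ with the positive multipliers $\lambda_i^t := t/(-h_i(x,y_t^\ast(x)))$. These multipliers satisfy the perturbed complementarity relation $\lambda_i^t \, h_i(x,y_t^\ast(x)) = -t$. Introducing the Lagrangian $L(x,y,\lambda) := g(x,y) + \sum_i \lambda_i h_i(x,y)$, which under Assumption \ref{assumption:nonlinear} is $\mu_g$-strongly convex in $y$ for any $\lambda\geq 0$, the vanishing gradient at $y_t^\ast$ yields
\begin{align*}
L(x,y^\ast(x),\lambda^t) \;\geq\; L(x,y_t^\ast(x),\lambda^t) + \frac{\mu_g}{2}\|y^\ast(x)-y_t^\ast(x)\|^2.
\end{align*}

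Then I would evaluate both sides of this inequality in terms of $g$. On the right, the perturbed complementarity gives $L(x,y_t^\ast,\lambda^t) = g(x,y_t^\ast) - kt$. On the left, feasibility of $y^\ast(x)$ (i.e.\ $h_i(x,y^\ast(x))\leq 0$) paired with $\lambda^t\geq 0$ gives $L(x,y^\ast,\lambda^t) \leq g(x,y^\ast)$. Combining these two estimates yields
\begin{align*}
\frac{\mu_g}{2}\|y^\ast(x)-y_t^\ast(x)\|^2 \;\leq\; g(x,y^\ast(x)) - g(x,y_t^\ast(x)) + kt \;\leq\; kt,
\end{align*}
where the last inequality uses that $y^\ast(x)$ minimizes $g(x,\cdot)$ over $\mathcal{Y}(x)$ while $y_t^\ast(x)\in\mathcal{Y}(x)$. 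Rearranging produces $\|y_t^\ast(x)-y^\ast(x)\|\leq \sqrt{2kt/\mu_g}$, and substitution into the Lipschitz bound finishes the argument.

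The only potentially subtle step is justifying that the Lagrangian inequality applies cleanly, which amounts to verifying that $y_t^\ast(x)$ is indeed strictly feasible (so that $\lambda^t$ is well-defined and the KKT computation is legitimate) and that $L(x,\cdot,\lambda^t)$ is strongly convex in $y$; both are immediate from the log-barrier structure and from $g$ being strongly convex with each $h_i$ convex under Assumption \ref{assumption:nonlinear}. No uniform-in-$x$ regularity beyond what Assumption \ref{assumption:general1} already provides is needed, so the resulting bound holds pointwise in $x$ as stated.
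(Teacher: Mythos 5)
Your proposal is correct: every step (strict feasibility of $y_t^\ast(x)$, the dual point $\lambda_i^t=t/(-h_i(x,y_t^\ast(x)))$, $\mu_g$-strong convexity of the Lagrangian in $y$, perturbed complementarity $\lambda_i^t h_i(x,y_t^\ast(x))=-t$, and the sign argument at the feasible point $y^\ast(x)$) checks out and delivers exactly $\|y_t^\ast(x)-y^\ast(x)\|\leq\sqrt{2kt/\mu_g}$, hence the stated bound after applying Lipschitzness of $f$. The route, however, differs from the paper's. The paper obtains the same distance bound (Lemma \ref{lem:optimalitygapforpoint}) by centering strong convexity of $g$ itself at the constrained minimizer $y^\ast(x)$, using the variational inequality $\nabla_y g(x,y^\ast(x))^\top(y_t^\ast(x)-y^\ast(x))\geq 0$, and then invoking the classical barrier suboptimality gap $g(x,y_t^\ast(x))-g(x,y^\ast(x))\leq kt$ (Lemma \ref{lem:optimalitygap}, cited as a known result) as a black box. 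You instead center the argument at the barrier minimizer $y_t^\ast(x)$ and apply strong convexity to the Lagrangian $L(x,\cdot,\lambda^t)$, exploiting exact stationarity of $\widetilde g_t$ there; this makes the proof self-contained, since the $kt$ term emerges from perturbed complementarity rather than from an external optimality-gap lemma (in effect you re-derive that duality-gap fact inline). The paper's version is shorter given the cited lemma; yours trades that citation for a single unified Lagrangian inequality, and both give the same constant.
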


Under Assumption \ref{assumption:linear}, i.e.\ the linear setting, we aim to establish the error bound 
between the hyperfunction values as a function of $t$ when $y^\ast(x)$ is unique. The reason that we do not attempt to show global uniform bounds is that in the case where the lower-level problem is a linear program, the hyperfunction of the original problem may be discontinuous, and even for the continuous points, the bound may not be uniform. Below we provide an example.

\begin{example}\label{counterexample1}
    Consider a BLO with  $f(x,y)=y_1$ with $x\in\RR$, $y=(y_1,y_2)^\top\in\RR^2$. 
Further, the lower-level problem takes the following form:
\begin{align*}
    \min_{y}\ &g(x,y):= xy_1+y_2\\
    \rm{s.t.}\quad & y_2\geq 0, \; y_1\geq-1, \; y_1\leq 1.
\end{align*}
It is easy to see that the lower-level problem has the following solutions:
\begin{align*}
    \arg \min_y {g(x,y)} = \begin{cases} 
(-1, 0), & \text{if } x > 0 \\
[-1,1]\times\{0\}, & \text{if } x=0 \\
(1, 0), & \text{if } x < 0.
\end{cases}
\end{align*}
The barrier-penalized reformulation of $g(x,y)$ is:
\begin{align*}
    \widetilde{g}_t(x,y)=xy_1+y_2-t(\log(y_2)+\log(1-y_1)+\log(1+y_1)).
\end{align*}
Direct calculation shows that its stationary point is
\begin{align*}
    \begin{cases} 
y_1=\begin{cases} 
0, & \text{if } x = 0 \\
\frac{t-\sqrt{t^2+x^2}}{x}, & \text{if } x \ne 0 
\end{cases} & \\
y_2=t. &  
\end{cases}
\end{align*}
The hyperfunction of the original problem is
\begin{align*}
\phi(x) = \begin{cases} 
-1, & \text{if } x \geq 0 \\
1, & \text{if } x < 0 
\end{cases}
\end{align*}
and the hyperfunction corresponding to the barrier-penalized reformulation is
\begin{align*}
\widetilde\phi_t(x) = \begin{cases} 
0, & \text{if } x = 0 \\
\frac{t-\sqrt{t^2+x^2}}{x}, & \text{if } x \ne 0 . 
\end{cases}
\end{align*}
However, $\left|\widetilde\phi_t(0)-\phi(0)\right|=1$ for any $t$. Moreover, when $x\ne 0$, we have $\left|\widetilde\phi_t(x)-\phi(x)\right|=|\frac{t-\sqrt{t^2+x^2}}{x} +1|$ when $x>0$ or $\left|\widetilde\phi_t(x)-\phi(x)\right|=|\frac{t-\sqrt{t^2+x^2}}{x} -1|$ when $x<0$, and both of these terms do not admit a uniform bound with respect to $t$.
\end{example}  
Clearly, the discontinuity at $x=0$ arises because $y^\ast(x)$ experiences a jump when transitioning from $x>0$ to $x<0$, or, in other words, $y^\ast(0)$ is not unique. Further, it is important to note that even if we exclude the point $0$, there does not exist a uniform error bound as a function of $t$ and is independent of $x$. Therefore, under the linear setting, we will only analyze those $x$ that admits {\it unique} $y^\ast(x)$, and we seek to establish a {\it pointwise} convergence relationship, that is, $\lim_{t\to 0}\widetilde{\phi}_t(x)=\phi(x)$. In the case where the lower-level problem is simply a linear program the uniqueness of the solution then depends on its dual solution being non-degenerate. Since vector $x$ plays the role of the right-hand side of the dual constraints in this case, if $x$ is subject to small perturbations or imprecision then by the perturbation theory for linear programming, the dual optimal solution of the lower-level problem is likely to be non-degenerate after perturbation. Hence, the primal, which is the lower-level problem itself, will likely have a unique optimal solution in that case.

To proceed with our analysis, we first present the following known result (exercise 11.12(b) in Chapter 11 of \cite{boyd2004convex}).
\begin{lemma}[Optimality gap]\label{lem:optimalitygap}
    Suppose that Assumption \ref{assumption:general1}(\ref{assumption:general1(1)}), \ref{assumption:general1}(\ref{assumption:general1(2)}) hold, and $g(x,y)$, $h_i(x,y)$'s are convex in $y$, then we have the optimality gap
    \begin{align*}
        g(x,y^\ast_t(x))-g(x,y^\ast(x))\leq kt.
    \end{align*}
\end{lemma}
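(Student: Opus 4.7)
The plan is to follow the standard barrier-method duality-gap argument from convex optimization, specialized pointwise to a given $x \in \mathcal{X}$. Since $g(x,\cdot)$ and each $h_i(x,\cdot)$ are convex and continuously differentiable in $y$ (by Assumption \ref{assumption:general1}(\ref{assumption:general1(1)})), and Slater's condition holds at $x$ (by Assumption \ref{assumption:general1}(\ref{assumption:general1(2)})), strong duality applies to the lower-level problem. The idea is to construct an explicit dual-feasible multiplier from the barrier stationarity condition, evaluate the dual function there, and invoke weak duality.

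First, I would write the first-order optimality condition for $y_t^\ast(x) = \arg\min_y \widetilde g_t(x,y)$. Since $y_t^\ast(x)$ lies strictly inside the feasible region (so $h_i(x, y_t^\ast(x)) < 0$ for all $i$), and since $\widetilde g_t(x,\cdot)$ is convex, we have
\begin{equation*}
\nabla_y g(x, y_t^\ast(x)) + \sum_{i=1}^k \frac{t}{-h_i(x, y_t^\ast(x))}\, \nabla_y h_i(x, y_t^\ast(x)) \;=\; 0.
\end{equation*}
Define the candidate multipliers $\lambda_i^\ast := t / (-h_i(x, y_t^\ast(x))) > 0$. Then the above equation says exactly that $y_t^\ast(x)$ minimizes the (convex in $y$) Lagrangian $L(y,\lambda^\ast) := g(x,y) + \sum_i \lambda_i^\ast h_i(x,y)$ of the lower-level problem.

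Next, I would evaluate the dual function at $\lambda^\ast$. Since $y_t^\ast(x)$ is an unconstrained minimizer of $L(\cdot,\lambda^\ast)$,
\begin{equation*}
d(\lambda^\ast) \;=\; \min_y L(y,\lambda^\ast) \;=\; g(x, y_t^\ast(x)) + \sum_{i=1}^k \lambda_i^\ast\, h_i(x, y_t^\ast(x)) \;=\; g(x, y_t^\ast(x)) - kt,
\end{equation*}
where the last equality uses $\lambda_i^\ast h_i(x, y_t^\ast(x)) = -t$ by construction. Since $\lambda^\ast \geq 0$ is dual-feasible, weak duality gives $d(\lambda^\ast) \leq g(x, y^\ast(x))$, and rearranging produces the claimed bound
\begin{equation*}
g(x, y_t^\ast(x)) - g(x, y^\ast(x)) \;\leq\; kt.
\end{equation*}

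There is no hard step here — the whole argument is essentially the textbook computation. The only thing worth being careful about is ensuring $y_t^\ast(x)$ is a valid interior minimizer so the log-barrier's first-order condition is well defined and the multipliers $\lambda_i^\ast$ are strictly positive; this follows since the barrier blows up on the boundary and Slater's condition (Assumption \ref{assumption:general1}(\ref{assumption:general1(2)})) guarantees the interior is nonempty, and since $\widetilde g_t(x,\cdot)$ is strictly convex (Proposition \ref{prop:diff_hyperfun}) so $y_t^\ast(x)$ exists and is unique whenever Assumption \ref{assumption:nonlinear} or \ref{assumption:linear} holds. If one only assumes convexity (as in the statement), the argument still goes through provided $y_t^\ast(x)$ exists as a minimizer, which is the implicit premise of the lemma.
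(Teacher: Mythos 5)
Your proof is correct and is exactly the standard barrier duality-gap argument that the paper invokes by citing exercise 11.12(b) of \cite{boyd2004convex} rather than reproving: the barrier stationarity condition yields the dual-feasible multipliers $\lambda_i^\ast = t/(-h_i(x,y_t^\ast(x)))$, the dual value is $g(x,y_t^\ast(x)) - kt$, and weak duality gives the bound. Your closing caution about existence of the interior minimizer $y_t^\ast(x)$ matches the paper's implicit premise, so nothing is missing.
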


To use the above result under linear setting, note that if $y^\ast(x)$ is unique, then for any point $y$ within the feasible region of the lower-level problem, the cosine of the angle between $y-y^\ast(x)$ and the $\nabla_y g(x,y^\ast(x))$ has a positive lower bound for any fixed $x$, which we denote as $\tau(x)$, i.e.
$$
\tau(x) = \argmin_{y\in \mathcal{Y}(x)}\frac{\left\langle y-y^\ast(x),\nabla_y g(x,y^\ast(x))\right\rangle}{\|y-y^\ast(x)\|\|\nabla_y g(x,y^\ast(x))\|}.
$$
It follows that:
\begin{align*}    
    g(x,y)-g(x,y^\ast(x))=\left\langle y-y^\ast(x),\nabla_y g(x,y^\ast(x))\right\rangle\geq\tau(x)\|\nabla_y g(x,y^\ast(x))\|\cdot\|y-y^\ast(x)\|.
\end{align*}
By uniqueness of $y^\ast(x)$ and Slater condition, $\nabla_y g(x,y^\ast(x))\ne 0$, then we obtain:
\begin{align}\label{eq:lineargap}
    \|y-y^\ast(x)\|\leq\frac{1}{\tau(x)\|\nabla_y g(x,y^\ast(x))\|}\left(g(x,y)-g(x,y^\ast(x))\right).
\end{align}
Combining Lemma \ref{lem:optimalitygap}, we obtain
$$\|y-y^\ast(x)\|\leq\frac{kt}{\tau(x)\|\nabla_y g(x,y^\ast(x))\|}.$$ If we further assume that $f(x,y)$ is $L_f$-Lipschitz continuous, then
$$|\widetilde{\phi}_t(x)-\phi(x)|=|f(x,y^\ast_t(x))-f(x,y^\ast(x))|\leq L_f \frac{kt}{\tau(x)\|\nabla_y g(x,y^\ast(x))\|}.$$
To conclude, we establish the following theorem:
\begin{theorem}[Hyperfunction value convergence under Linear setting]
    Suppose that Assumption \ref{assumption:general1}(\ref{assumption:general1(1)}), \ref{assumption:general1}(\ref{assumption:general1(2)}), and Assumption \ref{assumption:linear} hold. If $f(x,y)$ is Lipschitz continuous with coefficient $L_f$ and $y^\ast(x)$ is unique at point $x$, then
     \begin{align*}
        |\widetilde{\phi}_t(x)-\phi(x)|\leq L_f \frac{kt}{\tau(x)\|\nabla_y g(x,y^\ast(x))\|},
    \end{align*}
    where $\tau(x):=\argmin_{y\in\mathcal{Y}(x)}\cos\left<\nabla_y g(x,y^\ast(x)),y-y^\ast(x)\right>>0$.
\end{theorem}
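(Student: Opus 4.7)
The plan is to combine three ingredients that have already been set up in the surrounding text, exploiting the linearity of $g(x,\cdot)$. First, by Assumption \ref{assumption:linear} the first-order Taylor expansion of $g$ in $y$ at $y^*(x)$ is an exact equality:
\[g(x,y)-g(x,y^*(x)) \;=\; \langle y-y^*(x),\; \nabla_y g(x,y^*(x))\rangle \quad \text{for every } y.\]
Since $y_t^*(x)$ minimizes a logarithmic barrier, it lies in the open set $\{y:h_i(x,y)<0,\ \forall i\}\subseteq \mathcal{Y}(x)$, so Lemma \ref{lem:optimalitygap} applies at $y=y_t^*(x)$ and bounds the left-hand side by $kt$.

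Second, I would invoke the definition of $\tau(x)$ to lower-bound the same inner product:
\[\langle y-y^*(x),\, \nabla_y g(x,y^*(x))\rangle \;\geq\; \tau(x)\, \|y-y^*(x)\|\, \|\nabla_y g(x,y^*(x))\|,\quad \forall\, y\in \mathcal{Y}(x).\]
Chaining this lower bound (at $y=y_t^*(x)$) with the optimality-gap upper bound yields
\[\|y_t^*(x)-y^*(x)\| \;\leq\; \frac{kt}{\tau(x)\,\|\nabla_y g(x,y^*(x))\|}.\]
The claimed estimate then follows by $L_f$-Lipschitz continuity of $f$, via $\widetilde\phi_t(x)-\phi(x) = f(x,y_t^*(x))-f(x,y^*(x))$.

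The step I expect to require the most care, and which is the only real obstacle, is verifying that the right-hand side of the final display is well defined, i.e.\ that $\tau(x)>0$ and $\nabla_y g(x,y^*(x))\neq 0$. Both follow from the uniqueness hypothesis together with Slater's condition in Assumption \ref{assumption:general1}(\ref{assumption:general1(2)}). If $\nabla_y g(x,y^*(x))=0$, linearity would make every $y\in \mathcal{Y}(x)$ a minimizer, contradicting uniqueness of $y^*(x)$. For $\tau(x)>0$, note that $\langle y-y^*(x),\nabla_y g(x,y^*(x))\rangle$ is strictly positive on $\mathcal{Y}(x)\setminus\{y^*(x)\}$ (again by linearity and uniqueness of the minimizer), and by compactness of the polytope $\mathcal{Y}(x)$ one can show the minimum cosine is attained along some extreme feasible direction emanating from $y^*(x)$, where it is strictly positive. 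Once these positivity facts are secured, the remaining calculation is just the arithmetic written above.
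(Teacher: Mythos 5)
Your proposal is correct and follows essentially the same route as the paper: apply Lemma \ref{lem:optimalitygap} at $y_t^\ast(x)$, lower-bound the exact linear gap $g(x,y)-g(x,y^\ast(x))=\langle y-y^\ast(x),\nabla_y g(x,y^\ast(x))\rangle$ via $\tau(x)$, and finish with the $L_f$-Lipschitz continuity of $f$. Your added care in verifying $\nabla_y g(x,y^\ast(x))\neq 0$ and $\tau(x)>0$ matches the paper's appeal to uniqueness and Slater's condition, so no gap remains.
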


\subsection{Convergence of Hypergradient for Barrier Reformulation}

In this subsection, we explore the convergence behavior of the hypergradient of the barrier reformulation $\nabla_x\widetilde{\phi}_t(x)$ as $t$ goes to zero. Understanding this convergence is vital because optimization algorithms commonly converge to points where the gradient is zero. Therefore, it is crucial to ensure that when $\nabla_x\widetilde{\phi}_t(x)$ approaches zero, $\nabla_x\widetilde{\phi}_t(x)$ also becomes sufficiently small. Since the hyperfunction of the original problem may be discontinuous (under linear setting, see Example \ref{counterexample1}) or non-smooth (under strongly convex setting, see the example in \citet[Page 3]{pmlr-v202-khanduri23a}), we aim to show asymptotic convergence at the smooth points of $\phi(x)$.

We present the following definition as an essential characterization of the smoothness of the original problem.

\begin{definition}[SCSC point]\label{def:scsc_point}
    Suppose Assumption \ref{assumption:general1} holds. We say $x$ is an \textbf{SCSC (Strict Complementarity Slackness Condition) point} if for any $y\in y^\ast(x)$, we have that $h_{i}(x,y)=0$ implies $\lambda_i(x,y) >0$, where $y^\ast(x)$ is the set of optimal solutions for the lower-level problem, and $\lambda_i(x,y)$ is the optimal Lagrangian multiplier corresponding the the $i$-th constraint.
\end{definition}

We first provide a remark regarding the notation of the multipliers.

\begin{remark}\label{remark:welldefined}
    Under the strong convex setting, $y^\ast(x)$ is unique, so $\lambda_i(x,y^\ast(x))$ only depends on $x$. Under the linear setting, although $y^\ast(x)$ is not unique and forms a set, due to the LICQ assumption, the value of the multiplier $\lambda_i(x,y)$ actually is independent of the choice of $y$ in the optimal solution set $y^\ast(x)$. For the specific proof, see (a)$\Rightarrow$(b) part of the proof of the following Proposition \ref{prop:relation}. Therefore, in both cases we denote $\lambda_i(x,y^\ast(x))$ simply as $\lambda_i(x)$ without loss of generality.
\end{remark}

Note that $\nabla_x\phi(x)=\nabla_x f(x,y)+(\nabla_x y^\ast(x))^\top\nabla_y f(x,y)$, the existence of $\nabla_x y^\ast(x)$ is a sufficient condition of the differentiability of the hyperfunction $\phi(x)$. The following proposition shows the relation between \textbf{SCSC} point and the existence of $\nabla_x y^\ast(x)$. Please see Appendix \ref{proof:prop:relation} for the proof.

\begin{proposition}\label{prop:relation}
    Suppose Assumption \ref{assumption:general1} holds. We have the following results:
    \begin{enumerate}
        \item If Assumption \ref{assumption:nonlinear} holds, then $x$ is an \textbf{SCSC} point implies $\nabla_x y^\ast(x)$ exists, but not vice versa;
        \item If Assumption \ref{assumption:linear} holds, then following conditions are equivalent:
        \begin{enumerate}
            \item $x$ is an \textbf{SCSC} point;
            \item $y^\ast(x)$ is unique;
            \item $\nabla_x y^\ast(x)$ exists.
        \end{enumerate}
    \end{enumerate}
\end{proposition}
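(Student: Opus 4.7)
I would treat the two parts separately: Part 1 via the implicit function theorem (IFT) applied to the reduced KKT system, and Part 2 as a cyclic implication chain $(a)\Rightarrow(c)\Rightarrow(b)\Rightarrow(a)$ exploiting the polyhedral structure of the lower-level problem.

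\textbf{Part 1 (strongly convex setting).} Let $I := \{i : h_i(x, y^\ast(x)) = 0\}$; by SCSC the multipliers satisfy $\lambda_i(x) > 0$ for $i \in I$ and $h_j(x, y^\ast(x)) < 0$ for $j \notin I$. The plan is to apply the IFT to the reduced equality system
\begin{equation*}
F(x, y, \lambda_I) := \bigl(\nabla_y g(x, y) + \textstyle\sum_{i \in I}\lambda_i \nabla_y h_i(x, y),\; h_I(x, y)\bigr) = 0,
\end{equation*}
whose Jacobian in $(y, \lambda_I)$ is the saddle-point matrix
\begin{equation*}
J = \begin{pmatrix} \nabla^2_{yy}\mathcal{L} & (\nabla_y h_I)^\top \\ \nabla_y h_I & 0 \end{pmatrix}, \qquad \nabla^2_{yy}\mathcal{L} = \nabla^2_{yy} g + \sum_{i \in I}\lambda_i \nabla^2_{yy} h_i.
\end{equation*}
The upper-left block is positive definite (strong convexity of $g$, convexity of $h_i$, and $\lambda_i \geq 0$) and $\nabla_y h_I$ has full row rank by LICQ, so a Schur-complement argument shows $J$ is invertible. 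The IFT then produces smooth local branches $(y(x'), \lambda_I(x'))$, and continuity preserves $h_j(x', y(x')) < 0$ for $j \notin I$ and $\lambda_I(x') > 0$, so these branches coincide with the KKT data of $y^\ast(x')$, giving $\nabla_x y^\ast(x)$. For the ``not vice versa'' part, I would exhibit $g(x, y) = \tfrac12(y - x^2)^2$ with $h_1(x, y) = -y$: all assumptions hold, $y^\ast(x) = x^2$ is smooth everywhere, yet at $x = 0$ the active constraint has multiplier zero, violating SCSC.

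\textbf{Part 2 (linear setting).} Write $h_i(x, y) = a_i(x)^\top y - b_i(x)$ and let $I := \{i : \lambda_i(x) > 0\}$. The key identity is that stationarity $c(x) = -A_I(x)^\top \lambda_I(x)$ makes the objective constant on the affine set $E(x) := \{y : A_I(x) y = b_I(x)\}$, so $y^\ast(x) = \mathcal{Y}(x) \cap E(x)$. For $(a)\Rightarrow(c)$: SCSC combined with complementary slackness forces the active set at every $y \in y^\ast(x)$ to equal $I$, so $h_j(x, y) < 0$ for all $j \notin I$ and all $y \in y^\ast(x)$. If $|I| < m$, then $\dim E(x) \geq 1$ and one can move a short distance from any point of $y^\ast(x)$ within $E(x)$ while preserving the strict inactive inequalities, producing additional optimal points; by compactness of $\mathcal{Y}(x)$ this eventually reaches a boundary where some $h_j$ with $j \notin I$ becomes active, yet the unique multipliers (Remark~\ref{remark:welldefined}) give $\lambda_j = 0$, contradicting SCSC. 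Hence $|I| = m$, $A_I(x)$ is invertible by LICQ, $y^\ast(x) = A_I(x)^{-1} b_I(x)$ is smooth, and by continuity of the LP data this formula continues to describe $y^\ast(x')$ in a neighborhood, so $\nabla_x y^\ast(x)$ exists. The step $(c)\Rightarrow(b)$ is immediate since differentiability of $y^\ast$ requires single-valuedness. For $(b)\Rightarrow(a)$: uniqueness plus LICQ again forces $|I| = m$ via the same constant-objective argument; assuming for contradiction $\lambda_{i_0}(x) = 0$ for some $i_0 \in I$, the perturbation $d = -\epsilon A_I(x)^{-1} e_{i_0}$ keeps every other active constraint saturated, strictly relaxes $h_{i_0}$, preserves inactive feasibility by continuity, and changes the objective by $c(x)^\top d = \lambda_{i_0}(x)\epsilon = 0$, producing a second optimal point and contradicting uniqueness.

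\textbf{Main obstacle.} The hardest step will be $(a)\Rightarrow(c)$ in Part 2: one must carefully argue that SCSC truly collapses the optimal face $\mathcal{Y}(x) \cap E(x)$ to a singleton by combining LICQ, stationarity, compactness of $\mathcal{Y}(x)$, and the fact that any ``extra'' active constraint appearing at the boundary of a higher-dimensional optimal face automatically carries multiplier zero by the LICQ-based uniqueness of multipliers. Once this dimensional collapse is secured, the rest reduces to routine book-keeping with the IFT and the continuous dependence of the LP data on $x$.
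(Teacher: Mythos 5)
Your proposal is correct, and it takes a genuinely different route from the paper in both parts. For Part 1 the paper does not re-prove differentiability: it observes $y^\ast(x)$ is unique and cites \cite{Giorgi2018ATO}, then gives the counterexample $g=y^2$, $h=y$; your self-contained IFT argument on the reduced KKT system (positive definite $(1,1)$ block, full row rank of $\nabla_y h_I$ by LICQ, Schur complement, then continuity of the strict inequalities and of $\lambda_I$ to identify the branch with $y^\ast$) is the standard proof of the cited fact, and your counterexample $g=\tfrac12(y-x^2)^2$, $h=-y$ plays the same role. For Part 2 the paper proves $(a)\Leftrightarrow(b)$ via the relative boundary of the optimal face (an extra active index acquires a zero multiplier by LICQ) and, for $(b)\Rightarrow(a)$, a Gordan's-theorem/orthogonal-complement construction, then gets $(b)\Leftrightarrow(c)$ from continuity of the multipliers (Lemma \ref{lem:con_of_multi}) and the implicit function theorem. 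You instead run $(a)\Rightarrow(c)\Rightarrow(b)\Rightarrow(a)$: your $(a)\Rightarrow(c)$ merges the paper's two steps by pushing along the optimal face $\mathcal{Y}(x)\cap E(x)$ to its boundary using compactness, concluding $|I|=m$ and reading off $y^\ast=A_I^{-1}b_I$; and your $(b)\Rightarrow(a)$ replaces Gordan's theorem by the explicit direction $d=-\epsilon A_I(x)^{-1}e_{i_0}$, which is cleaner and works precisely because exactly $m$ constraints are active so $A_I$ is invertible.

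Two small repairs before writing this up. In $(b)\Rightarrow(a)$ you must let $I$ be the \emph{active set} (your earlier definition $I=\{i:\lambda_i(x)>0\}$ makes ``$\lambda_{i_0}=0$ for some $i_0\in I$'' vacuous); that $|I|=m$ follows because otherwise a direction orthogonal to all active gradients preserves feasibility and, by stationarity, the objective, contradicting uniqueness. In $(a)\Rightarrow(c)$, to claim $A_I(x')^{-1}b_I(x')$ remains $y^\ast(x')$ near $x$ you must also check dual feasibility, i.e. $\lambda_I(x')=-(A_I(x')^\top)^{-1}c(x')>0$ by SCSC and continuity (the paper's analogue is Lemma \ref{lem:con_of_multi}); and at the boundary point it is better to derive multiplier uniqueness directly from LICQ rather than cite Remark \ref{remark:welldefined}, whose justification in the paper sits inside the very proof you are replacing.
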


We point out that \citet[Page 3]{pmlr-v202-khanduri23a} provides an example where the hyperfunction is nonsmooth when \textbf{SCSC} is not satisfied, under the case that the lower-level objective function is strongly convex.

Next, we develop an analysis of the asymptotic behavior of the error based on the \textbf{SCSC} assumption, that is, we will show $\lim_{t\to 0}\nabla_x\widetilde{\phi}_t(x)=\nabla_x\phi(x)$. By direct computation, we obtain:
\begin{align*}
    \lim_{t\to 0}\nabla_x\widetilde{\phi}_t(x)&=\lim_{t\to 0}\left(\nabla_x f(x,y^\ast_t(x))+\left(\nabla_x y^\ast_t(x)\right)^\top\nabla_y f(x,y^\ast_t(x))\right)\\
    &=\nabla_x f(x,y^\ast(x))+\left(\lim_{t\to 0}\nabla_x y_t^\ast(x)\right)^\top\nabla_y f(x,y^\ast(x)),\\
    \nabla_x\phi(x)&=\nabla_x f(x,y^\ast(x))+\left(\nabla_x y^\ast(x)\right)^\top\nabla_y f(x,y^\ast(x)).
\end{align*}
Thus, the core issue is to prove that the Jacobian matrix converges, i.e.\ $\lim_{t\to 0}\nabla_x y_t^\ast(x)=\nabla_x y^\ast(x)$. We will prove this conclusion separately for the linear and strongly convex settings.

First, under the linear setting, the proof is relatively simple. This is because $\lim_{t\to 0}\nabla_x y_t^\ast(x)$ has a very straightforward form, and $\nabla_x y^\ast(x)$ also has a simple local expression (see (\ref{eq:localexpression})), which exactly equals $\lim_{t\to 0}\nabla_x y_t^\ast(x)$. In particular, we have the following result, whose proof can be found in Appendix \ref{proof:thm:mainlinear}.

\begin{theorem}[Jacobian convergence under Linear setting]\label{thm:mainlinear}
    Suppose Assumption \ref{assumption:general1}, \ref{assumption:linear} hold. If $x$ is an \textbf{SCSC} point of BLO problem (\ref{eq:blo_constrained}), then we have $\lim_{t\to 0}\nabla_x y_t^\ast(x)=\nabla_x y^\ast(x)$.
\end{theorem}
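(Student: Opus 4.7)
The plan is to derive explicit expressions for both $\nabla_x y^\ast(x)$ and $\lim_{t\to 0}\nabla_x y_t^\ast(x)$ in terms of the active-constraint data at $(x,y^\ast(x))$, and show they coincide. Under Assumption \ref{assumption:linear} with SCSC at $x$, Proposition \ref{prop:relation}(2) gives that $y^\ast(x)$ is unique, and LICQ plus uniqueness in a linear program forces the active set $\mathcal{A}(x)$ to have cardinality exactly $m$, so the $m\times m$ matrix $A_{\mathcal{A}}(x)$ whose rows are $a_i(x)^\top:=\nabla_y h_i(x,\cdot)^\top$, $i\in\mathcal{A}(x)$, is invertible (in the linear setting $\nabla_y h_i$ does not depend on $y$). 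Strict complementarity plus continuity of the LP solution make $\mathcal{A}(\cdot)$ locally constant at $x$, so applying the implicit function theorem to the square system $h_i(x',y^\ast(x'))=0$, $i\in\mathcal{A}(x)$, produces the local expression referenced as (\ref{eq:localexpression}),
\[
\nabla_x y^\ast(x)=-A_{\mathcal{A}}(x)^{-1}B_{\mathcal{A}}(x),
\]
where $B_{\mathcal{A}}(x)$ stacks the partial gradients $\nabla_x h_i(x,y^\ast(x))^\top$ (with $y$ held fixed), $i\in\mathcal{A}(x)$.

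For the barrier side, introducing the positive approximate multipliers $\lambda_i^t(x):=-t/h_i(x,y_t^\ast(x))$ recasts the first-order condition $\nabla_y\widetilde{g}_t(x,y_t^\ast(x))=0$ as $\nabla_y g(x,y_t^\ast)+\sum_{i=1}^k\lambda_i^t(x)\,a_i(x)=0$. Strict convexity of $\widetilde g_t$ in $y$ (Proposition \ref{prop:diff_hyperfun}) permits the implicit function theorem: $\nabla_x y_t^\ast(x)=-[\nabla_{yy}\widetilde g_t]^{-1}\nabla_{xy}\widetilde g_t$. Under linearity ($\nabla_{yy}g\equiv 0$, $\nabla_{yy}h_i\equiv 0$), direct differentiation gives
\begin{align*}
\nabla_{yy}\widetilde{g}_t(x,y_t^\ast(x)) &= \tfrac{1}{t}\textstyle\sum_{i=1}^k\lambda_i^t(x)^2\,a_i(x)\,a_i(x)^\top,\\
\nabla_{xy}\widetilde{g}_t(x,y_t^\ast(x)) &= \tfrac{1}{t}\textstyle\sum_{i=1}^k\lambda_i^t(x)^2\,a_i(x)\,\nabla_x h_i(x,y_t^\ast(x))^\top + R_t(x),
\end{align*}
with $R_t(x):=\nabla_{xy}g(x,y_t^\ast)+\sum_i\lambda_i^t(x)\,\nabla_{xy}h_i(x,y_t^\ast)$ bounded as $t\to 0$.

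Passing to the limit requires two ingredients. First, uniqueness of $y^\ast(x)$ combined with (\ref{eq:lineargap}) gives $y_t^\ast(x)\to y^\ast(x)$. Second, for inactive $i$, $h_i(x,y_t^\ast)\to h_i(x,y^\ast)<0$ and so $\lambda_i^t(x)\to 0=\lambda_i(x)$, while for $i\in\mathcal{A}(x)$, passing to the limit in the reshaped KKT condition and invoking LICQ uniqueness of multipliers gives $\lambda_i^t(x)\to\lambda_i(x)>0$. Multiplying by $t$ isolates the leading terms: writing $\Lambda_{\mathcal{A}}(x):=\diag(\lambda_i(x))_{i\in\mathcal{A}(x)}$,
\[
t\,\nabla_{yy}\widetilde{g}_t\to A_{\mathcal{A}}^\top\Lambda_{\mathcal{A}}^2 A_{\mathcal{A}},\qquad t\,\nabla_{xy}\widetilde{g}_t\to A_{\mathcal{A}}^\top\Lambda_{\mathcal{A}}^2\,B_{\mathcal{A}}(x),
\]
so the common $t$-factor cancels in $\nabla_x y_t^\ast=-[\nabla_{yy}\widetilde g_t]^{-1}\nabla_{xy}\widetilde g_t$. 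Invertibility of the square $A_{\mathcal{A}}(x)$ then collapses the product
\[
\lim_{t\to 0}\nabla_x y_t^\ast(x) = -(A_{\mathcal{A}}^\top\Lambda_{\mathcal{A}}^2 A_{\mathcal{A}})^{-1}A_{\mathcal{A}}^\top\Lambda_{\mathcal{A}}^2\,B_{\mathcal{A}}(x) = -A_{\mathcal{A}}(x)^{-1}B_{\mathcal{A}}(x),
\]
matching the expression in the first paragraph and proving the claim.

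The main obstacle is managing the competing $O(1/t)$ blow-ups in $\nabla_{yy}\widetilde g_t$ and the dominant piece of $\nabla_{xy}\widetilde g_t$: the ratio is well defined only because both matrices share the common structural factor $A_{\mathcal{A}}^\top\Lambda_{\mathcal{A}}^2$, and the clean telescoping identity $(A^\top\Lambda^2 A)^{-1}A^\top\Lambda^2=A^{-1}$ is available precisely when $A_{\mathcal{A}}$ is square, which is where the combination LICQ$+$SCSC$+$linearity becomes essential. A secondary but necessary step is the joint convergence $(y_t^\ast(x),\lambda^t(x))\to(y^\ast(x),\lambda(x))$, which rests on primal uniqueness (SCSC) and dual uniqueness (LICQ); the argument for local constancy of $\mathcal{A}(\cdot)$ also uses both, through standard LP sensitivity.
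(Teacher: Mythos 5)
Your proposal is correct and follows essentially the same route as the paper's proof: differentiate the barrier stationarity condition, rescale by $t$, use $-t/h_i(x,y_t^\ast(x))\to\lambda_i(x)$ together with the fact that exactly $m$ constraints are active under LICQ+SCSC (so the limiting matrix $\sum_i\lambda_i^2\nabla_yh_i\nabla_yh_i^\top$ is invertible), and match the result with the implicit-function-theorem expression for $\nabla_x y^\ast(x)$ obtained from the active constraint system. The only differences are cosmetic: you obtain the multiplier convergence by directly solving the (square, invertible) active system rather than the paper's subsequence/contradiction argument in its Lemma B.1, and you simplify the weighted expression to $-A_{\mathcal{A}}^{-1}B_{\mathcal{A}}$ via $(A^\top\Lambda^2A)^{-1}A^\top\Lambda^2=A^{-1}$, which coincides with the paper's formula \eqref{eq:localexpression}.
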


Under the strongly convex setting, the situation is more complicated. The reason is that $\lim_{t\to 0}\nabla_x y_t^\ast(x)$ becomes very difficult to determine. Even though $\nabla_x y^\ast(x)$ is computable \citep[Section 3]{Giorgi2018ATO}, proving that it equals $\lim_{t\to 0}\nabla_x y_t^\ast(x)$ becomes extremely challenging. Therefore, we use a different method to prove the conclusion.

Note that under the strongly convex setting, $y^\ast_t(x)$ uniformly converges to $y^\ast(x)$, which is a direct consequence of the optimality gap in Lemma \ref{lem:optimalitygap}. To prove $\lim_{t\to 0}\nabla_x y_t^\ast(x)=\nabla_x \lim_{t\to 0}y_t^\ast(x)=\nabla_x y^\ast(x)$, we essentially need to show that differentiation and limits are interchangeable. A common sufficient condition for this interchangeability is that the Jacobian matrix $\nabla_x y^\ast_t(x)$ converges uniformly in a neighborhood around $x$.
By proving the uniform convergence of $y^\ast_t(x)$ to $y^\ast(x)$ (see Lemma \ref{lem:optimalitygapforpoint} in Appendix \ref{proof:somebasiclemmas}) and local uniform convergence of $\nabla_x y^\ast_t(x)$ (see Lemma \ref{lem:uniformconvergeofgradient} in Appendix \ref{proof:thm:mainnonlinear}) of the Jacobian around the \textbf{SCSC} point, we are able to obtain the following conclusion.

\begin{theorem}[Jacobian convergence under Strongly convex setting]\label{thm:mainnonlinear}
    Suppose Assumption \ref{assumption:general1}, and Assumption \ref{assumption:nonlinear} hold, i.e. when $g(x,y)$ is  strongly convex  in $y$. If $x$ is an \textbf{SCSC} point of BLO problem (\ref{eq:blo_constrained}), then we have $\lim_{t\to 0}\nabla_x y_t^\ast(x)=\nabla_x y^\ast(x)$.
\end{theorem}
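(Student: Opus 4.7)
The plan is to realize $\lim_{t\to 0}\nabla_x y_t^*(x) = \nabla_x y^*(x)$ as an interchange of derivative and limit. Since $y_t^*(\cdot)\to y^*(\cdot)$ pointwise on a neighborhood of the SCSC point $x$ (the quantitative uniform version is the content of Lemma~\ref{lem:optimalitygapforpoint}), the standard calculus theorem (a pointwise-convergent family of $C^1$ maps whose Jacobians converge locally uniformly has a $C^1$ limit whose Jacobian is the limit of Jacobians) reduces the task to showing that the family $\{\nabla_x y_t^*(\cdot)\}_{t>0}$ converges uniformly on some neighborhood of $x$ to a continuous limit that must then be $\nabla_x y^*(x)$.

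First I would derive a workable formula for $\nabla_x y_t^*(x)$. Strict convexity of $\widetilde g_t(x,\cdot)$ (Proposition~\ref{prop:diff_hyperfun}) and $C^2$ smoothness give the first-order condition
\begin{align*}
\nabla_y g(x,y_t^*(x)) + \sum_{i=1}^k \lambda_i^t(x)\, \nabla_y h_i(x,y_t^*(x)) = 0, \qquad \lambda_i^t(x):= -\frac{t}{h_i(x,y_t^*(x))},
\end{align*}
which structurally mirrors the KKT stationarity of the original lower-level problem. Since $\nabla_{yy}^2 \widetilde g_t(x,y_t^*(x))\succeq \mu_g I$ is invertible, the implicit function theorem yields the closed-form Jacobian $\nabla_x y_t^*(x) = -[\nabla_{yy}^2 \widetilde g_t(x,y_t^*(x))]^{-1}\nabla_{xy}^2 \widetilde g_t(x,y_t^*(x))$. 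A parallel expression for $\nabla_x y^*(x)$ at the SCSC point arises from implicit differentiation of the original KKT system; its existence is guaranteed by Proposition~\ref{prop:relation}, and LICQ ensures that the true Lagrange multipliers $\lambda_i(x)$ are uniquely determined.

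The heart of the proof, and the main obstacle, is showing that the barrier multipliers $\lambda_i^t(x)$ converge as $t\to 0$ to the KKT multipliers $\lambda_i(x)$, uniformly for $x$ in a neighborhood of the fixed SCSC point. Inactive indices are easy: $h_i(x,y^*(x))<0$ together with continuity of $h_i$ and uniform convergence of $y_t^*$ gives $\lambda_i^t\to 0 = \lambda_i$ uniformly. Active indices are delicate because both numerator and denominator vanish; one must show that $-h_i(x,y_t^*(x))$ is of exact order $t/\lambda_i(x)$, with $\lambda_i(x)>0$ afforded by SCSC. Passing to the limit in the stationarity equation and invoking uniqueness of KKT multipliers under LICQ forces any accumulation point of $(\lambda_1^t(x),\ldots,\lambda_k^t(x))$ to equal $(\lambda_1(x),\ldots,\lambda_k(x))$; a quantitative ``local $M$'' type estimate of the kind developed in this paper then upgrades pointwise convergence to uniform convergence on a neighborhood of $x$.

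With uniform convergence of the multipliers in hand, $\nabla_{yy}^2\widetilde g_t(x,y_t^*(x))$ converges uniformly to $\nabla_{yy}^2 g(x,y^*(x)) + \sum_i \lambda_i(x)\nabla_{yy}^2 h_i(x,y^*(x))$, and the mixed partials converge analogously, so the closed-form expressions above give uniform convergence of $\nabla_x y_t^*(x)$ to the classical implicit-function-theorem formula for $\nabla_x y^*(x)$. Lemma~\ref{lem:uniformconvergeofgradient} packages exactly this local uniform Jacobian convergence; combining it with Lemma~\ref{lem:optimalitygapforpoint} and invoking the interchange theorem completes the proof.
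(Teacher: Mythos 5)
Your high-level skeleton matches the paper's: uniform convergence of $y_t^\ast$ (Lemma \ref{lem:optimalitygapforpoint}), convergence of the barrier multipliers $t/(-h_i(x,y_t^\ast(x)))\to\lambda_i(x)$ (the paper's Lemma \ref{lem_lowerbound2}), local uniform convergence of the Jacobians, and then the derivative/limit interchange theorem. However, the step where you pass from multiplier convergence to uniform convergence of $\nabla_x y_t^\ast$ contains a genuine gap, and it is exactly the technical heart of the theorem. You claim that once $\lambda_i^t(x)\to\lambda_i(x)$ uniformly, $\nabla_{yy}^2\widetilde g_t(x,y_t^\ast(x))$ converges to $\nabla_{yy}^2 g+\sum_i\lambda_i\nabla_{yy}^2 h_i$ and that the mixed partials converge analogously. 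This is false: the barrier Hessian is
\[
\nabla^2_{yy}\widetilde g_t=\nabla^2_{yy}g+\sum_{i=1}^k\frac{t\,\nabla^2_{yy}h_i}{-h_i}+\sum_{i=1}^k\frac{t\,\nabla_y h_i\nabla_y h_i^\top}{h_i^2},
\]
and for an active index the rank-one term equals $\tfrac1t\bigl(\tfrac{t}{-h_i}\bigr)^2\nabla_y h_i\nabla_y h_i^\top\approx\lambda_i(x)^2\,\nabla_y h_i\nabla_y h_i^\top/t$, which diverges as $t\to0$ --- precisely because \textbf{SCSC} forces $\lambda_i(x)>0$. The same divergence appears in $\nabla^2_{xy}\widetilde g_t$. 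So neither factor in your closed form $-[\nabla^2_{yy}\widetilde g_t]^{-1}\nabla^2_{xy}\widetilde g_t$ converges; only the product does, and proving that is the hard part. Relatedly, the limit cannot simply be identified with ``the classical implicit-function-theorem formula'' built from the Lagrangian Hessian, since $\nabla_x y^\ast$ at a point with active constraints comes from differentiating the full KKT system (active constraints included), not from inverting the Lagrangian Hessian alone.

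The paper's proof exists exactly to fill this hole: Lemma \ref{lem_existence} splits the Hessian as $A_t^k(x)+B_t(x)$ with $A_t^k$ the divergent rank-one part, and shows by a Sherman--Morrison induction over the constraints that $(A_t^k+B_t)^{-1}$ converges to a degenerate linear map that annihilates the span of the active constraint gradients and acts as an inverse only on the orthogonal complement; an analogous analysis handles the divergent part $D_t$ of the mixed derivative, and Lemma \ref{lem:uniformconvergeofgradient} upgrades all of this to local uniform convergence via a compactness/contradiction argument. The interchange theorem then identifies the limit with $\nabla_x y^\ast(x)$ without ever needing an explicit formula for it. To repair your proposal you would need to carry out this (or an equivalent) analysis of the singular limit of $[\nabla^2_{yy}\widetilde g_t]^{-1}\nabla^2_{xy}\widetilde g_t$; the multiplier convergence you outline is necessary but far from sufficient.
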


To conclude, we have the following main result for the hypergradient convergence in two cases.

\begin{theorem}[Hypergradient convergence]\label{thm:main_result}
    Suppose Assumption \ref{assumption:general1} holds, and either Assumption \ref{assumption:nonlinear} or Assumption \ref{assumption:linear} holds. If $x$ is an \textbf{SCSC} point of BLO problem (\ref{eq:blo_constrained}), then $\lim_{t\to 0}\nabla_x\widetilde{\phi}_t(x)=\nabla_x\phi(x)$,
    where $\nabla_x\widetilde{\phi}_t(x)$ is the hypergradient of the barrier reformulation (\ref{eq:blo_barrier_reformulation}).
\end{theorem}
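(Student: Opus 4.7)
The plan is to realize Theorem 3.5 as a clean corollary of the chain rule applied to both $\widetilde{\phi}_t$ and $\phi$, together with the Jacobian-convergence results Theorems 3.3 and 3.4 already established in the preceding subsection. First I would invoke Proposition 3.1 to guarantee that $\widetilde{\phi}_t$ is differentiable on $\mathcal{X}$, so that the chain rule yields
\begin{align*}
\nabla_x\widetilde{\phi}_t(x)=\nabla_x f(x,y^\ast_t(x))+(\nabla_x y^\ast_t(x))^\top\nabla_y f(x,y^\ast_t(x)).
\end{align*}
At an SCSC point $x$, Proposition 3.2 ensures that $\nabla_x y^\ast(x)$ exists (in both the strongly convex and linear settings, noting in the linear case that SCSC even forces $y^\ast(x)$ to be single-valued via Proposition 3.2(2)), so $\phi$ is differentiable at $x$ with the analogous expression
\begin{align*}
\nabla_x\phi(x)=\nabla_x f(x,y^\ast(x))+(\nabla_x y^\ast(x))^\top\nabla_y f(x,y^\ast(x)).
\end{align*}
Subtracting, it suffices to pass $t\to 0$ term by term.

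Next I would verify that $y^\ast_t(x)\to y^\ast(x)$ at the given $x$. Under Assumption 3.2, Lemma 3.1 combined with $\mu_g$-strong convexity of $g(x,\cdot)$ at $y^\ast(x)$ gives $\|y^\ast_t(x)-y^\ast(x)\|\leq (2kt/\mu_g)^{1/2}$. Under Assumption 3.3, SCSC guarantees uniqueness of $y^\ast(x)$, and then the pointwise growth inequality (\ref{eq:lineargap}) combined with Lemma 3.1 yields $\|y^\ast_t(x)-y^\ast(x)\|\leq kt/(\tau(x)\|\nabla_y g(x,y^\ast(x))\|)\to 0$. Using the continuity of $\nabla_x f$ and $\nabla_y f$ from Assumption 3.1(1), I then obtain $\nabla_x f(x,y^\ast_t(x))\to\nabla_x f(x,y^\ast(x))$ and $\nabla_y f(x,y^\ast_t(x))\to\nabla_y f(x,y^\ast(x))$ as $t\to 0$.

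Finally, applying Theorem 3.3 in the linear setting and Theorem 3.4 in the strongly convex setting gives $\nabla_x y^\ast_t(x)\to\nabla_x y^\ast(x)$, so the matrix-vector product $(\nabla_x y^\ast_t(x))^\top\nabla_y f(x,y^\ast_t(x))$ converges to $(\nabla_x y^\ast(x))^\top\nabla_y f(x,y^\ast(x))$ by the standard product limit law for convergent sequences in finite-dimensional spaces. Adding this to the convergence of the $\nabla_x f(x,y^\ast_t(x))$ term yields the claim $\lim_{t\to 0}\nabla_x\widetilde{\phi}_t(x)=\nabla_x\phi(x)$. The substantive difficulty has already been absorbed into Theorems 3.3 and 3.4, whose Jacobian-convergence statements are the real crux; the present theorem is essentially a chain-rule assembly, and the only care needed is to track which of the two settings (linear vs.\ strongly convex) licenses each convergence at an SCSC point.
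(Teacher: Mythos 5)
Your proposal matches the paper's own argument: the paper likewise writes both hypergradients via the chain rule, reduces everything to the Jacobian convergence $\lim_{t\to 0}\nabla_x y_t^\ast(x)=\nabla_x y^\ast(x)$ supplied by Theorems \ref{thm:mainlinear} and \ref{thm:mainnonlinear}, and passes to the limit using $y^\ast_t(x)\to y^\ast(x)$ (Lemma \ref{lem:optimalitygapforpoint} in the strongly convex case, \eqref{eq:lineargap} with Lemma \ref{lem:optimalitygap} in the linear case) together with continuity of $\nabla f$. Your assembly, including the bookkeeping of which setting licenses which convergence, is correct and essentially identical to the paper's.
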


The reader may wonder if the above convergence can be made uniform. However the remark below indicates that such a form of convergence is impossible.
\begin{remark}
    Denote the set of \textbf{SCSC} points as $\widetilde{\mathcal{X}}$. Under Assumption \ref{assumption:general1}, Assumption \ref{assumption:nonlinear} or Assumption \ref{assumption:linear}, it is impossible to get uniform error estimation between $\nabla_x\phi(x)$ and $\nabla_x\widetilde{\phi}_t(x)$ even on the set $\widetilde{\mathcal{X}}$. This is because if the convergence of hypergradient for the barrier reformulation would be uniform on $\widetilde{\mathcal{X}}$, then it is not hard to prove that $\nabla_x\phi(x)$ is well defined for any $x$ on the boundary of $\widetilde{\mathcal{X}}$, denoted as $\partial \widetilde{\mathcal{X}}$. However, the hypergradient of the original problem may not be well defined at some point on $\partial \widetilde{\mathcal{X}}$ (see Example \ref{counterexample1} and the example in \citet[Page 3]{pmlr-v202-khanduri23a}), which leads to a contradiction.
\end{remark}

\section{Algorithm Design}\label{section:Algorithm Design}

So far we have studied the relations between the original problem \eqref{eq:blo_constrained} and the barrier reformulated problem \eqref{eq:blo_barrier_reformulation}, including the function and gradient approximation errors. Now we turn to focus on the algorithm design for solving the barrier reformulated problem \eqref{eq:blo_barrier_reformulation} and provide both theoretical and numerical evidence to support its effectiveness. We aim at designing algorithm that works both for the linear and the strongly convex setting. To concisely state the assumptions required for this section, we present them as follows:

\begin{assumption}\label{assumption:nonlinear2} $g(x,y)$ is $\mu_g$-strongly convex in $y$ for any $x\in\mathcal{X}$; $h_i(x,y)$ is convex in $y$ for any $x\in\mathcal{X}$ and $i\in\{1,...,k\}$.
\end{assumption}

\begin{assumption}\label{assumption:linear2}$g(x,y)$, $h_i(x,y)$ are all linear in $y$ for any $x\in\mathcal{X}$ and $i\in\{1,...,k\}$, i.e.\ the lower-level problem is a a linear program.
\end{assumption}

Note that we only require one of the above two assumptions. Designing an algorithm that works for both settings is highly non-trivial since we lose the global Lipschitz gradient for the lower-level problem for both settings due to the existence of the log-barrier in \eqref{eq:blo_barrier_reformulation}, and the Lipschitz smoothness of the hyperfunction $\widetilde{\phi}_t$ is largely unknown. In \cite{tsaknakis2023implicit}, to avoid this difficulty, the authors impose an assumption (Assumption 3 in \cite{tsaknakis2023implicit}) over the function values of $h_i$ at the lower-level optimal points that directly results in the Lipschitz smoothness of the lower-level problem and hypergradient. However, this assumption is not verifiable and the level of difficulty drastically increases without such assumption. We thus design our algorithm by carefully controlling the local Lipschitz smoothness constant around the lower-level optimal solution at each step, and meticulously controlling the stepsize for each update point. To proceed, we first impose the following assumptions.

\begin{assumption}\label{assumption:general3}The following holds for every $x\in\mathcal{X},y\in\mathcal{Y}(x)$, and $i\in\{1,...,k\}$
\begin{enumerate}
    \item $f(x,y)$ is one time and $g(x,y)$, $h_i(x,y)$ are two times continuously differentiable;\label{assumption:general3(1)}
    \item $\mathcal{X}$ is convex and compact, and for any $x\in\mathcal{X}$ there exists $y\in\mathcal{Y}(x)$ such that $h_i(x,y)<0$ for any $i\in\{1,2,...,k\}$;\label{assumption:general3(2)}
    \item For any $x\in\mathcal{X}$, $\mathcal{Y}(x)$ is compact, and $\|y\|\leq R$ for any $y$ in $\mathcal{Y}(x)$;\label{assumption:general3(3)}
    \item  LICQ is satisfied for any $(x, y)$ where $x \in \mathcal{X}$ and $y \in y^\ast(x) = \arg\min_{y \in \mathcal{Y}(x)} g(x, y)$, i.e. the gradients $\{\nabla_{y} h_i(x, y) : i \text{ is active}\}$ are linearly independent for any $x \in \mathcal{X}$ and $y \in y^\ast(x)$.\label{assumption:general3(4)}
\end{enumerate}
\end{assumption}

Note that differently as Assumption \ref{assumption:nonlinear}, we have assumed the compactness of $\mathcal{Y}(x)$ for {\it both} the linear and the strongly convex setting. We also need the following smoothness assumptions.

\begin{assumption}\label{assumption:general2}The following holds for every $x,\overline{x}\in\mathcal{X},y,\overline{y}\in\mathcal{Y}(x)$, and $i\in\{1,...,k\}$
\begin{enumerate}
    \item $\|\nabla_{(x,y)} f(x,y)\|\leq L_f$;\label{assumption:general2(1)}
    \item $\|\nabla_{(x,y)} f(x,y)-\nabla_{(x,y)} f(\overline{x},\overline{y})\|\leq \overline{L}_f\|(x,y)-(\overline{x},\overline{y})\|$\label{assumption:general2(2)};
    \item $\|\nabla_{(x,y)} g(x,y)\|\leq L_g$; \label{assumption:general2(3)}
    \item $\|\nabla^2_{yy} g(x,y)\|\leq\overline{L}_g$; $\|\nabla^2_{xy} g(x,y)\|\leq \overline{L}_g$;\label{assumption:general2(4)}
    \item $\|\nabla^2_{xy} g(x,y)-\nabla^2_{xy} g(\overline{x},\overline{y})\|\leq \overline{\overline{L}}_{g}\|(x,y)-(\overline{x},\overline{y})\|$; $\|\nabla^2_{yy} g(x,y)-\nabla^2_{yy} g(\overline{x},\overline{y})\|\leq \overline{\overline{L}}_{g}\|(x,y)-(\overline{x},\overline{y})\|$\label{assumption:general2(5)};
    \item $\|\nabla_{(x,y)} h(x,y)\|\leq L_h$\label{assumption:general2(6)};
    \item $\|\nabla^2_{yy} h(x,y)\|\leq\overline{L}_h$; $\|\nabla^2_{xy} h(x,y)\|\leq \overline{L}_h$;\label{assumption:general2(7)}
    \item $\|\nabla^2_{xy} h(x,y)-\nabla^2_{xy} h(\overline{x},\overline{y})\|\leq \overline{\overline{L}}_{h}\|(x,y)-(\overline{x},\overline{y})\|$; $\|\nabla^2_{yy} h(x,y)-\nabla^2_{yy} h(\overline{x},\overline{y})\|\leq \overline{\overline{L}}_{h}\|(x,y)-(\overline{x},\overline{y})\|$.
\end{enumerate}
\end{assumption}

These assumptions, except Assumption \ref{assumption:general2}(\ref{assumption:general2(3)}), are standard in BLO literature such as \cite{ghadimi2018approximationmethodsbilevelprogramming,tsaknakis2023implicit,shen2023penaltybasedbilevelgradientdescent}. Assumption \ref{assumption:general2}(\ref{assumption:general2(3)}), which is an assumption of an upper bound for $\nabla g(x,y)$, can be implied by the compactness of $\mathcal{Y}(x)$ and $\mathcal{X}$.

\subsection{The Proposed Algorithm}

Now we are ready to establish the algorithm framework. We follow the standard bilevel implicit gradient method as in \cite{ghadimi2018approximationmethodsbilevelprogramming,ji2021bilevel,hong2022twotimescaleframeworkbileveloptimization}, where we estimate the hypergradient $\nabla_x\phi(x)$ at each update point $x$ and conduct a gradient descent update to get the next iterate. 

Specifically, we first find the approximate optimal solution $\hat{y}$ of the lower-level solver $y^*(x)$ for a fixed $x$, and such $\hat{y}$ is obtained by some careful design (see Algorithm \ref{algo:fo_lower_2}). In particular, since the lower-level objective $\widetilde{g}_t$ is not globally Lipschitz smooth in $y$, we need to carefully shrink the feasible set and estimate a local Lipschitz smoothness constant in the shrunk set, which is reflected in \textbf{Step 3} of Algorithm \ref{algo:fo_lower_2}. Then an accelerated gradient descent step is employed to approximate the lower-level optimal solution in \textbf{Step 4} of Algorithm \ref{algo:fo_lower_2}. 

Upon obtaining the approximate optimal lower-level solution, we further design the update for variable $x$ in Algorithm \ref{algo:fo}. In particular, we use the implicit gradient methods to compute the approximate gradient of the hyperfunction, which is
\begin{align*}
    \hat{\nabla}_x \widetilde{\phi}_t(x)=\nabla_x f(x,\hat{y})-\nabla^2_{xy}\widetilde{g}_t(x,\hat{y})\left(\nabla^2_{yy}\widetilde{g}_t(x,\hat{y})\right)^{-1} \nabla_y f(x,\hat{y}),
\end{align*}
where recall that the barrier reformulation  for the lower-level problem is defined as
\begin{align}\label{eq:barrierreformulationlowerlevel}
    \widetilde{g}_t(x,y)=g(x,y)-t\sum_{i=1}^k \log(-h_i(x,y)).
\end{align}
The {\bf main technical challenge} to analyze the proposed algorithm is that the hypergradient may suffer from non-Lipschitzness due to log barriers, similarly as the lower-level objective.  In addition, the inexact lower-level solution $\hat{y}$ introduces approximation errors to the exact lower-level solver $y^*(x)$ which further increases the difficulties of the analysis.  We overcome these difficulties and obtain an $\widetilde{\mathcal{O}}(1/\epsilon^{2})$ rate of convergence (Theorem \ref{thm:total_rate}) by carefully designing the upper-level stepsize $\eta_s$, which contains the information of local Lipschitz constants for the hypergradient. See Section \ref{sec:convergence_analysis} for details of the convergence analysis.
  
\begin{algorithm}[ht]
\caption{Hypergradient Based Bilevel Barrier Method for \eqref{eq:blo_barrier_reformulation}}\label{algo:fo}
\begin{algorithmic}
\State \textbf{Step 0: Initialization.} Given an initial point $x_0$, the accuracy level $\epsilon$, total iteration $S$.

\State \For{$s=0,1,...,S$}{
\State \textbf{Step 1: Solve the inner loop.} Solve the lower-level problem for $\hat{y}_s$ via Algorithm \ref{algo:fo_lower_2} such that $\|\hat{y}_s-y^*(x_s)\|\leq\epsilon_s$.
\State \textbf{Step 2: Update $x_s$.} 
\State Compute the following approximate gradient of $\widetilde{\phi}_t(x)$ at $x_s$
$$
\hat{\nabla}_x\widetilde{\phi}_t(x_s)=\nabla_x f(x_s, \hat{y}_s) - \nabla_{xy} \widetilde{g}_t(x_s,\hat{y}_s) (\nabla_{yy}^2 \widetilde{g}_t(x_s,\hat{y}_s))^{-1} \nabla_y f(x_s, \hat{y}_s)
$$
\State Update $x_s$ by
\begin{align*}
    x_{s+1}=\proj_{\mathcal{X}}\left(x_s - \eta_s\hat{\nabla}_x\widetilde{\phi}_t(x_s)\right)
\end{align*}
where the stepsize $\eta_s$ can be calculated according to (\ref{eq:stepsize}) in Theorem \ref{thm:total_rate}.
}

\State Output $x_s$ with $s=\argmin_{s=0,...,S-1}\{\frac{1}{\eta_s}\|x_{s} - x_{s+1}\|\}$
\end{algorithmic}
\end{algorithm}

\begin{algorithm}[ht]
\caption{lower-level solver for Algorithm \ref{algo:fo}}\label{algo:fo_lower_2}
\begin{algorithmic}
\State \textbf{Step 0: Initialization.} Given reference point $x_s$, stepsize $\gamma$, and constant $d=1$.
\State \textbf{Step 1: Find the initial $d$ such that $\mathcal{Y}_{d}$ is nonempty.} \\
\While{$\mathcal{Y}_{d}(x_s):=\{y:h_i(x_s,y)\leq -d,\ i=1,...,k\}$ is empty}{$d=d/2$} \\
\State Denote $d_s:=d$ and initialize $y_0\in \mathcal{Y}_{d_s}(x_s)$, 
\State \textbf{Step 2: Estimate the local Lipschitz smoothness constant.}
\begin{enumerate}
    \item Compute scalar $m_s$ by replacing $d$ in (\ref{eq:m}) with $d_s$ from \textbf{Step 1}. Thus $y^\ast_t(x_s)$ is contained in $\mathcal{Y}_{m_s}(x_s):=\{y:h_i(x_s,y)\leq -m_s,\ i=1,...,k\}$;
    \item Compute the Lipschitz smoothness constant $\overline{L}_{\widetilde{g}_t,m_s}$ of $\widetilde{g}_t(x,y)$ in $y$ on $\mathcal{Y}_{m_s}(x_s)$; $\overline{L}_{\widetilde{g}_t,m_s}$ can be computed by replacing $m$ in (\ref{eq:prop:lipschitz_smooth}) with $m_s$.
\end{enumerate}
\State \textbf{Step 3: Update based on current estimate of Lipschitz smoothness constant.}
\For{$j=0,1,...,J-1:=\mathcal{O}(\sqrt{\overline{L}_{\widetilde{g}_t,m_s}/\mu_{\widetilde{g}_t}} \log(1/\epsilon_s))$}{
\State Update
\begin{align*}
    \omega&=\begin{cases}
        y_0 &\text{ if }j=0\\
        y_j+\frac{1-\sqrt{\mu_{\widetilde{g}_t}/\overline{L}_{\widetilde{g}_t,m_s}}}{1+\sqrt{\mu_{\widetilde{g}_t}/\overline{L}_{\widetilde{g}_t,m_s}}}(y_j-y_{j-1}) &\text{ if }j\geq 1,
    \end{cases}\\
    y_{j+1}&=\proj_{\mathcal{Y}_{m_s}(x)}\left(y_{j}-\frac{1}{\overline{L}_{\widetilde{g}_t,m_s}}\nabla_y\widetilde{g}_t(x_s,\omega)\right),
\end{align*}
where the strongly convexity constant $\mu_{\widetilde{g}_t}$ is from Proposition \ref{prop:strongly_convex}.}
\State \textbf{Step 4: Output results.} Output $\hat{y}_s:=y_{J}$ and constant $d_s$, $m_s$.
\end{algorithmic}
\end{algorithm}

\subsection{Convergence Analysis}\label{sec:convergence_analysis}

Now we aim to develop an algorithm with non-asymptotic convergence rate. We first discuss the main difficulty.

\noindent\textbf{Technical Difficulty:} Algorithm \ref{algo:fo} leverages the following approximation for the hypergradient:
\begin{align*}
    \hat{\nabla}_x \widetilde{\phi}_t(x)=\nabla_x f(x,\hat{y})-\nabla^2_{xy}\widetilde{g}_t(x,\hat{y})\left(\nabla^2_{yy}\widetilde{g}_t(x,\hat{y})\right)^{-1} \nabla_y f(x,\hat{y}),
\end{align*}
and it is necessary to estimate the Lipschitz constant of $\hat{\nabla}_x \widetilde{\phi}_t(x)$, which further requires an upper bound estimate of $\nabla^2_{xy}\widetilde{g}_t(x,\hat{y})$. By directly computing the mixed Hessian (or referred to as Jacobian)
\begin{align*}
\nabla^2_{xy}\widetilde{g}_t(x,\hat{y})&=\nabla^2_{xy}g(x,\hat{y})+t\sum_{i=1}^k\left(\frac{\nabla^2_{xy}h_i(x,\hat{y})}{-h_i(x,\hat{y})}+\frac{\nabla_x h_i(x,\hat{y})\nabla_y h_i(x,\hat{y})^\top}{h_i^2(x,\hat{y})}\right),
\end{align*}
we need a negative upper bound estimate of $h_i(x,\hat{y})$, which can be achieved by upper bounding $h_i(x,y^\ast_t(x))$ if $h_i(x,\hat{y})$ and $h_i(x,y^\ast_t(x))$ are close to each other. However upper bounding $h_i(x,y^\ast_t(x))$ is hard: if $t$ is sufficiently small, for those indices $i$ that are active in the original problem, $h_i(x,y^\ast_t(x))$ will also approach zero. On the other hand, in order to make sure that $h_i(x,\hat{y})$ and $h_i(x,y^\ast_t(x))$ are as close as possible, we need to design an effective algorithm for the lower-level problem. It is challenging to design an effective algorithm: due to the log-barrier, the gradient of barrier reformulation $\widetilde{g}_t(x)$ tends to explode near the boundary, making it not globally Lipschitz smooth. Specifically, observe that the gradient of lower-level problem, given below
$$\nabla_y \widetilde{g}_t(x,y)=\nabla_y g(x,y)+t\sum_{i=1}^k\frac{\nabla_y h_i(x,y)}{-h_i(x,y)},$$
 may goes to infinity when $-h_i(x,y)$ goes to zero. To overcome this, we consider the lower-level problem in a shrunk set $\mathcal{Y}_{d}(x):=\{y:h_i(x,y)\leq -d,\ i=1,...,k\}$. In order to determine the size of $d$,  such that $y^\ast_t(x)$ is still in this shrunk set $\mathcal{Y}_{d}(x)$, we need to identify a negative value that upper bounds $h_i(x,y^\ast_t(x))$. The next result establishes such an upper bound. Please see Appendix \ref{proof:lem:local_M} for proof.

\begin{theorem}\label{lem:local_M}
    Suppose $0< t\leq T$, Assumption \ref{assumption:general3}(\ref{assumption:general3(1)})(\ref{assumption:general3(2)})(\ref{assumption:general3(3)}) and Assumption \ref{assumption:general2} all hold, and either Assumption \ref{assumption:nonlinear2} or Assumption \ref{assumption:linear2} holds. For a fixed $x$, if $\min_{y\in\mathcal{Y}(x)}\{\max_{i\in\{1,...,k\}}h_i(x,y)\}\leq -d<0$. Then for any $i\in\{1,...,k\}$, we have $h_i(x,y_t^\ast(x))\leq -m$, where
    \begin{align}\label{eq:m}
        m:=\min\left\{t\frac{d^2}{4dRL_g+4RTkL_h},\frac{d}{2}\right\}.
    \end{align}
\end{theorem}

\begin{remark}
    Theorem \ref{lem:local_M} still holds when $g(x,y)$ is non-convex in $y$, and $h_i(x,y)$ is convex in $y$ for any $i$. In this case, for $m$ computed in \eqref{eq:m},  $h_i(x,y)\leq -m$ holds for any $y$ in the set $y_t^\ast(x)$.
\end{remark}

Theorem \ref{lem:local_M} provides an interesting way to estimate the negative upper bound at the optimal point $h_i(x,y^\ast_t(x))$: as long as there exists a tuple $(y, d)$ in such that $y\in \mathcal{Y}(x)$ and $h_i(x,y)<-d$ for any $i$, it is possible to compute the desired negative upper bound. This enable us to solve the aforementioned difficulty without explicitly assuming a negative upper bound at the optimal point.

The next result computes the strong convexity constant of lower-level penalized objective $\widetilde{g}_t(x,y)$, allowing us to use accelerated gradient methods to obtain the global optimal solutions. The proof is provided in Appendix \ref{proof:prop:strongly_convex}.

\begin{proposition}\label{prop:strongly_convex}
    Suppose the assumptions in Theorem  \ref{lem:local_M} are satisfied, then for any fix $x$ and $t$, the barrier reformulation for the lower-level problem, expressed in (\ref{eq:barrierreformulationlowerlevel}), is $\mu_{\widetilde{g}_t}$-strongly convex in $y\in\{y:h_i(x,y)<0,\ i=1,...,k\}$. Further
    \begin{enumerate}
        \item $\mu_{\widetilde{g}_t}=\mu_g$ when Assumption \ref{assumption:nonlinear2} holds;
        \item $\mu_{\widetilde{g}_t}=t\frac{\sigma}{H^2}$ when Assumption \ref{assumption:linear2} holds, for some constant $H$ and $\sigma$ be defined as follows:
        \begin{itemize}
            \item Denote $\sigma(x)$ as the smallest eigenvalue of the following matrix (which is independent of $y$ due to the linearity of the lower-level problem)
            $$(\nabla_y h_1(x,y),\cdots,\nabla_y h_k(x,y))^\top (\nabla_y h_1(x,y),\cdots,\nabla_y h_k(x,y)).$$
            Then set $\sigma:=\min_{x\in\mathcal{X}}\sigma(x)$. We also have $\sigma>0$;
            \item $H:=\sup_{i\in\{1,...,k\},x\in\mathcal{X},y\in\mathcal{Y}_x}-h_i(x,y)$. We also have $H<\infty$.
        \end{itemize}
    \end{enumerate}
\end{proposition}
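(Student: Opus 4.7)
The plan is to differentiate (\ref{eq:barrierreformulationlowerlevel}) twice in $y$ and then lower bound the resulting Hessian in the PSD sense separately in each of the two settings. For any $y$ in the open feasible region $\{h_i(x,y)<0\}$, a direct computation gives
$$
\nabla_{yy}^2 \widetilde{g}_t(x,y) \;=\; \nabla_{yy}^2 g(x,y) \;+\; t\sum_{i=1}^k \left[\frac{\nabla_{yy}^2 h_i(x,y)}{-h_i(x,y)} \;+\; \frac{\nabla_y h_i(x,y)\,\nabla_y h_i(x,y)^\top}{h_i^2(x,y)}\right].
$$
The denominators $-h_i(x,y)$ are strictly positive, and the rank-one outer-product terms are PSD, so the barrier contribution is automatically PSD once each $\nabla_{yy}^2 h_i$ is PSD.

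For the strongly convex setting (Assumption \ref{assumption:nonlinear2}), $\nabla_{yy}^2 g(x,y) \succeq \mu_g I$ and $\nabla_{yy}^2 h_i(x,y) \succeq 0$ by convexity of $h_i$ in $y$. Combined with the PSD observation above, this immediately yields $\nabla_{yy}^2 \widetilde{g}_t \succeq \mu_g I$, giving $\mu_{\widetilde{g}_t}=\mu_g$. For the linear setting (Assumption \ref{assumption:linear2}), both $\nabla_{yy}^2 g$ and $\nabla_{yy}^2 h_i$ vanish while each $\nabla_y h_i(x)$ is constant in $y$, so the Hessian reduces to $t\sum_i \nabla_y h_i(x)\nabla_y h_i(x)^\top / h_i^2(x,y)$. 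The key inequality is $1/h_i^2(x,y)\geq 1/H^2$, which is immediate from the definition of $H$, leading to
$$
\nabla_{yy}^2 \widetilde{g}_t(x,y) \;\succeq\; \frac{t}{H^2}\sum_{i=1}^k \nabla_y h_i(x)\nabla_y h_i(x)^\top \;\succeq\; \frac{t\sigma}{H^2}\, I,
$$
where the last bound is exactly the definition of $\sigma$.

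The main obstacle is then to verify that the two constants $H$ and $\sigma$ appearing above are actually finite and strictly positive. For $H<\infty$: Assumption \ref{assumption:general3}(\ref{assumption:general3(2)})(\ref{assumption:general3(3)}) makes $\mathcal{X}$ compact and gives a uniform bound $\|y\|\leq R$ on every $\mathcal{Y}(x)$, so the joint feasibility set $\{(x,y):x\in\mathcal{X},\,y\in\mathcal{Y}(x)\}$ is closed and bounded, hence compact; continuity of $h_i$ then yields $H=\sup_{i,x,y}(-h_i(x,y))<\infty$. For $\sigma>0$: I would argue by contradiction. If $\sigma(x)=0$ at some $x\in\mathcal{X}$, there exists a unit vector $v$ with $\nabla_y h_i(x)^\top v =0$ for every $i$; by linearity of $h_i$ in $y$ this gives $h_i(x,y+\lambda v)=h_i(x,y)$ for all $\lambda\in\RR$, so $\mathcal{Y}(x)$ contains the entire line $\{y+\lambda v\}_{\lambda\in\RR}$, contradicting its compactness. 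Since $\sigma(\cdot)$ is continuous on the compact set $\mathcal{X}$ (as the smallest eigenvalue of a continuous symmetric matrix), we conclude $\sigma = \min_{x\in\mathcal{X}} \sigma(x)>0$, completing the proof.
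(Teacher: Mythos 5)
Your proposal is correct and follows essentially the same route as the paper: compute $\nabla^2_{yy}\widetilde{g}_t$, use convexity of the $h_i$ (so the barrier terms are PSD) to get $\mu_g$-strong convexity in the first case, and in the linear case bound the Hessian below by $\tfrac{t}{H^2}\sum_i \nabla_y h_i\nabla_y h_i^\top \succeq \tfrac{t\sigma}{H^2}I$, with $\sigma>0$ obtained from exactly the same compactness/unbounded-line contradiction the paper uses (in its proof of Proposition \ref{prop:diff_hyperfun}) together with continuity of the smallest eigenvalue over the compact set $\mathcal{X}$. Your explicit verification of $H<\infty$ via compactness of the joint feasible set is a slightly more detailed write-up of what the paper leaves as "easy to see," but it is not a different argument.
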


\begin{remark}\label{remark:sc}
    When Assumption \ref{assumption:linear2} holds, i.e. the lower-level problem is a linear program, the constant of strong convexity depends on $\sigma$, which is hard to compute for many applications. In practice, it can be made more computable by adding a constraint $h_{k+1}(x,y)=\|y\|^2\leq R^2$ where $R$ is from Assumption \ref{assumption:general3}. Adding this constraint will not affect the original problem because the feasible set of the original problem is already contained within that ball, and we can immediately obtain $\widetilde{g}_t(x,y)$ is $\frac{2t}{R}$-strongly convex.
\end{remark}

Before proceeding to resolve the technical difficulty we discussed before, we need to emphasize the following remark:

\begin{remark}\label{remark:non-emptyness}
    The assumed compactness of $\mathcal{X}$ and the Slater condition implies that there exists a positive constant $D$ such that $\{y:h_i(x,y)\leq -D,\ i=1,...,k\}\ne\emptyset$ for any $x$, because $\max_{y\in\mathcal{Y}(x)}h_i(x,y)<0$ for any $x\in\mathcal{X}$. In our analysis, we need to use $D$ when proving the convergence rate; however, we do not need to know the value of $D$ a priori during the design of the algorithm.
\end{remark}

In view of Theorem \ref{lem:local_M} and Remark \ref{remark:non-emptyness}, we can immediately obtain the following global estimate on the upper bound for the constraints $h_i(x,y^\ast_t(x))$ and $h_i(x,\hat{y}_s)$, which will also be used for the further convergence proof.

\begin{corollary}\label{remark:upper_bound_of_h_at_approximate_point}
    By Remark \ref{remark:non-emptyness} and \textbf{Step 1} of Algorithm \ref{algo:fo_lower_2}, the feasibility check must terminate when $d$ is between $D/2$ to $D$, thus $d_s$ is greater than $D/2$ for any $s$. By replacing $d$ with $D/2$ in Theorem \ref{lem:local_M}, we have a positive lower bound estimate of $m_s$, denoted as $M$. Note that by \textbf{Step 3} of Algorithm \ref{algo:fo_lower_2}, the projection guarantees that  $\hat{y}_s\in\mathcal{Y}_{m_s}(x_s)$. Therefore we have $h_i(x_s,\hat{y}_s)\leq -M$ for any $s$ and $i$. Since $y^\ast(x_s)$ is also in $\mathcal{Y}_{m_s}(x_s)$ by Theorem \ref{lem:local_M}, we have $h_i(x_s,y^\ast(x_s))\in\mathcal{Y}_{M}(x_s)$. This implies $h_i(x_s,y^\ast(x_s))\leq -M$ for any $s$ and $i$.
    
    In conclusion, $-M$ is the upper bound of both the exact lower-level solution $h_i(x_s,y^\ast(x_s))$, and the inexact lower-level solution $h_i(x_s,\hat{y}_s)$ at each update point .
\end{corollary}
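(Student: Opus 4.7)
The plan is to prove the corollary by chaining together three ingredients: Remark~\ref{remark:non-emptyness} (global feasibility of a shrunk set), the termination behavior of the while loop in Step~1 of Algorithm~\ref{algo:fo_lower_2}, and the monotonicity part of Theorem~\ref{lem:local_M}. The statement is essentially unpacking what the algorithm constructs, so the main task is to make the uniform (in $s$) bounds explicit.

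First I would argue that $d_s \ge D/2$ for every iteration $s$. By Remark~\ref{remark:non-emptyness}, there exists a constant $D > 0$, independent of $x \in \mathcal{X}$, with $\{y : h_i(x,y) \le -D, \ i=1,\ldots,k\} \ne \emptyset$. The loop in Step~1 initializes $d = 1$ and halves $d$ whenever $\mathcal{Y}_d(x_s)$ is empty. Since $\mathcal{Y}_d(x_s) \supseteq \mathcal{Y}_D(x_s) \ne \emptyset$ whenever $d \le D$, the loop terminates no later than the first iteration in which $d \le D$; because we halved from a strictly larger value the previous step, the terminating value satisfies $D/2 \le d_s \le \max\{1, D\}$. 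In particular, $d_s \ge D/2$ uniformly in $s$.

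Next I would define $M$ by substituting $d = D/2$ into the formula from Theorem~\ref{lem:local_M}:
\begin{align*}
M := \min\!\left\{ t \cdot \frac{(D/2)^2}{4RL_g + 4RTkL_h},\ \frac{D}{4} \right\},
\end{align*}
which is strictly positive. Theorem~\ref{lem:local_M} states that the constructed $m$ is monotonically nondecreasing in $d$, so from $d_s \ge D/2$ one obtains $m_s \ge M > 0$ for every $s$. Consequently the nested inclusion $\mathcal{Y}_{m_s}(x_s) \subseteq \mathcal{Y}_M(x_s)$ holds.

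Finally I would verify the two containments that yield the displayed bounds. For the inexact solution $\widetilde{y}_s$: the accelerated iterates in Step~3 of Algorithm~\ref{algo:fo_lower_2} are produced by $\proj_{\mathcal{Y}_{m_s}(x_s)}(\cdot)$, so by construction $\widetilde{y}_s \in \mathcal{Y}_{m_s}(x_s)$, giving $h_i(x_s, \widetilde{y}_s) \le -m_s \le -M$ for each $i$. For the exact lower-level solution, Theorem~\ref{lem:local_M} applied at $x = x_s$ with $d = d_s$ yields $h_i(x_s, y_t^\ast(x_s)) \le -m_s \le -M$, so $y_t^\ast(x_s) \in \mathcal{Y}_M(x_s)$ as well. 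I do not anticipate any real obstacle here; the only care needed is to recognize that the monotonicity clause of Theorem~\ref{lem:local_M} is what turns the per-iteration bound $h_i(x_s, \cdot) \le -m_s$ into the uniform bound $-M$ that the subsequent convergence analysis requires.
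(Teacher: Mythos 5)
Your proposal is correct and follows essentially the same route as the paper, which embeds the argument in the corollary itself: feasibility from Remark \ref{remark:non-emptyness} gives $d_s\geq D/2$, monotonicity of $m$ in $d$ from Theorem \ref{lem:local_M} gives $m_s\geq M>0$, and the projection in Step 3 plus Theorem \ref{lem:local_M} at $(x_s,d_s)$ give the uniform bounds $h_i(x_s,\widetilde{y}_s)\leq -M$ and $h_i(x_s,y^\ast_t(x_s))\leq -M$. Writing out the explicit formula $M=\min\{t(D/2)^2/(4RL_g+4RTkL_h),\,D/4\}$ is a harmless (and helpful) addition, not a deviation.
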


We are now ready to analyze the lower-level convergence of Algorithm \ref{algo:fo_lower_2}. We first have the following Lipschitzness property for the barrier reformulated lower-level problem.

\begin{proposition}\label{prop:lipschitz_smooth}
    Suppose the assumptions in Theorem  \ref{lem:local_M} are satisfied, then the following barrier reformulated lower-level objective function
    \begin{align*}
        \widetilde{g}_t(x,y)=g(x,y)-t\sum_{i=1}^k \log(-h_i(x,y))
    \end{align*}
    is $\overline{L}_{\widetilde{g}_t,m}$-Lipschitz smooth with respect to $y$ in the set $\mathcal{Y}_m(x):=\{y:h_i(x,y)\leq-m,\ i=1,...,k\}$ for any $x$, and 
    \begin{align}\label{eq:prop:lipschitz_smooth}
        \overline{L}_{\widetilde{g}_t,m}=\overline{L}_g+\frac{tk\overline{L}_h}{m}+\frac{tkL_h^2}{m^2}.
    \end{align}
\end{proposition}

The proof of Proposition \ref{prop:lipschitz_smooth} is available in Appendix \ref{proof:prop:lipschitz_smooth}. Now we state the convergence of the lower-level Algorithm \ref{algo:fo_lower_2}.

\begin{theorem}\label{thm:convergence_rage_alg_2}
    Suppose Assumption \ref{assumption:general3}(\ref{assumption:general3(1)})(\ref{assumption:general3(2)})(\ref{assumption:general3(3)}) and Assumption \ref{assumption:general2} all hold, and either Assumption \ref{assumption:nonlinear2} or Assumption \ref{assumption:linear2} holds. For the output of Algorithm \ref{algo:fo_lower_2}, within $\mathcal{O}(\sqrt{\overline{L}_{\widetilde{g}_t,m_s}/\mu_{\widetilde{g}_t}} \log(1/\epsilon_s))$ number of iterations, we have
    \begin{align*}
        \|y_J-y^\ast_t(x)\|\leq \epsilon_s.
    \end{align*}
    Denote $\kappa_s=\overline{L}_{\widetilde{g}_t,m_s}/\mu_{\widetilde{g}_t}$, then $\kappa_s$ has a upper bound estimate $\kappa=\overline{L}_{\widetilde{g}_t,M}/\mu_{\widetilde{g}_t}$, where $\mu_{\widetilde{g}_t}$ is from Proposition \ref{prop:strongly_convex}, $\overline{L}_{\widetilde{g}_t,M}$ can be computed by replacing $m$ in (\ref{eq:prop:lipschitz_smooth}) with $M$ from Corollary \ref{remark:upper_bound_of_h_at_approximate_point}, and $\overline{L}_{\widetilde{g}_t,m_s}$ can be computed by replacing $m$ in (\ref{eq:prop:lipschitz_smooth}) with $m_s$ from the \textbf{Step 2} of Algorithm \ref{algo:fo_lower_2}.
    
\end{theorem}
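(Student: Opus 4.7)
The plan is to reduce the claim to a standard convergence result for Nesterov's accelerated projected gradient method applied to the restricted problem
\[
\min_{y \in \mathcal{Y}_{m_s}(x_s)} \widetilde{g}_t(x_s,y),
\]
and then to show that the condition number appearing in this rate is uniformly bounded across all outer iterations $s$ by $\kappa$.

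First I would verify that this restricted problem is indeed a strongly convex, Lipschitz-smooth problem on a convex compact set. Convexity of $\mathcal{Y}_{m_s}(x_s)$ follows because each $h_i(x_s,\cdot)$ is convex in $y$ under either Assumption \ref{assumption:nonlinear2} or Assumption \ref{assumption:linear2}, so that $\{y : h_i(x_s,y) \leq -m_s\}$ is a convex sublevel set, and the intersection remains convex. Non-emptiness of $\mathcal{Y}_{m_s}(x_s)$, together with the inclusion $y^*_t(x_s) \in \mathcal{Y}_{m_s}(x_s)$, follows directly from Theorem \ref{lem:local_M}: by the construction of $d_s$ in \textbf{Step~1}, the set $\mathcal{Y}_{d_s}(x_s)$ is non-empty, hence Theorem \ref{lem:local_M} applies with this $d_s$ and yields the constant $m_s$ such that $h_i(x_s, y^*_t(x_s)) \leq -m_s$ for all $i$. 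On $\mathcal{Y}_{m_s}(x_s)$, Proposition \ref{prop:strongly_convex} gives $\mu_{\widetilde{g}_t}$-strong convexity of $\widetilde{g}_t(x_s,\cdot)$, and Proposition \ref{prop:lipschitz_smooth} gives Lipschitz smoothness with constant $\overline{L}_{\widetilde{g}_t,m_s}$. Since the projection in \textbf{Step~3} keeps all iterates $y_j$ inside $\mathcal{Y}_{m_s}(x_s)$, and the unique minimizer of the restricted problem coincides with $y^*_t(x_s)$ (as the unconstrained minimizer already lies in this closed convex set), these estimates hold along the entire sequence.

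Next I would invoke the standard linear convergence rate of Nesterov's accelerated projected gradient method on a $\mu$-strongly convex and $L$-smooth objective over a convex set: after $J$ iterations,
\[
\|y_J - y^*_t(x_s)\|^2 \;\leq\; \bigl(1 - \sqrt{\mu_{\widetilde{g}_t}/\overline{L}_{\widetilde{g}_t,m_s}}\bigr)^J \,\cdot\, C_0 ,
\]
for an absolute constant $C_0$ depending on the initial gap $\|y_0 - y^*_t(x_s)\|^2$, which is itself bounded by $(2R)^2$ via Assumption \ref{assumption:general3}(\ref{assumption:general3(3)}). Setting the right-hand side below $\epsilon_s^2$ and solving for $J$ gives
\[
J \;=\; \mathcal{O}\!\left(\sqrt{\overline{L}_{\widetilde{g}_t,m_s}/\mu_{\widetilde{g}_t}}\,\log(1/\epsilon_s)\right),
\]
matching the stated bound. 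The extrapolation weight in \textbf{Step~3} is exactly the Nesterov momentum coefficient $(1 - \sqrt{\mu/L})/(1 + \sqrt{\mu/L})$, so the rate applies verbatim.

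For the second part, I would bound $\kappa_s$ uniformly in $s$. By Remark \ref{remark:non-emptyness}, there exists $D > 0$ with $\mathcal{Y}_D(x) \neq \emptyset$ for every $x \in \mathcal{X}$. The while-loop in \textbf{Step~1} of Algorithm \ref{algo:fo_lower_2} halves $d$ starting from $1$, so it terminates with $d_s \geq D/2$. Theorem \ref{lem:local_M} states that the resulting $m_s$ is monotonically increasing in $d_s$, hence $m_s \geq M$, where $M$ is obtained by plugging $d = D/2$ into the formula of Theorem \ref{lem:local_M}. Since the expression $\overline{L}_{\widetilde{g}_t,m} = \overline{L}_g + tk\overline{L}_h/m + tkL_h^2/m^2$ from Proposition \ref{prop:lipschitz_smooth} is monotonically decreasing in $m$, we obtain $\overline{L}_{\widetilde{g}_t,m_s} \leq \overline{L}_{\widetilde{g}_t,M}$, and therefore $\kappa_s \leq \kappa := \overline{L}_{\widetilde{g}_t,M}/\mu_{\widetilde{g}_t}$, as claimed.

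The main obstacle I would expect is in the first part: confirming that the iterates of the accelerated projected gradient method stay inside $\mathcal{Y}_{m_s}(x_s)$ (so that the Lipschitz-smoothness estimate of Proposition \ref{prop:lipschitz_smooth} is valid at every iterate) and that projecting does not perturb the standard Nesterov analysis --- this is immediate since the method is precisely the projected version of accelerated gradient descent, but it must be recorded carefully because $\widetilde{g}_t(x_s,\cdot)$ is only $\overline{L}_{\widetilde{g}_t,m_s}$-smooth on the restricted set, not globally.
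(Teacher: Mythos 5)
Your proposal is correct and follows essentially the same route as the paper: invoke the standard linear rate of accelerated projected gradient descent on the restricted problem over $\mathcal{Y}_{m_s}(x_s)$, using the strong convexity from Proposition \ref{prop:strongly_convex} and the local Lipschitz smoothness from Proposition \ref{prop:lipschitz_smooth}, and then bound $\kappa_s$ by $\kappa$ via $d_s\geq D/2$, the monotonicity of $m$ in $d$ from Theorem \ref{lem:local_M}, and the monotonicity of $\overline{L}_{\widetilde{g}_t,m}$ in $m$. Your additional bookkeeping (convexity and non-emptiness of the shrunk set, $y^\ast_t(x_s)\in\mathcal{Y}_{m_s}(x_s)$ so the restricted and unrestricted minimizers coincide, and the initial gap bounded via Assumption \ref{assumption:general3}(\ref{assumption:general3(3)})) is consistent with, and slightly more explicit than, the paper's argument.
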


The proof follows the standard proof of the convergence of the accelerated proximal/projected gradient descent (see \cite{Nesterov2012GradientMF}) since the lower objective $\widetilde{g}_t$ is both $\overline{L}_{\widetilde{g}_t,m_s}$ Lipschitz smooth and $\mu_{\widetilde{g}_t}$ strongly convex in the region $\mathcal{Y}_{m_s}(x)$. By Corollary  \ref{remark:upper_bound_of_h_at_approximate_point}, $m_s$ has a lower bound estimate $M$, so $\overline{L}_{\widetilde{g}_t,m_s}$ has a upper bound estimate $\overline{L}_{\widetilde{g}_t,M}$, which means $\kappa_s$ has a upper bound estimate $\overline{L}_{\widetilde{g}_t,M}/\mu_{\widetilde{g}_t}$, denoted as $\kappa$. The proof is complete.

\begin{remark}
    We remark that the feasibility check (Step 1 of Algorithm \ref{algo:fo_lower_2}) is achievable by the following scheme: let us assume that a solver is available, which is capable of computing the projection operation of a point onto a non-empty convex set. To determine whether the set $\{y:h_i(x,y)\leq-d,\ i=1,...,k\}$ is non-empty, we can employ the solver to project any arbitrary point onto the set. If this solver can return a solution, then we confirm that the set $\{y : h_i(x, y) \leq -d,\ i=1,...,k\}$ is indeed non-empty. Conversely, if the solver can not return a solution, then the set is empty. Therefore we believe that the feasibility check step is no more complicated than a projection onto a convex set.
\end{remark}

The upper-level problem is more challenging than the lower-level problem. This difficulty arises because solving the lower-level problem requires only the value $d_s$ at a point $x_s$, where there exists a $y$ such that $h_i(x_s, y) \leq -d_s$ for any $i$. In contrast, designing the stepsize for the upper-level problem necessitates the global value $D$, where for any $x$, there exists a $y$ such that $h_i(x, y) \leq -D$ for any $i$. Specifically, for lower-level problem, we only need the value $d_s$ at the point $x_s$ and then estimate the value of \(m_s\) in \textbf{Step 2} of Algorithm \ref{algo:fo_lower_2}, so we can obtain the optimal value within the shrunk feasible set $\{y:h_i(x,y)\leq -m_s,\ i=1,...,k\}$. However, when addressing the upper-level problem, we need to estimate the Lipschitz constant of hypergradient, which requires the global upper bound of $h_i(x, y^\ast_t(x))$, as stated in Corollary \ref{remark:upper_bound_of_h_at_approximate_point} as $-M$. To obtain the $-M$, the global value $D$ is necessary in view of Corollary \ref{remark:upper_bound_of_h_at_approximate_point}. Yet, we do not know the value of $D$ a priori.

Our approach begins with determining the value of $d_s$ at a specific reference point \(x_s\). Leveraging the Lipschitz continuity assumption of \(h\), it is clear that for any $x$ in the ball $B_{x_s}\left(d_s/2L_h\right)$ there exists a point $y$ such that $h_i(x, y) \leq -d_s/2$ for any $i$. Therefore we can estimate the local upper bound of $h_i(x, y^\ast_t(x))$ within the ball $B_{x_s}\left(d_s/2L_h\right)$. This enables us to design an appropriate stepsize for the upper-level problem. Furthermore, we constrain the stepsize to ensure that next reference point \(x_{s+1}\) remains within this ball. The process is shown in Figure \ref{fig:update_tikz_plots1} and \ref{fig:update_tikz_plots2}.

\begin{figure}[htbp]
\centering

\begin{minipage}{.45\textwidth}
\centering
\tikzset{every picture/.style={line width=0.75pt}} 
\begin{tikzpicture}[x=0.75pt,y=0.75pt,yscale=-1,xscale=1]
\draw   (70.6,152.2) .. controls (70.6,111.77) and (103.37,79) .. (143.8,79) .. controls (184.23,79) and (217,111.77) .. (217,152.2) .. controls (217,192.63) and (184.23,225.4) .. (143.8,225.4) .. controls (103.37,225.4) and (70.6,192.63) .. (70.6,152.2) -- cycle ;
\draw  [fill={rgb, 255:red, 0; green, 0; blue, 0 }  ,fill opacity=1 ] (142.6,150.7) .. controls (142.6,149.76) and (143.36,149) .. (144.3,149) .. controls (145.24,149) and (146,149.76) .. (146,150.7) .. controls (146,151.64) and (145.24,152.4) .. (144.3,152.4) .. controls (143.36,152.4) and (142.6,151.64) .. (142.6,150.7) -- cycle ;
\draw    (143.27,150.4) -- (72.27,137.4) ;
\draw   (144.47,151.4) .. controls (145.36,146.82) and (143.51,144.08) .. (138.93,143.19) -- (119.62,139.44) .. controls (113.07,138.17) and (110.25,135.24) .. (111.14,130.66) .. controls (110.25,135.24) and (106.53,136.89) .. (99.99,135.62)(102.93,136.2) -- (80.67,131.87) .. controls (76.09,130.98) and (73.36,132.82) .. (72.47,137.4) ;
\draw (117,155) node [anchor=north west][inner sep=0.75pt]    {$x_{s-1}$};
\draw (90,111) node [anchor=north west][inner sep=0.75pt]  [font=\small]  {$d_{s-1} /( 2L_{h})$};
\end{tikzpicture}
\caption{$d_{s-1}$ is an output of Algorithm \ref{algo:fo_lower_2}, which indicates that there exists one point $y_0$ such that $h_i(x_{s-1},y_0)\leq -d_{s-1}$ for any $i$. By Lipschitz continuity of $h_i(x,y)$, when $x$ is in the ball $B_{x_{s-1}}(d_{s-1}/(2L_h))$, we have $h_i(x,y_0)\leq -d_{s-1}/2$ for any $i$. In view of Theorem \ref{lem:local_M}, we have a negative upper bound of $h_{i}(x,y^\ast_t(x))$ for any $i\in\{1,..,k\}$ and $x$ in the ball $B_{x_{s-1}}(d_{s-1}/(2L_h))$.}\label{fig:update_tikz_plots1}
\end{minipage}%
\hspace{5mm}
\begin{minipage}{.45\textwidth}
\centering
\tikzset{every picture/.style={line width=0.75pt}} 

\begin{tikzpicture}[x=0.75pt,y=0.75pt,yscale=-1,xscale=1]

\draw  [dash pattern={on 4.5pt off 4.5pt}] (70.6,152.2) .. controls (70.6,111.77) and (103.37,79) .. (143.8,79) .. controls (184.23,79) and (217,111.77) .. (217,152.2) .. controls (217,192.63) and (184.23,225.4) .. (143.8,225.4) .. controls (103.37,225.4) and (70.6,192.63) .. (70.6,152.2) -- cycle ;
\draw  [fill={rgb, 255:red, 0; green, 0; blue, 0 }  ,fill opacity=1 ] (142.6,150.7) .. controls (142.6,149.76) and (143.36,149) .. (144.3,149) .. controls (145.24,149) and (146,149.76) .. (146,150.7) .. controls (146,151.64) and (145.24,152.4) .. (144.3,152.4) .. controls (143.36,152.4) and (142.6,151.64) .. (142.6,150.7) -- cycle ;
\draw    (144.27,150.4) -- (174.52,133.38) ;
\draw [shift={(176.27,132.4)}, rotate = 150.64] [color={rgb, 255:red, 0; green, 0; blue, 0 }  ][line width=0.75]    (10.93,-3.29) .. controls (6.95,-1.4) and (3.31,-0.3) .. (0,0) .. controls (3.31,0.3) and (6.95,1.4) .. (10.93,3.29)   ;
\draw  [fill={rgb, 255:red, 0; green, 0; blue, 0 }  ,fill opacity=1 ] (174.6,132.7) .. controls (174.6,131.76) and (175.36,131) .. (176.3,131) .. controls (177.24,131) and (178,131.76) .. (178,132.7) .. controls (178,133.64) and (177.24,134.4) .. (176.3,134.4) .. controls (175.36,134.4) and (174.6,133.64) .. (174.6,132.7) -- cycle ;
\draw   (124.6,131.33) .. controls (124.6,102.98) and (147.58,80) .. (175.93,80) .. controls (204.28,80) and (227.27,102.98) .. (227.27,131.33) .. controls (227.27,159.68) and (204.28,182.67) .. (175.93,182.67) .. controls (147.58,182.67) and (124.6,159.68) .. (124.6,131.33) -- cycle ;
\draw  [dash pattern={on 4.5pt off 4.5pt}]  (143.27,150.4) -- (72.27,137.4) ;
\draw  [dash pattern={on 4.5pt off 4.5pt}] (144.27,149.4) .. controls (145.18,144.83) and (143.35,142.08) .. (138.78,141.16) -- (120.45,137.5) .. controls (113.91,136.19) and (111.1,133.25) .. (112.01,128.67) .. controls (111.1,133.25) and (107.37,134.88) .. (100.83,133.57)(103.78,134.16) -- (82.5,129.91) .. controls (77.93,129) and (75.18,130.83) .. (74.27,135.4) ;
\draw    (176.27,132.4) -- (136.27,99.4) ;
\draw   (176.27,131.4) .. controls (179.18,127.75) and (178.82,124.47) .. (175.17,121.56) -- (168.45,116.18) .. controls (163.24,112.01) and (162.1,108.11) .. (165.01,104.47) .. controls (162.1,108.11) and (158.04,107.85) .. (152.83,103.69)(155.17,105.56) -- (146.11,98.31) .. controls (142.46,95.39) and (139.18,95.75) .. (136.27,99.4) ;
\draw    (176.27,132.4) -- (218.29,125.71) ;
\draw [shift={(220.27,125.4)}, rotate = 170.96] [color={rgb, 255:red, 0; green, 0; blue, 0 }  ][line width=0.75]    (10.93,-3.29) .. controls (6.95,-1.4) and (3.31,-0.3) .. (0,0) .. controls (3.31,0.3) and (6.95,1.4) .. (10.93,3.29)   ;
\draw  [fill={rgb, 255:red, 0; green, 0; blue, 0 }  ,fill opacity=1 ] (218.6,125.7) .. controls (218.6,124.76) and (219.36,124) .. (220.3,124) .. controls (221.24,124) and (222,124.76) .. (222,125.7) .. controls (222,126.64) and (221.24,127.4) .. (220.3,127.4) .. controls (219.36,127.4) and (218.6,126.64) .. (218.6,125.7) -- cycle ;

\draw (117,153) node [anchor=north west][inner sep=0.75pt]    {$x_{s-1}$};
\draw (171,141) node [anchor=north west][inner sep=0.75pt]    {$x_{s}$};
\draw (84,115) node [anchor=north west][inner sep=0.75pt]  [font=\tiny]  {$d_{s-1} /( 2L_{h})$};
\draw (153,92) node [anchor=north west][inner sep=0.75pt]  [font=\tiny]  {$d_{s} /( 2L_{h})$};
\draw (228,105) node [anchor=north west][inner sep=0.75pt]    {$x_{s+1}$};

\end{tikzpicture}
\caption{Utilizing the negative upper bound of $h_{i}(x,y^\ast_t(x))$ in the ball $B_{x_{s-1}}(d_{s-1}/(2L_h))$ obtained from left figure, we estimate the Lipschitz smoothness constant of $\widetilde{\phi}_t(x)$ in this ball, and design the stepsize $\eta_{s-1}$. The design of $\eta_{s-1}$ ensures that $x_s$ is still in $B_{x_{s-1}}(d_{s-1}/(2L_h))$. We update and get $x_{s}$ by \textbf{Step 2} of Algorithm \ref{algo:fo}. We also get the ball $B_{x_s}(d_s/(2L_h))$ by repeating the argument in the left figure, then update to the next point $x_{s+1}$ by repeating the above process.}\label{fig:update_tikz_plots2}
\end{minipage}

\end{figure}

We first establish the following lemma on the Lipschitz properties of the lower-level Hessian, whose proof can be found in Appendix \ref{proof:lem:lipHessian}.

\begin{lemma}\label{lem:lipHessian}
    Suppose the assumptions in Theorem  \ref{lem:local_M} are satisfied. If $h_i(x_1,y_1)\leq -m$, $h_i(x_2,y_2)\leq-m$ for any $i$, then  $\|\nabla^2_{yy}\widetilde{g}_t(x_1,y_1)-\nabla^2_{yy}\widetilde{g}_t(x_2,y_2)\|\leq \overline{\overline{L}}_{\widetilde{g}_t,m}\|(x_1,y_1)-(x_2,y_2)\|$; $\|\nabla^2_{xy}\widetilde{g}_t(x_1,y_1)-\nabla^2_{xy}\widetilde{g}_t(x_2,y_2)\|\leq \overline{\overline{L}}_{\widetilde{g}_t,m}\|(x_1,y_1)-(x_2,y_2)\|$, where
    \begin{align}\label{eq:liphessian!}
        \overline{\overline{L}}_{\widetilde{g}_t,m}=\overline{\overline{L}}_{g}+tk\left(\frac{\overline{\overline{L}}_h}{m}+\frac{\overline{L}_hL_h}{m^2}+\frac{2L^3_h}{m^3}+\frac{2L_h\overline{L}_h}{m^2}\right).
    \end{align}
\end{lemma}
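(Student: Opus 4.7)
The plan is to differentiate $\widetilde{g}_t(x,y) = g(x,y) - t\sum_{i=1}^k \log(-h_i(x,y))$ explicitly and bound the Lipschitz constant of each resulting summand separately. Applying the chain rule twice, one obtains
\[
\nabla^2_{yy}\widetilde{g}_t(x,y) = \nabla^2_{yy} g(x,y) + t\sum_{i=1}^k \left(\frac{\nabla^2_{yy} h_i(x,y)}{-h_i(x,y)} + \frac{\nabla_y h_i(x,y)\,\nabla_y h_i(x,y)^\top}{h_i(x,y)^2}\right),
\]
and an analogous expression for $\nabla^2_{xy}\widetilde{g}_t$ with $\nabla_y h_i$ in the rank-one part replaced by $\nabla_x h_i$. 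The Lipschitz constant of $\nabla^2_{yy} g$ is $\overline{\overline{L}}_g$ by Assumption \ref{assumption:general2}(\ref{assumption:general2(5)}), so the task reduces to controlling the two families of barrier terms, for which the main tool is the elementary quotient identity $A_1/B_1 - A_2/B_2 = (A_1-A_2)/B_1 + A_2(B_2 - B_1)/(B_1 B_2)$.

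For the first barrier family $\nabla^2_{yy} h_i/(-h_i)$, I would apply the quotient identity with $A = \nabla^2_{yy} h_i$ (bounded by $\overline{L}_h$ and $\overline{\overline{L}}_h$-Lipschitz by Assumption \ref{assumption:general2}(\ref{assumption:general2(7)})--(8)) and $B = -h_i$ (satisfying $|B| \geq m$ by hypothesis, and $L_h$-Lipschitz by Assumption \ref{assumption:general2}(\ref{assumption:general2(6)})), yielding Lipschitz constant $\frac{\overline{\overline{L}}_h}{m} + \frac{\overline{L}_h L_h}{m^2}$ per index $i$. For the rank-one family $\frac{\nabla_y h_i \nabla_y h_i^\top}{h_i^2}$, the key idea is to factor the quotient as $v(x,y)\,v(x,y)^\top$ with $v(x,y) := \nabla_y h_i(x,y)/(-h_i(x,y))$. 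Applying the same quotient identity to $v$ itself yields $\|v\| \leq L_h/m$ and a Lipschitz constant for $v$ of $\frac{\overline{L}_h}{m} + \frac{L_h^2}{m^2}$. Then the bilinear expansion $v_1 v_1^\top - v_2 v_2^\top = (v_1 - v_2) v_1^\top + v_2(v_1 - v_2)^\top$ gives a Lipschitz constant for the outer product of $2\cdot \frac{L_h}{m} \cdot \bigl(\frac{\overline{L}_h}{m} + \frac{L_h^2}{m^2}\bigr) = \frac{2 L_h \overline{L}_h}{m^2} + \frac{2 L_h^3}{m^3}$.

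Summing these three contributions, with the two barrier pieces scaled by $t$ and added over $i \in \{1,\ldots,k\}$, exactly recovers the constant in (\ref{eq:liphessian!}). The argument for $\nabla^2_{xy}\widetilde{g}_t$ is verbatim the same: the rank-one part becomes $u v^\top$ with $u := \nabla_x h_i/(-h_i)$ and $v := \nabla_y h_i/(-h_i)$, and the bilinear identity $u_1 v_1^\top - u_2 v_2^\top = (u_1-u_2) v_1^\top + u_2 (v_1 - v_2)^\top$ still gives a factor-of-two bound, since $\|\nabla_x h_i\|, \|\nabla_y h_i\| \leq L_h$ and $\|\nabla^2_{xy} h_i\| \leq \overline{L}_h$ by Assumption \ref{assumption:general2}. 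I do not anticipate a substantial obstacle; the only care points are keeping the factor of $2$ that arises from the bilinear expansion of the outer-product difference, and using $|h_i(x_j,y_j)| \geq m$ at \emph{both} endpoints $j=1,2$ to keep every denominator uniformly controlled throughout the quotient estimates.
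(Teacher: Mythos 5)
Your proposal is correct and follows essentially the same route as the paper's proof: expand the Hessians of the barrier reformulation explicitly, bound the $\nabla^2_{yy}g$ (resp. $\nabla^2_{xy}g$) part by $\overline{\overline{L}}_g$, and control the two barrier families by quotient estimates using $|h_i|\geq m$ at both endpoints together with Assumption \ref{assumption:general2}, which recovers exactly the constant in (\ref{eq:liphessian!}). The only cosmetic difference is that you handle the rank-one term by factoring it as $vv^\top$ (resp. $uv^\top$) with $v=\nabla_y h_i/(-h_i)$ and applying the bilinear expansion, whereas the paper uses a common-denominator three-way splitting; the resulting bounds coincide term by term.
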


The following lemma informs us that when we know, for a fixed $x$, the values of the constraints $h_i(x,y)$ for some interior point $y$, we can evaluate the Lipschitz smoothness constant of $\widetilde{\phi}_t$ within a spherical neighborhood $B_{x_s}\left(d_s/2L_h\right)$ of $x$ which can help us to design the stepsize. The proof is available in Appendix \ref{proof:lem:local_lip}.

\begin{lemma}\label{lem:local_lip}
    Suppose the assumptions in Theorem  \ref{lem:local_M} are satisfied. For any fixed $x$, if there exists $y\in\mathcal{Y}(x)$ such that $h_i(x,y)\leq -d$ for any $i\in\{1,...,k\}$, then 
    \begin{enumerate}
        \item For any $x+\Delta x$, where $\|\Delta x\|\leq\frac{d}{2L_h}$, the following inequality holds: $h_i(x+\Delta x,y^\ast_t(x+\Delta x))\leq -m^{loc}$, where $m^{loc}$ is a positive constant which can be computed through replacing $d$ by $d/2$ in (\ref{eq:m});
        \item The hyperfunction $\widetilde{\phi}_t(x)$ is local $\overline{L}^{loc}_{\widetilde{\phi}_t}$-Lipschitz smooth. That is, the following holds  for any $x_1,x_2$ satisfies $\max\{\|x_1-x\|,\|x_2-x\|\}\leq\frac{d}{2L_h}$:
        \begin{align*}
            \|\nabla_{x} \widetilde{\phi}_t(x_1)-\nabla_{x} \widetilde{\phi}_t(x_2)\|\leq \overline{L}^{loc}_{\widetilde{\phi}_t}\|x_1-x_2\|.
        \end{align*}
        $\overline{L}^{loc}_{\widetilde{\phi}_t}$ can be computed as follows:
        \begin{align*}
            \overline{L}^{loc}_{\widetilde{\phi}_t}=&\left(\overline{L}_f+\overline{L}_f\frac{1}{\mu_{\widetilde{g}_t}}(\overline{L}_g+\frac{tk\overline{L}_h}{m^{loc}}+\frac{tkL_h^2}{(m^{loc})^2})+L_f\frac{\overline{\overline{L}}_{\widetilde{g}_t,m^{loc}}}{(\mu_{\widetilde{g}_t})^2}(\overline{L}_g+\frac{tk\overline{L}_h}{m^{loc}}+\frac{tkL_h^2}{(m^{loc})^2})+L_f\frac{\overline{\overline{L}}_{\widetilde{g}_t,m^{loc}}}{\mu_{\widetilde{g}_t}}\right)\\
        &\times\left(1+\frac{1}{\mu_{\widetilde{g}_t}}(\overline{L}_g+\frac{tk\overline{L}_h}{{m^{loc}}}+\frac{tkL_h^2}{(m^{loc})^2})\right),
        \end{align*}
        where $\overline{\overline{L}}_{\widetilde{g}_t,m^{loc}}$ is from (\ref{eq:liphessian!}) by replacing $m$ with $m^{loc}$, and $\mu_{\widetilde{g}_t}$ is from Proposition \ref{prop:strongly_convex}. 
    \end{enumerate}
\end{lemma}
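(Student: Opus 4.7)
Part (1) is a direct consequence of the Lipschitz continuity of $h_i$ in $x$. Given the certificate $y \in \mathcal{Y}(x)$ with $h_i(x,y) \leq -d$, for any $\Delta x$ with $\|\Delta x\| \leq d/(2L_h)$ the bound $\|\nabla_x h_i\| \leq L_h$ from Assumption \ref{assumption:general2}(\ref{assumption:general2(6)}) gives $h_i(x+\Delta x, y) \leq h_i(x,y) + L_h\|\Delta x\| \leq -d/2$ for every $i$. Thus $y$ serves as a Slater certificate at the perturbed point $x+\Delta x$ with level $d/2$, and invoking Theorem \ref{lem:local_M} with $d$ replaced by $d/2$ produces the desired constant $m^{loc}$ and the conclusion $h_i(x+\Delta x, y_t^\ast(x+\Delta x)) \leq -m^{loc}$.

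For part (2), the plan is to start from the hypergradient identity
\[
\nabla_x \widetilde\phi_t(x) = \nabla_x f(x, y_t^\ast(x)) - \nabla^2_{xy}\widetilde g_t(x, y_t^\ast(x)) \bigl[\nabla^2_{yy}\widetilde g_t(x, y_t^\ast(x))\bigr]^{-1}\nabla_y f(x, y_t^\ast(x)),
\]
valid on the ball $B_x(d/(2L_h))$ because part (1) guarantees $h_i(x', y_t^\ast(x')) \leq -m^{loc}$ there. The three ingredients I would set up are: (i) a Lipschitz bound for $y_t^\ast$ of the form $\|y_t^\ast(x_1) - y_t^\ast(x_2)\| \leq (\kappa/\mu_{\widetilde g_t})\|x_1 - x_2\|$, obtained from the implicit function theorem using $\mu_{\widetilde g_t}$-strong convexity (Proposition \ref{prop:strongly_convex}) and the bound $\kappa := \overline L_g + tk\overline L_h/m^{loc} + tkL_h^2/(m^{loc})^2$ on $\|\nabla^2_{xy}\widetilde g_t\|$ inside the region where part (1) applies; (ii) the composite-Lipschitz factor $(1 + \kappa/\mu_{\widetilde g_t})$ on $\|(x_1, y_t^\ast(x_1)) - (x_2, y_t^\ast(x_2))\|$; and (iii) the Hessian Lipschitz constant $\overline{\overline L}_{\widetilde g_t, m^{loc}}$ from Lemma \ref{lem:lipHessian}, applicable since both endpoints lie in the shrunk set.

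Then I would bound $\|\nabla_x\widetilde\phi_t(x_1) - \nabla_x\widetilde\phi_t(x_2)\|$ by the standard four-term decomposition
\[
J_1 H_1^{-1} v_1 - J_2 H_2^{-1} v_2 = (J_1 - J_2)H_1^{-1} v_1 + J_2(H_1^{-1} - H_2^{-1})v_1 + J_2 H_2^{-1}(v_1 - v_2),
\]
with $J_i = \nabla^2_{xy}\widetilde g_t$, $H_i = \nabla^2_{yy}\widetilde g_t$, $v_i = \nabla_y f$ all evaluated at $(x_i, y_t^\ast(x_i))$, plus the leading $\nabla_x f$ difference. Using $\|v_i\| \leq L_f$, $\|J_i\| \leq \kappa$, $\|H_i^{-1}\| \leq 1/\mu_{\widetilde g_t}$, the identity $H_1^{-1} - H_2^{-1} = H_1^{-1}(H_2 - H_1)H_2^{-1}$, Lipschitz smoothness $\overline L_f$ of $f$, and Lipschitz continuity $\overline{\overline L}_{\widetilde g_t, m^{loc}}$ of the Hessians, each term picks up the factor $(1 + \kappa/\mu_{\widetilde g_t})\|x_1 - x_2\|$. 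Summing the four contributions exactly reproduces the claimed closed-form expression for $\overline L^{loc}_{\widetilde\phi_t}$.

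The main subtlety is ensuring that all Lipschitz and boundedness constants used during the decomposition are valid uniformly along the segment between $x_1$ and $x_2$ (or at least at both endpoints, which suffices since the bounds depend only on point evaluations of the Hessians). This is precisely the role of part (1): it certifies that $h_i(x', y_t^\ast(x')) \leq -m^{loc}$ throughout the ball $B_x(d/(2L_h))$, so the Hessian smoothness constant $\overline{\overline L}_{\widetilde g_t, m^{loc}}$ from Lemma \ref{lem:lipHessian} and the operator norm bound $\kappa$ apply at every point encountered in the estimate. Without part (1) one could not invoke Lemma \ref{lem:lipHessian} uniformly, so the reduction of part (2) to part (1) is the real content of the proof.
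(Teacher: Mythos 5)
Your proposal is correct and follows essentially the same route as the paper: part (1) via Lipschitz continuity of $h_i$ in $x$ plus Theorem \ref{lem:local_M} applied with $d/2$, and part (2) via the same four-term decomposition of the implicit-gradient formula, the resolvent identity $H_1^{-1}-H_2^{-1}=H_1^{-1}(H_2-H_1)H_2^{-1}$, the bounds $\|H_i^{-1}\|\leq 1/\mu_{\widetilde g_t}$ and $\|\nabla^2_{xy}\widetilde g_t\|\leq \overline L_g+tk\overline L_h/m^{loc}+tkL_h^2/(m^{loc})^2$, Lemma \ref{lem:lipHessian} for the Hessian Lipschitz constants, and the Jacobian bound on $\nabla_x y_t^\ast$ giving the final factor $1+\kappa/\mu_{\widetilde g_t}$.
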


Replacing $d$ in Lemma \ref{lem:local_lip} with the current iteration $d_s$ from the output of Algorithm \ref{algo:fo_lower_2}, we have an estimate of local Lipschitz smoothness constant $\overline{L}_{\widetilde{\phi}_t, s}:=\overline{L}^{loc}_{\widetilde{\phi}_t}$ in the ball $B_{x_s}(d_s/(2L_h))$. Now we are able to establish the following lemma on the upper bound of $\overline{L}_{\widetilde{\phi}_t, s}$, whose proof is provided in Appendix \ref{proof:lem:lip_global_bound}:

\begin{lemma}\label{lem:lip_global_bound}
     Suppose the assumptions in Theorem  \ref{lem:local_M} are satisfied. The estimate $\overline{L}_{\widetilde{\phi}_t, s}$ has a upper bound $\overline{L}_{\widetilde{\phi}_t}=\mathcal{O}(1/t^4)$.
\end{lemma}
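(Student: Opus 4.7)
The plan is to treat this lemma as a careful bookkeeping exercise: plug the structural bounds already proved in Section~\ref{section:Algorithm Design} into the closed-form expression for $\overline{L}^{loc}_{\widetilde{\phi}_t}$ given by Lemma~\ref{lem:local_lip}, and then track the leading powers of $1/t$. The starting point is the identification $\overline{L}_{\widetilde{\phi}_t,s} := \overline{L}^{loc}_{\widetilde{\phi}_t}$ with $d$ taken to be the iterate-dependent constant $d_s$ produced by Step~1 of Algorithm~\ref{algo:fo_lower_2}.

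First, I would establish uniform lower bounds on the two quantities that appear in denominators. By Corollary~\ref{remark:upper_bound_of_h_at_approximate_point} we have $d_s \ge D/2$ for every $s$, hence applying Theorem~\ref{lem:local_M} with $d$ replaced by $d_s/2 \ge D/4$ produces $m^{loc} \ge \min\bigl\{t(D/4)^2/(4RL_g+4RTkL_h),\,D/8\bigr\}$. For any $t \le T$ small enough the minimum is achieved by the first argument, so $m^{loc} \ge c_1 t$ for a constant $c_1>0$ that is independent of both $s$ and $t$. On the strong-convexity side, Proposition~\ref{prop:strongly_convex} gives $\mu_{\widetilde{g}_t} \ge \mu_g$ in the strongly convex case and $\mu_{\widetilde{g}_t} \ge t\sigma/H^2$ in the linear case; either way we have a uniform positive lower bound on $\mu_{\widetilde{g}_t}/\min\{1,t\}$.

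Next I would substitute these bounds into the formula from Lemma~\ref{lem:local_lip}. Plugging $m^{loc} = \Theta(t)$ into the bracket $\overline{L}_g + \tfrac{tk\overline{L}_h}{m^{loc}} + \tfrac{tkL_h^2}{(m^{loc})^2}$ yields a quantity of order $\mathcal{O}(1/t)$, driven by the last term. Similarly, Lemma~\ref{lem:lipHessian} applied with $m=m^{loc}=\Theta(t)$ gives
\[
\overline{\overline{L}}_{\widetilde{g}_t,m^{loc}} = \overline{\overline{L}}_g + tk\!\left(\tfrac{\overline{\overline{L}}_h}{m^{loc}} + \tfrac{\overline{L}_h L_h}{(m^{loc})^2} + \tfrac{2L_h^3}{(m^{loc})^3} + \tfrac{2L_h\overline{L}_h}{(m^{loc})^2}\right) = \mathcal{O}(1/t^2),
\]
where the $t/(m^{loc})^3$ term dominates. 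Combining, the first factor of $\overline{L}^{loc}_{\widetilde{\phi}_t}$ is bounded by the largest of $\overline{L}_f$, $\overline{L}_f/\mu_{\widetilde{g}_t}\cdot\mathcal{O}(1/t)$, $L_f\overline{\overline{L}}_{\widetilde{g}_t,m^{loc}}/\mu_{\widetilde{g}_t}^2\cdot\mathcal{O}(1/t)$, and $L_f\overline{\overline{L}}_{\widetilde{g}_t,m^{loc}}/\mu_{\widetilde{g}_t}$, while the second factor is $1+\mathcal{O}(1/t)/\mu_{\widetilde{g}_t}$. In the strongly convex regime, where $\mu_{\widetilde{g}_t}=\Theta(1)$, the dominant contribution in the first factor is $L_f\overline{\overline{L}}_{\widetilde{g}_t,m^{loc}}/\mu_{\widetilde{g}_t} = \mathcal{O}(1/t^2)$ multiplied by $L_f\overline{\overline{L}}_{\widetilde{g}_t,m^{loc}}/\mu_{\widetilde{g}_t}^2 \cdot \mathcal{O}(1/t) = \mathcal{O}(1/t^3)$, so the first factor is $\mathcal{O}(1/t^3)$, and the second factor is $\mathcal{O}(1/t)$, producing a product $\mathcal{O}(1/t^4)$ as claimed.

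The main obstacle is bookkeeping rather than conceptual: one has to keep track of many terms and identify which ones dominate for small $t$; the remaining work is verifying that all the constants hidden in $\mathcal{O}(\cdot)$ are uniform in $s$, which is exactly what the uniform lower bound $d_s \ge D/2$ from Corollary~\ref{remark:upper_bound_of_h_at_approximate_point} buys us. I would finish by remarking that, in the linear case, the same substitution yields a worse polynomial rate because $\mu_{\widetilde{g}_t}=\Theta(t)$, so the $\mathcal{O}(1/t^4)$ statement should be read as the bound that governs the strongly convex setting and drives the final complexity in Theorem~\ref{thm:total_rate}.
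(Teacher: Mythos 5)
Your proposal is correct and follows essentially the same route as the paper: use $d_s\ge D/2$ (Corollary \ref{remark:upper_bound_of_h_at_approximate_point}) to lower bound $m^{loc}$ by $m(D/4)=\Theta(t)$, substitute this together with $\overline{\overline{L}}_{\widetilde{g}_t,m^{loc}}=\mathcal{O}(1/t^2)$ into the expression from Lemma \ref{lem:local_lip}, and count powers of $t$ to reach $\mathcal{O}(1/t^4)$ (your "multiplied by" phrasing for the two summands of the first factor is a slip, but the orders and the conclusion are right). Your closing caveat is also apt: the paper's own proof implicitly treats $\mu_{\widetilde{g}_t}$ as $t$-independent, which holds only under Assumption \ref{assumption:nonlinear2}, so in the linear setting ($\mu_{\widetilde{g}_t}=\Theta(t)$) the same substitution gives a larger power of $1/t$ than the stated $\mathcal{O}(1/t^4)$ — a point the paper does not acknowledge.
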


\begin{remark}
    Although $\overline{L}_{\widetilde{\phi}_t, s}$ is bounded by $\overline{L}_{\widetilde{\phi}_t}$, the constant $\overline{L}_{\widetilde{\phi}_t}$ is not computable if we do not know the constant $D$ from Remark \ref{remark:non-emptyness}, thus this cannot help us design the stepsize. $\overline{L}_{\widetilde{\phi}_t, s}$ can make sure that the stepsize $\eta_s$ has a lower bound, a fact which is essential for our main convergence result Theorem \ref{thm:total_rate} to hold.
\end{remark}

Now that in practice we are not able to calculate the hypergradient $\nabla_x\widetilde{\phi}_t(x)$ exactly, and in Algorithm \ref{algo:fo} we approximate the hypergradient by $\hat{\nabla}_x\widetilde{\phi}_t(x)$. We thus need the following lemma on the approximation error of the hypergradient, whose proof can be found in Appendix \ref{proof:lemma:approx_error_hypergrad}.

\begin{lemma}\label{lemma:approx_error_hypergrad}
    Suppose the assumptions in Theorem  \ref{lem:local_M} are satisfied. Let $x_s$ be the reference point, and $\hat{y}_s$ is from the output of Algorithm \ref{algo:fo_lower_2}. Define the following approximate gradient
    \begin{align}\label{eq:approximategradient}
        \hat{\nabla}_x\widetilde{\phi}_t(x_s)=\nabla_x f(x_s,\hat{y}_s)-\nabla_{xy}^2\widetilde{g}_t(x_s,\hat{y}_s)(\nabla_{yy}^2\widetilde{g}_t(x_s,\hat{y}_s))^{-1}\nabla_y f(x_s,\hat{y}_s).
    \end{align}
    Then the gradient approximation error can be upper bounded as $\|\nabla_x\widetilde{\phi}_t(x_s)-\hat{\nabla}_x\widetilde{\phi}_t(x_s)\|\leq\epsilon_s \overline{L}'_{\widetilde{\phi}_t,m_s}$, where $m_s$ is from the output of Algorithm \ref{algo:fo_lower_2} and
    \begin{equation}\label{eq:L'phims}
        \resizebox{0.99\textwidth}{!}{$\overline{L}'_{\widetilde{\phi}_t,m_s}=\overline{L}_f+\overline{L}_f\frac{1}{\mu_{\widetilde{g}_t}}\left(\overline{L}_g+\frac{tk\overline{L}_h}{m_s}+\frac{tkL_h^2}{m_s^2}\right)+L_f\left(\frac{1}{\mu_{\widetilde{g}_t}}\right)^2\overline{\overline{L}}_{\widetilde{g}_t,m_s}\left(\overline{L}_g+\frac{tk\overline{L}_h}{m_s}+\frac{tkL_h^2}{m_s^2}\right)+L_f\frac{1}{\mu_{\widetilde{g}_t}}\overline{\overline{L}}_{\widetilde{g}_t,m_s},$}
    \end{equation}
    where $\overline{\overline{L}}_{\widetilde{g}_t,m_s}$ is from Lemma \ref{lem:lipHessian} which can be computed by replacing $m$ with $m_s$.
\end{lemma}

Finally, we state the convergence of the upper-level problem as follows.

\begin{theorem}\label{thm:total_rate}
    Suppose Assumption \ref{assumption:general3}(\ref{assumption:general3(1)})(\ref{assumption:general3(2)})(\ref{assumption:general3(3)}) and Assumption \ref{assumption:general2} all hold, and either Assumption \ref{assumption:nonlinear2} or Assumption \ref{assumption:linear2} holds. We recall and define the following terms:
    \begin{itemize}
        \item $\overline{L}_{\widetilde{\phi}_{t},s}$ is computed by replacing $d$ in Lemma \ref{lem:local_lip} with $d_s$ from the output of Algorithm \ref{algo:fo_lower_2};
        \item $\hat{\nabla}_x\widetilde{\phi}_t(x_s)$ is from (\ref{eq:approximategradient});
        \item $\overline{L}'_{\widetilde{\phi}_t,m_s}$ is from (\ref{eq:L'phims});
        \item $M$ is from Corollary  \ref{remark:upper_bound_of_h_at_approximate_point}; 
        \item $\mu_{\widetilde{g}_t}$ is from Proposition \ref{prop:strongly_convex};
        \item $\overline{L}_{\widetilde{\phi}_t}$ is from Lemma \ref{lem:lip_global_bound};
        \item $D$ is from Remark \ref{remark:non-emptyness};
        \item $d_s$ is from the output of Algorithm \ref{algo:fo_lower_2};
        \item
        $\zeta = \min\left\{1,\frac{D}{4L_h}\times\frac{1}{L_f + \frac{L_f}{\mu_{\widetilde{g}_t}}\left(\overline{L}_g+ t k \frac{\overline{L}_h}{M} + t k \frac{L_h^2}{M^2}\right)}, \frac{1}{\overline{L}_{\widetilde{\phi}_t}}\right\}$.
    \end{itemize}
    We set the stepsize $\eta_s$ as
    \begin{align}\label{eq:stepsize}
        \eta_s=\min\left\{1,  \frac{d_s}{2 L_h\|\hat{\nabla}_x\widetilde{\phi}_t(x_s)\|}, \frac{1}{\overline{L}_{\widetilde{\phi}_{t},s}}\right\},
    \end{align}and set the stop criterion $\epsilon_s=\epsilon/(4\overline{L}'_{\widetilde{\phi}_t,m_s})$ for the inner loop. Then the the sequence $\{x_s\}$ generated by Algorithm \ref{algo:fo} satisfies
    $$
    \min_{s=0,...,S-1}\frac{1}{\eta_s}\|x_{s} - \proj_{\mathcal{X}}(x_s-\eta_s \nabla_x\widetilde{\phi}_t(x_s))\|\leq\epsilon
    $$
    within $\widetilde{\mathcal{O}}(1/(\zeta\epsilon^2))$ number of iterations.
\end{theorem}

 The proof of Theorem \ref{thm:total_rate} is in Appendix \ref{proof:thm:total_rate}.
\begin{remark}\label{remark:dependence}
    In terms of the dependency on $\epsilon$, Theorem \ref{thm:total_rate} achieves the rate $\widetilde{\mathcal{O}}(\epsilon^{-2})$. However, if higher order (3rd order or even higher for the lower-level objective) smoothness is assumed for both lower- and upper-level objectives, the convergence could be improved to $\mathcal{O}(\epsilon^{-7/4})$ using the techniques proposed in \cite{wang2024efficientmethodforsaddlepoint} for the special case of bilevel optimization, namely the minimax saddle point problems. We refer to \cite{wang2024efficientmethodforsaddlepoint} and omit the details to keep the conciseness of this work.
    
    In terms of the dependency on $t$, from Lemma \ref{lem:lip_global_bound} and Theorem \ref{thm:total_rate}, we can see that $\zeta=\mathcal{O}(t^4)$ with respect to $t$. For the inner loop, we have that the number of iterations is $\widetilde{\mathcal{O}}(t^{-0.5})$ (more details see Appendix \ref{explainationforremark:dependence}). Therefore, the total number of iterations is $\widetilde{\mathcal{O}}(t^{-4.5})$. Moreover, under the strong convex setting, if we already know the upper bounds of some provably bounded terms a priori, then by adjusting the stepsize, the number of iterations with respect to $t$ can be reduced to $\widetilde{\mathcal{O}}(t^{-1.5})$ (for details, see Appendix \ref{sec:improvedconvergencerate}).
\end{remark}

In the next section, we discuss how to recover a stationary point for the original lower-level constrained bilevel problem \eqref{eq:blo_constrained}.

\subsection{Asymptotic Convergence to the Original Problem}

The convergence result in Theorem \ref{thm:total_rate} indicates a non-asymptotic rate of convergence is achievable for solving the barrier reformulated problem \eqref{eq:blo_barrier_reformulation}. Based on our previous discussion, achieving non-asymptotic convergence toward the original problem \eqref{eq:blo_constrained} is intractable since the hypergradient of the original problem \eqref{eq:blo_constrained} can be non-differentiable or even discontinuous (Example \ref{counterexample1}). Therefore we inspect the asymptotic convergence toward the original problem when the \textbf{SCSC} assumption is satisfied (which will make the original problem differentiable). In particular, we consider Algorithm \ref{algo:fo_asymptotic} listed below, where we iteratively shrink the barrier parameter $t$ in our formulation \eqref{eq:blo_barrier_reformulation} and utilize Algorithm \ref{algo:fo} for each $t$.
\begin{algorithm}
\caption{Hypergradient Based Bilevel Barrier Method for \eqref{eq:blo_constrained}}\label{algo:fo_asymptotic}
\begin{algorithmic}
\State \textbf{Step 0: Initialization.} Given an initial point $x_0$, $t_0=1$, the accuracy level $\epsilon_0$.

\State \For{$i=0,1,2,...$}{
\State \textbf{Step 1: Solve the problem with current $t$.} Call Algorithm \ref{algo:fo} with initial point $x_i$, accuracy $\epsilon=\epsilon_i$ and $t=t_i$ and other parameters specified as in Theorem \ref{thm:total_rate}, obtaining the output $x_{i+1}$

\State \textbf{Step 2: Decrease the parameter.} Set $\epsilon_{i+1}=\epsilon_i/2$ and $t_{i+1}=t_{i}/2$
}

\end{algorithmic}
\end{algorithm}

Algorithm \ref{algo:fo_asymptotic} is a direct application of Algorithm \ref{algo:fo} with shrinking $t$, which is analogous to the classical path-following scheme. Different from the path-following scheme, we do not have self-concordant barriers, thus we have to solve each of the sub-step to a certain precision for each fix $t$. We have the following result for the asymptotic convergence of Algorithm \ref{algo:fo_asymptotic}.

\begin{theorem}\label{thm:asymptotic}
    Assume the same assumptions as Theorem \ref{thm:total_rate} and consider the output sequence $\{x_i\}$ of Algorithm \ref{algo:fo_asymptotic}. Suppose in addition Assumption \ref{assumption:general3}(\ref{assumption:general3(4)}) holds. If the limit point $x^*$ of $\{x_i\}\subset \mathcal{X}$ is \textbf{SCSC} points of original BLO problem \eqref{eq:blo_constrained}, then $x^*$ is the stationary point of \eqref{eq:blo_constrained}.
\end{theorem}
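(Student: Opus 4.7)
The plan is to translate the approximate stationarity of each outer iterate with respect to the barrier-reformulated hypergradient into stationarity of the limit point with respect to the original hypergradient, using the convergence result of Theorem \ref{thm:main_result} together with the convexity of $\mathcal{X}$. Specifically, Theorem \ref{thm:total_rate} applied at outer iteration $i$ guarantees that the returned iterate, which I will relabel as $x_{i+1}$, satisfies
\begin{equation*}
\tfrac{1}{\eta_i}\bigl\|x_{i+1} - \proj_{\mathcal{X}}\bigl(x_{i+1} - \eta_i \nabla_x\widetilde{\phi}_{t_i}(x_{i+1})\bigr)\bigr\|\leq\epsilon_i,
\end{equation*}
with $\epsilon_i,t_i\to 0$. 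Setting $\widetilde x_{i+1}:=\proj_{\mathcal{X}}(x_{i+1} - \eta_i \nabla_x\widetilde{\phi}_{t_i}(x_{i+1}))$, the projection characterization for convex $\mathcal{X}$ gives the variational inclusion
\begin{equation*}
-\nabla_x\widetilde{\phi}_{t_i}(x_{i+1}) - \tfrac{1}{\eta_i}(\widetilde x_{i+1}-x_{i+1}) \;\in\; N_{\mathcal{X}}(\widetilde x_{i+1}), \qquad \bigl\|\tfrac{1}{\eta_i}(\widetilde x_{i+1}-x_{i+1})\bigr\|\leq \epsilon_i.
\end{equation*}

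Next, I would extract a subsequence, still indexed by $i$, with $x_{i+1}\to x^\ast$. Because $\|\widetilde x_{i+1}-x_{i+1}\|\leq \eta_i\epsilon_i\leq \epsilon_i$ (using $\eta_i\leq 1$ from the definition in Theorem \ref{thm:total_rate}), we also have $\widetilde x_{i+1}\to x^\ast$. The central step is to show the joint limit
\begin{equation*}
\nabla_x\widetilde{\phi}_{t_i}(x_{i+1}) \;\longrightarrow\; \nabla_x\phi(x^\ast),
\end{equation*}
which is stronger than the pointwise convergence of Theorem \ref{thm:main_result}. For this, I would invoke the local uniform convergence of $\nabla_x y^\ast_t(x)$ on a neighborhood of the SCSC point $x^\ast$ that was already established to prove Theorem \ref{thm:mainnonlinear} (via Lemma \ref{lem:uniformconvergeofgradient}), together with the analogous and more transparent local-uniform statement that underlies Theorem \ref{thm:mainlinear} in the linear setting. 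Combining with continuity of $\nabla_x y^\ast(\cdot)$ at $x^\ast$ and continuity of $\nabla f$, one obtains the desired joint convergence.

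With the joint gradient convergence in hand, one closes the argument by passing to the limit in the variational inclusion and using the closedness of the graph of $N_{\mathcal{X}}$ (valid since $\mathcal{X}$ is closed convex): the right-hand perturbation $\tfrac{1}{\eta_i}(\widetilde x_{i+1}-x_{i+1})$ vanishes, $\widetilde x_{i+1}\to x^\ast$, and one concludes
\begin{equation*}
-\nabla_x\phi(x^\ast)\in N_{\mathcal{X}}(x^\ast),
\end{equation*}
which is precisely the first-order stationarity condition for problem \eqref{eq:blo_constrained} at $x^\ast$ (note that $\nabla_x\phi(x^\ast)$ is well-defined because of the SCSC assumption at $x^\ast$, by Proposition \ref{prop:relation} and the hypergradient formula).

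The main obstacle is the joint gradient convergence step. A merely pointwise application of Theorem \ref{thm:main_result} at $x^\ast$ is insufficient because the evaluation points $x_{i+1}$ move along with the shrinking barrier parameter $t_i$. What is needed is that, on some neighborhood $U$ of $x^\ast$, $\nabla_x\widetilde{\phi}_{t}(x)$ converges to $\nabla_x\phi(x)$ uniformly as $t\to 0$. This in turn relies on SCSC persisting on $U$: in the strongly convex setting this follows from LICQ and continuity of the unique multipliers (so the active set is locally stable), while in the linear setting it follows from standard LP sensitivity (the optimal basis at $x^\ast$ remains optimal on a neighborhood). Granted this local persistence, the uniform-convergence arguments already developed in Section \ref{sec:convergence_of_hyperfunction} apply on $U$ and deliver the joint limit, completing the proof.
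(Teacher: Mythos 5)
Your proposal is correct and follows essentially the same route as the paper's proof: per-iteration approximate stationarity from Theorem \ref{thm:total_rate}, persistence of the \textbf{SCSC} property on a neighborhood of $x^*$ via continuity of the multipliers (Lemma \ref{lem:con_of_multi}), convergence of the barrier hypergradients to $\nabla_x\phi(x^*)$, and then passage to the limit in the stationarity condition. The only real differences are mechanical: the paper first eliminates the $t_i$-dependent stepsize by using that the projected-gradient residual $\tfrac{1}{\eta}\|x-\proj_{\mathcal{X}}(x-\eta g)\|$ is nonincreasing in $\eta$ (together with $\eta_{t_i}\le 1$), and then concludes $x^*=\proj_{\mathcal{X}}(x^*-\nabla_x\phi(x^*))$ by continuity of the projection, whereas you keep $\eta_i$ and close the argument through the normal-cone inclusion and graph closedness of $N_{\mathcal{X}}$ — both are valid, and your variant avoids the stepsize-monotonicity lemma. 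One caveat on the joint-limit step you rightly isolate: the local uniform convergence of $\nabla_x y_t^\ast(\cdot)$ is only established in the paper for the strongly convex setting (Lemma \ref{lem:uniformconvergeofgradient}); in the linear setting the paper's Theorem \ref{thm:mainlinear} is purely pointwise, and the paper's own proof simply cites it for the joint limit $\nabla_x\widetilde{\phi}_{t_i}(x_i)\to\nabla_x\phi(x^*)$, so your appeal to an LP-sensitivity/local-uniform argument there is a supplement rather than something already available in the text — you are, if anything, more explicit about this subtlety than the paper is.
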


The proof of Theorem \ref{thm:asymptotic} is available in Section \ref{proof:thm:asymptotic}.

\section{Numerical Experiments}
In this section, we conduct numerical experiments to verify the effectiveness of the proposed Algorithm \ref{algo:fo}. Our experiments are conducted on a single PC with Intel Core 12400F CPU and 16GB of RAM. Our code is available at \url{https://github.com/jxxxxxxt/bilevel_ipm}.

\subsection{Generated Strongly Convex Problem}
In this experiment, we evaluate the performance of our proposed BFBM algorithm using a class of randomly generated problems. Both the upper-level and lower-level objective functions are designed to be strongly convex, and the constraints are all linear in $y$. The problem template is given as follows:
\begin{align}\label{eq:strongly_convex_problem}
    \min_{x,y}\ f(x, y) =& \frac{1}{2} \begin{pmatrix} x \\ y \end{pmatrix}^\top A \begin{pmatrix} x \\ y \end{pmatrix} + b^\top \begin{pmatrix} x \\ y \end{pmatrix} \nonumber\\
    \rm{s.t.}\quad\ y\in\argmin_{y}\ &g(x, y) = \frac{1}{2} y^\top C y + d^\top y + x^\top D y  \nonumber\\
    \rm{s.t.}&\quad\  x^\top E_i y \leq e_i \quad \text{for } i = 1, \dots, k, \quad -1\leq y_i\leq 1 \quad \text{for } i = 1, \dots, n\nonumber,
\end{align}
where \( x \in \mathbb{R}^n \) represents the upper-level decision variables, \( y \in \mathbb{R}^n \) represents the lower-level decision variables, \( A \in \mathbb{R}^{2n \times 2n} \) and \( C \in \mathbb{R}^{n \times n} \) are symmetric positive definite matrices, \( b \in \mathbb{R}^{2n} \) and \( d \in \mathbb{R}^n \) are vectors, \( D \in \mathbb{R}^{n \times n} \) and \( E_i \in \mathbb{R}^{n \times n} \) are matrices, $e_i\in\mathbb{R}^+$ are positive numbers for $i=1,..,k$. All these are randomly generated (see Appendix \ref{sc_details_of_experiments}).

\paragraph{Experimental Setup and Algorithm Comparison}

In this experiment, we set the problem dimension to \( n = 60 \) and the number of primary constraints to \( k = 20 \). We generated ten problem instances using seeds ranging from 0 to 9 to ensure variability and reproducibility. We allocated a two-minute time budget to each of the three algorithms—BFBM~(Our proposed Algorithm), BSG~\citep{giovannelli2021inexact}, and BLOCC~\citep{jiang2024primal}—and evaluated them under two stepsize strategies: constant and diminishing. (For details about the parameters, refer to Appendix \ref{sc_details_of_experiments}.)

After the allotted time, the quality of the obtained solutions \((x, y)\) was evaluated based on four metrics: the {\it upper-level function value} \( f(x,y) \), the {\it hyperfunction value} \( f(x, y_{\text{opt}}) \) where \( y_{\text{opt}} \) is the optimal solution of the lower-level problem given \( x \), the {\it lower-level optimality gap} \( g(x,y) - g(x, y_{\text{opt}}) \), and the {\it total constraint violation} defined as
\[
\sum_{i=1}^{k} \max(0, x E_i^{(1)} y - e_i^{(1)}) + \sum_{i=1}^{n} \max(0, y_i - 1) + \sum_{i=1}^{n} \max(0, -y_i - 1),
\]
which measures the overall lower-level feasibility of the solution. A higher total constraint violation indicates that the obtained solution deviates more from the feasible region of the lower-level problem. The total constraint violation equals zero if and only if the obtained solution is feasible in the lower-level problem. For each algorithm, the solution with the best hyperfunction value across both stepsize strategies was selected for comparison.

\begin{table}[ht]
\centering
\caption{Evaluate solution qualities using upper-level metrics: Hyperfunction Value (Upper-Level
Function Value)}
\label{tab:selected_stepsize}
\begin{tabular}{cccc}
\toprule
\textbf{Seed} & \textbf{BSG} & \textbf{BLOCC} & \textbf{BFBM} \\
\midrule
0 & \textbf{1.1138 (1.0546)} & 2.6983 (1.3556) & 1.5372 (1.2014) \\
1 & 1.1550 (1.0850) & 0.9524 (0.1207) & \textbf{0.1657 (-0.0186)} \\
2 & 1.9700 (1.8424) & 3.1455 (2.1752) & \textbf{1.8765 (1.6664)} \\
3 & \textbf{2.0662 (2.1330)} & 2.7364 (2.5312) & 2.1675 (1.9531) \\
4 & \textbf{0.6963 (17.6067)} & 1.4438 (1.4192) & 0.8288 (0.6353) \\
5 & 0.7596 (0.7157) & 0.8560 (1.4241) & \textbf{0.6675 (0.3143)} \\
6 & \textbf{1.0723 (18.5577)} & 1.2513 (0.2774) & 2.2743 (1.8548) \\
7 & 0.6451 (1.0544) & \textbf{0.3920 (0.5784)} & 2.8073 (2.2171) \\
8 & 17.2746 (146.8528) & 2.8086 (1.1682) & \textbf{2.2652 (2.0724)} \\
9 & 8.4697 (15.3909) & 5.8981 (5.9888) & \textbf{1.5066 (1.0964)} \\
\bottomrule
\end{tabular}
\label{tab:best_hyperfunction_solutions}
\end{table}

Table~\ref{tab:best_hyperfunction_solutions} compares the solutions obtained by the three algorithms across ten different seeds under a two-minute computational budget per run. Each solution is evaluated in terms of both its hyperfunction value $f(x, y_{\text{opt}})$ and its upper-level function value $f(x, y)$. As shown in the table, our proposed algorithm BFBM achieves the lowest hyperfunction value among the three algorithms in five out of the ten seeds.

\begin{table}[ht]
\centering
\caption{Evaluate solution qualities using lower-level metrics: Total Constraint Violation (Optimality Gap)}
\label{tab:gap_violation}
\begin{tabular}{cccc}
\toprule
\textbf{Seed} & 
\textbf{BSG} & 
\textbf{BLOCC} & 
\textbf{BFBM} \\
\midrule
0 & 0.0000(0.0289) & 0.0000(0.1354) & 0.0000(0.0515) \\
1 & 0.0084(0.0218) & 0.0000(0.2183) & 0.0000(0.0478) \\
2 & 0.0000(0.0425) & 0.0000(0.1248) & 0.0000(0.0643) \\
3 & 0.0100(-0.0204) & 0.0000(0.0863) & 0.0000(0.0443) \\
4 & 0.0122(-0.0392) & 0.0066(0.0285) & 0.0000(0.0527) \\
5 & 0.0000(0.0224) & 0.0192(0.0099) & 0.0000(0.0530) \\
6 & 0.0140(-0.0317) & 0.0000(0.1670) & 0.0000(0.0718) \\
7 & 0.0136(-0.0320) & 0.0050(0.0171) & 0.0000(0.0562) \\
8 & 1.2690(-1.0111) & 0.0000(0.7070) & 0.0000(0.0717) \\
9 & 0.0169(-0.0059) & 0.0000(0.1192) & 0.0000(0.0510) \\
\bottomrule
\end{tabular}
\end{table}

As shown in Table~\ref{tab:gap_violation}, we compare the solutions obtained by the three algorithms across ten different random seeds in terms of their optimality gaps and total violations. It is crucial to consider only solutions with a total violation of zero as feasible; therefore, the comparison of optimality gaps is meaningful exclusively for these feasible solutions.

Our proposed algorithm, BFBM, consistently achieves a total violation of zero for all ten seeds, ensuring the feasibility of its solutions. Compared to BLOCC, BFBM achieves lower total constraint violation and optimality gap. Relative to BSG, although both exhibit small optimality gaps, BFBM maintains a lower total constraint violation.

This stability underscores the reliability and high quality of the solutions generated by the BFBM algorithm.

\subsection{Price Setting Problem}

In this experiment, we assess the performance of our proposed BFBM algorithm in the context of a price setting problem (see Section \ref{sec:MotivatingApplication}). In this problem, both the upper-level and lower-level objective functions are linear, and the constraints are all linear in $y$. Here we consider the case that the lower-level problems are all uncoupled, i.e. they do not depend on $T$. All the matrices and vectors in (\ref{eq:blo_price_setting}) are randomly generated (see Appendix \ref{sc_details_of_experiments}).

\paragraph{Experimental Setup and Algorithm Comparison}

In this experiment, we set the problem dimension to \( n = 60 \) and the number of primary constraints to \( k = 20 \). We generated ten problem instances using seeds ranging from 0 to 9 to ensure variability and reproducibility. We allocated a four-minute time budget to each of the three algorithms—BFBM~(Our proposed Algorithm), BSG~\citep{giovannelli2021inexact}, and BLOCC~\citep{jiang2024primal}—and evaluated them under diminishing stepsize strategy. (For details about the parameters, refer to Appendix \ref{sc_details_of_experiments}.)

\begin{table}[htbp]
\centering
\caption{Hyperfunction (Evaluate solution qualities using upper-level metrics: Hyperfunction Value (Upper-Level Function Value))}
\label{tab:hyper_upper}
\begin{tabular}{cccc}
\toprule
\textbf{Seed} & \textbf{BSG} & \textbf{BLOCC} & \textbf{BFBM} \\
\midrule
0 & -0.1279 (-0.0710) & -0.0046 (-0.0047) & \textbf{-0.2825 (-0.2750)} \\
1 & -0.1337 (-0.0405) & -0.0034 (-0.0034) & \textbf{-0.2455 (-0.2382)} \\
2 & -0.1291 (-0.1417) & -0.0000 (0.0000) & \textbf{-0.3629 (-0.3552)} \\
3 & -0.0509 (-0.0112) & -0.0000 (0.0000) & \textbf{-0.2950 (-0.2868)} \\
4 & -0.0559 (-0.1086) & -0.0059 (-0.0060) & \textbf{-0.3034 (-0.2888)} \\
5 & -0.0499 (-0.0612) & -0.0000 (0.0000) & \textbf{-0.2992 (-0.2891)} \\
6 & -0.0045 (-0.0445) & -0.0044 (-0.0043) & \textbf{-0.1590 (-0.1506)} \\
7 & 0.0000 (0.0000) & -0.0000 (0.0000) & \textbf{-0.3070 (-0.2945)} \\
8 & -0.0314 (-0.1053) & -0.0000 (0.0000) & \textbf{-0.1790 (-0.1741)} \\
9 & -0.0748 (-0.0603) & -0.0100 (-0.0095) & \textbf{-0.3348 (-0.3248)} \\
\bottomrule
\end{tabular}
\end{table}

\begin{table}[htbp]
\centering
\caption{Evaluate solution qualities using lower-level metrics: Total Constraint Violation (Optimality Gap)}
\label{tab:violation_gap}
\begin{tabular}{cccc}
\toprule
\textbf{Seed} & \textbf{BSG} & \textbf{BLOCC} & \textbf{BFBM} \\
\midrule
0 & 2.1335 (1.8119) & 0.0676 (0.0088) & 0.0000 (0.0120) \\
1 & 4.4754 (-1.0703) & 0.0618 (0.0312) & 0.0000 (0.0119) \\
2 & 3.0165 (0.0653) & 0.1464 (0.0024) & 0.0000 (0.0120) \\
3 & 4.6530 (-0.8170) & 0.0875 (-0.0049) & 0.0000 (0.0119) \\
4 & 3.4429 (-1.0803) & 0.1834 (-0.0523) & 0.0000 (0.0120) \\
5 & 3.7905 (-0.2300) & 0.1748 (-0.0246) & 0.0000 (0.0119) \\
6 & 3.7567 (-1.3199) & 0.0917 (0.0293) & 0.0000 (0.0120) \\
7 & 6.6517 (-2.5038) & 0.1058 (0.0149) & 0.0000 (0.0120) \\
8 & 2.5576 (0.0622) & 0.2153 (-0.0415) & 0.0000 (0.0120) \\
9 & 3.4771 (0.5839) & 0.0814 (0.0316) & 0.0000 (0.0120) \\
\bottomrule
\end{tabular}
\end{table}

For the BSG and BLOCC algorithms, we conducted experiment on a regularized version of the price setting problem. In the regularized version, a small regularization term $0.001\times(\|x\|^2+\|y\|^2)$ was added to the lower-level objective function $g(T,x,y)$, thereby ensuring that it is strongly convex. From Table \ref{tab:hyper_upper} and \ref{tab:violation_gap}, it is evident that for the price-setting problem, the solution obtained by our BFBM algorithm is optimal in terms of hyperfunction quality. In addition, while maintaining strict feasibility (i.e., the total constraint violation is zero), the optimality gap remains consistently low, demonstrating the robustness and efficiency of our method when lower-level problem is a linear program.

\section{Conclusion}

In this paper, we proposed a barrier function reformulation approach for BLO optimization problems involving lower-level problems with coupled constraints, i.e. the constraints depend on both upper- and lower-level variables. We focused on two specific cases: one where the lower-level objective is strongly convex with convex constraints concerning the lower-level variable, and another where the lower-level problem is a linear program.

By developing a series of new techniques, we demonstrated that the barrier reformulated problem converges to the original bilevel problem in terms of both hyperfunction value and hypergradient. To solve the reformulated problem whose hyperfunction and lower-level problems are not Lipschitz smooth, we designed two effective algorithms: one that guarantees non-asymptotic convergence for the barrier reformulated problem for a fixed $t$, and the other that converges asymptotically to a stationary point of the original problem if this point is an \textbf{SCSC} point (see Definition \ref{def:scsc_point}). Notably, this is the first work providing convergence guarantees for BLO optimization problems where the lower-level problem is a linear program.  For the general case that $g(x,y)$ and $h_i(x,y)$ are all convex in $y$, we have the convergence for hyperfunction value, and  Algorithm \ref{algo:fo} still works in this case (see Appendix \ref{section:convex_case}).

We conducted experiments on strongly convex and linear lower-level problems with linear inequality constraints. Our algorithm is the only one that ensure the solution is feasible for lower-level problem. For the price-setting problem, it was the most effective algorithm, demonstrating its robustness and effectiveness for bilevel optimization.

Our work establishes a unified framework that operates under minimal assumptions, significantly advancing the methodology in bilevel optimization. Future efforts will focus on extending our algorithms to scenarios with stochastic upper- and lower-level objective functions.

\clearpage
\bibliographystyle{abbrvnat} 
\bibliography{reference}

\clearpage
\appendix
\section*{Appendix}

\section{Details of Experiments}\label{sc_details_of_experiments}

\subsection{Generated Strongly Convex Problem}

\paragraph{Data Generation Process}

The problem data is generated using the following procedure:

\begin{enumerate}
    \item \textbf{Random Seed Initialization:} 
    A fixed random seed is set using \texttt{np.random.seed(seed)} to guarantee reproducibility of the results across different runs.

    \item \textbf{Matrix Generation:}
    \begin{itemize}
        \item \textbf{Matrix \( A \):} 
        Generated as a symmetric positive definite matrix:
        \[
        A = 0.5 \times (A_{\text{init}} \times A_{\text{init}} ^\top) + I_{2n}.
        \]
        Here, each entry of the initial random matrix \( A_{\text{init}}  \) is drawn from a standard normal distribution \( \mathcal{N}(0, 1) \);

        \item \textbf{Matrix \( C \):} 
        Generated as a symmetric positive definite matrix:
        \[
        C = 0.5 \times (C_{\text{init}}  \times C_{\text{init}} ^\top) + I_n.
        \]
        Each entry of the initial random matrix \( C_{\text{init}}  \) is drawn from a standard normal distribution \( \mathcal{N}(0, 1) \);

        \item \textbf{Matrix \( D \):} 
        Generated as a random matrix:
        \[
        D = \text{randn}(n, n).
        \]
        Each entry of \( D \) is independently drawn from a standard normal distribution \( \mathcal{N}(0, 1) \).
    \end{itemize}
    
    \item \textbf{Vector Generation:}
    \begin{itemize}
        \item \textbf{Vector \( b \):} 
        Generated as a random vector:
        \[
        b = \text{randn}(2n).
        \]
        Each entry of \( b \) is drawn from a normal distribution \( \mathcal{N}(0, 1) \) and then scaled by a factor of 10 to increase the magnitude of the coefficients;

        \item \textbf{Vector \( d \):} 
        Generated as a random vector:
        \[
        d = \text{randn}(n).
        \]
        Each entry of \( d \) is drawn from a normal distribution \( \mathcal{N}(0, 1) \) and scaled by a factor of 10.
    \end{itemize}
    
    \item \textbf{Constraints Generation:}
    \begin{itemize}
        \item \textbf{Coupled Constraints:} 
        A total of $k$ coupled linear constraints are imposed on the lower-level variables:
        \[
        x^\top E_i y \leq e_i \text{, for } i = 1, \dots, k.
        \]
        \begin{itemize}
            \item Each matrix \( E_i \in \mathbb{R}^{n \times n} \) is generated with entries drawn independently from a standard normal distribution \( \mathcal{N}(0, 1) \);
            \item Each scalar \( e_i \in \mathbb{R} \) is drawn uniformly from the interval \([0, 1]\), i.e., \( e_i \sim \mathcal{U}(0, 1) \);
        \end{itemize}
        
        \item \textbf{Box Constraints:} 
        Simple Box constraints are imposed to make sure the lower-level feasible set is compact.
        \[
        -\mathbf{1} \leq y \leq \mathbf{1}.
        \]
    \end{itemize}
\end{enumerate}

\paragraph{Implementation Details}

\begin{enumerate}
    \item \textbf{BSG Algorithm~\cite{giovannelli2021inexact}:} 
    \begin{itemize}
        \item For inner loop, we use a max-min solver (Algorithm 2 in \cite{jiang2024primal}) to obtain the lower-level optimal solution $y$ and Lagrange multiplier $z_I$;
        \item We choose the initial point $x_{\text{init}}=0$, $y_{\text{init}}=0$, $z_{I,\text{init}}=0$; For the main loop, the stepsize is set to $\alpha_{k}=2\times 10^{-6}$ or $5\times 10^{-5}/\sqrt{k+1}$; For the max-min solver, the stepsizes for the first and second loops are set to $\eta_1=10^{-3}$ and $\eta_2=0.02$ respectively; The stopping criteria for the first and second loops in the max-min solver are $\epsilon_1=0.01$ and $\epsilon_2=0.01$.
    \end{itemize}
    \item \textbf{BLOCC Algorithm~\cite{jiang2024primal}:}
    \begin{itemize}
        \item We choose the initial point $x_{\text{init}}=0$, $y_{g,\text{init}}=0$, $y_{F,\text{init}}=0$, $\mu_{g,\text{init}}=0$, $\mu_{F,\text{init}}=0$, and the penalty coefficient $\gamma=10$; For the main loop, the stepsize is set to $\alpha_{k}=2\times10^{-5}$ or $10^{-3}/\sqrt{k+1}$; For both max-min solvers, the stepsizes for the first and second loops are set to $\eta_1=10^{-3}$ and $\eta_2=0.5$ respectively; The stopping criteria for the first and second loops in both max-min solvers are $\epsilon_1=0.05$ and $\epsilon_2=0.05$.
    \end{itemize}
    \item \textbf{BFBM Algorithm (Our proposed Algorithm):}
    \begin{itemize}
        \item For inner loop we use Netwon method;
        \item We set $t=0.01$, and $M=0.1$, which means we shrink the lower-level feasible set to $\{y|h_{i}(x,y)\leq-10^{-3}\text{ for }i=1,...,k\}$;
        \item We choose the initial point $x_{\text{init}}=0$, $y_{\text{init}}=0$; For the main loop, the stepsize is set to $\alpha_{k}=2\times10^{-5}$ or $5\times 10^{-4}/\sqrt{k+1}$; For inner loop the stepsizes is set to $\eta=0.1$; The stopping criteria for inner loop is $\epsilon_y=0.01$.
    \end{itemize}
\end{enumerate}

\subsection{Price Setting Problem}

\paragraph{Data Generation Process}

The problem data is generated using the following procedure:

\begin{enumerate}
    \item \textbf{Random Seed Initialization:} 
    A fixed random seed is set using \texttt{np.random.seed(seed)} to guarantee reproducibility of the results across different runs.
    \item \textbf{Matrix Generation:}
    \begin{itemize}
    \item \textbf{Matrix \( A_1 \) and \( A_2 \):} 
    Generated as a matrix with entries drawn from a normal distribution \( \mathcal{N}(0, 1) \) and then made non-negative by taking the absolute value. Additionally, a correction is applied: if all values in a row of \( A_1 \) or \( A_2 \) are less than 1, we set the maximum value in that row to be 1. This ensures that each city has at least one road leading to it. Finally, we multiply each element of \( A_2 \) by 0.2 to simulate the significantly lower carrying capacity of untaxed roads compared to taxed roads.
    
\end{itemize}

\item \textbf{Vector Generation:}
\begin{itemize}
    \item \textbf{Vector \( b \), \( c_1 \) and \( c_2 \):} 
    Generated as random vectors, with each entry drawn from \( \mathcal{N}(0, 1) \), and then made non-negative by taking the absolute value.

\end{itemize}

\end{enumerate}

\paragraph{Implementation Details}

\begin{enumerate}
\item \textbf{BSG Algorithm~\cite{giovannelli2021inexact}:} 
    \begin{itemize}
        \item For inner loop, we use a max-min solver (Algorithm 2 in \cite{jiang2024primal}) to obtain the lower-level optimal solution $y$ and Lagrange multiplier $z_I$;
        \item We choose the initial point $T_{\text{init}}=0$, $x_{\text{init}}=0$, $y_{\text{init}}=0$, $z_{I,\text{init}}=0$; For the main loop, the stepsize is set to $\alpha_{k}=0.01/\sqrt{k+1}$; For the max-min solver, the stepsizes for the first and second loops are set to $\eta_1=2^{-5}$ and $\eta_2=30$ respectively; The stopping criteria for the first and second loops in the max-min solver are $\epsilon_1=5\times 10^{-4}$ and $\epsilon_2=1$.
    \end{itemize}
    \item \textbf{BLOCC Algorithm~\cite{jiang2024primal}:}
    \begin{itemize}
        \item We choose the initial point $T_{\text{init}}=0$, $x_{g,\text{init}}=0$, $y_{g,\text{init}}=0$, $x_{F,\text{init}}=0$, $y_{F,\text{init}}=0$, $\mu_{g,\text{init}}=0$, $\mu_{F,\text{init}}=0$, and the penalty coefficient $\gamma=2$; For the main loop, the stepsize is set to $\alpha_{k}=0.01/\sqrt{k+1}$; For both max-min solvers, the stepsizes for the first and second loops are set to $\eta_1=2\times10^{-5}$ and $\eta_2=30$ respectively; The stopping criteria for the first and second loops in both max-min solvers are $\epsilon_1=10^{-4}$ and $\epsilon_2=1$.
    \end{itemize}
    \item \textbf{BFBM Algorithm (Our proposed Algorithm):}
    \begin{itemize}
        \item For inner loop we use Newton method;
        \item We set $t=10^{-4}$, and $M=0.1$, which means we shrink the lower-level feasible set to $\{y|h_{i}(x,y)\leq-10^{-5}\text{ for }i=1,...,k\}$;
        \item We choose the initial point $x_{\text{init}}=0$, $y_{\text{init}}=0$; For the main loop, the stepsize is set to $\alpha_{k}=0.02/\sqrt{k+1}$; For inner loop the stepsizes is set to $\eta=0.01$; The stopping criteria for inner loop is $\epsilon_{xy}=5\times 10^{-4}$.
    \end{itemize}
\end{enumerate}

\section{Preliminaries}
\subsection{Notations}\label{sec:notations}

\begin{itemize}
    \item Denote $[\lambda_1(x,y),...,\lambda_k(x,y)]:=\argmax_\lambda g(x,y)+\sum_{i=1}^k\lambda_i h_i(x,y)$ be the Lagrange multipliers at point $(x,y)$, where $y\in y^\ast(x)$. If $y^\ast(x)$ is unique, or $\lambda_i(x,y)$ is independent of the choice of $y\in y^\ast(x)$, for simplicity, we write $\lambda_i(x,y)=\lambda_i(x)$, and, under this case, define $\mathcal{I}^\ast(x):=\{i:\lambda_i(x)>0\}$ be the set of index that corresponding multiplier is positive;
    \item $\mathcal{Y}_m(x):=\{y:h_i(x,y)\leq -m,\ i=1,...,k\}$ is the shrinked feasible set;
    \item $\hat{y}_s$ is approximation of $y^\ast(x_s)$ from the output of Algorithm \ref{algo:fo_lower_2};
    \item $D$ introduced in Remark \ref{remark:non-emptyness} is a constant that for any $x$ there exists $y$ satisfies $h_i(x,y)\leq -D$ for any $i$;
    \item $\mu_{\widetilde{g}_t}$ introduced in Proposition \ref{prop:strongly_convex} is the strongly convexity constant of $\widetilde{g}_t(x,y)$ in $y$;
    \item $M$ defined in Corollary \ref{remark:upper_bound_of_h_at_approximate_point} is the positive lower bound of $-h_i(x,y^\ast(x))$ and $-h_i(x_s,\hat{y}_s)$; 
    \item $\overline{L}_{\widetilde{g}_t,m}$ introduced in Proposition \ref{prop:lipschitz_smooth} is the Lipschitz smoothness constant of $\widetilde{g}_t(x,y)$ in $y$ in the set $\mathcal{Y}_m(x)$; 
    \item $\overline{\overline{L}}_{\widetilde{g}_t,m}$ introduced in Proposition \ref{lem:lipHessian} is the Lipschitz constant of Hessian for $\widetilde{g}_t(x,y)$ in the set $\mathcal{Y}_m(x)$; 
    \item $\kappa_s$ utilized in Theorem \ref{thm:convergence_rage_alg_2} is the condition number of the inner loop at the $s$-th step of the outer loop;
    \item $\overline{L}_{\widetilde{\phi}_t,s}$ studied in Lemma \ref{lem:lip_global_bound} is the estimate of local Lipschitz smoothness constant at the $s$-th step of the outer loop;
    \item $\overline{L}_{\widetilde{\phi}_t}$ introduced in Lemma \ref{lem:lip_global_bound} is the global upper bound of $\overline{L}_{\widetilde{\phi}_t,s}$;
    \item $\hat{\nabla}_x\widetilde{\phi}_t(x_s):=\nabla_x f(x,y)+\nabla^2_{xy}\widetilde{g}_t(x_s,\hat{y}_s)(\nabla^2_{yy}\widetilde{g}_t(x_s,\hat{y}_s))^{-1}\nabla_y f(x_s,\hat{y}_s)$ is the approximation of the hypergradient $\nabla_x\widetilde{\phi}_t(x_s)$ at the $s$-th step of the outer loop;
    \item $\overline{L}_{\widetilde{\phi}_t,m_s}'$ studied in Lemma \ref{lemma:approx_error_hypergrad} is the approximation error between $\hat{\nabla}_x\widetilde{\phi}_t(x_s)$ and $\nabla_x\widetilde{\phi}_t(x_s)$, i.e. $\|\hat{\nabla}_x\widetilde{\phi}_t(x_s)-\nabla_x\widetilde{\phi}_t(x_s)\|\leq \overline{L}_{\widetilde{\phi}_t,m_s}'\|y^\ast(x_s)-\hat{y}_s\|$;
    \item $\overline{L}_{\widetilde{\phi}_t}'$ given in Lemma \ref{lem:upperboundofapproxgradient} is the upper bound of $\overline{L}_{\widetilde{\phi}_t,m_s}'$.
\end{itemize}

\subsection{Some basic lemmas}\label{proof:somebasiclemmas}

\begin{lemma}\label{lem:optimalitygapforpoint}
    Suppose Assumption \ref{assumption:general1}(\ref{assumption:general1(1)}), \ref{assumption:general1}(\ref{assumption:general1(2)}) and Assumption \ref{assumption:nonlinear} hold, then 
    \begin{align}
        \left\|y^\ast_t(x)-y^\ast(x)\right\|\leq \sqrt{\frac{2}{\mu_g}kt}.
    \end{align}
\end{lemma}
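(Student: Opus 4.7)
The plan is to combine the optimality gap bound from Lemma \ref{lem:optimalitygap} with the quadratic growth property implied by strong convexity. The inequality $\|y^\ast_t(x) - y^\ast(x)\| \leq \sqrt{2kt/\mu_g}$ has the familiar shape of ``distance bounded by the square root of function value gap,'' which is exactly what strong convexity provides once the function gap itself is bounded linearly in $t$.

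First I would argue that $y^\ast_t(x)$ is feasible for the original lower-level problem, i.e.\ $y^\ast_t(x) \in \mathcal{Y}(x)$. This is immediate from the barrier: since $\widetilde{g}_t(x,y) = g(x,y) - t\sum_i \log(-h_i(x,y))$ is finite at its minimizer, we must have $h_i(x, y^\ast_t(x)) < 0$ strictly for every $i$. Moreover, by Slater's condition (Assumption \ref{assumption:general1}(\ref{assumption:general1(2)})) the reformulated problem is well-defined and attains its minimum on the (strictly) feasible interior, so $y^\ast_t(x)$ is a well-defined strictly feasible point.

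Next I would invoke the quadratic growth property of $g$. Because $g(x,\cdot)$ is $\mu_g$-strongly convex, $\mathcal{Y}(x)$ is convex (as each $h_i(x,\cdot)$ is convex in $y$), and $y^\ast(x)$ is the minimizer of $g(x,\cdot)$ over $\mathcal{Y}(x)$, the first-order optimality condition $\langle \nabla_y g(x, y^\ast(x)),\, y - y^\ast(x)\rangle \geq 0$ holds for every $y \in \mathcal{Y}(x)$. Combining this with the strong-convexity lower bound yields
\begin{equation*}
g(x,y) - g(x, y^\ast(x)) \;\geq\; \tfrac{\mu_g}{2}\,\|y - y^\ast(x)\|^2 \qquad \text{for every } y \in \mathcal{Y}(x).
\end{equation*}
Applying this with $y = y^\ast_t(x)$, which is feasible by the first step, gives
\begin{equation*}
\tfrac{\mu_g}{2}\,\|y^\ast_t(x) - y^\ast(x)\|^2 \;\leq\; g(x, y^\ast_t(x)) - g(x, y^\ast(x)).
\end{equation*}

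Finally, I would chain this with Lemma \ref{lem:optimalitygap}, whose hypotheses are met since $g$ strongly convex in $y$ implies convex in $y$, and $h_i$ is convex in $y$ by Assumption \ref{assumption:nonlinear}. Lemma \ref{lem:optimalitygap} gives $g(x, y^\ast_t(x)) - g(x, y^\ast(x)) \leq kt$, so we conclude $\tfrac{\mu_g}{2}\|y^\ast_t(x)-y^\ast(x)\|^2 \leq kt$ and taking square roots yields the desired bound. There is no real obstacle here: the only subtlety is making sure that $y^\ast_t(x)$ lies in the feasible set $\mathcal{Y}(x)$ so that the strong-convexity inequality for the constrained minimizer $y^\ast(x)$ can be applied to it, and this follows automatically from the log-barrier structure.
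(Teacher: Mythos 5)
Your proposal is correct and follows essentially the same route as the paper: first-order optimality of $y^\ast(x)$ over $\mathcal{Y}(x)$ plus $\mu_g$-strong convexity give $\tfrac{\mu_g}{2}\|y^\ast_t(x)-y^\ast(x)\|^2 \leq g(x,y^\ast_t(x))-g(x,y^\ast(x))$, which is then chained with the optimality-gap bound $kt$ from Lemma \ref{lem:optimalitygap}. Your explicit remark that the barrier forces $y^\ast_t(x)$ to be strictly feasible is a detail the paper leaves implicit, but it does not change the argument.
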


\begin{proof}
    Since $g(x,y)$ is $\mu_g$-strongly convex in $y$, we have
    \begin{align}\label{eq:sc}
        g(x,y)\geq g(x,y^\ast(x))+\nabla_y g(x,y^\ast(x))(y-y^\ast(x))+\frac{\mu_g}{2}\left\|y-y^\ast(x)\right\|^2.
    \end{align}
    By the optimal condition, $\nabla_y g(x,y^\ast(x))(y-y^\ast(x))\geq 0$ for any feasible point $y$. Replacing $y$ by $y^\ast_t(x)$ in Equation (\ref{eq:sc}), we get
    \begin{align*}
        \frac{\mu_g}{2}\left\|y^\ast_t(x)-y^\ast(x)\right\|^2&\leq g(x,y^\ast_t(x))-g(x,y^\ast(x))-\nabla_y g(x,y^\ast(x))(y^\ast_t(x)-y^\ast(x))\\
        &\leq g(x,y^\ast_t(x))-g(x,y^\ast(x)).
    \end{align*}
    From lemma \ref{lem:optimalitygap}, we know  $g(x,y^\ast_t(x))-g(x,y^\ast(x))\leq kt$, which implies
    \begin{align}\label{ineq:uniformlyconverge}
        \left\|y^\ast_t(x)-y^\ast(x)\right\|\leq \sqrt{\frac{2}{\mu_g}kt}.
    \end{align}
\end{proof}

\begin{lemma}\label{lem:continuityofyastt}
    Suppose Assumption \ref{assumption:general1} holds, and either Assumption \ref{assumption:linear} or \ref{assumption:nonlinear} holds, then $y^\ast_t(x)$ continuously depends on $t$ for any fixed $x$.
\end{lemma}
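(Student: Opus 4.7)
\begin{proof-idea}
The plan is to fix $t_0 > 0$ and a sequence $t_n \to t_0$ (with $t_n > 0$) and show $y^\ast_{t_n}(x) \to y^\ast_{t_0}(x)$. Since $\widetilde g_t(x,\cdot)$ is strictly convex on the open set $\{y : h_i(x,y) < 0,\ \forall i\}$ by Proposition \ref{prop:diff_hyperfun}, the minimizer $y^\ast_t(x)$ is unique, so the argument reduces to a standard compactness-plus-uniqueness scheme: show that the sequence is relatively compact inside the strict interior of $\mathcal Y(x)$, and then identify every cluster point as $y^\ast_{t_0}(x)$.

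First I would establish that $\{y^\ast_{t_n}(x)\}$ is bounded. Under Assumption \ref{assumption:linear} the set $\mathcal Y(x)$ is compact by hypothesis. Under Assumption \ref{assumption:nonlinear}, $\mu_g$-strong convexity of $g(x,\cdot)$ together with the fact that $-t\sum_i \log(-h_i(x,y))$ grows at most logarithmically in $\|y\|$ (because $h_i(x,y)$ grows at most polynomially) makes $\widetilde g_t(x,\cdot)$ coercive uniformly for $t$ in a compact neighborhood of $t_0$, so the inequality $\widetilde g_{t_n}(x, y^\ast_{t_n}(x)) \leq \widetilde g_{t_n}(x, y_0)$ against any fixed interior reference point $y_0$ (existing by Assumption \ref{assumption:general1}(\ref{assumption:general1(2)})) yields a uniform $\|y\|$-bound.

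Next I would prove the main obstacle, which is a uniform interior bound: the cluster points of $\{y^\ast_{t_n}(x)\}$ cannot lie on the boundary of $\mathcal Y(x)$. Rearranging $\widetilde g_{t_n}(x, y^\ast_{t_n}(x)) \leq \widetilde g_{t_n}(x, y_0)$ gives
\[
-t_n \sum_{i=1}^k \log(-h_i(x, y^\ast_{t_n}(x))) \;\leq\; g(x,y_0) - g(x,y^\ast_{t_n}(x)) - t_n \sum_{i=1}^k \log(-h_i(x,y_0)).
\]
The right-hand side is uniformly bounded in $n$ by Step 1 and continuity of $g$ and the log-barrier at $y_0$, and since $t_n \geq t_0/2 > 0$ for $n$ large, each summand $-\log(-h_i(x, y^\ast_{t_n}(x)))$ is bounded above uniformly, giving a uniform positive lower bound on $-h_i(x, y^\ast_{t_n}(x))$ for every $i$. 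This is the key barrier-blow-up argument and the step that truly requires $t_0 > 0$.

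Finally I would extract from any subsequence a further subsequence converging to some $y^\dagger$ with $h_i(x,y^\dagger) < 0$ for all $i$, pass to the limit in the optimality inequality $\widetilde g_{t_{n_k}}(x, y^\ast_{t_{n_k}}(x)) \leq \widetilde g_{t_{n_k}}(x, y)$ for any interior competitor $y$ (using continuity of $\widetilde g_t(x,y)$ jointly in $(t,y)$ on the interior), and conclude $\widetilde g_{t_0}(x, y^\dagger) \leq \widetilde g_{t_0}(x, y)$ for every interior $y$. Strict convexity then forces $y^\dagger = y^\ast_{t_0}(x)$, and since every subsequence has the same cluster point, the full sequence converges. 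The principal difficulty is Step 2: once the uniform separation from the boundary is in hand, the rest is a routine continuity-plus-uniqueness argument.
\end{proof-idea}
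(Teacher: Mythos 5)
Your argument is correct in substance, but it takes a genuinely different route from the paper. The paper's proof is a two-line application of the implicit function theorem: fixing $x$, it observes that $\nabla_y\widetilde{g}_t(x,y^\ast_t(x))=0$ with $\nabla^2_{yy}\widetilde{g}_t(x,y)\succ 0$, so the stationarity equation defines $y^\ast_t(x)$ locally as a continuous (indeed differentiable) function of $t$, and uniqueness of the minimizer upgrades this to the global statement. You instead run a direct variational argument: boundedness of the minimizers along $t_n\to t_0>0$, a uniform separation from the boundary obtained from the blow-up of the barrier in the inequality $\widetilde{g}_{t_n}(x,y^\ast_{t_n}(x))\leq\widetilde{g}_{t_n}(x,y_0)$ against a Slater point, and then limit-passing in the optimality inequality plus strict convexity (Proposition \ref{prop:diff_hyperfun}) to identify every cluster point with $y^\ast_{t_0}(x)$. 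The paper's route is shorter and buys more (local smoothness of $t\mapsto y^\ast_t(x)$), at the cost of needing the invertible Hessian and twice differentiability; your route is more elementary, avoids the IFT entirely, and yields uniformity over compact $t$-intervals essentially for free. Two small points to tighten: in Step 1 the correct reason the barrier cannot destroy coercivity in the unbounded strongly convex case is that each $-h_i(x,\cdot)$ is \emph{concave}, hence bounded above by an affine function of $y$, so $\log(-h_i(x,y))=\mathcal{O}(\log\|y\|)$ (the phrase ``grows at most polynomially'' is not justified by the assumptions as stated); and in Step 2, passing from the upper bound on the \emph{sum} $-t_n\sum_i\log(-h_i(x,y^\ast_{t_n}(x)))$ to an upper bound on each summand requires lower-bounding the remaining summands, which does follow from the Step 1 bound and continuity of the $h_i$ on the resulting compact set, but should be said explicitly.
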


\begin{proof}
    Denote $G_x(t,y)=g_t(x,y)$. We fix a $x$. Note that
    \begin{align*}
        \nabla_{y} (\nabla_y G_x(t,y))=\nabla^2_{yy}\widetilde{g}_t(x,y)\succ 0
    \end{align*}
    for any $t$ and $y$, and $\nabla_y G_x(t,y^\ast_t(x))=0$. By implicit function theorem, we can find a neighborhood $U_t\times U_{y^\ast_t(x)}$ of $(t,y^\ast_t(x))$ such that there exists a continuous function $Y(t)$ satisfies $\nabla_y G_x(t,y)=0$ if and only if $y=Y(t)$. Since $G$ is strongly convex in $y$, $\nabla_y G_x(t,y)=0$ means $y=y^\ast_t(x)$. Therefore, by the uniqueness of the optimal solution, locally we have $y^\ast_t(x)=Y(t)$ continuously depending on $t$. As we already know $y^\ast_t(x)$ is a function of $t$, we conclude that globally, i.e. for any $t$, $y^\ast_t(x)$ continuously depends on $t$.
\end{proof}

\begin{lemma}\label{lem:con_of_multi}
    Suppose Assumption \ref{assumption:general1} holds, and either Assumption \ref{assumption:nonlinear} or Assumption \ref{assumption:linear} holds, then the Lagrange multiplier $\lambda_i(x)$ is continuous for any $i$. Furthermor, since $\mathcal{X}$ is compact, $\lambda_i(x)$ has upper bound independent of $i$ and $x$.
\end{lemma}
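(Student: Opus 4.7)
The plan is to establish continuity pointwise at each $\bar x \in \mathcal{X}$ along an arbitrary sequence $x_n \to \bar x$, and then invoke compactness of $\mathcal{X}$ for the uniform bound. The first step is stability of the lower-level solution map. Under Assumption \ref{assumption:nonlinear}, $y^*(x_n) \to y^*(\bar x)$ follows from strong convexity together with continuity of $g$ in $x$, via a Berge-type argument analogous to Lemma \ref{lem:optimalitygapforpoint}. Under Assumption \ref{assumption:linear}, the solution map $x \mapsto y^*(x)$ is only upper semicontinuous, but for any subsequence one can extract a further subsequence along which $y^*(x_{n_k}) \to \bar y$ for some $\bar y \in y^*(\bar x)$ (using that the feasible polytope depends continuously on $x$ and that the objective is linear). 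By LICQ at every $(\bar x, y)$ with $y \in y^*(\bar x)$, the multiplier vector at $(\bar x, \bar y)$ is unique, and by Remark \ref{remark:welldefined} independent of the selection, so I denote it $\lambda(\bar x)$.

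The next step is to pass to the limit in the KKT system
\[
\nabla_y g(x_n, y^*(x_n)) + \sum_{i=1}^k \lambda_i(x_n)\, \nabla_y h_i(x_n, y^*(x_n)) = 0, \qquad \lambda_i(x_n) \geq 0, \qquad \lambda_i(x_n)\, h_i(x_n, y^*(x_n)) = 0.
\]
I first claim that $\{\lambda(x_n)\}$ is bounded. If not, dividing by $\|\lambda(x_n)\|$ and extracting a limit $\bar\mu$ with $\|\bar\mu\|=1$, the stationarity equation forces $\sum_i \bar\mu_i\, \nabla_y h_i(\bar x, \bar y) = 0$; complementarity together with continuity of $h_i$ gives $\bar\mu_i = 0$ for every $i$ inactive at $(\bar x, \bar y)$, so the active gradients would be linearly dependent, contradicting LICQ from Assumption \ref{assumption:general1}(\ref{assumption:general1(3)}). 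With boundedness in hand, any subsequence of $\{\lambda(x_n)\}$ admits a further subsequence converging to some $\bar\lambda$; passing to the limit in the three KKT relations shows $\bar\lambda$ is a valid multiplier at $(\bar x, \bar y)$, hence $\bar\lambda = \lambda(\bar x)$ by uniqueness. Since every subsequential limit equals $\lambda(\bar x)$, the full sequence converges, establishing continuity of $\lambda_i$ at $\bar x$.

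The uniform boundedness claim then follows immediately: each continuous function $\lambda_i : \mathcal{X} \to \mathbb{R}$ is bounded on the compact set $\mathcal{X}$, and since there are only finitely many indices $i \in \{1,\ldots,k\}$, taking the maximum produces a single constant independent of both $i$ and $x$.

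The main obstacle will be the linear setting, where $y^*(x)$ is genuinely set-valued and a literal continuous selection need not exist. The key leverage is that LICQ forces the multiplier to depend only on $x$, not on the selection $\bar y \in y^*(\bar x)$, so the subsequence-limit argument goes through uniformly over selections; in particular, the contradiction against unboundedness of $\lambda(x_n)$ depends only on the active-set structure at $(\bar x, \bar y)$, which LICQ tames irrespective of which optimal $\bar y$ is extracted in the limit.
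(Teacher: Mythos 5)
Your proposal is correct and follows essentially the same route as the paper's proof: boundedness of the multipliers via a normalization argument whose limit would violate LICQ (the paper's Case 1), identification of every subsequential limit with $\lambda(\bar x)$ by passing to the limit in the KKT system and using LICQ-induced uniqueness of the multiplier (the paper's Case 2), and then compactness of $\mathcal{X}$ plus finiteness of $k$ for the uniform bound. Your explicit handling of convergent selections from the set-valued $y^\ast(x)$ in the linear case, combined with Remark \ref{remark:welldefined}, is a slightly more careful rendering of the step the paper dispatches with ``the proof is the same as the strongly convex case,'' but it is not a different argument.
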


\begin{proof}
    When $g(x,y)$ is strongly convex in $y$ and $h_i(x,y)$ is convex in $y$, then $y^\ast(x)$ is unique. If $\lambda_i(x)$ is not continuous for some point $x_0$ and some index $i_0$, then we can find a sequence $\{x_j\}_{j=1}^\infty$ such that $\lim_{j\to\infty}x_j=x_0$, and $\lim_{j\to\infty}\lambda_{i_0}(x_j)\ne\lambda_{i_0}(x_0)$, or does not exist.\\
    
    \noindent\textbf{Case 1:} If $\lim_{j\to\infty}\lambda_{i_0}(x_j)$ does not exist. We consider separately the cases where $\lambda_{i_0}(x_j)$ is uniformly bounded and where there exists an unbounded subsequence. 
    
    If $\{\lambda_{i_0}(x_j)\}_{j=1}^\infty$ has an unbounded subsequence, still denoted as $\{\lambda_{i_0}(x_j)\}_{j=1}^\infty$, i.e. $\lim_{j\to\infty}|\lambda_{i_0}(x_j)|=\infty$, then set $\mu^\ast_j=\max_{i}|\lambda_i(x_j)|$. By the unboundedness assumption, $\lim_{j\to\infty}\mu^\ast_j=\infty$. We can choose a sub-sequence of $\{x_j\}_{j=1}^\infty$, still denoted as $\{x_j\}$, such that $i^\ast=\arg\max_{i}|\lambda_i(x_j)|$ for some fixed index $i^\ast$. Since $|\lambda_i(x_j)|/\mu^\ast_j\leq 1$ for any $i$ and $j$, we can choose a subsequence, still denoted as $\{x_j\}$, such that $\lim_{j\to\infty}\lambda_i(x_j)/\mu^\ast_j$ exists for any $i$. Denote $\mu_i(x_0):=\lim_{j\to\infty}\lambda_i(x_j)/\mu^\ast_j$. It is clear that $\mu_{i^\ast}(x_0)=1$.
    Replacing $x$ with $x_j$ in the KKT condition equation
    \begin{align}\label{eq:kkt}
        \nabla_y g(x,y^\ast(x))+\sum_{i=1}^k\lambda_i(x) \nabla_y h_i(x,y^\ast(x))=0,
    \end{align}
    dividing by $\mu^\ast_j$, and letting $j$ approach infinity, we get
    \begin{align}\label{eq:kkt2}
        \sum_{i=1}^k\mu_i(x_0)\nabla_y h_i(x_0,y^\ast(x_0))=0.
    \end{align}
    We will show that $\mu_i(x_0)\ne 0$ means $i$ is active at $x_0$. If $\mu_i(x_0)\ne 0$, then $\lambda_i(x_j)\ne 0$ except for at most finitely many $j$. So the index $i$ is active at the point $x_j$, which implies $h_i(x_j,y^\ast(x_j))=0$. Thus, $h_i(x_0,y^\ast(x_0))=\lim_{j\to\infty}h_i(x_j,y^\ast(x_j))=0$, implying that $i$ is active at the point $x_0$. (\ref{eq:kkt2}) can be written as
    \begin{align*}
        \sum_{i\in\mathcal{I}^\ast(x_0)}\mu_i(x_0)\nabla_y h_i(x_0,y^\ast(x_0))=0,
    \end{align*}
    where $\mathcal{I}^\ast(x_0)$ is the set of active index at $x_0$. By LICQ assumption, $\nabla_y h_i(x_0,y^\ast(x_0))$ are linear independent for $i\in\mathcal{I}^\ast(x_0)$. Note that $\mu_{i^\ast}(x_0)=1\ne 0$, this contradicts the LICQ assumption. Therefore, we know that $\{\lambda_{i_0}(x_j)\}_{j=1}^\infty$ is uniform bounded.\\
    
    \noindent\textbf{Case 2:} If $\lim_{j\to\infty}\lambda_{i_0}(x_j)$ exists but not equals $\lambda_{i_0}(x_0)$, we will show that this also contradicts LICQ assumption. Case 1 demonstrates that $\lambda_i(x_j)$ has a uniform bound for any $i$ and $j$. Thus, we can choose a subsequence of $\{x_j\}_{j=1}^\infty$, still denoted as $\{x_j\}_{j=1}^\infty$, such that $\lim_{j\to\infty}\lambda_i(x_j)=:\mu_i(x_0)$ exists for any $i$. Replacing $x$ with $x_j$ in (\ref{eq:kkt}), and letting $j$ approach infinity, we get
    \begin{align}\label{eq:kkt3}
        \nabla_y g(x_0,y^\ast(x_0))+\sum_{i=1}^k\mu_i(x_0)\nabla_y h_i(x_0,y^\ast(x_0))=0.
    \end{align}
    Replacing $x$ with $x_0$ in the KKT condition (\ref{eq:kkt}), and subtracting (\ref{eq:kkt3}), we obtain
     \begin{align*}
        \sum_{i=1}^k(\lambda_i(x_0)-\mu_i(x_0))\nabla_y h_i(x_0,y^\ast(x_0))=0.
    \end{align*}
    If $\lambda_i(x_0)-\mu_i(x_0)\ne 0$, then either $\lambda_i(x_0)=0$ or $\lambda_i(x_0)\ne 0$. $\lambda_i(x_0)=0$ means $\mu_i(x_0)\ne 0$, which implies $i$ is active at point $x_0$ similar as the argument in Case 1. $\lambda_i(x_0)\ne 0$ means that $i$ is active at point $x_0$. Therefore, $\lambda_i(x_0)-\mu_i(x_0)\ne 0$ always means is active at point $x_0$. Note that $\lambda_{i_0}(x_0)-\mu_{i_0}(x_0)\ne 0$, which contradicts the LICQ assumption.\\

    When $g(x,y)$ and $h_i(x,y)$ are linear in $y$ for any $i$, then by Remark \ref{remark:welldefined}, we know that the multipliers $\lambda_i(x)$ are well-defined in the sense that is independent of $y\in y^\ast(x)$. We can also denote $\nabla_y g(x,y)$ as $w(x)$, and $\nabla_y h_i(x,y^\ast(x))$ as $v_i(x)$ since $\nabla_y g(x,y)$ and $\nabla_y h_i(x,y)$ is independent of $y$. Then the proof is the same as the strongly convex case.
\end{proof}

\section{Improved Convergence Rate}\label{sec:improvedconvergencerate}

In this section, we will demonstrate how to modify the choice of stepsize so that the convergence rate of Algorithm \ref{algo:fo} with respect to $t$ can be improved to $\widetilde{\mathcal{O}}(t^{-1.5})$ under strongly convex case. However, it should be noted that we need to utilize the upper bounds of certain terms. We will prove that these terms are indeed bounded, but these bounds are not computable in practice. 

Specifically, in Lemma \ref{lem:local_lip}, when estimating the local Lipschitz constant, we frequently evaluate the term $\nabla_{xy}\widetilde{g}_t(x,y)(\nabla_{yy}\widetilde{g}_t(x,y))^{-1}$. For the upper bound estimation of $(\nabla_{yy}\widetilde{g}_t(x,y))^{-1}$, we directly employ the strongly convexity constant of $\nabla_{yy}\widetilde{g}_t(x,y)$ in $y$ from Proposition \ref{prop:strongly_convex}. However, it is important to note that, under strongly convex case, when $\nabla_{xy}\widetilde{g}_t(x,y)$ explodes, which is because $h$ approaches zero, $\nabla_{yy}\widetilde{g}_t(x,y)$ also explodes. Therefore, using the strong convexity of $\nabla_{yy}\widetilde{g}_t(x,y)$ for the upper bound estimation of $(\nabla_{yy}\widetilde{g}_t(x,y))^{-1}$ results in significant losses. This leads to an underestimate of the dependence of $\nabla_{xy}\widetilde{g}_t(x,y)(\nabla_{yy}\widetilde{g}_t(x,y))^{-1}$ on $t$. 

We assume that all values of $t$ are less than or equal to 1. This assumption is justified, as in practical applications, we often choose a very small $t$. To improve the estimate, we establish the following result:

\begin{lemma}\label{lem:boundednessofJabobian}
    Suppose Assumption \ref{assumption:general3}, Assumption \ref{assumption:general2}, and Assumption \ref{assumption:nonlinear2} hold, i.e. under the strongly convex setting. Then for any $0<t\leq 1$ and $x\in\mathcal{X}$, the following holds 
    $$\|\nabla_x y^\ast_t(x)\|=\|(\nabla_{yy} \widetilde{g}_t(x,y^\ast_t(x)))^{-1}\nabla_{yx} \widetilde{g}_t(x,y^\ast_t(x))\|\le J_1$$
    for some constant $J_1>0$.
\end{lemma}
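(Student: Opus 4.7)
The plan is to apply the implicit function theorem to the optimality condition $\nabla_y \widetilde{g}_t(x,y^\ast_t(x))=0$, which gives $\nabla_x y^\ast_t(x)=-(\nabla_{yy}^2 \widetilde{g}_t)^{-1}\nabla_{yx}^2 \widetilde{g}_t$ evaluated at $(x,y^\ast_t(x))$. The crux is that if one naively bounds the numerator by $\|\nabla_{yx}^2 \widetilde{g}_t\|$ and the inverse by $1/\mu_g$, the bound blows up as some $h_i(x,y^\ast_t(x))\to 0$, because the log-barrier contributes rank-one terms of order $1/t$ (equivalently $\lambda_i^t(x)^2/t$, where $\lambda_i^t(x):=t/(-h_i(x,y^\ast_t(x)))$). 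The plan is to show that these $1/t$ singularities cancel after taking the inverse, through a Sherman--Morrison--Woodbury identity.

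First I would introduce the compact notation $V=[\nabla_y h_1,\ldots,\nabla_y h_k]$, $W=[\nabla_x h_1,\ldots,\nabla_x h_k]$, $D=\mathrm{diag}(\lambda_i^t(x)^2/t)$, so that
\begin{align*}
\nabla_{yy}^2 \widetilde{g}_t=L+VDV^\top,\qquad \nabla_{yx}^2 \widetilde{g}_t=M+VDW^\top,
\end{align*}
with $L=\nabla_{yy}^2 g+\sum_i\lambda_i^t\nabla_{yy}^2 h_i$ and $M=\nabla_{yx}^2 g+\sum_i\lambda_i^t\nabla_{yx}^2 h_i$. The matrix $L$ inherits strong convexity, $L\succeq \mu_g I$, and contains no $1/t$ factor, so its norm is controlled once the $\lambda_i^t$'s are bounded. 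Applying Woodbury together with the identity $I-(D^{-1}+N)^{-1}N=(D^{-1}+N)^{-1}D^{-1}$ where $N:=V^\top L^{-1}V$, one obtains the clean representation
\begin{align*}
(L+VDV^\top)^{-1}(M+VDW^\top)=L^{-1}M+L^{-1}V K\bigl(W^\top-V^\top L^{-1}M\bigr),\qquad K:=(D^{-1}+N)^{-1},
\end{align*}
in which $D$ itself no longer appears: the singular $1/t$ factors have been absorbed into $K$, which is much tamer.

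Next I would prove uniform boundedness of $\lambda_i^t(x)$ on $\mathcal{X}\times(0,1]$ by the contradiction-plus-LICQ argument already used in Lemma~\ref{lem:con_of_multi}: if $\max_i\lambda_i^{t_n}(x_n)\to\infty$ along some sequence, divide the KKT relation $\nabla_y g+\sum_i\lambda_i^t\nabla_y h_i=0$ by the largest multiplier, extract a convergent subsequence with $x_n\to x^*$, and obtain a nontrivial null combination of the active gradients $\{\nabla_y h_i(x^*,y^\ast(x^*))\}$, violating LICQ. This also bounds $\|L\|,\|M\|$ and thus $\|L^{-1}M\|$ and $\|L^{-1}V\|$ uniformly.

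Finally, I would bound $\|L^{-1}VK\|$ uniformly, again by contradiction-compactness. Suppose the bound fails along $(x_n,t_n)$, extract subsequences with $x_n\to x^*$, $t_n\to t^*\in[0,1]$, and $\lambda_i^{t_n}(x_n)\to\lambda_i^*$. If $t^*>0$, then $(D^{-1}+N)^{-1}$ is continuous and bounded by direct computation. If $t^*=0$, partition indices into active/inactive at $(x^*,y^\ast(x^*))$: for inactive $i$, $h_i$ stays bounded away from zero so $D^{-1}_{ii}=h_i^2/t\to\infty$, making $K$ vanish on the inactive block; the surviving active block of $K$ approaches $(V_{\mathcal{A}}^\top L^{-1}V_{\mathcal{A}})^{-1}$, which is well-defined and bounded because LICQ forces $V_{\mathcal{A}}(x^*)$ to have full column rank and $L\succeq \mu_g I$. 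Putting the three pieces together yields the desired uniform constant $J_1$. The most delicate step is the last one, where one must carefully handle possibly non-SCSC limit points at which some active $\lambda_i^*=0$, showing that the $D^{-1}_{\mathcal{A}}$ contribution remains compatible with the LICQ lower bound on $V_{\mathcal{A}}^\top L^{-1}V_{\mathcal{A}}$ so that $\|K\|$ stays bounded.
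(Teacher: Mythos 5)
Your route is genuinely different from the paper's and, once two points are repaired, it works. The paper expands $\nabla_x y^\ast_t(x)$ as $\bigl(A_t+\sum_i \tfrac1t v^i_t (v^i_t)^\top\bigr)^{-1}\bigl(B_t+\sum_i \tfrac1t v^i_t (u^i_t)^\top\bigr)$ and treats each barrier rank-one product separately: for each index $\hat j$ it applies Sherman--Morrison once and then lower-bounds $(\nabla_y h_{\hat j})^\top C_t^{-1}\nabla_y h_{\hat j}$ by a geometric construction (projection onto the orthogonal complement of the span of active gradients, a four-property tubular neighborhood around each point of $\mathcal{X}_1$, a finite subcover by compactness, plus an easy separate bound away from $\mathcal{X}_1$). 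You instead absorb all rank-one terms at once via Woodbury and reduce everything to controlling $K=(D^{-1}+N)^{-1}$ with $N=V^\top L^{-1}V$; your identity is correct (indeed $I-(D^{-1}+N)^{-1}N=(D^{-1}+N)^{-1}D^{-1}$), and the uniform bound on $\lambda^t_i(x)=t/(-h_i(x,y^\ast_t(x)))$ that you need is already available as a one-liner from Theorem \ref{lem:local_M} and Corollary \ref{remark:upper_bound_of_h_at_approximate_point} (since $-h_i(x,y^\ast_t(x))\ge\min\{M_1t,M_2\}$), which controls $\|L\|$, $\|M\|$ and the factor $W^\top-V^\top L^{-1}M$. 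The payoff of your version is a shorter, purely linear-algebraic argument with no covering construction; the paper's version never needs the joint matrix $N$ and works constraint by constraint.

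Two steps in your sketch need actual proofs rather than flags. First, the divide-by-the-largest-multiplier argument only works along sequences with $t_n\to 0$, because only then does $y^\ast_{t_n}(x_n)\to y^\ast(x^\ast)$ (Lemma \ref{lem:optimalitygapforpoint} plus continuity of $y^\ast(\cdot)$), and LICQ is assumed only at $y^\ast(x^\ast)$; if $t_n\to t^\ast>0$ the limit of $y^\ast_{t_n}(x_n)$ is some boundary point where LICQ is not available, so blow-up must be excluded differently (a barrier-value argument, or simply citing Theorem \ref{lem:local_M}). Second, your claim that the active block of $K$ ``approaches $(V_{\mathcal{A}}^\top L^{-1}V_{\mathcal{A}})^{-1}$'' is false at non-SCSC limits: for an active index with vanishing limiting multiplier, $D^{-1}_{ii}=h_i^2/t$ need not vanish (e.g.\ $-h_i\sim t^{1/3}$ gives $D^{-1}_{ii}\to\infty$), so $K$ need not converge at all. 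What is true, and all you need, is that $\|K\|$ stays bounded, and this follows cheaply: since $D^{-1}\succeq \widetilde{D}^{-1}$, the diagonal matrix retaining only the inactive entries, it suffices to lower-bound $\lambda_{\min}(\widetilde{D}^{-1}+N)$. Split a unit vector $z=(z_{\mathcal{A}},z_{\mathcal{I}})$: either $\|z_{\mathcal{I}}\|^2\ge\sqrt{t}$, and the inactive diagonal (of order $1/t$) dominates, or $\|z_{\mathcal{A}}\|^2\ge 1-\sqrt{t}$ and $z^\top N z\ge\frac{1}{\|L\|}\bigl(\sigma_{\min}(V_{\mathcal{A}})\|z_{\mathcal{A}}\|-\|V_{\mathcal{I}}\|\,\|z_{\mathcal{I}}\|\bigr)_+^2$ is bounded below, using LICQ and continuity near the limit point. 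With these two repairs your plan yields the lemma by a shorter path than the paper's.
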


The proof is provided in Section \ref{proof:lem:boundednessofJabobian}. In addition, we also need the following technical results:

\begin{lemma}\label{cor:bound1}
    Suppose Assumptions in Lemma \ref{lem:boundednessofJabobian} are satisfied. Then for any  $0<t\leq 1$, any $x\in\mathcal{X}$ and index $\hat{j}$  we have 
    $$\left\|(\nabla_{yy} \widetilde{g}_t(x))^{-1}\frac{t\nabla_y h_{\hat{j}}(x,y^\ast_t(x))}{(-h_{\hat{j}} (x,y^\ast_t(x)))^2}\right\|\le J_2$$
  for some constant $J_2>0$.
\end{lemma}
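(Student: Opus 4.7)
The plan is to reduce Lemma \ref{cor:bound1} to Lemma \ref{lem:boundednessofJabobian} through a constraint-shift perturbation that isolates the singular direction $\hat{j}$. Introduce a scalar parameter $\tau$ and consider the perturbed barrier function
\begin{align*}
\widetilde{g}_t^{\tau}(x,y) := g(x,y) - t\sum_{i\neq \hat{j}} \log(-h_i(x,y)) - t\log(-h_{\hat{j}}(x,y) + \tau),
\end{align*}
which is exactly the barrier reformulation associated with replacing the $\hat{j}$-th lower-level constraint by $h_{\hat{j}}(x,y)\le \tau$. By restricting $|\tau|\le \tau_0$ for $\tau_0 := D/4$ with $D$ from Remark \ref{remark:non-emptyness}, the perturbed lower-level problem inherits all structural hypotheses of Assumptions \ref{assumption:general3}, \ref{assumption:general2}, and \ref{assumption:nonlinear2} uniformly in $\tau$: strong convexity of $g$ in $y$ is unaffected, convexity of each $h_i$ is preserved, LICQ is preserved since a constant shift of $h_{\hat{j}}$ does not alter $\nabla_y h_{\hat{j}}$, and the Slater margin becomes at least $D/2$.

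Denote by $y_t^\ast(x,\tau)$ the minimizer of $\widetilde{g}_t^{\tau}(x,\cdot)$, so that $y_t^\ast(x,0)=y_t^\ast(x)$. Applying the implicit function theorem to $\nabla_y\widetilde{g}_t^{\tau}(x,y_t^\ast(x,\tau))=0$ and differentiating in $\tau$, using that $\partial_\tau\nabla_y\widetilde{g}_t^{\tau}|_{\tau=0} = -t\nabla_y h_{\hat{j}}/h_{\hat{j}}^2$, I obtain
\begin{align*}
\frac{\partial y_t^\ast}{\partial \tau}(x,0) = \bigl(\nabla_{yy}^2\widetilde{g}_t(x,y_t^\ast(x))\bigr)^{-1}\frac{t\,\nabla_y h_{\hat{j}}(x,y_t^\ast(x))}{\bigl(-h_{\hat{j}}(x,y_t^\ast(x))\bigr)^2},
\end{align*}
which is precisely the vector whose norm we wish to bound. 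Regarding $(x,\tau)\in\mathcal{X}\times[-\tau_0,\tau_0]$ as the joint upper-level variable of the perturbed BLO problem, Lemma \ref{lem:boundednessofJabobian} applied to this perturbed problem gives $\|\nabla_{(x,\tau)}y_t^\ast(x,\tau)\|\le J_1'$ for some constant $J_1'$; its $\tau$-column is the object above, which therefore has norm at most $J_2:=J_1'$ uniformly in $t\in(0,1]$, $x\in\mathcal{X}$, and $\hat{j}\in\{1,\dots,k\}$.

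The main obstacle is to justify that the constants in Lemma \ref{lem:boundednessofJabobian} extend uniformly in the perturbation parameter $\tau$ and in the choice of index $\hat{j}$. The key step is to transfer Theorem \ref{lem:local_M}: since the negative upper bound $M$ on $h_i$ at the lower-level optimum is determined solely by the Slater margin and the Lipschitz constants $L_g,L_h$, and since the perturbed Slater margin remains at least $D/2$ while the Lipschitz constants of the shifted constraint $h_{\hat{j}}-\tau$ coincide with those of $h_{\hat{j}}$, the same formula produces a uniform negative upper bound (with possibly a slightly worse constant) for $-h_i(x,y_t^\ast(x,\tau))$ and $-h_{\hat{j}}(x,y_t^\ast(x,\tau))+\tau$. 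With this uniformity in place, the proof of Lemma \ref{lem:boundednessofJabobian} runs through verbatim for the perturbed family, and the resulting $J_1'$ is independent of $\tau$, $x$, $\hat{j}$, and $t$, yielding the desired constant $J_2$.
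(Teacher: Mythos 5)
Your sensitivity interpretation is attractive and the algebraic core is right: differentiating the stationarity condition of the perturbed barrier $\widetilde{g}_t^{\tau}$ at $\tau=0$ does identify the target vector with $\partial_\tau y_t^\ast(x,0)$, i.e.\ with one column of the Jacobian of the barrier solution of the BLO problem with joint upper-level variable $(x,\tau)$. The genuine gap is in the step where you invoke Lemma \ref{lem:boundednessofJabobian} for this perturbed family. That lemma requires Assumption \ref{assumption:general3} in full, and its proof uses LICQ essentially (it is what delivers the uniform lower bound $\delta$ on $\|\proj_{V^\perp(x,y^\ast(x);x)}\nabla_y h_{\hat j}(x,y^\ast(x))\|$ and underlies the subspace decomposition in Step 1). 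For the perturbed problem, LICQ must hold at the solutions $y^\ast(x,\tau)$ of the \emph{perturbed constrained} lower-level problem for every $(x,\tau)\in\mathcal{X}\times[-\tau_0,\tau_0]$, whereas the paper's Assumption \ref{assumption:general3}(\ref{assumption:general3(4)}) only gives LICQ at $y^\ast(x)=y^\ast(x,0)$. Your justification --- that a constant shift does not alter $\nabla_y h_{\hat j}$ --- conflates invariance of the gradient field with linear independence of the \emph{active} gradients at the \emph{moved} optimum: $y^\ast(x,\tau)$ is a different point whose active set can differ from that of $y^\ast(x)$, and for a shift as large as $\tau_0=D/4$ nothing in the paper's assumptions prevents the perturbed solution from landing where the active gradients are linearly dependent. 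A secondary unverified hypothesis is Assumption \ref{assumption:general3}(\ref{assumption:general3(3)}): for $\tau>0$ the relaxed feasible set $\{y:h_i(x,y)\le 0,\ i\ne\hat j,\ h_{\hat j}(x,y)\le\tau\}$ is strictly larger than $\mathcal{Y}(x)$, and a uniform radius $R'$ has to be argued (it does hold, via recession cones of convex sublevel sets, but you assert rather than prove it).

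The LICQ issue is repairable, but not with the explicit choice $\tau_0=D/4$: by strong convexity $(x,\tau)\mapsto y^\ast(x,\tau)$ is continuous, so constraints strictly inactive at $y^\ast(x,0)$ stay inactive at $y^\ast(x,\tau)$ for small $\tau$, hence the active set at $y^\ast(x,\tau)$ is eventually a subset of that at $y^\ast(x)$ and LICQ transfers; making this uniform in $x$ requires a compactness argument producing some (non-explicit) $\tau_0>0$. As written, the proposal skips exactly this argument, which is where the difficulty lives. For comparison, the paper's own proof sidesteps the perturbed family entirely: writing $\alpha=-h_{\hat j}(x,y^\ast_t(x))$, the quantity in the lemma is literally the term $\|(A_t+\sum_i\frac{1}{t}v^i_t(v^i_t)^\top)^{-1}\alpha^{-1}v^{\hat j}_t\|$ that is already bounded inside the proof of Lemma \ref{lem:boundednessofJabobian} --- by (\ref{eq:byproduct}) on the finite cover of the active region $\mathcal{X}_1$ and by the estimate culminating in (\ref{eq:boundterm}) on its complement --- so $J_2$ is obtained as a byproduct with no additional hypotheses to verify.
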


\begin{lemma}\label{cor:bound2}
    Suppose Assumptions in Lemma \ref{lem:boundednessofJabobian} are satisfied. There exists a constant $\beta$ such that for any $0<t\leq 1$, $x$ in $\mathcal{X}$ and $y\in\text{int}\mathcal{Y}(x)$, which is the interior of $\mathcal{Y}(x)$, such that $\|y-y^\ast(x)\|\leq\beta$, we have $\|(\nabla_{yy}\widetilde{g}_t(x,y))^{-1}\nabla_{yx}\widetilde{g}_t(x,y)\|\le J_3$, for some constant $J_3>0$.
\end{lemma}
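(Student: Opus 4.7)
The plan is to extend the pointwise bound of Lemma~\ref{lem:boundednessofJabobian} (which holds at $y=y_t^\ast(x)$) to a full $\beta$-neighborhood of $y^\ast(x)$ by combining the uniform convergence $y_t^\ast(x)\to y^\ast(x)$ with an explicit Woodbury-type decomposition that exposes the cancellation between the singular parts of $\nabla_{yy}\widetilde{g}_t$ and $\nabla_{yx}\widetilde{g}_t$.

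First, by Lemma~\ref{lem:optimalitygapforpoint} there is $t_0\in(0,1]$ with $\|y_t^\ast(x)-y^\ast(x)\|\le\beta/2$ for all $t\le t_0$ and $x\in\mathcal{X}$. I would also shrink $\beta$ so that, uniformly in $x\in\mathcal{X}$, every $y$ with $\|y-y^\ast(x)\|\le\beta$ keeps $-h_i(x,y)$ bounded below by some $c>0$ for every inactive index $i\notin\mathcal{I}^\ast(x)$; this is possible by continuity of $h_i$, compactness of $\mathcal{X}$, and continuity of $y^\ast(\cdot)$ under Assumption~\ref{assumption:nonlinear2}. On the compact regime $t\in[t_0,1]$, with only inactive $i$ capable of threatening the denominators, the target quantity is continuous on a compact set and therefore bounded; the real work is the small-$t$ regime, where some active $-h_i(x,y)$ can also approach $0$.

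For the small-$t$ regime I would use the decomposition
\[
\nabla_{yy}\widetilde{g}_t(x,y)=A_t(x,y)+V_t V_t^\top,\qquad \nabla_{yx}\widetilde{g}_t(x,y)=B_t(x,y)+V_t W_t^\top,
\]
where $V_t=[\sqrt{t}\,\nabla_y h_i(x,y)/(-h_i(x,y))]_{i=1}^k$ and $W_t=[\sqrt{t}\,\nabla_x h_i(x,y)/(-h_i(x,y))]_{i=1}^k$ gather all the rank-one singular contributions, while $A_t$ and $B_t$ absorb $\nabla_{yy}g+t\sum \nabla_{yy}h_i/(-h_i)$ and $\nabla_{yx}g+t\sum \nabla_{yx}h_i/(-h_i)$, respectively. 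Convexity of the $h_i$ and $h_i<0$ keep $A_t-\nabla_{yy}g$ positive semidefinite, hence $\|A_t^{-1}\|\le 1/\mu_g$, and the Woodbury identity yields
\[
(\nabla_{yy}\widetilde{g}_t)^{-1}\nabla_{yx}\widetilde{g}_t \;=\; A_t^{-1}B_t + A_t^{-1}V_t\bigl(I+V_t^\top A_t^{-1}V_t\bigr)^{-1}\bigl(W_t^\top-V_t^\top A_t^{-1}B_t\bigr).
\]
The LICQ assumption (Assumption~\ref{assumption:general3}(\ref{assumption:general3(4)})) together with continuity gives a uniform lower bound on the smallest singular value of the active-gradient matrix $[\nabla_y h_i(x,y)]_{i\in\mathcal{I}^\ast(x)}$ in a neighborhood of $y^\ast(x)$, so the active-index block of $V_t^\top A_t^{-1}V_t$ inflates at precisely the rate at which the active columns of $V_t$ inflate; the factor $(I+V_t^\top A_t^{-1}V_t)^{-1}$ then supplies the damping needed to keep the whole expression bounded independently of how $t/(-h_i)^2$ behaves.

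The main obstacle I expect is the uniform Woodbury bookkeeping: verifying that $A_t^{-1}V_t(I+V_t^\top A_t^{-1}V_t)^{-1}W_t^\top$ and the cancellation term $A_t^{-1}V_t(I+V_t^\top A_t^{-1}V_t)^{-1}V_t^\top A_t^{-1}B_t$ stay jointly bounded even though $B_t$ itself can grow like $t/(-h_i)$ for active $i$ when $y$ is off the central path. I plan to handle this by splitting the index set into active and inactive pieces, factoring the large norms out of the active block of $V_t$ and $W_t$, and using the LICQ-derived spectral lower bound so that $(I+V_t^\top A_t^{-1}V_t)^{-1}V_t^\top$ collapses to a bounded operator whose norm depends only on $\mu_g$, $L_h$, $\overline{L}_h$, the LICQ constant, and the uniform multiplier bound from Lemma~\ref{lem:con_of_multi}. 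Corollary~\ref{cor:bound1} evaluated at $y=y_t^\ast(x)$ serves as a sanity check that this mechanism reproduces the already-known bound, giving confidence that the same argument extends throughout the $\beta$-ball and yields the desired constant $J_3$.
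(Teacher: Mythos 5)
Your route is genuinely different from the paper's: the paper proves this lemma by re-using the finite cover $\{W_i\}_{i=1}^p$ of $\mathcal{X}_1$ and the tubular neighborhoods $U_{x_0}$ constructed in the proof of Lemma \ref{lem:boundednessofJabobian}, replaying Step 1 there with $y_t^\ast(x)$ replaced by any $y$ with $\|y-y^\ast(x)\|\le \min_i r_i$, and using the Step-2 estimate with $\beta_2=Q/(2L_h)$ off the cover; you instead propose a single global Woodbury decomposition. As written, though, your plan has two concrete gaps. First, the preliminary reductions do not hold: there is no uniform $c>0$ with $-h_i(x,y)\ge c$ for every index $i$ inactive at $x$, uniformly over $x\in\mathcal{X}$, because the active set changes with $x$ and $-h_i(x,y^\ast(x))\to 0$ as $x$ approaches the region where $i$ becomes active (this is exactly why the paper works with per-point constants $H_{x_0}$ and then a finite subcover); and the regime $t\in[t_0,1]$ is not a compactness triviality, since for an index $i$ active at $y^\ast(x)$ the denominator $-h_i(x,y)$ can be arbitrarily small as $y$ ranges over $\mathrm{int}\,\mathcal{Y}(x)$ inside the $\beta$-ball, for every fixed $t$, so the set on which you would invoke continuity is not compact.

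The decisive gap is in the Woodbury step itself, at exactly the point you flagged as the main obstacle. In your identity the $B_t$-dependent part recombines to $\left(A_t+V_tV_t^\top\right)^{-1}B_t=(\nabla^2_{yy}\widetilde{g}_t)^{-1}B_t$: the factor $(I+V_t^\top A_t^{-1}V_t)^{-1}$ never multiplies $A_t^{-1}B_t$ on its own, and the damping it provides acts only along the span of the (scaled) active gradients $\nabla_y h_i$. But the dangerous entries of $B_t$, namely $t\,\nabla^2_{yx}h_i(x,y)/(-h_i(x,y))$ for $i$ active at $y^\ast(x)$, generically have a component orthogonal to that span; on that component the inverse Hessian is controlled only by $1/\mu_g$, so the product retains a piece of size on the order of $t/(-h_i(x,y))$, which is unbounded as $y$ approaches the active facet inside the $\beta$-ball with $t$ fixed. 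The only control on $t/(-h_i)$ available in the paper is Theorem \ref{lem:local_M} together with Corollary \ref{remark:upper_bound_of_h_at_approximate_point}, and it applies at $y=y_t^\ast(x)$ (or at iterates projected into $\mathcal{Y}_{m_s}(x_s)$), not at an arbitrary interior $y$ near $y^\ast(x)$; invoking Lemma \ref{lem:optimalitygapforpoint} to place $y_t^\ast(x)$ within $\beta/2$ of $y^\ast(x)$ does not transfer that control to the point $y$ at which the bound must hold. Without a substitute argument explaining why the factors $t/(-h_i(x,y))$ entering $B_t$ (and $\lambda_{\max}(A_t)$, which your LICQ-based lower bound on the active block also needs) are harmless on the whole $\beta$-ball — i.e., either a restriction of the admissible $y$, as in the paper's use of the tubular neighborhoods and of the shrunk sets $\mathcal{Y}_{m_s}$, or an additional cancellation beyond the rank-one one captured by Lemma \ref{cor:bound1} — the bookkeeping cannot close and the proposed proof does not establish the lemma.
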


Lemma \ref{cor:bound1} is a byproduct of Lemma \ref{lem:boundednessofJabobian}. We provide the proof in Section \ref{proof:cor:bound1}. The proof of Lemma \ref{cor:bound2} utilizes some technical details from Lemma \ref{lem:boundednessofJabobian}, as discussed in Section \ref{proof:cor:bound2}. Now we can estimate the improved local Lipschitz constant by the boundedness we proved before.

\begin{lemma}\label{lem:improvedglobalupperboundLip}
    Suppose the assumptions in Lemma \ref{lem:boundednessofJabobian} are satisfied. For any $0<t\leq 1$, and any fixed $x$, if there exists $y\in\mathcal{Y}(x)$ such that $h_i(x,y)\leq -d$ holds for any $i\in\{1,...,k\}$, then the hyperfunction $\widetilde{\phi}_t(x)$ is $\overline{L}^{loc}_{\widetilde{\phi}_t}$-Lipschitz smooth in the ball $B_x(\min\{\frac{d}{2L_h},\frac{m}{2L_h(1+J_1)}\})$ which centered at $x$ with radius $\min\{\frac{d}{2L_h},\frac{m}{2L_h(1+J_1)}\}$, where $J_1$ is from Lemma \ref{lem:boundednessofJabobian} and $m$ is computed by (\ref{eq:m}). That is, the following holds for any $x_1,x_2$ satisfies $\max\{\|x_1-x\|,\|x_2-x\|\}\leq\min\{\frac{d}{2L_h},\frac{m}{2L_h(1+J_1)}\}$:
    \begin{align*}
            \|\nabla_{x} \widetilde{\phi}_t(x_1)-\nabla_{x} \widetilde{\phi}_t(x_2)\|\leq \overline{L}^{loc}_{\widetilde{\phi}_t}\|x_1-x_2\|.
    \end{align*}
    $\overline{L}^{loc}_{\widetilde{\phi}_t}$ can be computed as follows
    
    \resizebox{\textwidth}{!}{
    \begin{minipage}{\textwidth}
    \begin{align*}
        \overline{L}^{loc}_{\widetilde{\phi}_t}=&J_1\left(\overline{L}_f+\overline{L}_fJ_1+L_fJ_1\left(\frac{\overline{\overline{L}}_g}{\mu_g}+\frac{tk\overline{\overline{L}}_h}{\mu_gm^{loc}}+\frac{tk\overline{L_h}L_h}{\mu_g\left(m^{loc}\right)^2}+2\frac{tkL_h\overline{L}_h}{\mu_g\left(m^{loc}\right)^2}+J_2\frac{kL_h^2}{m^{loc}}+J_2\frac{3kL_h^2}{m^{loc}}\right)+\overline{\overline{L}}_{\widetilde{g}_t,m^{loc}}J_1\right).
    \end{align*}
    \end{minipage}
    }
     Here $m^{loc}$ is a positive constant which can be computed through replacing $d$ with $d/2$ in (\ref{eq:m}), and $\mu_{\widetilde{g}_t}$ is from Lemma \ref{prop:strongly_convex}, $J_1$ is from Lemma \ref{lem:boundednessofJabobian}, $J_2$ is from Lemma \ref{cor:bound1}. $\overline{L}^{loc}_{\widetilde{\phi}_t}$ has a upper bound $\overline{L}_{\widetilde{\phi}_t}=\mathcal{O}(\frac{1}{t})$. 
\end{lemma}

The proof of Lemma \ref{lem:improvedglobalupperboundLip} can be found in Section \ref{proof:lem:improvedglobalupperboundLip}. Given our refined estimation of the Lipschitz constant, we are now able to optimize the stepsize selection. This adjustment significantly enhances the convergence rate with respect to $t$, leading to the following convergence results, whose proof is in (\ref{proof:thm:total_rate2}):

\begin{theorem}\label{thm:total_rate2}
    Suppose Assumption \ref{assumption:general3}, Assumption \ref{assumption:general2}, and Assumption \ref{assumption:nonlinear2} hold, i.e. under the strongly convex setting. We recall and define the following terms:
    \begin{itemize}
        \item $\beta$ is from Lemma \ref{cor:bound2};
        \item $\overline{L}_{\widetilde{\phi}_{t},s}$ is computed by replacing $d$ in Lemma \ref{lem:improvedglobalupperboundLip} with $d_s$ from the output of Algorithm \ref{algo:fo_lower_2};
        \item $\overline{L}'_{\widetilde{\phi}_t,m_s}$ is from (\ref{eq:L'phims});
        \item $\hat{\nabla}_x\widetilde{\phi}_t(x_s)$ is from (\ref{eq:approximategradient});
        \item $J_1$ is from Lemma \ref{lem:boundednessofJabobian};
        \item $J_3$ is from Lemma \ref{cor:bound2};
        \item $M$ is from Corollary  \ref{remark:upper_bound_of_h_at_approximate_point}; 
        \item $D$ is from Remark \ref{remark:non-emptyness};
        \item $m_s$, $d_s$ are from the output of Algorithm \ref{algo:fo_lower_2};
        \item $\mu_{\widetilde{g}_t}$ is from Proposition \ref{prop:strongly_convex};
        \item $\overline{L}_{\widetilde{\phi}_t}$ is from Lemma \ref{lem:improvedglobalupperboundLip};
        \item
        $\zeta = \min\left\{1, \frac{D}{4L_hL_f(1+J_3)}, \frac{M}{4L_hL_f(1+J_1)(1+J_3)}, \frac{1}{\overline{L}_{\tilde{\phi}_t}}\right\}$.
    \end{itemize}

    Assume $0<t\leq\min\{ \frac{\mu_g\beta^2}{2k},1\}$. We set the stepsize $\eta_s$ as
    \begin{align}
        \eta_s=\min\{1,  \frac{d_s}{2 L_h}\times \frac{1}{\|\hat{\nabla}_x\tilde{\phi}(x_s)\|}, \frac{m_s}{2 L_h(1+J_1)}\times\frac{1}{\|\hat{\nabla}_x\tilde{\phi}(x_s)\|},\frac{1}{\overline{L}_{\tilde{\phi}_{t},s}}\},
    \end{align} 
    and set the stop criterion $\epsilon_s=\min\{\epsilon,\beta/2\}/\overline{L}'_{\widetilde{\phi}_t,m_s}$ for the inner loop. Then the sequence $\{x_s\}$ generated by Algorithm \ref{algo:fo} satisfies
    $$
    \min_{s=0,...,S-1}\frac{1}{\eta_s}\|x_{s} - \proj_{\mathcal{X}}(x_s-\eta_s \nabla_x\tilde{\phi}(x_s))\|\leq\epsilon
    $$
    within $\widetilde{\mathcal{O}}(1/(\zeta\epsilon^2))$ number of iterations.
\end{theorem}

\begin{remark}
     From Lemma \ref{lem:improvedglobalupperboundLip} and Theorem \ref{thm:total_rate2}, we can see that $\zeta=\mathcal{O}(t)$ with respect to $t$. Note that for inner loop we have $\kappa=\widetilde{\mathcal{O}}(t^{-0.5})$, the total number of iterations is $\widetilde{\mathcal{O}}(t^{-1.5})$.
\end{remark}

\section{Convex Case}\label{section:convex_case}

When $g(x,y)$ is convex in $y$, some intrinsic difficulties arise in attempting to establish a connection between the barrier reformulation and the original problem. First, we will provide some counterexamples to illustrate these issues.

Before the discussion, we establish the following basic assumption for convex case:

\begin{assumption}\label{assumption:general4}The following holds
\begin{enumerate}
    \item $f(x,y)$ is one time and $g(x,y)$, $h_i(x,y)$ are two times continuously differentiable for every $x\in\mathcal{X},y\in\mathcal{Y}(x)$, and $i\in\{1,...,k\}$;\label{assumption:general4(1)}
    \item $\mathcal{X}$ is convex and compact, and for any $x\in\mathcal{X}$ there exists $y\in\mathcal{Y}(x)$ such that $h_i(x,y)<0$ for some $i\in\{1,2,...,k\}$;\label{assumption:general4(2)}
    \item For any $x\in\mathcal{X}$, $\mathcal{Y}(x)$ is compact, and $\|y\|\leq R$ for any $y$ in $\mathcal{Y}(x)$.\label{assumption:general4(3)}
\end{enumerate}
\end{assumption}
When $g(x,y)$ and $h_i(x,y)$ are all convex in $y$, the lower-level objective function $\widetilde{g}_t(x)$ of the barrier reformulation \ref{eq:blo_barrier_reformulation} may not have the unique optimal solution. Consider the following counterexample:

\begin{example}
    Define the lower-level objective function $g(x,y)$ as follows
    \begin{align*}
        g(x,y)=
        \begin{cases}
            (y-1)^2&\text{ if }1\leq y\leq 2\\
            0&\text{ if }-1\leq y< 1\\
            (y+1)^2&\text{ if }-2\leq y< -1,
        \end{cases}
    \end{align*}
    and the lower-level constraint $h(x,y):=g(x,y)-1$. Then $g(x,y)$ and $h(x,y)$ are both convex, and
    \begin{align*}
        \widetilde{g}_t(x,y)=g(x,y)-t\log(1-g(x,y))\geq 0-t\log(1)=0,
    \end{align*} 
    The optimal solution set of the barrier reformulation is $[-1,1]$ for any $t$, which means $y^\ast_t(x)$ is not unique.
\end{example}

Because the lower-level problem does not have a unique solution, the hypergradient cannot be determined. Therefore, in this situation, we cannot use the implicit gradient methods for the barrier reformulation.

When $g(x,y)$ is convex in $y$, but there exist at least one strongly convex constraint $h_i(x,y)$ in $y$, it is not hard to see the lower-level problem of barrier reformulation is strongly convex. Therefore, the hyperfunction of barrier reformulation is well-defined. From this, we can explore the connection between the hyperfunctions of the original problem and the barrier reformulation. Like the counterexample \ref{counterexample1} under the linear setting, we can still only attempt to prove the convergence of the hyperfunction when $y^\ast(x)$ is unique. Before proceeding, let us first present an assumption for the convex case.

\begin{assumption}\label{assumption:convex}
    $g(x,y)$ is convex in $y$ for any $x$, and $h_i(x,y)$ are convex in $y$ for any $i$ and $x$. There exists at least one strongly convex constraint $h_i(x,y)$ in $y$.
\end{assumption}
We give the following convergence result for the convex case:

\begin{theorem}
    Suppose Assumption \ref{assumption:general4} and Assumption \ref{assumption:convex} hold. If $x$ satisfies that $y^\ast(x)$ is unique, then $\lim_{t\to 0}\widetilde{\phi}_t(x)=\phi(x)$.
\end{theorem}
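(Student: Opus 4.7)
The plan is to exploit uniqueness of $y^*(x)$ together with a standard barrier-method subsequence argument, leveraging the optimality gap already stated in Lemma \ref{lem:optimalitygap}. The key observation is that under Assumption \ref{assumption:convex}, at least one constraint $h_j(x,\cdot)$ is strongly convex in $y$, so $-t\log(-h_j(x,\cdot))$ contributes a strongly convex term (with modulus $t\mu_{h_j}/(-h_j)$ bounded below on bounded subsets away from the boundary). Combined with convexity of $g$ and of the remaining $-\log(-h_i(x,\cdot))$, this ensures $\widetilde{g}_t(x,\cdot)$ is strongly convex on the strict interior $\{y : h_i(x,y)<0,\ \forall i\}$. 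Since $\widetilde{g}_t(x,\cdot)$ blows up at the boundary of $\mathcal{Y}(x)$ (via the log barrier) and $\mathcal{Y}(x)$ is compact, $y^*_t(x)$ is well-defined and lies in the strict interior of $\mathcal{Y}(x)$.

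Next, I would apply Lemma \ref{lem:optimalitygap}, which holds because both $g$ and the $h_i$'s are convex in $y$, to obtain
\begin{equation*}
    0 \le g(x,y^*_t(x)) - g(x,y^*(x)) \le kt.
\end{equation*}
Letting $t\to 0$ then gives $g(x,y^*_t(x)) \to g(x,y^*(x))$. Since $y^*_t(x)\in \mathcal{Y}(x)$ and $\mathcal{Y}(x)$ is compact by Assumption \ref{assumption:general4}(\ref{assumption:general4(3)}), the family $\{y^*_t(x)\}_{t>0}$ is bounded. Pick any sequence $t_n\to 0$; extract a convergent subsequence (still indexed by $n$) with $y^*_{t_n}(x)\to \bar y$. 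By continuity of $h_i$ and the fact that $h_i(x,y^*_{t_n}(x))<0$, we get $h_i(x,\bar y)\le 0$, so $\bar y\in \mathcal{Y}(x)$. Continuity of $g$ then yields $g(x,\bar y) = \lim_n g(x,y^*_{t_n}(x)) = g(x,y^*(x))$, so $\bar y$ is optimal for the original lower-level problem. By uniqueness of $y^*(x)$, $\bar y = y^*(x)$.

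Since every subsequence of $\{y^*_t(x)\}_{t\to 0}$ has a sub-subsequence converging to the same limit $y^*(x)$, the full net satisfies $y^*_t(x)\to y^*(x)$ as $t\to 0$. Continuity of $f$ from Assumption \ref{assumption:general4}(\ref{assumption:general4(1)}) finally gives
\begin{equation*}
    \lim_{t\to 0}\widetilde{\phi}_t(x) \;=\; \lim_{t\to 0} f(x,y^*_t(x)) \;=\; f(x,y^*(x)) \;=\; \phi(x),
\end{equation*}
which is the desired conclusion.

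The main obstacle is the first step, namely verifying that $\widetilde{g}_t(x,\cdot)$ has a (unique) minimizer $y^*_t(x)$ in the strict interior of $\mathcal{Y}(x)$ under the convex (not strongly convex) setting for $g$. This is precisely where Assumption \ref{assumption:convex}'s requirement that at least one $h_i$ be strongly convex enters: without it, $\widetilde{g}_t(x,\cdot)$ could fail to be strictly convex and the minimizer, if it existed, could be non-unique, breaking both the definition of $\widetilde{\phi}_t(x)$ and the subsequence argument. All remaining steps are standard continuity and compactness manipulations once this is in place.
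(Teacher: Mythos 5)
Your proposal is correct and follows essentially the same route as the paper: the optimality gap of Lemma \ref{lem:optimalitygap} gives $g(x,y^\ast_t(x))\to g(x,y^\ast(x))$, uniqueness of $y^\ast(x)$ forces $y^\ast_t(x)\to y^\ast(x)$, and continuity of $f$ concludes. The only difference is that you spell out the details the paper leaves implicit (well-posedness of $y^\ast_t(x)$ via the strongly convex constraint and the compactness/subsequence argument behind ``it is not hard to see''), which is a faithful elaboration rather than a different argument.
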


The proof can be directly obtained by optimal condition 
\begin{align*}
    g(x,y_t^\ast(x))-g(x,y^\ast(x))\leq kt,
\end{align*}
which means $\lim_{t\to 0}g(x,y_t^\ast(x))=g(x,y^\ast(x))$. Note that $y^\ast(x)$ is unique, it not hard to see $\lim_{t\to 0}y_t^\ast(x)=y^\ast(x)$, which implies $\lim_{t\to 0}\widetilde{\phi}_t(x)=\phi(x)$.\\

However, establishing a relationship between the hypergradients may still be challenging, as the hypergradient of the original problem might not be determinable through the expression:
\begin{align*}
    \nabla_x\phi(x)=\nabla_x f(x,y^\ast(x))-\nabla_{xy}^2 g(x,y^\ast(x))(\nabla_{yy}^2g(x,y^\ast(x))^{-1}\nabla_y f(x,y^\ast(x))
\end{align*}
for any $x$. Let's see the following example:

\begin{example}
    Define the lower-level objective function $g(x,y)=y^4$, and the lower-level constraint $h(x,y):=y^2-1$. Then $g(x,y)$ is convex, and $h(x,y)$ is strongly convex. The optimal solution $y^\ast(x)=0$ for any $x$. However, the hypergradient
    \begin{align*}
        \nabla_x f(x,y^\ast(x))-\nabla_{xy}^2 g(x,y^\ast(x))(\nabla_{yy}^2g(x,y^\ast(x))^{-1}\nabla_y f(x,y^\ast(x))
    \end{align*}
    is not well-defined since $\nabla_{yy}^2 g(x,y)=0$ is not invertible for any $x$ and $y$. In this context, how to calculate the hypergradient remains unexplored.
\end{example}

\begin{remark}
    For the strong convexity of the lower-level problem in the barrier reformulation, we need at least one constraint $h_i(x,y)$ to be strongly convex in $y$. However, by Assumption \ref{assumption:general4}(\ref{assumption:general4(3)}), we can add a constraint $h_{k+1}(x,y)=\|y\|^2\leq R^2$ which is strongly convex in $y$. After adding this constraint, it remains equivalent to the original problem. Therefore, we can say Assumption \ref{assumption:general4}(\ref{assumption:general4(3)}) implies Assumption \ref{assumption:convex}.
\end{remark}

The most important result, Theorem \ref{lem:local_M}, still holds in the general convex case. Therefore, Algorithm \ref{algo:fo} and Algorithm \ref{algo:fo_lower_2} still work in the general convex case, and we can get the same convergence rate $\widetilde{\mathcal{O}}(1/(\epsilon^{2}t^{4.5}))$ in Theorem \ref{thm:total_rate}. However, we do not have the convergence for hypergradient. The Algorithm \ref{algo:fo_asymptotic} does not work for this case.

\subsection{Proof of Proposition \ref{prop:diff_hyperfun}}\label{proof:prop:diff_hyperfun}
Through direct computing the Hessian of $\widetilde{g}_t(x, y^*_t(x))$, we get
    \begin{align*}
    \nabla^2_{yy}\widetilde{g}_t(x,y^\ast_t(x))&=\nabla^2_{yy}g(x,y^\ast_t(x))+t\sum_{i=1}^k\left(\frac{\nabla^2_{yy}h_i(x,y^\ast_t(x))}{-h_i(x,y^\ast_t(x))}+\frac{\nabla_y h_i(x,y^\ast_t(x))\nabla_y h_i(x,y^\ast_t(x))^\top}{h_i^2(x,y^\ast_t(x))}\right).
    \end{align*}
    If Assumption \ref{assumption:nonlinear} holds, then $\nabla^2_{yy}\widetilde{g}_t(x,y)\succeq \mu_g I$, which means $y^\ast_t(x)$ is unique for any $t>0$ and $x\in\mathcal{X}$. Thus the hypergradient can be computed directly by using implicit function Theorem (see equation (2.8) in \cite{ghadimi2018approximationmethodsbilevelprogramming})
    \begin{align}\label{eq:hypergradient}
        \nabla_x\widetilde{\phi}_t(x)=\nabla_x f(x,y_t^\ast(x))-\nabla_{xy}^2\widetilde{g}_t(x,y_t^\ast(x))(\nabla_{yy}^2\widetilde{g}_t(x,y_t^\ast(x)))^{-1}\nabla_y f(x,y_t^\ast(x)).
    \end{align}
    This also illustrates that $\widetilde{\phi}_t(x)$ is differentiable.

    If Assumption \ref{assumption:linear} holds (i.e., $h_i$'s and $g$ are linear), the Hessian can be bounded below by 
    \begin{align*}
        \nabla^2_{yy}\widetilde{g}_t(x,y^\ast_t(x))&=t\sum_{i=1}^k\frac{\nabla_y h_i(x,y^\ast_t(x))\nabla_y h_i(x,y^\ast_t(x))^\top}{h_i^2(x,y^\ast_t(x))}\\
        &\geq \frac{t}{\min_i h_i^2(x,y^\ast_t(x))}\sum_{i=1}^k\nabla_y h_i(x,y^\ast_t(x))\nabla_y h_i(x,y^\ast_t(x))^\top.
    \end{align*}
    It is noteworthy that $h_i(x,y^\ast_t(x))<0$ for any $i$. This is because, if $h_i(x,y^\ast_t(x))=0$ for some $i$, then $\widetilde{g}_t(x,y^\ast(x))=\infty$ due to the log-barrier, which contradicts that $y^\ast_t(x)$ minimize $\widetilde{g}_t(x,y)$. Since $h_i(x,y)$ is linear, we can set
    \begin{align*}
        h_i(x,y)&=a_i(x)^\top y-b_i(x).
    \end{align*}
    Define $A(x)=(a_1(x),\cdots,a_k(x))^\top$. Note that the feasible set $\mathcal{Y}(x)$ is compact, we get $A(x)^\top A(x)$ is positive definitive for any $x$. Otherwise, if there exists a vector $v$ such that $v^\top A(x)^\top A(x) v=0$, then $v^\top a_i(x)=0$ for any $i$. However, in this case, if $h_i(x,y)\le 0$, it implies that $h_i(x,y+\alpha v)\le 0$ for any $\alpha$,  which contradicts the compactness of feasible set. Thus, we know $\nabla^2_{yy}\widetilde{g}_t(x,y^\ast_t(x))\succ 0$, which implies (\ref{eq:hypergradient}) still holds. We obtain that $\widetilde{\phi}_t(x)$ is differentiable.

\section{Convergence of Hypergradient to the original problem}

\subsection{Proof of Proposition \ref{prop:relation}}\label{proof:prop:relation}
\textbf{(1)} Under the strongly convex setting, we already have that $y^\ast(x)$ is unique. By  \cite{Giorgi2018ATO}, it is known that $\nabla_x y^\ast(x)$ exists. As for the converse, it does not hold. Consider the following counterexample: $g(x,y)=y^2$ and $h(x,y)=y$, where $y\in\RR$. Then $y^\ast(x)=0$ and $\nabla_x y^\ast(x)=0$ for any $x$, and also $h(x,y)$ is always active. However, by KKT condition we have
$$0=\nabla_y g(x,0)+\lambda(x)\nabla_y h(x,0)=\lambda(x)\nabla_y h(x,0)=\lambda(x).$$
Thus the multiplier $\lambda(x)$ always equals $0$.\\
    
\noindent\textbf{(2)} Under the linear setting, we fist prove \textbf{(a)} is equivalent to \textbf{(b)}, and then prove \textbf{(b)} is equivalent to \textbf{(c)}.\\

\noindent\textbf{(a)$\Rightarrow$(b): }Assuming $x$ is an \textbf{SCSC} point, we want to show that $y^\ast(x)$ is unique. We will use proof by contradiction to demonstrate that if this is not true, then there must exist a point $(x,y)$ at which the optimal Lagrange multiplier $\lambda_i(x,y)$ is zero for some $i$. 

Suppose $y^\ast(x)$ is not unique, then the optimal solution set must be the face $F$ of a polyhedron. Denote $\partial F$ as the relative boundary of $F$ (for specific definition, see Definition 2.11 on Page 28 in \cite{drusvyatskiy2020convex}). For any two points $y$ and $y'$ in $F\setminus\partial F$, they have the same active constraints. Due to linearity of $g$ and $h$, we have $\nabla_y g(x,y)=\nabla_y g(x,y')$ and $\nabla_y h_i(x,y)=\nabla_y h_i(x,y')$ for any $i$. Combining these facts with the LICQ assumption, we get $\lambda_i(x,y)=\lambda_i(x,y')$. For any point $y''$ in $\partial F$, the active index set of $y\in F\setminus\partial F$ is necessarily a proper subset of the active index set of $y''$. By LICQ assumption, we can prove $\lambda_i(x,y)=\lambda_i(x,y’')$ for any active $i$ as follows: denote $\mathcal{I}^\ast(x,y)$, $\mathcal{I}^\ast(x,y'')$ as the set of active indices at $(x,y)$ and $(x,y'')$, respectively.  Consider the following KKT condition at $(x,y)$ and $(x,y'')$
    \begin{align}
        \nabla_y g(x,y)+\sum_{i\in\mathcal{I}^\ast(x,y)}\lambda_i(x,y)\nabla_y h_i(x,y)&=0\label{eq:kkt_condition_at_y}\\
        \nabla_y g(x,y'')+\sum_{i\in\mathcal{I}^\ast(x,y'')}\lambda_i(x,y'')\nabla_y h_i(x,y'')&=0\label{eq:kkt_condition_at_y''}
    \end{align}
    Note that $\mathcal{I}^\ast(x,y)$ is a proper subset of $\mathcal{I}^\ast(x,y'')$, and $\nabla_y g(x,y)=\nabla_y g(x,y'')$, $\nabla_y h_i(x,y)=\nabla_y h_i(x,y'')$ since $g(x,y)$, $h_i(x,y)$ is linear in $y$ for any $i$, by subtracting (\ref{eq:kkt_condition_at_y}) from (\ref{eq:kkt_condition_at_y''}), we have:
    \begin{align*}
        \sum_{i\in\mathcal{I}^\ast(x,y'')}(\lambda_i(x,y'')-\lambda_i(x,y))\nabla_y h_i(x,y'')=0.
    \end{align*}
    By LICQ assumption, the gradients $\nabla_y h_i(x,y'')$ are independent for $i\in\mathcal{I}^\ast(x,y'')$. Thus, we obtain that $\lambda_i(x,y)=\lambda_i(x,y’')$ for any $i\in \mathcal{I}^*(x,y'')$. Since $\mathcal{I}^\ast(x,y)$ is a proper subset of $\mathcal{I}^\ast(x,y'')$, there exists an index $i\in \mathcal{I}^\ast(x,y'')$ but $i\notin  \mathcal{I}^\ast(x,y)$. Therefore, we know $\lambda_i(x,y)=0$, which further means $\lambda_i(x,y'')=\lambda_i(x,y)=0$. However, $i$ is active at $(x,y'')$. This contradicts the assumption that $x$ is an \textbf{SCSC} point, thus proving the uniqueness of $y^*(x)$. By the argument above, we can also conclude $\lambda_i(x,y)$ is not a function of $y\in y^\ast(x)$.\\

\noindent\textbf{(b)$\Rightarrow$(a): }
    Assuming $y^\ast(x)$ is unique, we want to show $x$ is an \textbf{SCSC} point. From the proof of \textbf{(a)$\Rightarrow$(b)}, we know that $\lambda_i(x,y)$ is independent of the choice of $y\in y^\ast(x)$. With out loss of generality, we denote $\lambda_i(x,y)=\lambda_i(x)$. We will use proof by contradiction to demonstrate that if this is not true, we can construct a vector $v$ such that we can prove $h_i(x,y^\ast(x)+sv)\leq 0$ for any index $i$ and $s$ small enough. This means $(x,y^\ast(x)+sv)$ is feasible. We can also prove $g(x,y^\ast(x)+sv)=g(x,y^\ast(x))$. This contradicts the uniqueness of $y^\ast(x)$.
    
    Given that $y^\ast(x)$ is the unique optimal solution, it must be an extreme point of a polyhedron. Without loss of generality, we assume $y^\ast(x)$ is determined by $h_i(x,y^\ast(x))=0$, $i=1,...,p$, i.e. the first $p$ constraints are active. Since $y^\ast(x)$ is unique, we have $p\ge m$. By assuming the LICQ, we also know $p\leq m$. Thus $p$ exactly equals $m$. By complementary slackness, the multipliers satisfy $\lambda_i(x)=0$ for $i=m+1,...,k$. We now prove that $\lambda_i(x)>0$ for each $i\in\{1,...,m\}$. Suppose, for contradiction, that $\lambda_i(x)=0$ for some $i\in\{1,...,m\}$. Without loss of generality, assume $\lambda_i(x)=0$ for $i\in\{1,...,l\}$, where $l\leq m$. According to KKT conditions, we have
    \begin{align}
        &\nabla_y g(x,y^\ast(x))+\sum_{i=1}^k\lambda_i(x) \nabla_y h_i(x,y^\ast(x))\nonumber\\
        =&\begin{cases}
            \nabla_y g(x,y^\ast(x))+\sum_{i=l+1}^m\lambda_i(x) \nabla_y h_i(x,y^\ast(x)) & \text{ if }l<m\\
            \nabla_y g(x,y^\ast(x)) & \text{ if }l=m
        \end{cases}\label{linearkkt}\\
        =&0.\nonumber
    \end{align}
    For notational convenience, we suppose that $l<m$. The case that $l=m$ is similar. Therefore,
    \begin{align*}
        \nabla_y g(x,y^\ast(x))\in \mathrm{span}\{\nabla_y h_i(x,y^\ast(x)):i=l+1,...,m\},
    \end{align*}
    where $\mathrm{span}\{\nabla_y h_i(x,y^\ast(x)):i=l+1,...,m\}$ is the subspace spanned by $\nabla_y h_i(x,y^\ast(x))$ for $i=l+1,...,m$. Now consider the subspace $\mathrm{span}\{\nabla_y h_i(x,y^\ast(x)):i=1,...,l\}$. The LICQ assumption means these vectors are independent, thus the dimension of this subspace is $l$. Let us denote the matrix formed by these gradients as
    \begin{align*}
        A=[\nabla_y h_1(x,y^\ast(x)),\cdots,\nabla_y h_l(x,y^\ast(x))]^\top.
    \end{align*}
    By Gordan's theorem, either $Av<0$ has a solution $v$, or $A^\top w=0$ has a nonzero solution $w$ with $w\geq 0$. Note that by LICQ assumption, $\nabla_y h_1(x,y^\ast(x)),\cdots,\nabla_y h_l(x,y^\ast(x))$ are linear independent, so $A^\top w=0$ has only the zero solution. Consequently, there exists a vector $v\ne 0$ such that $v^\top \nabla_y h_i(x,y^\ast(x))<0$ for any $i\in\{1,...,m\}$. Treating $\mathrm{span}\{\nabla_y h_i(x,y^\ast(x)):i=1,...,m\}$ as the entire space, we consider the orthogonal complement of the subspace $\mathrm{span}\{\nabla_y h_i(x,y^\ast(x)):i=1,...,l\}$, denoted as $\left(\mathrm{span}\{\nabla_y h_i(x,y^\ast(x)):i=1,...,l\}\right)^\perp$. Denote the sum of this space and the space spanned by $v$ as $\left(\mathrm{span}\{\nabla_y h_i(x,y^\ast(x)):i=1,...,l\}\right)^\perp+\mathrm{span}\{v\}$.
    The dimension of $\left(\mathrm{span}\{\nabla_y h_i(x,y^\ast(x)):i=1,...,l\}\right)^\perp+\mathrm{span}\{v\}$ is $m-l+1$. Treating $\RR^m$ as the entire space, we consider the orthogonal complement of the subspace $\mathrm{span}\{\nabla_y h_i(x,y^\ast(x)):i=l+1,...,m\}$, denoted as $\left(\mathrm{span}\{\nabla_y h_i(x,y^\ast(x)):i=l+1,...,m\}\right)^\perp$. Its dimension is $l$. Then, the intersection of two subspaces
    \begin{align*}
        \left\{\left(\mathrm{span}\{\nabla_y h_i(x,y^\ast(x)):i=1,...,l\}\right)^\perp+\mathrm{span}\{v\}\right\}\cap\left(\mathrm{span}\{\nabla_y h_i(x,y^\ast(x)):i=l+1,...,m\}\right)^\perp\ne\emptyset
    \end{align*}
    since the sum of their dimensions is greater that $m$. Choose a vector $\hat{v}\ne 0$ in the intersection of the two subspaces. We decompose $\hat{v}=v_1+v_2$, where $v_1\in \left(\mathrm{span}\{\nabla_y h_i(x,y^\ast(x)):i=1,...,l\}\right)^\perp$ and $v_2\in \mathrm{span}\{v\}$, so there exist a constant $\alpha$ such that $v_2=\alpha v$. We can assume $\alpha\ge 0$, otherwise consider $-\hat{v}$. Therefore, we have 
    \begin{align*}
        \hat{v}^\top\nabla_y h_i(x,y^\ast(x))=v_2^\top\nabla_y h_i(x,y^\ast(x))=\alpha v^\top\nabla_y h_i(x,y^\ast(x))\leq 0
    \end{align*}
    for any $i\in\{1,...,l\}$, which implies 
    \begin{align}\label{eq:condition1}
        h_i(x,y^\ast(x)+s\hat{v})\leq 0\text{ for any }i\in\{1,...,l\}\text{, and }s>0
    \end{align}
    by the linearity of $h_i(x,y)$. Note that $\hat{v}\in\left(\mathrm{span}\{\nabla_y h_i(x,y^\ast(x)):i=l+1,...,m\}\right)^\perp$, we also have $\hat{v}^\top\nabla_y h_i(x,y^\ast(x))=0$ for any $i\in\{l+1,...,m\}$, which means 
    \begin{align}\label{eq:condition2}
        h_i(x,y^\ast(x)+s\hat{v})= 0\text{ for any }i\in\{l+1,...,m\}\text{ and }s\in\RR.
    \end{align} 
    Since $i\in\{m+1,...,k\}$ is inactive, we know $h_i(x,y^\ast(x))<0$ for any $i\in\{m+1,...,k\}$. By the continuity of $h_i(x,y)$ we can find a constant $S>0$ such that 
    \begin{align}\label{eq:condition3}
        h_i(x,y^\ast(x)+s\hat{v})<0\text{ for any }0<s\leq S.
    \end{align}
    Combining the fact $\hat{v}^\top\nabla_y h_i(x,y^\ast(x))=0$ and \eqref{linearkkt}, we obtain $\hat{v}^\top\nabla_y g(x,y^\ast(x))=0$, and thus 
    \begin{align}\label{eq:condition4}
        g(x,y^\ast(x)+s\hat{v})=g(x,y^\ast(x))\text{ for any }s\in\RR.
    \end{align}
    (\ref{eq:condition1}), (\ref{eq:condition2}), (\ref{eq:condition3}) means $y^\ast(x)+s\hat{v}$ remains feasible for any $0<s\leq S$. (\ref{eq:condition4}) implies $y^\ast(x)+s\hat{v}$ is still optimal for any $0<s\leq S$. These contradicts the uniqueness of $y^\ast(x)$. Therefore, we conclude that $\lambda_i(x)>0$ for any $i\in\{1,...,m\}$, i.e. $x$ is an \textbf{SCSC} point.\\

\noindent\textbf{(b)$\Rightarrow$(c): }
    Assuming $y^\ast(x)$ is unique, we will prove $\nabla_x y^\ast(x)$ exists. Since we have proved that $(b)$ is equivalent to $(a)$, $x$ is also an \textbf{SCSC} point. By Lemma \ref{lem:con_of_multi}, i.e. the continuity of $\lambda_i(x)$, there exists a neighborhood $U$ of $x$ such that any point in $U$ is an \textbf{SCSC} point, and their active indices are the same. Without loss of generality, suppose $y^\ast(x)$ is the solution of following equations
    \begin{align*}
        \begin{pmatrix}
        h_1(x,y^\ast(x))\\
        \vdots\\
        h_p(x,y^\ast(x))
    \end{pmatrix}=0.
    \end{align*}
    for any $x$ in the neighborhood. Since $y^\ast(x)$ is unique, we have $p\geq m$. According to LICQ assumption, we also know $p\leq m$. Therefore, $p$ exactly equals $m$, and $(\nabla_y h_1(x,y^\ast(x)),...,h_m(x,y^\ast(x)))$ is invertivble. By the Implicit Function Theorem, $y^\ast(x)$ is differentiable at $x$.\\

\noindent\textbf{(c)$\Rightarrow$(b): }
    Assuming $\nabla_x y^\ast(x)$ exists, we will show $y^\ast(x)$ is unique. The existence of the gradient \(\nabla_x y^\ast(x)\) implicitly assumes differentiability at the point \(y^\ast(x)\), which in turn requires that \(y^\ast(x)\) be well-defined and single-valued. Therefore, \(y^\ast(x)\) is unique.

\subsection{Proof of Theorem \ref{thm:mainlinear}}\label{proof:thm:mainlinear}

We first establish the following lemma:

\begin{lemma}\label{lem_lowerbound}
   Suppose Assumption \ref{assumption:general1} and \ref{assumption:linear} hold and $y^\ast(x)$ is unique, then we have $t/(-h_i(x,y^\ast_t(x)))$ converges to $\lambda_i(x)$.
\end{lemma}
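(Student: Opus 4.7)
The plan is to exploit first-order optimality of the barrier subproblem and identify $\lambda_i^t(x):=t/(-h_i(x,y_t^\ast(x)))$ as a ``surrogate multiplier'', then show it converges to the true KKT multiplier $\lambda_i(x)$ by combining convergence of primals with LICQ. The linearity of $g$ and $h_i$ in $y$ is crucial because it makes the gradients $\nabla_y g(x,\cdot)$ and $\nabla_y h_i(x,\cdot)$ independent of $y$, so the KKT residuals of the barrier and original problems can be compared at different $y$-points without incurring additional error.

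First, I would write out the first-order optimality of $y_t^\ast(x)$ for $\widetilde g_t$, namely
\[
\nabla_y g(x,y_t^\ast(x))+\sum_{i=1}^k \frac{t}{-h_i(x,y_t^\ast(x))}\,\nabla_y h_i(x,y_t^\ast(x))=0,
\]
and, using Assumption~\ref{assumption:linear}, rewrite it as
\[
\nabla_y g(x)+\sum_{i=1}^k \lambda_i^t(x)\,\nabla_y h_i(x)=0,\qquad \lambda_i^t(x):=\frac{t}{-h_i(x,y_t^\ast(x))}>0.
\]
The KKT condition at $y^\ast(x)$ (under LICQ and Slater) reads $\nabla_y g(x)+\sum_i \lambda_i(x)\nabla_y h_i(x)=0$, so subtracting yields the single identity
\begin{equation}\label{eq:plan-diff}
\sum_{i=1}^k \bigl(\lambda_i^t(x)-\lambda_i(x)\bigr)\,\nabla_y h_i(x)=0.
\end{equation}

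Next I would control the inactive multipliers. Because $y^\ast(x)$ is unique, Lemma~\ref{lem:optimalitygap} together with the linear growth bound \eqref{eq:lineargap} gives $\|y_t^\ast(x)-y^\ast(x)\|=O(t)$, so $y_t^\ast(x)\to y^\ast(x)$. For any inactive index $i\notin \mathcal{I}^\ast(x)$ we have $h_i(x,y^\ast(x))<0$, hence by continuity $-h_i(x,y_t^\ast(x))\ge c_i>0$ for all sufficiently small $t$, and therefore $\lambda_i^t(x)\le t/c_i\to 0=\lambda_i(x)$. Rewriting \eqref{eq:plan-diff} then gives
\[
\sum_{i\in \mathcal{I}^\ast(x)} \bigl(\lambda_i^t(x)-\lambda_i(x)\bigr)\nabla_y h_i(x)=-\sum_{i\notin \mathcal{I}^\ast(x)}\lambda_i^t(x)\,\nabla_y h_i(x)\;\longrightarrow\;0.
\]
Invoking Assumption~\ref{assumption:general1}(\ref{assumption:general1(3)}) (LICQ) on the active gradients yields that the coefficients themselves tend to zero, i.e.\ $\lambda_i^t(x)\to\lambda_i(x)$ for every active $i$ as well, completing the proof.

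I do not expect any serious obstacle. The one minor subtlety is that the identity \eqref{eq:plan-diff} holds on the nose only because of linearity; without it one would pick up nonzero terms of the form $\sum_i\lambda_i(x)[\nabla_y h_i(x,y_t^\ast(x))-\nabla_y h_i(x,y^\ast(x))]$ and $\nabla_y g(x,y_t^\ast(x))-\nabla_y g(x,y^\ast(x))$, which would need to be absorbed into the error. Under Assumption~\ref{assumption:linear} these vanish, so the argument reduces cleanly to the primal convergence $y_t^\ast(x)\to y^\ast(x)$ plus LICQ.
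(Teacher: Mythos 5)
Your proposal is correct, and the first half coincides with the paper's proof: you subtract the KKT system of the original LP from the stationarity condition of $\widetilde g_t$, observe that linearity makes $\nabla_y g(x,\cdot)$ and $\nabla_y h_i(x,\cdot)$ constant in $y$ so the gradient-difference terms vanish exactly, and use Lemma~\ref{lem:optimalitygap} together with \eqref{eq:lineargap} (hence uniqueness of $y^\ast(x)$ and $\nabla_y g(x,y^\ast(x))\neq 0$) to get $y_t^\ast(x)\to y^\ast(x)$. Where you genuinely diverge is in the finishing step. The paper argues by contradiction: it assumes some surrogate multiplier stays $\epsilon_0$-far from $\lambda_i(x)$ along a sequence $t_j\to 0$, normalizes the discrepancies by their maximum, extracts subsequences, shows the normalized limits vanish on inactive indices, and then contradicts LICQ. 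You instead split directly: for inactive $i$, complementary slackness gives $\lambda_i(x)=0$ while $-h_i(x,y_t^\ast(x))\ge c_i>0$ for small $t$ (by primal convergence), so $\lambda_i^t(x)\to 0$; for the active indices, the exact identity \eqref{eq:plan-diff} then says a fixed, finite, LICQ-independent family of vectors carries coefficients whose combination tends to zero, and since the matrix $U$ of active gradients has full column rank, $\|c^t\|\le\|(U^\top U)^{-1}U^\top\|\,\|Uc^t\|\to 0$, i.e.\ $\lambda_i^t(x)\to\lambda_i(x)$ on the active set as well. This is more elementary and avoids subsequence extraction entirely; it works here precisely because, in the linear pointwise setting, the residual is exactly zero and the active gradients do not move with $t$. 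The paper's normalization-and-contradiction machinery is the version that survives in the strongly convex case (Lemma~\ref{lem_lowerbound2}), where the gradients depend on $y_t^\ast(x)$, the residual is only asymptotically small, and uniformity in $x$ is needed, which is presumably why the authors reuse it verbatim in the linear case. No gap in your argument; the only implicit ingredients are existence and uniqueness of the KKT multipliers at $y^\ast(x)$ (Slater plus LICQ), which the paper also uses.
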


\begin{proof}
    The KKT condition of the original problem is
    \begin{align}\label{eq:kkt4}
        \nabla_y g(x,y^\ast(x))+\sum_{i=1}^k\lambda_i(x) \nabla_y h_i(x,y^\ast(x))=0.
    \end{align} 
    We first claim that under linear case, the notations in (\ref{eq:kkt4}) is also meaningful because
    \begin{enumerate}
        \item By Remark \ref{remark:welldefined}, $\lambda_i(x,y)$ is independent of the choice of $y\in y^\ast(x)$. Without loss of generality we denote $\lambda_i(x)=\lambda_i(x,y)$;
        \item Although $y^\ast(x)$ may represent a set, given that $h_i(x,y)$ and $g(x,y)$ are all linear in $y$, $\nabla_y h_i(x,y)$ is independent of the choice of $y$. Without loss of generality we use the notation $\nabla_y h_i(x,y^\ast(x))$ and $\nabla_y g(x,y^\ast(x))$.
    \end{enumerate}
    By subtracting (\ref{eq:kkt4}) from optimal condition equation of barrier reformulation 
    \begin{align*}
        \nabla_y g(x,y^\ast_t(x))+\sum_{i=1}^k\frac{t}{-h_i(x,y_t^\ast(x))} \nabla_y h_i(x,y_t^\ast(x))=0,
    \end{align*} 
    we derive the following expression
    \begin{align*}
        \nabla_y g(x,y^\ast(x))-\nabla_y g(x,y^\ast_t(x))+\sum_{i=1}^k\left(\lambda_i(x) \nabla_y h_i(x,y^\ast(x))-\frac{t}{-h_i(x,y_t^\ast(x))} \nabla_y h_i(x,y_t^\ast(x))\right)=0.
    \end{align*}
    By adding and subtracting $\sum_{i=1}^k\lambda_i(x) \nabla_y h_i(x,y_t^\ast(x)$, we obatin
    \begin{align*}
        0=&\nabla_y g(x,y^\ast(x))-\nabla_y g(x,y^\ast_t(x))\\
        +&\sum_{i=1}^k\left(\lambda_i(x) \nabla_y h_i(x,y^\ast(x))-\lambda_i(x) \nabla_y h_i(x,y_t^\ast(x))\right)\\
        +&\sum_{i=1}^k\left(\lambda_i(x)\nabla_y h_i(x,y_t^\ast(x))-\frac{t}{-h_i(x,y_t^\ast(x))} \nabla_y h_i(x,y_t^\ast(x))\right)\\
        =&\nabla_y g(x,y^\ast(x))-\nabla_y g(x,y^\ast_t(x))\\
        &+\sum_{i=1}^k\lambda_i(x) \left(\nabla_y h_i(x,y^\ast(x))-\nabla_y h_i(x,y_t^\ast(x))\right)\\
        &+\sum_{i=1}^k\left(\lambda_i(x)-\frac{t}{-h_i(x,y_t^\ast(x))}\right) \nabla_y h_i(x,y_t^\ast(x)).
    \end{align*}
    It follows that
    \begin{align}
        &\left\|\sum_{i=1}^k\left(\lambda_i(x)-\frac{t}{-h_i(x,y_t^\ast(x))}\right) \nabla_y h_i(x,y_t^\ast(x))\right\|\nonumber\\
        \leq&\left\|\nabla_y g(x,y^\ast(x))-\nabla_y g(x,y^\ast_t(x))\right\|+\left\|\sum_{i=1}^k\lambda_i(x) \left(\nabla_y h_i(x,y^\ast(x))-\nabla_y h_i(x,y_t^\ast(x))\right)\right\|\nonumber\\
        \leq&\left\|\nabla_y g(x,y^\ast(x))-\nabla_y g(x,y^\ast_t(x))\right\|+\sum_{i=1}^k\lambda_i(x) \left\|\nabla_y h_i(x,y^\ast(x))-\nabla_y h_i(x,y_t^\ast(x))\right\|\label{ineq:t/-hconverges}\\
        \overset{\text{(i)}}{=}&0,\nonumber
    \end{align} 
    where (i) comes from the following equations due to the linearity of $g(x,y)$ and $h_i(x,y)$ for any $i$
    \begin{align*}
        \left\|\nabla_y g(x,y^\ast(x))-\nabla_y g(x,y^\ast_t(x))\right\|&=0,\\
        \left\|\nabla_y h_i(x,y^\ast(x))-\nabla_y h_i(x,y_t^\ast(x))\right\|&=0.
    \end{align*}
    Next we prove that $t/(-h_i(x,y^\ast_t(x)))$ converges to $\lambda_i(x)$ for any $i$, i.e. for any $\epsilon>0$, there exists $T$ such that for any $0<t\leq T$, we have
    \begin{align*}
        \left|\lambda_i(x)-\frac{t}{-h_i(x,y_t^\ast(x))}\right|\leq\epsilon.
    \end{align*}
    Without loss of generality, we prove the case $i=1$. If this claim were false, there exists $\epsilon_0>0$ such that for any $T>0$, we can find $0<t\leq T$ satisfies
    \begin{align*}
        \left|\lambda_1(x)-\frac{t}{-h_1(x,y_t^\ast(x))}\right|\geq\epsilon_0.
    \end{align*}
    This implies we can find a sequence $t_j$, such that 
    \begin{align*}
        \left|\lambda_1(x)-\frac{t_j}{-h_1(x,y_{t_j}^\ast(x))}\right|\geq\epsilon_0.
    \end{align*}
    for any $j$ and $\lim_{j\to\infty}t_j=0$. Set
    \begin{align*}
        \mu^i_j=\lambda_i(x)-\frac{t_j}{-h_i(x,y_{t_j}^\ast(x))}
    \end{align*}
    and let $i^\ast_j=\arg\max_{i}|\mu^i_j|$, we can choose a subsequence of $\{t_j\}_{j=1}^\infty$, still denoted as $\{t_j\}_{j=1}^\infty$, such that $i^\ast_j=i^\ast$ for any $j$, where index $i^\ast$ is a fixed index. It holds that $|\mu^{i^\ast}_j|\geq|\mu^{1}_j|\geq\epsilon_0$. Moreover, we have $|\mu^{i}_j|/|\mu^{i^\ast}_j|\leq 1$ for any index $i$. Therefore, we can select a subsequence, still denoted as $\{t_j\}_{j=1}^\infty$, such that $\lim_{j\to\infty}|\mu^{i}_j|/|\mu^{i^\ast}_j|=:\hat{\mu}^i$ exists for any $i$. By (\ref{ineq:t/-hconverges}), we obtain
    \begin{align}\label{eq:linearkkt}
        \left\|\sum_{i=1}^k\left(\lambda_i(x)-\frac{t_j}{-h_i(x,y_{t_j}^\ast(x))}\right) \nabla_y h_i(x,y_{t_j}^\ast(x))\right\|=0
    \end{align}
    Let $j$ go to infinity, we obtain
    \begin{align*}
        \sum_{i=1}^k\hat{\mu}^i \nabla_y h_i(x_0,y^\ast(x_0))=0.
    \end{align*}
    Recall (\ref{eq:lineargap}) says
    \begin{align*}
        \|y-y^\ast(x)\|\leq\frac{1}{\tau(x)\|\nabla_y g(x,y^\ast(x))\|}\left(g(x,y)-g(x,y^\ast(x))\right).
    \end{align*}
    According to Lemma \ref{lem:optimalitygapforpoint}, which shows $g(x,y^\ast_{t})-g(x,y^\ast(x))\leq kt$, we have 
    \begin{align}\label{eq:finiteterm}
        \|y_{t_j}^\ast(x)-y^\ast(x)\|\leq\frac{1}{\tau(x)\|\nabla_y g(x,y^\ast(x))\|}\left(g(x,y_{t_j}^\ast(x))-g(x,y^\ast(x))\right)\leq\frac{kt_j}{\tau(x)\|\nabla_y g(x,y^\ast(x))\|},
    \end{align}
    where $\tau(x)$ is the lower bound of the cosine of the angle between $y-y^\ast(x)$ and the $\nabla_y g(x,y^\ast(x))$. The right hands of (\ref{eq:finiteterm}) is finite because when $y^\ast(x)$ is unique, $\tau(x)\|\nabla_y g(x,y^\ast(x))\|\ne 0$. Therefore, $\lim_{j\to\infty}y^\ast_{t_j}(x)=y^\ast(x)$. This implies 
    $$|\hat{\mu}^i|=\lim_{j\to\infty}\frac{|\mu^{i}_j|}{|\mu^{i^\ast}_j|}\leq\frac{|\lambda_i(x)-\frac{0}{-h_i(x,y^\ast(x))}|}{\epsilon_0}=0$$
    for any $i\notin\mathcal{I}^\ast(x_0)$. Dividing both sides of (\ref{eq:linearkkt}) with $\mu_j^{i^\ast}$ and let $j$ go to infinity, we have
    \begin{align*}
        0&=\lim_{j\to\infty}\frac{\left\|\sum_{i=1}^k\left(\lambda_i(x)-\frac{t_j}{-h_i(x,y_{t_j}^\ast(x))}\right) \nabla_y h_i(x,y_{t_j}^\ast(x))\right\|}{\mu_j^{i^\ast}}\\
        &=\sum_{i=1}^k\hat{\mu}^i \nabla_y h_i(x,y^\ast(x))\\
        &\overset{\text{(i)}}{=}\sum_{i\in\mathcal{I}^\ast(x_0)}\hat{\mu}^i \nabla_y h_i(x,y^\ast(x))
    \end{align*}
    In (i) we utilize the face that $\hat{\mu}^i=0$ for any $i\notin\mathcal{I}^\ast(x_0)$. Since $\hat{\mu}^{i^\ast}=\lim_{j\to\infty}|\mu^{i^\ast}_j|/|\mu^{i^\ast}_j|=1$, we also have $i^\ast\in\mathcal{I}^\ast(x)$. However, by the LICQ assumption, $\nabla_y h_i(x,y^\ast(x))$ are linear independent for $i\in\mathcal{I}^\ast(x_0)$, which means $\hat{\mu}^i$ for any $i\in\mathcal{I}^\ast(x_0)$. Here is a contradiction, so $t/(-h_i(x,y^\ast_t(x)))$ converges to $\lambda_i(x)$ for any $i$.
\end{proof}

\begin{lemma}\label{lem:linearcase}
    Suppose Assumption \ref{assumption:general1} and Assumption \ref{assumption:linear} hold. If $x$ is an \textbf{SCSC} point, then there are exact $m$ constraints active and the following matrix is invertible:
    \begin{align*}
        \sum_{i=1}^k\lambda^2_i(x)\nabla_y h_i(x,y^\ast(x))\left(\nabla_y h_i(x,y^\ast(x))\right)^\top.
    \end{align*}
\end{lemma}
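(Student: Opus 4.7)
The plan is to first pin down the cardinality of the active set and then exploit a clean factorization of the target matrix. By Proposition \ref{prop:relation}, under Assumption \ref{assumption:linear} the SCSC property at $x$ is equivalent to uniqueness of $y^\ast(x)$. Since the lower-level problem is a linear program and $y^\ast(x)$ is a unique optimal solution, it must be a vertex of the feasible polyhedron $\mathcal{Y}(x) \subseteq \mathbb{R}^m$. A vertex must satisfy at least $m$ active constraints (otherwise it would lie on a face of positive dimension and could be perturbed in at least one direction while retaining optimality, contradicting uniqueness), so $|\mathcal{I}^\ast(x)| \geq m$. On the other hand, Assumption \ref{assumption:general1}(\ref{assumption:general1(3)}) (LICQ) forces $\{\nabla_y h_i(x,y^\ast(x)) : i \in \mathcal{I}^\ast(x)\}$ to be linearly independent, hence $|\mathcal{I}^\ast(x)| \leq m$. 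Combining both bounds gives $|\mathcal{I}^\ast(x)| = m$ exactly.

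Next I would stack the active gradients into an $m \times m$ matrix
\begin{align*}
A := \bigl(\nabla_y h_{i_1}(x,y^\ast(x)),\ \ldots,\ \nabla_y h_{i_m}(x,y^\ast(x))\bigr)^\top,
\end{align*}
where $\mathcal{I}^\ast(x) = \{i_1,\ldots,i_m\}$; by LICQ, $A$ is invertible. Since $\lambda_i(x) = 0$ for all inactive $i$ (complementary slackness) and $\lambda_i(x) > 0$ for all $i \in \mathcal{I}^\ast(x)$ (by the SCSC definition), the target matrix collapses to a sum only over the active indices and admits the factorization
\begin{align*}
\sum_{i=1}^{k} \lambda_i^2(x)\, \nabla_y h_i(x,y^\ast(x))\,\nabla_y h_i(x,y^\ast(x))^\top
\;=\; A^\top D A,
\end{align*}
where $D = \operatorname{diag}\bigl(\lambda_{i_1}^2(x),\ldots,\lambda_{i_m}^2(x)\bigr)$ is a strictly positive diagonal matrix by SCSC.

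Finally, I would conclude invertibility from a standard positive-definiteness argument: for any nonzero $v \in \mathbb{R}^m$, since $A$ is invertible we have $Av \neq 0$, so $v^\top A^\top D A v = (Av)^\top D (Av) > 0$ because $D \succ 0$. Therefore $A^\top D A$ is positive definite and in particular invertible.

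The argument is quite short; the only step that needs care is the lower bound $|\mathcal{I}^\ast(x)| \geq m$. If one prefers to avoid the geometric ``vertex'' description, the same conclusion can be extracted directly from the proof of (b) $\Rightarrow$ (a) in Proposition \ref{prop:relation}, where it is already observed that under uniqueness of $y^\ast(x)$ together with LICQ one has exactly $m$ active constraints. Everything else is just linear algebra: factor, then use LICQ $+$ SCSC to certify invertibility of $A$ and positivity of $D$, respectively.
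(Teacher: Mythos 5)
Your proposal is correct and follows essentially the same route as the paper's own proof: SCSC plus Proposition \ref{prop:relation} give uniqueness of $y^\ast(x)$, the vertex argument and LICQ pin the active set at exactly $m$, and invertibility follows since the sum reduces to the active indices with positive multipliers. The only difference is that you spell out the final linear-algebra step via the factorization $A^\top D A$ and a positive-definiteness argument, which the paper leaves implicit.
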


\begin{proof}
  By Proposition \ref{prop:relation}, $x$ is an \textbf{SCSC} point means $y^\ast(x)$ is unique. Since the lower-level problem is a linear program, $y^\ast(x)$ is a vertex of a polyhedron, so there are at least $m$ constraints active. According to LICQ assumption, there are at most $m$ constraints active. Without loss of generality, assume $h_i(x,y^\ast(x))$ is active for $i=1,...,m$. We also know $\nabla_y h_i(x,y^\ast(x))$ are linear independent for $i=1,...,m$ by LICQ assumption. Therefore, the following matrix
    \begin{align*}
        \sum_{i=1}^k\lambda^2_i(x)\nabla_y h_i(x,y^\ast(x))\left(\nabla_y h_i(x,y^\ast(x))\right)^\top=\sum_{i=1}^m\lambda^2_i(x)\nabla_y h_i(x,y^\ast(x))\left(\nabla_y h_i(x,y^\ast(x))\right)^\top.
    \end{align*}
     is invertible.
\end{proof}

Next, we prove the relation of Jacobians in linear case when $t$ approaches $0$.

\noindent\textbf{Proof of Theorem \ref{thm:mainlinear}:} We fix an \textbf{SCSC} point $x_0$ and prove this theorem at this point. We intend to present our proof in two steps. First, we will compute the limit of $\nabla_x y^\ast_t(x_0)$ when $t$ goes to $0$. Second, we will compute $\nabla_x y^\ast(x_0)$ directly, and then show that they are equal.\\
\noindent\textbf{Step 1.} In this step, we will use the standard process to compute the Jacobian $\nabla_x y^\ast_t(x_0)$ as follows
\begin{align}
    \nabla_x y^\ast_t(x_0)&=(\nabla_{yy}\widetilde{g}_t(x_0,y^\ast_t(x_0)))^{-1}\nabla_{yx}\widetilde{g}_t(x_0,y^\ast_t(x_0))\label{eq:Jacobianofbarrierreformulation}\\
    &=(t\nabla_{yy}\widetilde{g}_t(x_0,y^\ast_t(x_0)))^{-1}t\nabla_{yx}\widetilde{g}_t(x_0,y^\ast_t(x_0))\nonumber.
\end{align}
where $0<t\leq T$ with some constant $T$. Next we will leverage Lemma \ref{lem_lowerbound} and Lemma \ref{lem:linearcase} to compute the limit 
$$\lim_{t\to 0}(t\nabla_{yy}\widetilde{g}_t(x_0,y^\ast_t(x_0)))^{-1}t\nabla_{yx}\widetilde{g}_t(x_0,y^\ast_t(x_0)).$$

Since $\widetilde{g}_t(x,y)$ is strongly convex in $y$ for any $x$, by the optimal condition of the barrier reformulated lower-level problem, we have
$$
    \nabla_y \widetilde{g}_t(x,y^\ast_t(x))=0\quad\forall x.
$$
Note that $\widetilde{g}_t$ is two times continuously differentiable by Assumption \ref{assumption:general1}(\ref{assumption:general1(1)}), to get the expression of $\nabla_x y^\ast_t(x_0)$, we take the gradient with respect to $x$ at point $x_0$ on both sides, yielding
    \begin{align*}
    0=&\nabla_{xy} \widetilde{g}_t(x_0,y^\ast_t(x_0))\\
    \overset{\text{(i)}}{=}&\nabla_x\left(\nabla_y g(x_0,y^\ast_t(x_0))+\sum_{i=1}^k\frac{t\nabla_y h_i(x_0,y_t^\ast(x_0))}{-h_i(x_0,y_t^\ast(x_0))}\right)\\
    \overset{\text{(ii)}}{=}&\nabla^2_{xy} g(x_0,y^\ast_t(x_0))+\left(\nabla_x y_t^\ast(x_0)\right)^\top\nabla^2_{yy} g(x_0,y^\ast_t(x_0))\\
    &+\sum_{i=1}^k\frac{t\nabla_x h_i(x_0,y_t^\ast(x_0))\left(\nabla_y h_i(x_0,y_t^\ast(x_0))\right)^\top}{h^2_i(x_0,y_t^\ast(x_0))}+\left(\nabla_x y_t^\ast(x_0)\right)^\top\sum_{i=1}^k\frac{t\nabla_y h_i(x_0,y_t^\ast(x_0))\left(\nabla_y h_i(x_0,y_t^\ast(x_0))\right)^\top}{h^2_i(x_0,y_t^\ast(x_0))} \\
    &+\sum_{i=1}^k\frac{t\nabla^2_{xy} h_i(x_0,y_t^\ast(x_0))}{-h_i(x_0,y_t^\ast(x_0))} +\left(\nabla_x y_t^\ast(x_0)\right)^\top\sum_{i=1}^k\frac{t\nabla^2_{yy} h_i(x_0,y_t^\ast(x_0))}{-h_i(x_0,y_t^\ast(x_0))},
    \end{align*}
    where (i) is by the definition of $\widetilde{g}_t(x,y)$ and (ii) is by direct computation. Rearranging terms, we obtain 
    \begin{align}\label{eq:jacobianofbarrierreformulation}
        &\left(\nabla^2_{yy} g(x_0,y^\ast_t(x_0))+\sum_{i=1}^k\frac{t\nabla_y h_i(x_0,y_t^\ast(x_0))\left(\nabla_y h_i(x_0,y_t^\ast(x_0))\right)^\top}{h^2_i(x_0,y_t^\ast(x_0))}+\sum_{i=1}^k\frac{t\nabla^2_{yy} h_i(x_0,y_t^\ast(x_0))}{-h_i(x_0,y_t^\ast(x_0))}\right)\nabla_x y_t^\ast(x_0)\nonumber\\
        =&-\left(\nabla^2_{yx} g(x_0,y^\ast_t(x_0))+\sum_{i=1}^k\frac{t\nabla_y h_i(x_0,y_t^\ast(x_0))\left(\nabla_x h_i(x_0,y_t^\ast(x_0))\right)^\top}{h^2_i(x_0,y_t^\ast(x_0))}+\sum_{i=1}^k\frac{t\nabla^2_{yx} h_i(x_0,y_t^\ast(x_0))}{-h_i(x_0,y_t^\ast(x_0))}\right).
    \end{align}
    Lemma \ref{lem_lowerbound} and Lemma \ref{lem:optimalitygapforpoint} tell us $\lim_{t\to 0}t/(-h_i(x_0,y^\ast_t(x_0)))=\lambda_i(x_0)$ for any $i$ and $\lim_{t\to 0}y^\ast_t(x_0)=y^\ast(x_0)$. Thus
    \begin{align}
        &\lim_{t\to 0}\left(t\nabla^2_{yy} g(x_0,y^\ast_t(x_0))+\sum_{i=1}^k\frac{t^2\nabla_y h_i(x_0,y_t^\ast(x_0))\left(\nabla_y h_i(x_0,y_t^\ast(x_0))\right)^\top}{h^2_i(x_0,y_t^\ast(x_0))}+t\sum_{i=1}^k\frac{t\nabla^2_{yy} h_i(x_0,y_t^\ast(x_0))}{-h_i(x_0,y_t^\ast(x_0))}\right)\nonumber\\
        \overset{\text{(i)}}{=}&\sum_{i=1}^k\lambda^2_i(x_0)\nabla_y h_i(x_0,y^\ast(x_0))\left(\nabla_y h_i(x_0,y^\ast(x_0))\right)^\top,\label{eq:nablaynablay}
        \end{align}
        and
        \begin{align}
        &\lim_{t\to 0}\left(t\nabla^2_{yx} g(x_0,y^\ast_t(x_0))+\sum_{i=1}^k\frac{t^2\nabla_y h_i(x_0,y_t^\ast(x_0))\left(\nabla_x h_i(x_0,y_t^\ast(x_0))\right)^\top}{h^2_i(x_0,y_t^\ast(x_0))}+t\sum_{i=1}^k\frac{t\nabla^2_{yx} h_i(x_0,y_t^\ast(x_0))}{-h_i(x_0,y_t^\ast(x_0))}\right)\nonumber\\
        \overset{\text{(ii)}}{=}&\sum_{i=1}^k\lambda^2_i(x_0)\nabla_y h_i(x_0,y^\ast(x_0))\left(\nabla_x h_i(x_0,y^\ast(x_0))\right)^\top,\label{eq:nablaynablax}
    \end{align}
    where (i), (ii) are because
    \begin{align*}
        \lim_{t\to 0}t\nabla_{yy}^2 g(x_0,y_t^\ast(x_0))&=\lim_{t\to 0}t\cdot\nabla_{yy}^2 g(x_0,y^\ast(x_0))=0,\\
        \lim_{t\to 0}t\nabla_{yx}^2 g(x_0,y_t^\ast(x_0))&=\lim_{t\to 0}t\cdot\nabla_{yx}^2 g(x_0,y^\ast(x_0))=0,\\
        \lim_{t\to 0}t\frac{t\nabla^2_{yy} h_i(x_0,y_t^\ast(x_0))}{-h_i(x_0,y_t^\ast(x_0))}&=\lim_{t\to 0}t\cdot\lambda_i(x_0)\nabla^2_{yy} h_i(x_0,y^\ast(x_0))=0,\\
        \lim_{t\to 0}t\frac{t\nabla^2_{yx} h_i(x_0,y_t^\ast(x_0))}{-h_i(x_0,y_t^\ast(x_0))}&=\lim_{t\to 0}t\cdot\lambda_i(x_0)\nabla^2_{yx} h_i(x_0,y^\ast(x_0))=0.
    \end{align*}
    By Lemma \ref{lem:linearcase}, the following matrix is invertible
    \begin{align*}
        \sum_{i=1}^k\lambda^2_i(x_0)\nabla_y h_i(x_0,y^\ast(x_0))\left(\nabla_y h_i(x_0,y^\ast(x_0))\right)^\top.
    \end{align*}
    Therefore, again applying
    $$\lim_{t\to 0}\sum_{i=1}^k\frac{t^2\nabla_y h_i(x_0,y_t^\ast(x_0))\left(\nabla_y h_i(x_0,y_t^\ast(x_0))\right)^\top}{h^2_i(x_0,y_t^\ast(x_0))}=\sum_{i=1}^k\lambda^2_i(x_0)\nabla_y h_i(x_0,y^\ast(x_0))\left(\nabla_y h_i(x_0,y^\ast(x_0))\right)^\top,$$
    we can find a constant $T$ small enough such that 
    \begin{align*}
        \sum_{i=1}^k\frac{t^2\nabla_y h_i(x_0,y_t^\ast(x_0))\left(\nabla_y h_i(x_0,y_t^\ast(x_0))\right)^\top}{h^2_i(x_0,y_t^\ast(x_0))}
    \end{align*}
    is invertible for any $0<t\leq T$. This implies the invertibility of
    \begin{align*}
        t\nabla^2_{yy} g(x_0,y^\ast_t(x_0))+\sum_{i=1}^k\frac{t^2\nabla_y h_i(x_0,y_t^\ast(x_0))\left(\nabla_y h_i(x_0,y_t^\ast(x_0))\right)^\top}{h^2_i(x_0,y_t^\ast(x_0))}+t\sum_{i=1}^k\frac{t\nabla^2_{yy} h_i(x_0,y_t^\ast(x_0))}{-h_i(x_0,y_t^\ast(x_0))}.
    \end{align*}
    for any $0<t\leq T$. By (\ref{eq:jacobianofbarrierreformulation}), we derive the expression of $\nabla_x y_t^\ast(x_0)$ for any $0<t\leq T$

    \resizebox{\textwidth}{!}{
    \begin{minipage}{\textwidth}
    \begin{align*}
        \nabla_x y_t^\ast(x_0)=&-\left[t\nabla^2_{yy} g(x_0,y^\ast_t(x_0))+\sum_{i=1}^k\frac{t^2\nabla_y h_i(x_0,y_t^\ast(x_0))\left(\nabla_y h_i(x_0,y_t^\ast(x_0))\right)^\top}{h^2_i(x_0,y_t^\ast(x_0))}+t\sum_{i=1}^k\frac{t\nabla^2_{yy} h_i(x_0,y_t^\ast(x_0))}{-h_i(x_0,y_t^\ast(x_0))}\right]^{-1}\\
        &\times\left[t\nabla^2_{yx} g(x_0,y^\ast_t(x_0))+\sum_{i=1}^k\frac{t^2\nabla_y h_i(x_0,y_t^\ast(x_0))\left(\nabla_x h_i(x_0,y_t^\ast(x_0))\right)^\top}{h^2_i(x_0,y_t^\ast(x_0))}+t\sum_{i=1}^k\frac{t\nabla^2_{yx} h_i(x_0,y_t^\ast(x_0))}{-h_i(x_0,y_t^\ast(x_0))}\right]
    \end{align*}
    \end{minipage}
    }
    
From (\ref{eq:nablaynablay}) and (\ref{eq:nablaynablax}), we have

    \resizebox{\textwidth}{!}{
    \begin{minipage}{\textwidth}
    \begin{align*}
        \lim_{t\to 0}\nabla_x y_t^\ast(x_0)=&\lim_{t\to 0}-\left(\sum_{i=1}^k\left(\frac{t}{-h_i(x_0,y_t^\ast(x_0))}\right)^2\nabla_y h_i(x_0,y_t^\ast(x_0))\left(\nabla_y h_i(x_0,y_t^\ast(x_0))\right)^\top\right)^{-1}\\
        &\times\left(\sum_{i=1}^k\left(\frac{t}{-h_i(x_0,y_t^\ast(x_0))}\right)^2\nabla_y h_i(x_0,y_t^\ast(x_0))\left(\nabla_x h_i(x_0,y_t^\ast(x_0))\right)^\top\right)\\
        =&-\left(\sum_{i=1}^k\lambda^2_i(x_0)\nabla_y h_i(x_0,y_t^\ast(x_0))\left(\nabla_y h_i(x_0,y_t^\ast(x_0))\right)^\top\right)^{-1}\\
        &\times\left(\sum_{i=1}^k\lambda^2_i(x_0)\nabla_y h_i(x_0,y_t^\ast(x_0))\left(\nabla_x h_i(x_0,y_t^\ast(x_0))\right)^\top\right)
    \end{align*}
    \end{minipage}
    }

\noindent\textbf{Step 2.} In this step, we will establish that
    \begin{align*}
        \nabla_x y^\ast(x_0)=&-\left(\sum_{i=1}^k\lambda^2_i(x_0)\nabla_y h_i(x_0,y^\ast(x_0))\left(\nabla_y h_i(x_0,y^\ast(x_0))\right)^\top\right)^{-1}\\
        &\times\left(\sum_{i=1}^k\lambda^2_i(x_0)\nabla_y h_i(x_0,y^\ast(x_0))\left(\nabla_x h_i(x_0,y^\ast(x_0))\right)^\top\right)\\
        =&-\left(\sum_{i=1}^k\lambda_i(x_0)\nabla_y h_i(x_0,y^\ast(x_0))\left(\lambda_i(x_0)\nabla_y h_i(x_0,y^\ast(x_0))\right)^\top\right)^{-1}\\
        &\times\left(\sum_{i=1}^k\lambda_i(x_0)\nabla_y h_i(x_0,y^\ast(x_0))\left(\lambda_i(x_0)\nabla_x h_i(x_0,y^\ast(x_0))\right)^\top\right)
    \end{align*}
By the continuity of $h_i(x,y)$ and Lemma \ref{lem:con_of_multi}, i.e. the continuity of $\lambda_i(x)$, we can select a neighborhood $U$ of $x_0$ such that the active index set for any point in $U$ is the same as that of $x_0$. According to Proposition \ref{prop:relation}, $x$ is an \textbf{SCSC} point, which means $y^\ast(x)$ is unique. By Lemma \ref{lem:linearcase}, there are exact $m$ constrains active at point $x_0$, we assume $h_i(x_0,y^\ast(x_0))=0$ for $i=1,...,m$, and $i=m+1,...,k$ are inactive. Therefore, $y^\ast(x)$ is the solution of the following $m$ equations
\begin{align*}
    \begin{pmatrix}
    h_1(x,y)\\
    \vdots\\
    h_m(x,y)
\end{pmatrix}=0.
\end{align*} 
for any $x$ in $U$. Since $\lambda_i(x_0)>0$ for any active index at point $x_0$ with $i=1,...,m$, these equations are equivalent to
\begin{align*}    
\begin{pmatrix}
    \lambda_1(x_0) h_1(x,y)\\
    \vdots\\
    \lambda_m(x_0) h_m(x,y)
\end{pmatrix}=0.
\end{align*}
Replace $y$ by $y^\ast(x)$ in these equations, we have 
\begin{align*}    
\begin{pmatrix}
    \lambda_1(x_0) h_1(x,y^\ast(x))\\
    \vdots\\
    \lambda_m(x_0) h_m(x,y^\ast(x))
\end{pmatrix}=0.
\end{align*}
These hold for any $x$ in $U$. Take gradient for $x$ on both sides, we obtain
\begin{align*}
    \begin{pmatrix}
    \lambda_1(x_0) \nabla_x^\top h_1(x,y^\ast(x))\\
    \vdots\\
    \lambda_m(x_0) \nabla_x^\top h_m(x,y^\ast(x))
\end{pmatrix}
+
\begin{pmatrix}
    \lambda_1(x_0) \nabla_y^\top h_1(x,y^\ast(x))\\
    \vdots\\
    \lambda_m(x_0) \nabla_y^\top h_m(x,y^\ast(x))
\end{pmatrix}\nabla_x y^\ast(x)=0
\end{align*}
for any $x$ in $U$. Multiply the matrix $\left(
    \lambda_1(x_0) \nabla_y h_1(x,y^\ast(x)),\cdots,\lambda_m(x_0) \nabla_y h_m(x,y^\ast(x))
\right)$ to both sides and moving terms. Note that the index $i=m+1,...,k$ are inactive at point $x_0$, then $\lambda_i(x_0)=0$ for $i=m+1,...,k$. We finalize
\begin{align}
        \nabla_x y^\ast(x_0)=&-\left(\sum_{i=1}^k\lambda_i(x_0)\nabla_y h_i(x_0,y^\ast(x_0))\left(\lambda_i(x_0)\nabla_y h_i(x_0,y^\ast(x_0))\right)^\top\right)^{-1}\nonumber\\
        &\times\left(\sum_{i=1}^k\lambda_i(x_0)\nabla_y h_i(x_0,y^\ast(x_0))\left(\lambda_i(x_0)\nabla_x h_i(x_0,y^\ast(x_0))\right)^\top\right)\label{eq:localexpression}\\
        =&\lim_{t\to 0}\nabla_x y^\ast_t(x_0).\nonumber
    \end{align}
    This completes the proof.

\subsection{Proof of Theorem \ref{thm:mainnonlinear}}\label{proof:thm:mainnonlinear}

\begin{lemma}\label{lem_lowerbound2}
   Suppose Assumption \ref{assumption:general1} and \ref{assumption:nonlinear} hold, then we have $t/(-h_i(x,y^\ast_t(x)))$ converges to $\lambda_i(x)$ uniformly for any $i$.
\end{lemma}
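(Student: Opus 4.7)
The plan is to upgrade the contradiction argument used in the proof of Lemma \ref{lem_lowerbound} so that it gives uniformity in $x\in\mathcal{X}$, exploiting the compactness of $\mathcal{X}$ from Assumption \ref{assumption:general1}(\ref{assumption:general1(2)}), the uniform convergence $y^\ast_t(x)\to y^\ast(x)$ from Lemma \ref{lem:optimalitygapforpoint}, and the uniform boundedness of the multipliers from Lemma \ref{lem:con_of_multi}. The key advantage over the linear case is that $y^\ast(x)$ is now unique by strong convexity, so the notation $\lambda_i(x)$ is unambiguous and the continuity of $\lambda_i(\cdot)$ on the compact set $\mathcal{X}$ is already in hand.

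Concretely, I would first subtract the KKT condition of the original lower-level problem from that of the barrier-reformulated lower-level problem, obtaining
\begin{align*}
\sum_{i=1}^k\!\left(\lambda_i(x)-\frac{t}{-h_i(x,y^\ast_t(x))}\right)\!\nabla_y h_i(x,y^\ast_t(x)) = \bigl(\nabla_y g(x,y^\ast_t(x))-\nabla_y g(x,y^\ast(x))\bigr)+\sum_{i=1}^k\lambda_i(x)\bigl(\nabla_y h_i(x,y^\ast_t(x))-\nabla_y h_i(x,y^\ast(x))\bigr).
\end{align*}
The right-hand side is controlled uniformly in $x$: Lipschitz smoothness of $g$ and of each $h_i$, the uniform bound $\sup_{x,i}\lambda_i(x)<\infty$, and $\|y^\ast_t(x)-y^\ast(x)\|\le\sqrt{2kt/\mu_g}$ jointly yield a bound of the form $C\sqrt{t}$ with $C$ independent of $x$.

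Next I would argue by contradiction. Assume uniform convergence fails; then there exist $\epsilon_0>0$, sequences $t_j\downarrow 0$, $x_j\in\mathcal{X}$, and indices $i^\ast_j$ with $|\mu^{i^\ast_j}_j|:=|\lambda_{i^\ast_j}(x_j)-t_j/(-h_{i^\ast_j}(x_j,y^\ast_{t_j}(x_j)))|\ge\epsilon_0$. By compactness of $\mathcal{X}$ and finiteness of the index set, pass to a subsequence with $x_j\to x_0\in\mathcal{X}$ and $i^\ast_j\equiv i^\ast$. By Lemma \ref{lem:optimalitygapforpoint}, $y^\ast_{t_j}(x_j)\to y^\ast(x_0)$; by continuity of $\lambda_i$ (Lemma \ref{lem:con_of_multi}) and of $h_i$, for every index $i$ inactive at $x_0$ we get $h_i(x_j,y^\ast_{t_j}(x_j))$ bounded away from zero and $\lambda_i(x_j)\to 0$, so $\mu^i_j\to 0$. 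Normalizing by $\mu^{i^\ast}_j$, extracting a further subsequence so that $\hat\mu^i:=\lim_j\mu^i_j/\mu^{i^\ast}_j$ exists for every $i$, and using $|\mu^{i^\ast}_j|\ge\epsilon_0$ to ensure the $C\sqrt{t_j}/|\mu^{i^\ast}_j|$ residual vanishes, the displayed identity above passes to the limit as
\begin{align*}
\sum_{i\in\mathcal{I}^\ast(x_0)}\hat\mu^i\,\nabla_y h_i(x_0,y^\ast(x_0))=0,
\end{align*}
while $|\hat\mu^{i^\ast}|=1$ forces $i^\ast\in\mathcal{I}^\ast(x_0)$. LICQ at $(x_0,y^\ast(x_0))$ then gives $\hat\mu^i=0$ for all $i\in\mathcal{I}^\ast(x_0)$, contradicting $|\hat\mu^{i^\ast}|=1$.

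The main obstacle I anticipate is the book-keeping needed to make every ``passage to the limit'' uniform: in particular, showing that for indices inactive at $x_0$ the ratio $\mu^i_j/\mu^{i^\ast}_j$ tends to zero requires combining the continuity of $h_i$, the joint convergence $(x_j,y^\ast_{t_j}(x_j))\to(x_0,y^\ast(x_0))$, and the continuity of $\lambda_i$, which together force $h_i(x_j,y^\ast_{t_j}(x_j))\le \tfrac12 h_i(x_0,y^\ast(x_0))<0$ eventually and hence $t_j/(-h_i(x_j,y^\ast_{t_j}(x_j)))\to 0$. Once this is in place, the remainder is a direct transcription of the LICQ-based contradiction from the proof of Lemma \ref{lem_lowerbound}, now applied at the limit point $x_0$ rather than at a fixed $x$.
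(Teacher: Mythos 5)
Your proposal follows essentially the same route as the paper's proof: subtract the two KKT/stationarity conditions, bound the residual using the uniform multiplier bound (Lemma \ref{lem:con_of_multi}) and the uniform estimate $\|y^\ast_t(x)-y^\ast(x)\|\le\sqrt{2kt/\mu_g}$ (Lemma \ref{lem:optimalitygapforpoint}), then run a contradiction argument along a sequence $(x_j,t_j)$ with $x_j\to x_0$ by compactness, kill the indices inactive at $x_0$, pass to the limit, and invoke LICQ at $(x_0,y^\ast(x_0))$. The limit identity, the conclusion $|\hat\mu^{i^\ast}|=1$ with $i^\ast\in\mathcal{I}^\ast(x_0)$, and the LICQ contradiction are exactly the paper's steps.

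One step, as written, does not follow. You take $i^\ast_j$ to be merely an index whose deviation satisfies $|\mu^{i^\ast_j}_j|\ge\epsilon_0$ and then "extract a further subsequence so that $\hat\mu^i=\lim_j \mu^i_j/\mu^{i^\ast}_j$ exists for every $i$." That extraction requires the ratios to be bounded, which in turn requires every $\mu^i_j$ to be bounded; for indices active at $x_0$ this amounts to boundedness of the barrier quotients $t_j/(-h_i(x_j,y^\ast_{t_j}(x_j)))$, which you have not established (and which, if proved directly, would itself need a separate LICQ-normalization argument of the same flavor as Case 1 of Lemma \ref{lem:con_of_multi}; the quantitative bounds of Theorem \ref{lem:local_M} are not available here since Lemma \ref{lem_lowerbound2} only assumes Assumptions \ref{assumption:general1} and \ref{assumption:nonlinear}). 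The paper sidesteps this entirely by choosing $i^\ast_j=\arg\max_i|\mu^i_j|$: then $|\mu^i_j|/|\mu^{i^\ast}_j|\le 1$ automatically, subsequential limits exist, and $|\mu^{i^\ast}_j|\ge\epsilon_0$ is preserved because the maximum dominates the offending index. Replacing your choice of $i^\ast_j$ by this argmax choice closes the gap; the rest of your argument then goes through as in the paper.
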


\begin{proof}
    The KKT condition of original problem is
    \begin{align}\label{eq:kkt5}
        \nabla_y g(x,y^\ast(x))+\sum_{i=1}^k\lambda_i(x) \nabla_y h_i(x,y^\ast(x))=0.
    \end{align} 
    By subtracting (\ref{eq:kkt5}) from optimal condition equation of barrier reformulation, given below,
    \begin{align*}
        \nabla_y g(x,y^\ast_t(x))+\sum_{i=1}^k\frac{t}{-h_i(x,y_t^\ast(x))} \nabla_y h_i(x,y_t^\ast(x))=0,
    \end{align*} 
    we derive the following expression
    \begin{align*}
        \nabla_y g(x,y^\ast(x))-\nabla_y g(x,y^\ast_t(x))+\sum_{i=1}^k\left(\lambda_i(x) \nabla_y h_i(x,y^\ast(x))-\frac{t}{-h_i(x,y_t^\ast(x))} \nabla_y h_i(x,y_t^\ast(x))\right)=0.
    \end{align*}
    It follows that
    \begin{align}
        0=&\nabla_y g(x,y^\ast(x))-\nabla_y g(x,y^\ast_t(x))\nonumber\\
        &+\sum_{i=1}^k\left(\lambda_i(x) \nabla_y h_i(x,y^\ast(x))-\lambda_i(x) \nabla_y h_i(x,y_t^\ast(x))\right)\nonumber\\
        &+\sum_{i=1}^k\left(\lambda_i(x)\nabla_y h_i(x,y_t^\ast(x))-\frac{t}{-h_i(x,y_t^\ast(x))} \nabla_y h_i(x,y_t^\ast(x))\right)\nonumber\\
        =&\nabla_y g(x,y^\ast(x))-\nabla_y g(x,y^\ast_t(x))\nonumber\\
        &+\sum_{i=1}^k\lambda_i(x) \left(\nabla_y h_i(x,y^\ast(x))-\nabla_y h_i(x,y_t^\ast(x))\right)\nonumber\\
        &+\sum_{i=1}^k\left(\lambda_i(x)-\frac{t}{-h_i(x,y_t^\ast(x))}\right) \nabla_y h_i(x,y_t^\ast(x))\nonumber\\
        =&\left\|\sum_{i=1}^k\left(\lambda_i(x)-\frac{t}{-h_i(x,y_t^\ast(x))}\right) \nabla_y h_i(x,y_t^\ast(x))\right\|\nonumber\\
        \leq&\left\|\nabla_y g(x,y^\ast(x))-\nabla_y g(x,y^\ast_t(x))\right\|+\left\|\sum_{i=1}^k\lambda_i(x) \left(\nabla_y h_i(x,y^\ast(x))-\nabla_y h_i(x,y_t^\ast(x))\right)\right\|\nonumber\\
        \leq&\left\|\nabla_y g(x,y^\ast(x))-\nabla_y g(x,y^\ast_t(x))\right\|+\sum_{i=1}^k|\lambda_i(x)|\left\|\nabla_y h_i(x,y^\ast(x))-\nabla_y h_i(x,y_t^\ast(x))\right\|\label{ineq:t/-hconverges3}
    \end{align}
    
    Next we prove that $t/(-h_i(x,y^\ast_t(x)))$ converges to $\lambda_i(x)$ uniformly for any $i$, i.e. for any $\epsilon>0$, there exists $T$ such that for any $x\in\mathcal{X}$ and $0<t\leq T$, we have
    \begin{align*}
        \left|\lambda_i(x)-\frac{t}{-h_i(x,y_t^\ast(x))}\right|\leq\epsilon.
    \end{align*}
    Without loss of generality, we consider the case $i=1$. We prove this by contradiction. If there exists $\epsilon_0>0$ such that for any $T>0$, we can find $x\in\mathcal{X}$  and $0<t\leq T$ satisfies
    \begin{align*}
        \left|\lambda_1(x)-\frac{t}{-h_1(x,y_t^\ast(x))}\right|\geq\epsilon_0.
    \end{align*}
    This implies we can find a sequence $(x_j,t_j)$, such that the following holds for any $j$
    \begin{align*}
        \left|\lambda_1(x_j)-\frac{t_j}{-h_1(x_j,y_{t_j}^\ast(x_j))}\right|\geq\epsilon_0,
    \end{align*}
    and also we have $\lim_{j\to\infty}t_j=0$. Since $\mathcal{X}$ is compact, we can assume $\lim_{j\to\infty}x_j=x_0$. Set
    \begin{align*}
        \mu^i_j=\lambda_i(x_j)-\frac{t_j}{-h_i(x_j,y_{t_j}^\ast(x_j))}
    \end{align*}
    and let $i^\ast_j=\arg\max_{i}|\mu^i_j|$, we can choose a subsequence of $\{(x_j,t_j)\}_{j=1}^\infty$, still denoted as $\{(x_j,t_j)\}_{j=1}^\infty$, such that $i^\ast_j=i^\ast$ for any $j$, where index $i^\ast$ is a fixed index. It holds that $|\mu^{i^\ast}_j|\geq|\mu^{1}_j|\geq\epsilon_0$. Moreover, we have $|\mu^{i}_j|/|\mu^{i^\ast}_j|\leq 1$ for any index $i$. Therefore, we can select a subsequence, still denoted as $\{(x_j,t_j)\}_{j=1}^\infty$, such that $\lim_{j\to\infty}|\mu^{i}_j|/|\mu^{i^\ast}_j|=:\hat{\mu}^i$ exists for any $i$. Apply (\ref{ineq:t/-hconverges3}) to $(x_j,t_j)$, we obtain
    \begin{align}
        &\left\|\sum_{i=1}^k\left(\lambda_i(x_j)-\frac{t_j}{-h_i(x_j,y_{t_j}^\ast(x_j))}\right) \nabla_y h_i(x_j,y_{t_j}^\ast(x_j))\right\|\nonumber\\
        \leq&\left\|\nabla_y g(x_j,y^\ast(x_j))-\nabla_y g(x_j,y^\ast_{t_j}(x_j))\right\|+\sum_{i=1}^k|\lambda_i(x_j)| \left\|\nabla_y h_i(x_j,y^\ast(x_j))-\nabla_y h_i(x_j,y_{t_j}^\ast(x_j))\right\|\label{ineq:t/-hconverges2}
    \end{align}
    It is worth noting that 
    \begin{align}
        \lim_{j\to\infty}(\nabla_y h_i(x_j,y_{t_j}^\ast(x_j))-\nabla_y h_i(x_0,y^\ast(x_0)))&=0\label{eq:convergenceofgradient1}\\
        \lim_{j\to\infty}(\nabla_y g(x_j,y_{t_j}^\ast(x_j))-\nabla_y g(x_0,y^\ast(x_0)))&=0\label{eq:convergenceofgradient2}.
    \end{align}
    This is because, by Lemma \ref{lem:optimalitygapforpoint}, $y_{t}^\ast(x)$ uniformly converges to $y^\ast(x)$. Under strongly convex setting, we also know $y^\ast(x)$ continuously depends on $x$. According to Lemma \ref{lem:limitofsequence}, $y_{t_j}^\ast(x_j)$ converges to $y^\ast(x_0)$, which implies $(x_j,y_{t_j}^\ast(x_j))$ converges to $(x_0,y^\ast(x_0))$.

    Note that $\lambda_i(x_j)$ is uniform bounded by Lemma \ref{lem:con_of_multi}, the right-hand side of (\ref{ineq:t/-hconverges2}) converges to $0$. Dividing both sides of (\ref{ineq:t/-hconverges2}) with $\mu_j^\ast$ and let $j$ go to infinity, it follows that
    \begin{align*}
        \sum_{i=1}^k\hat{\mu}^i \nabla_y h_i(x_0,y^\ast(x_0))=0.
    \end{align*}
    Applying Lemma \ref{lem:optimalitygapforpoint} to $(x_j,t_j)$, we have
    $$\|y^\ast_{t_j}(x_j)-y^\ast(x_j)\|\leq\sqrt{\frac{2}{\mu_g}kt_j}.$$
    This means $\lim_{j\to\infty}y^\ast_{t_j}(x_j)=y^\ast(x_0)$. Therefore, the following holds
    $$|\hat{\mu}^i|=\lim_{j\to\infty}\frac{|\mu^{i}_j|}{|\mu^{i^\ast}_j|}\leq\frac{|\lambda_i(x_0)-\frac{0}{-h_i(x_0,y^\ast(x_0))}|}{\epsilon_0}=0.$$
    Thus, we have
    \begin{align*}
        \sum_{i\in\mathcal{I}^\ast(x_0)}\hat{\mu}^i \nabla_y h_i(x_0,y^\ast(x_0))=0.
    \end{align*}
    Since $\hat{\mu}^{i^\ast}=\lim_{j\to\infty}|\mu^{i^\ast}_j|/|\mu^{i^\ast}_j|=1$, we also have $i\in\mathcal{I}^\ast(x_0)$. However, by the LICQ assumption, $\nabla_y h_i(x_0,y^\ast(x_0))$ are linear independent for $i\in\mathcal{I}^\ast(x_0)$, which means $\hat{\mu}^i=0$ for any $i\in\mathcal{I}^\ast(x_0)$. Here is a contradiction, so $t/(-h_i(x,y^\ast_t(x)))$ converges to $\lambda_i(x)$ uniformly for any $i$.
    
\end{proof}

\begin{lemma}\label{lem_existence}
    Suppose Assumption \ref{assumption:general1} and Assumption \ref{assumption:nonlinear} hold, and $x$ is an \textbf{SCSC} point. Then $\lim_{t\to 0}\nabla_x y_t^\ast(x)$ exists.
\end{lemma}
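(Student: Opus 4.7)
The plan is to derive an explicit expression for $\nabla_x y^\ast_t(x)$ by implicit differentiation of the lower-level optimality condition and then pass to the limit by isolating and cancelling the divergent contributions. By Proposition~\ref{prop:diff_hyperfun} combined with Assumption~\ref{assumption:nonlinear}, $\widetilde{g}_t$ is strongly convex in $y$ with $\nabla^2_{yy}\widetilde{g}_t(x, y^\ast_t(x)) \succeq \mu_g I$, so the implicit function theorem yields
\begin{equation*}
  \nabla_x y^\ast_t(x) = -\bigl[\nabla^2_{yy}\widetilde{g}_t(x, y^\ast_t(x))\bigr]^{-1}\nabla^2_{yx}\widetilde{g}_t(x, y^\ast_t(x)).
\end{equation*}
Setting $\lambda^t_i := t/(-h_i(x, y^\ast_t(x)))$ and using $t/h_i^2 = (\lambda^t_i)^2/t$, direct computation yields the decompositions
\begin{equation*}
  \nabla^2_{yy}\widetilde{g}_t = Q_t + B_t^\top D_t B_t, \qquad \nabla^2_{yx}\widetilde{g}_t = Q'_t + B_t^\top D_t C_t,
\end{equation*}
where the rows of $B_t$ and $C_t$ are $\nabla_y h_i(x, y^\ast_t(x))$ and $\nabla_x h_i(x, y^\ast_t(x))$ for $i$ in the active index set $\mathcal{I}^\ast(x)$, $D_t = \mathrm{diag}((\lambda^t_i)^2/t)_{i \in \mathcal{I}^\ast(x)}$, and $Q_t, Q'_t$ gather everything else. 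Crucially, the inactive-constraint analogues of the third Hessian term vanish, because there $\lambda_i(x)=0$ and a short estimate gives $\lambda^t_i = \mathcal{O}(t)$, hence $(\lambda^t_i)^2/t = \mathcal{O}(t)$. By Lemma~\ref{lem_lowerbound2} and Lemma~\ref{lem:optimalitygapforpoint}, $\lambda^t_i \to \lambda_i(x)$ and $y^\ast_t(x) \to y^\ast(x)$, so $Q_t \to Q$, $Q'_t \to Q'$, $B_t \to B$, $C_t \to C$, with $Q \succeq \mu_g I$, while $D_t^{-1} \to 0$ on the active block because SCSC forces $\lambda_i(x) > 0$ for every $i \in \mathcal{I}^\ast(x)$.

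Next I would invert via the Sherman--Morrison--Woodbury formula,
\begin{equation*}
  (Q_t + B_t^\top D_t B_t)^{-1} = Q_t^{-1} - Q_t^{-1} B_t^\top S_t^{-1} B_t Q_t^{-1}, \qquad S_t := D_t^{-1} + B_t Q_t^{-1} B_t^\top,
\end{equation*}
and multiply against $Q'_t + B_t^\top D_t C_t$. The only individual pieces that a priori diverge carry the factor $D_t$; using $B_t Q_t^{-1} B_t^\top = S_t - D_t^{-1}$ one verifies the algebraic identity
\begin{equation*}
  D_t - S_t^{-1}\bigl(B_t Q_t^{-1} B_t^\top\bigr) D_t = S_t^{-1},
\end{equation*}
which collapses those divergent pieces into $Q_t^{-1} B_t^\top S_t^{-1} C_t$ and eliminates the $1/t$ growth. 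Since $S_t \to B Q^{-1} B^\top$, which is invertible by LICQ (its rows being linearly independent), every remaining factor has a finite limit, and one obtains the explicit expression
\begin{equation*}
  \lim_{t \to 0}\nabla_x y^\ast_t(x) = -Q^{-1}Q' + Q^{-1} B^\top \bigl(B Q^{-1} B^\top\bigr)^{-1}\bigl(B Q^{-1} Q' - C\bigr),
\end{equation*}
establishing existence.

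The chief difficulty is precisely that both $\nabla^2_{yy}\widetilde{g}_t$ and $\nabla^2_{yx}\widetilde{g}_t$ contain rank-$|\mathcal{I}^\ast(x)|$ components blowing up at rate $1/t$, so a finite limit of their ``quotient'' demands an algebraic cancellation rather than entrywise convergence. SCSC is indispensable here: it guarantees that the divergent components in both Hessians are supported on exactly the same set of active constraint gradients, and thus share the common factor $B_t^\top D_t$ that makes the Woodbury telescoping possible; LICQ in turn ensures $B Q^{-1} B^\top$ is invertible so that $S_t^{-1}$ has a well-defined limit. Without either of these structural hypotheses, the divergences in the two matrices would not align, and one could not expect $\nabla_x y^\ast_t(x)$ to have any limit at all.
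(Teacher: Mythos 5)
Your proposal is correct, and it takes a genuinely different route from the paper. The paper also starts from the implicit-function expression, but it treats the divergent rank-one barrier terms \emph{one at a time}: it runs an induction over the constraints, applying the Sherman--Morrison formula at each step, distinguishing active from inactive indices, and constructing the limit of the inverse Hessian abstractly as a projection-plus-restricted-inverse operator $\widetilde{\mathscr{B}}(x)$; the terms coming from $\nabla_xh_i$ are then handled separately through a Sherman--Morrison quotient whose denominator is shown positive via a subspace decomposition. You instead group all active constraints into a single block and invert via the Woodbury identity with $S_t=D_t^{-1}+B_tQ_t^{-1}B_t^\top$; the key cancellation $P_tB_t^\top D_t=Q_t^{-1}B_t^\top S_t^{-1}$ (which I verified: $S_t^{-1}(B_tQ_t^{-1}B_t^\top)D_t=S_t^{-1}(S_t-D_t^{-1})D_t=D_t-S_t^{-1}$) removes the $1/t$ growth in one stroke, and the hypotheses enter exactly where you say: SCSC gives $D_t^{-1}\to 0$ on the active block (via Lemma \ref{lem_lowerbound2}), strong convexity and convexity of the $h_i$ give $Q_t\succeq\mu_g I$, and LICQ makes $BQ^{-1}B^\top$ invertible so $S_t^{-1}$ converges. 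Your argument is shorter and yields a closed-form limit, which moreover coincides with the classical sensitivity formula for $\nabla_x y^\ast(x)$ obtained by differentiating the reduced KKT system, so it is consistent with Theorem \ref{thm:mainnonlinear}. What the paper's heavier machinery buys is reuse: the same subspace-decomposition apparatus is deployed again in Lemma \ref{lem:uniformconvergeofgradient} to prove \emph{local uniform} convergence of $\nabla_x y_t^\ast$ near an SCSC point, which is what actually licenses exchanging limit and derivative in Theorem \ref{thm:mainnonlinear}; if you wanted to replace the paper's development wholesale, you would still need to argue that your Woodbury-based limit is attained uniformly in a neighborhood (e.g.\ by noting that $Q_t,B_t,C_t,Q_t',D_t^{-1}$ all converge uniformly there, using Lemmas \ref{lem:optimalitygapforpoint} and \ref{lem_lowerbound2} and the local constancy of the active set), but for the existence statement of Lemma \ref{lem_existence} itself your proof is complete.
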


\begin{proof}
    Denote $\mathcal{I}^\ast(x)$ be the set of active index at $y^\ast(x)$. Since LICQ assumption is satisfied at $y^\ast(x)$, $\{\nabla_y h_{i}(x,y^\ast(x))\}_{i\in\mathcal{I}^\ast(x)}$ are linearly independent. By Lemma \ref{lem:optimalitygapforpoint} and Lemma \ref{lem:continuityofyastt}, $y^\ast_t(x)$ continuously converges to $y^\ast(x)$, so we can assume that $t$ is sufficiently small such that $\{\nabla_y h_{i}(x,y^\ast_t(x))\}_{i\in\mathcal{I}^\ast(x)}$ are linearly independent. Following (\ref{eq:Jacobianofbarrierreformulation}) in the proof of Theorem \ref{thm:mainlinear}, we derive

    \resizebox{\textwidth}{!}{
    \begin{minipage}{\textwidth}
    \begin{align*}
        \nabla_x y_t^\ast(x)=&-\left(\nabla^2_{yy} g(x,y^\ast_t(x))+\sum_{i=1}^k\frac{1}{t}\frac{t^2}{h^2_i(x,y_t^\ast(x))}\nabla_y h_i(x,y_t^\ast(x))\left(\nabla_y h_i(x,y_t^\ast(x))\right)^\top+\sum_{i=1}^k\frac{t\nabla^2_{yy} h_i(x,y_t^\ast(x))}{-h_i(x,y_t^\ast(x))}\right)^{-1}\\
        &\times\left(\nabla^2_{yx} g(x,y^\ast_t(x))+\sum_{i=1}^k\frac{1}{t}\frac{t^2}{h^2_i(x,y_t^\ast(x))}\nabla_y h_i(x,y_t^\ast(x))\left(\nabla_x h_i(x,y_t^\ast(x))\right)^\top+\sum_{i=1}^k\frac{t\nabla^2_{yx} h_i(x,y_t^\ast(x))}{-h_i(x,y_t^\ast(x))}\right)
    \end{align*}
    \end{minipage}}
    
    Unlike the linear case, handling the strongly convex case becomes extremely challenging. The reason is that in the linear case, the limit of a few terms in the above expression (e.g., $A_t^k(x)$ and $D_t(x)$ to be defined later) are invertible, but this is no longer the case without linearity. Therefore, it is necessary to develop a new set of tools.

    Denote the following terms as
    \begin{align}
        v^j_t(x)&=\frac{t}{-h_j(x,y_t^\ast(x))}\nabla_y h_j(x,y_t^\ast(x))\label{notation:vjt}\\
        v^j(x)&=\lambda_{j}(x)\nabla_y h_j(x,y^\ast(x))\label{notation:vj}\\
        V^j_t(x)&=\mathrm{span}\{\frac{t}{-h_i(x,y^\ast_t(x))}\nabla_y h_i(x,y_t^\ast(x)):i\in\{1,...,j\}\cap \mathcal{I}^\ast(x)\}\label{notation:space}\\
        V^{j}&=\mathrm{span}\{\lambda_i\nabla_y h_i(x,y^\ast(x)):i\in\{1,...,j\}\cap\mathcal{I}^\ast(x)\}\nonumber\\
        A_t^j(x)&=\sum_{i=1}^j\frac{1}{t}(v^j_t(x))(v^j_t(x))^\top\nonumber\\
        &=\sum_{i=1}^j\frac{1}{t}\frac{t^2}{h^2_i(x,y_t^\ast(x))}\nabla_y h_i(x,y_t^\ast(x))\left(\nabla_y h_i(x,y_t^\ast(x))\right)^\top,\ j=1,...,k\nonumber\\
        B_t(x)&=\nabla^2_{yy} g(x,y^\ast_t(x))+\sum_{i=1}^k\frac{t\nabla^2_{yy} h_i(x,y_t^\ast(x))}{-h_i(x,y_t^\ast(x))}\nonumber\\
        B_t^j(x)&=A_t^j(x)+B_t(x)\label{notation:Btj}\\
        B(x)&=\nabla^2_{yy} g(x,y^\ast(x))+\sum_{i=1}^k\lambda_i(x)\nabla^2_{yy} h_i(x,y^\ast(x))\nonumber\\
        C_t(x)&=\nabla^2_{yx} g(x,y^\ast_t(x))+\sum_{i=1}^k\frac{t\nabla^2_{yx} h_i(x,y_t^\ast(x))}{-h_i(x,y_t^\ast(x))}\nonumber\\
        C(x)&=\nabla^2_{yx} g(x,y^\ast(x))+\sum_{i=1}^k\lambda_i(x)\nabla^2_{yx} h_i(x,y^\ast(x))\nonumber\\
        D_t(x)&=\sum_{i=1}^k\frac{1}{t}\frac{t^2}{h^2_i(x,y_t^\ast(x))}\nabla_y h_i(x,y_t^\ast(x))\left(\nabla_x h_i(x,y_t^\ast(x))\right)^\top,\label{notation:Dt}
    \end{align}
    where $\mathcal{I}^\ast(x)$ denotes the set of \textbf{active index} at the point $(x,y^\ast(x))$. 
    
    By the above notations, we have
    \begin{align}\label{eq:jacobiannotation}
        \nabla_x y_t^\ast(x)=\left(A^k_t(x)+B_t(x)\right)^{-1}\left(C_t(x)+D_t(x)\right).
    \end{align}

    We will demonstrate the convergence of the above two terms separately.
    
    \noindent\textbf{Step 1.} To prove that the following limit exists, 
    $$\lim_{t\to 0}\left(A^k_t(x)+B_t(x)\right)^{-1}C_t(x)$$ we will show $\lim_{t\to 0}C_t(x)$ exists and $\lim_{t\to 0}\left(A^k_t(x)+B_t(x)\right)^{-1}$ exists as a linear transformation. 
    
    By Lemma \ref{lem_lowerbound2}, i.e. $\lim_{t\to 0}t/(-h_i(x,y^\ast_t(x)))=\lambda_i(x)$, and Lemma \ref{lem:optimalitygapforpoint}, which shows $\lim_{t\to 0}y^\ast_t(x)=y^\ast(x)$, we have 
    \begin{align*}
        \lim_{t\to 0}C_t(x)&=\lim_{t\to 0}\nabla^2_{yx} g(x,y^\ast_t(x))+\sum_{i=1}^k\frac{t\nabla^2_{yx} h_i(x,y_t^\ast(x))}{-h_i(x,y_t^\ast(x))}\\
        &=\nabla^2_{yx} g(x,y^\ast(x))+\sum_{i=1}^k\lambda_i(x)\nabla^2_{yx} h_i(x,y^\ast(x)).
    \end{align*}
    
    Now we prove $\lim_{t\to 0}(A_t^k(x)+B_t(x))^{-1}$ exists, which is the key point of this proof. We proceed by induction. 
    
    If $j=1$ is an active index, note that
    $$A_t^j(x)=\sum_{i=1}^j\frac{1}{t}v^i_t(x)(v^i_t(x))^\top.$$
    Applying the Sherman-Morrison formula, we obtain
        \begin{align*}
            (A_t^1(x)+B_t(x))^{-1}&=\left(\frac{1}{t}v_t^1(x) \left(v_t^1(x)\right)^\top+B_t(x)\right)^{-1}\\
            &={B_t(x)}^{-1}-\frac{{B_t(x)}^{-1}v_t^1(x){v_t^1(x)}^\top {B_t(x)}^{-1}}{t^2+{v_t^1(x)}^{\top}{B_t(x)}^{-1}v_t^1(x)}.
        \end{align*}
        To prove $\lim_{t\to 0}\left(A^k_t(x)+B_t(x)\right)^{-1}$ exists, it is sufficient to show that $\lim_{t\to 0}B_t(x)$ and $\lim_{t\to 0}v_t^1(x)$ exist, and $\lim_{t\to 0}{v_t^1(x)}^{\top}{B_t(x)}^{-1}v_t^1(x)>0$. Applying Lemma \ref{lem_lowerbound2}, and Lemma \ref{lem:optimalitygapforpoint} again, we obtain 
        \begin{align*}
            \lim_{t\to 0}v^1_t(x)=\lim_{t\to 0}\frac{t}{-h_1(x,y^\ast_t(x))}\cdot\lim_{t\to 0}\nabla_y h_1(x,y^\ast_t(x))=\lambda_1(x)\nabla_y h_1(x,y^\ast(x))=v^1(x).
        \end{align*} 
        Since $B(x)\succeq\nabla^2_{yy}g(x,y^\ast(x))\succ0$, it follows that
        \begin{align*}
            \lim_{t\to 0}B_t(x)^{-1}=\left(\nabla^2_{yy} g(x,y^\ast(x))+\sum_{i=1}^k\lambda_i(x)\nabla^2_{yy} h_i(x,y^\ast(x))\right)^{-1}={B(x)}^{-1}\succ0,
        \end{align*}
        Under the LICQ assumption and given that we have assumed index $1$ is active, we have $v^1(x)\ne 0$. Therefore, $\lim_{t\to 0}{v_t^1(x)}^{\top}{B_t(x)}^{-1}v_t^1(x)={v^1}(x)^{\top}{B(x)}^{-1}v^1(x)>0$, implying
        \begin{align*}
            \lim_{t\to 0}(A_t^1(x)+B_t(x))^{-1}={B(x)}^{-1}-\frac{{B(x)}^{-1}v^1(x) {v^1(x)}^\top {B(x)}^{-1}}{{v^1}(x)^{\top}{B(x)}^{-1}v^1(x)}.
        \end{align*}
        If $j=1$ is an inactive index, we obtain
        \begin{align*}
            &\lim_{t\to 0}A^1_t(x)\\
            =&\lim_{t\to 0}\frac{1}{t}\frac{t^2}{h^2_1(x,y^\ast_t(x))}\nabla_yh_1(x,y^\ast_t(x))\left(\nabla_yh_1(x,y^\ast_t(x))\right)^\top\\
            =&\lim_{t\to 0}\frac{t}{h^2_1(x,y^\ast_t(x))}\nabla_yh_1(x,y^\ast_t(x))\left(\nabla_yh_1(x,y^\ast_t(x))\right)^\top\\
            =&0.
        \end{align*}
        Therefore,
        \begin{align*}
            \lim_{t\to 0}(A_t^1(x)+B_t(x))^{-1}={B(x)}^{-1}.
        \end{align*}
        
        Next, we prove the existence of $\lim_{t\to 0}(A_t^k(x)+B_t(x))^{-1}$ by induction. For $j=1$, we have proved the existence of $\lim_{t\to 0}(A^j_t(x)+B_t(x))^{-1}$. Suppose when $j=\widetilde{j}$, $\lim_{t\to 0}(A^{\widetilde{j}}_t(x)+B_t(x))^{-1}=:G^{\widetilde{j}}(x)$ exists, where $G^{\widetilde{j}}(x)$ is some quantity to be specified soon. We will prove that for $j=\widetilde{j}+1$, $\lim_{t\to 0}(A^{\widetilde{j}+1}_t(x)+B_t(x))^{-1}=\lim_{t\to 0}(B^{\widetilde{j}}_t(x))^{-1}=:G^{\widetilde{j}+1}(x)$ also exists, by considering two cases, depending on whether $\widetilde{j}+1$ is active or not.
        
        \noindent\textbf{Case 1: $j=\widetilde{j}+1$ is an active index.} Again using the Sherman-Morrison formula, we find
        \begin{align}
            (A_t^{\widetilde{j}+1}(x)+B_t(x))^{-1}&=(\frac{1}{t}v_t^{\widetilde{j}+1}(x){v_t^{\widetilde{j}+1}(x)}^\top+A_t^{\widetilde{j}}(x)+B_t(x))^{-1}\nonumber\\
            &\overset{\text{(i)}}{=}(\frac{1}{t}v_t^{\widetilde{j}+1}(x){v_t^{\widetilde{j}+1}(x)}^\top+B_t^{\widetilde{j}}(x))^{-1}\nonumber\\
            &={B_t^{\widetilde{j}}(x)}^{-1}-\frac{{B_t^{\widetilde{j}}(x)}^{-1}v_t^{\widetilde{j}+1}(x) {v_t^{\widetilde{j}+1}(x)}^\top {B_t^{\widetilde{j}}(x)}^{-1}}{t^2+{v_t^{\widetilde{j}+1}(x)}^\top{B_t^{\widetilde{j}}(x)}^{-1}v_t^{\widetilde{j}+1}(x)}. \label{eq:inversion}
        \end{align}
        (i) is because the notation in (\ref{notation:Btj}). Note that $\lim_{t\to 0}v_t^{\widetilde{j}+1}(x)=\lambda_{\widetilde{j}+1}(x)\nabla_y h_{\widetilde{j}+1}(x,y^\ast(x))$ exists, and by the induction hypothesis, $\lim_{t\to 0}{B_t^{\widetilde{j}}(x)}^{-1}=G^{\widetilde{j}}(x)$ exists. Then to show that the limit of \eqref{eq:inversion} exists, it is sufficient to prove that 
        \begin{align}\label{eq:positive}
            \lim_{t\to 0}{v_t^{\widetilde{j}+1}(x)}^\top{B_t^{\widetilde{j}}(x)}^{-1}v_t^{\widetilde{j}+1}(x)={v^{\widetilde{j}+1}(x)}^\top G^{\widetilde{j}}(x)v^{\widetilde{j}+1}(x)>0.
        \end{align}
        Towards this end, let us first define 
        $w^{\widetilde{j}+1}_t(x):={{B_t^{\widetilde{j}}}(x)}^{-1}v_t^{\widetilde{j}+1}(x)$, then it suffices to first show that $w^{\widetilde{j}+1}(x)= \lim_{t\to 0}w^{\widetilde{j}+1}_t(x)$  exists, then  ${v^{\widetilde{j}+1}(x)}^\top w^{\widetilde{j}+1}(x)>0$ as desired.
        
        To proceed, let us first observe that the following two limits exist:
        \begin{align}
            &\lim_{t\to 0}(A^{\widetilde{j}}_t(x)+B_t(x))^{-1}=\lim_{t\to 0}{B_t^{\widetilde{j}}(x)}^{-1} :=G^{\widetilde{j}}(x)\label{eq:twoeq}\\
            &\lim_{t\to 0}v_t^{\widetilde{j}+1}(x)=\lambda_{\widetilde{j}+1}(x)\nabla_y h_{\widetilde{j}+1}(x,y^\ast(x)),\nonumber
        \end{align}
        which represents that $\lim_{t\to 0}w^{\widetilde{j}+1}_t(x)=\lim_{t\to 0}(A^{\widetilde{j}}_t(x)+B_t(x))^{-1}v_t^{\widetilde{j}+1}(x)$ also exists. 
        
        Next we calculate $w^{\widetilde{j}+1}(x)=\lim_{t\to 0}w^{\widetilde{j}+1}_t(x)$ indirectly. 
        We decompose $v_t^{\widetilde{j}+1}(x)$ and $w^{\widetilde{j}+1}_t(x)$ as $v_t^{\widetilde{j}+1}(x)=\hat{v}_t+\hat{v}_t^\perp$ and $w^{\widetilde{j}+1}_t(x)=\hat{w}_t+\hat{w}_t^\perp$ respectively, with $\hat{v}_t,\hat{w}_t\in V^{\widetilde{j}}_t(x)$ and $\hat{v}_t^\perp,\hat{w}_t^\perp\in (V^{\widetilde{j}}_t(x))^\perp$. Here $V^{\widetilde{j}}_t(x)$ defined at (\ref{notation:space}) is
        $$V^j_t(x)=\mathrm{span}\{\frac{t}{-h_i(x,y^\ast_t(x))}\nabla_y h_i(x,y_t^\ast(x)):i\in\{1,...,j\}\cap \mathcal{I}^\ast(x)\}.$$
        A key observation is, since the term $A^{\widetilde{j}}_t(x)$ in $B_t^{\widetilde{j}}(x)$ has the potential to approach infinity, $B_t^{\widetilde{j}}(x)$ will stretch vectors in the subspace $V^{\widetilde{j}}_t(x)$ to infinite length, hence its inverse will compress vectors in $V^{\widetilde{j}}_t(x)$ to zero. We will demonstrate that the limit of $\hat{w}_t$ is zero, and then compute the limit of $\hat{w}_t^\perp$.
        
        We will first show that the decomposition vectors $\hat{v}_t$, $\hat{w}_t$, $\hat{v}_t^\perp$, $\hat{w}_t^\perp$ also have limit when $t$ goes to $0$. Note that the projection map has a matrix representation:
        $$\hat{w}_t=\proj_{V^{\widetilde{j}}_t(x)}w^{\widetilde{j}+1}_t(x)=F_t(x)({F_t(x)}^\top F_t(x))^{-1}{F_t(x)}^\top w^{\widetilde{j}+1}_t(x),$$
        where $F_t(x):=\{[v^i_t(x)]:i\in\{1,...,\widetilde{j}\}\cap\mathcal{I}^\ast(x)\}$ is the matrix with columns as the vectors $v^i_t(x)$ for $i\in\{1,...,\widetilde{j}\}\cap\mathcal{I}^\ast(x)$. According to Lemma \ref{lem_lowerbound2}, and Lemma \ref{lem:optimalitygapforpoint}, $v_t^{i}(x)$ converges to $v^i(x)$ for $i=1,...,\widetilde{j}$. By LICQ assumption, $F(x)^\top F(x)$ is invertible, where $F(x):=\{[v^i(x)]:i\in\{1,...,\widetilde{j}\}\cap\mathcal{I}^\ast(x)\}$ is the matrix with columns as the vectors $v^i(x)$ for $i\in\{1,...,\widetilde{j}\}\cap\mathcal{I}^\ast(x)$. Note that $\lim_{t\to 0} F_t(x)=F(x)$, we confirm $$\lim_{t\to 0}F_t(x)({F_t(x)}^\top F_t(x))^{-1}{F_t(x)}^\top=F(x)({F(x)}^\top F(x))^{-1}{F(x)}^\top$$ exists. Therefore $\hat{w}_t$ converges, and the convergence of $\hat{w}_t^\perp=w^{\widetilde{j}+1}_t(x)-\hat{w}_t$ follows. The convergence of $\hat{v}$ and $\hat{v}^\perp_t$ can also be obtained by a similar process. Denote $\hat{w}=\lim_{t\to 0}\hat{w}_t$, $\hat{w}^\perp=\lim_{t\to 0}\hat{w}^\perp_t$ and $\hat{v}=\lim_{t\to 0}\hat{v}_t$, $\hat{v}^\perp=\lim_{t\to 0}\hat{v}^\perp_t$. 
        
        Note that
        \begin{align}
            \hat{v}_t+\hat{v}_t^\perp&=v_t^{\widetilde{j}+1}(x)\nonumber\\
            &=\left(A^{\widetilde{j}}_t(x)+B_t(x)\right)w^{\widetilde{j}+1}_t(x)\nonumber\\
            &=\left(A^{\widetilde{j}}_t(x)+B_t(x)\right)\left(\hat{w}_t+\hat{w}_t^\perp\right)\nonumber\\
            &=A^{\widetilde{j}}_t(x)\hat{w}_t+A^{\widetilde{j}}_t(x) \hat{w}_t^\perp+B_t(x)\hat{w}_t+B_t(x)\hat{w}_t^\perp,\label{eq:decompose}
        \end{align} 
        We will compute convergence of some terms in (\ref{eq:decompose}) which are needed to prvove (\ref{eq:positive})
        \begin{enumerate}
            \item \textbf{Claim: $\lim_{t\to 0}A^{\widetilde{j}}_t(x)\hat{w}_t^\perp=0$.} 
            
            \textbf{Proof.} This is because 
            \begin{align*}
            \lim_{t\to 0}A^{\widetilde{j}}_t(x)\hat{w}_t^\perp&=\lim_{t\to 0}\sum_{i=1}^{\widetilde{j}}\frac{1}{t}v_t^i(x) {v_t^i(x)}^\top\hat{w}_t^\perp\\
            &\overset{\text{(i)}}{=}\lim_{t\to 0}\sum_{i\in\{1,...,\widetilde{j}\}\setminus\mathcal{I}^\ast(x)}\frac{1}{t}v_t^i(x) {v_t^i(x)}^\top\hat{w}_t^\perp\\
            &=\lim_{t\to 0}\sum_{i\in\{1,...,\widetilde{j}\}\setminus\mathcal{I}^\ast(x)}\frac{t}{h^2_i(x,y^\ast_t(x))}\nabla_y h_i(x,y_t^\ast(x)){\nabla_y h_i(x,y_t^\ast(x))}^\top\hat{w}_t^\perp\\
            &\overset{\text{(ii)}}{=}0,
            \end{align*}
            where (i) is because $v^i_t(x)\in V^{\widetilde{j}}_t(x)$, $\hat{w}_t^\perp\in(V^{\widetilde{j}}_t(x))^\perp$, and $v^i_t(x)\hat{w}_t^\perp=0$ for any $i\in\{1,...,\widetilde{j}\}\cap\mathcal{I}^\ast(x)$, and (ii) is because $\lim_{t\to 0}[t/h^2_i(x,y^\ast_t(x))=0/h^2_i(x,y^\ast(x))]=0$ for any $i\notin\mathcal{I}^\ast(x)$;
            \item \textbf{Claim: $\lim_{t\to 0}\hat{w}_t=0$.}
            
            \textbf{Proof.} If $\lim_{t\to 0}\hat{w}_t=:\zeta\ne0$, it must be that $\zeta\in V^{\widetilde{j}}(x)$ since $\hat{w}_t\in V_t^{\widetilde{j}}(x)$ for any $t$. Consequently, 
            \begin{align}\label{eq:divergence}
                \lim_{t\to 0}\|A^{\widetilde{j}}_t(x)\hat{w}_t\|\overset{\text{(i)}}{=}\lim_{t\to 0}\left\|\sum_{i\in\{1,...,\widetilde{j}\}\cap\mathcal{I}^\ast(x)}\frac{1}{t}v_t^i(x) {v_t^i(x)}^\top\hat{w}_t\right\|=\infty,
            \end{align} 
            where (i) is because $\lim_{t\to 0}[t/h^2_i(x,y^\ast_t(x))]=0$ for any $i\notin\mathcal{I}^\ast(x)$ and also $\lim_{t\to 0}\hat{w}^\perp_t=\hat{w}^\perp$ exists. (\ref{eq:divergence}) implies that $\lim_{t\to 0}\|A^{\widetilde{j}}_t(x)\hat{w}_t+A^{\widetilde{j}}_t(x) \hat{w}_t^\perp+B_t(x)\hat{w}_t+B_t(x)\hat{w}_t^\perp\|=\infty$, since $\lim_{t\to 0}\|A^{\widetilde{j}}_t(x) \hat{w}_t^\perp+B_t(x)\hat{w}_t+B_t(x)\hat{w}_t^\perp\|$ exists as all three terms in $\|A^{\widetilde{j}}_t(x) \hat{w}_t^\perp+B_t(x)\hat{w}_t+B_t(x)\hat{w}_t^\perp\|$ have limits. However, $\lim_{t\to 0} \|\hat{v}_t+\hat{v}_t^\perp\|=\|v^{\widetilde{j}}(x)\|<\infty$. Combining these facts with (\ref{eq:decompose}), we have
            \begin{align*}
            \infty>\lim_{t\to 0}\|\hat{v}_t+\hat{v}_t^\perp\|=\lim_{t\to 0}\|A^{\widetilde{j}}_t(x)\hat{w}_t+A^{\widetilde{j}}_t(x) \hat{w}_t^\perp+B_t(x)\hat{w}_t+B_t(x)\hat{w}_t^\perp\|=\infty.
            \end{align*}
            Here is a contradiction, so we conclude $\lim_{t\to 0}\hat{w}_t=0$;
            \item \textbf{Claim: $\lim_{t\to 0}B_t(x)\hat{w}_t=0$.}

            \textbf{Proof.}  According to Lemma \ref{lem_lowerbound2}, and Lemma \ref{lem:optimalitygapforpoint}, it is easy to see $\lim_{t\to 0}B_t(x)=B(x)$. Note that $\lim_{t\to 0}\hat{w}_t=0$, we can directly get $\lim_{t\to 0}B_t(x)\hat{w}_t=0$.
        \end{enumerate}
         
        Based on the above results, we are able to compute $w^{\widetilde{j}+1}(x)=\lim_{t\to 0}w^{\widetilde{j}+1}_t(x)=\lim_{t\to 0}\hat{w}^\perp_t=\hat{w}^\perp$ as follows:
        \begin{enumerate}
            \item Consider a linear transformation $\mathscr{B}(x)$ restricted to $(V^{\widetilde{j}}(x))^\perp$ as follows
            $$\mathscr{B}(x)(v):=\proj_{(V^{\widetilde{j}}(x))^\perp}B(x)v$$
            for any $v\in (V^{\widetilde{j}}(x))^\perp$. We show that $\mathscr{B}(x)$ is positive definite. Suppose there exists a non-zero vector $\eta\in(V^{\widetilde{j}}(x))^\perp$ such that $\eta^\top\mathscr{B}(x)(\eta)\leq 0$. Decomposing  $B(x)\eta=\eta_1+\eta_2$, where $\eta_1\in V^{\widetilde{j}}(x)$ and $\eta_2\in(V^{\widetilde{j}}(x))^\perp$, then $\eta^\top \mathscr{B}(x)(\eta)=\eta^\top \proj_{(V^{\widetilde{j}}(x))^\perp}B(x)\eta=\eta^\top\eta_2\leq 0$. Hence $\eta^\top B(x)\eta=\eta^\top(\eta_1+\eta_2)=\eta^\top\eta_2\leq 0$, contradicting the fact that $B(x)\succeq\nabla_{yy}g(x,y^\ast(x))$. Therefore, $\mathscr{B}(x)$ must be a positive definite linear transformation from $(V^{\widetilde{j}}(x))^\perp$ to $(V^{\widetilde{j}}(x))^\perp$;
            \item Consider the inverse of $\mathscr{B}(x)$ on $(V^{\widetilde{j}}(x))^\perp$, denoted as $(\mathscr{B}(x))^{-1}$. We extend the linear transformation $(\mathscr{B}(x))^{-1}:(V^{\widetilde{j}}(x))^\perp\to (V^{\widetilde{j}}(x))^\perp$ to a linear transformation $\widetilde{\mathscr{B}}(x)$ defined on $\RR^m$ as follows
            \begin{equation}
                \widetilde{\mathscr{B}}(x)(v)=\begin{cases}\label{eq:linearmap}
                    0 &\text{ if } v\in V^{\widetilde{j}}(x) \\
                    (\mathscr{B}(x))^{-1}(v) &\text{ if }  v\in \left(V^{\widetilde{j}}(x)\right)^\perp.
                 \end{cases}
            \end{equation}
            and $\widetilde{\mathscr{B}}(x)$ remains positive definite on $(V^{\widetilde{j}}(x))^\perp$, as $(\mathscr{B}(x))^{-1}$ is positive definite;
            \item We will show that $\lim_{t\to 0}w^{\widetilde{j}+1}_t(x)=\widetilde{\mathscr{B}}(x)(v^{\widetilde{j}+1})$. By calculation, we have
            \begin{align*}
                \hat{v}+\hat{v}^\perp&=\lim_{t\to 0}\hat{v}_t+\hat{v}_t^\perp\\
                &=\lim_{t\to 0}v_t^{\widetilde{j}+1}(x)\\
                &=\lim_{t\to 0}\left(A_t^{\widetilde{j}}(x)+B_t(x)\right)w_t^{\widetilde{j}+1}(x)\\
                &=\lim_{t\to 0}\left(A_t^{\widetilde{j}}(x)+B_t(x)\right)\left(\hat{w}_t+\hat{w}_t^\perp\right)\\
                &=\lim_{t\to 0}\left(A_t^{\widetilde{j}}(x)\hat{w}_t+A_t^{\widetilde{j}}(x)\hat{w}_t^\perp+B_t(x)\hat{w}_t+B_t(x)\hat{w}_t^\perp\right)\\
                &\overset{\text{(i)}}{=}\lim_{t\to 0}A_t^{\widetilde{j}}(x)\hat{w}_t+B(x)\hat{w}^\perp\\\
                &=\lim_{t\to 0}\sum_{i\in\{1,...,\widetilde{j}\}}\frac{t}{h^2_i(x,y^\ast_t(x))}\nabla_y h_j(x,y_t^\ast(x)){\nabla_y h_j(x,y_t^\ast(x))}^\top\hat{w}_t+B(x)\hat{w}^\perp\\
                &\overset{\text{(ii)}}{=}\lim_{t\to 0}\sum_{i\in\{1,...,\widetilde{j}\}\cap\mathcal{I}^\ast(x)}\frac{t}{h^2_i(x,y^\ast_t(x))}\nabla_y h_j(x,y_t^\ast(x)){\nabla_y h_j(x,y_t^\ast(x))}^\top\hat{w}_t+B(x)\hat{w}^\perp\\
                &=\sum_{i\in\{1,...,\widetilde{j}\}\cap\mathcal{I}^\ast(x)}\left(\lim_{t\to 0}\frac{t}{h^2_i(x,y^\ast_t(x))}{\nabla_y h_i(x,y_t^\ast(x))}^\top\hat{w}_t^\perp\right)\nabla_y h_i(x,y^\ast(x))+B(x)\hat{w}^\perp,\\
            \end{align*}
            where (i) is because $\lim_{t\to 0}A_t^{\widetilde{j}}(x)\hat{w}_t^\perp=0$, $\lim_{t\to 0}B_t(x)\hat{w}_t=0$, $\lim_{t\to 0}B_t(x)=B(x)$ which have been shown in the claims we proved before, and (ii) is because $\lim_{t\to 0}t/h^2_i(x,y^\ast_t(x))=0$ for any $i\notin\mathcal{I}^\ast(x)$. Note that the vector 
            $$\sum_{i\in\{1,...,\widetilde{j}\}\cap\mathcal{I}^\ast(x)}\left(\lim_{t\to 0}\frac{t}{h^2_i(x,y^\ast_t(x))}{\nabla_y h_i(x,y_t^\ast(x))}^\top\hat{w}_t^\perp\right)\nabla_y h_i(x,y^\ast(x))$$
            is in the subspace $V^{\widetilde{j}}(x)$, then $\hat{v}^\perp=\proj_{(V^{\widetilde{j}}(x))^\perp}B(x)\hat{w}^\perp=\mathscr{B}(x)(\hat{w}^\perp)$, implying $\lim_{t\to 0}w^{\widetilde{j}+1}_t(x)=\hat{w}^\perp=(\mathscr{B}(x))^{-1}(\hat{v}^\perp)=\widetilde{\mathscr{B}}(x)(v^{\widetilde{j}+1})$.
        \end{enumerate}
        To conclude, we get the result that $w^{\widetilde{j}+1}(x)=\widetilde{\mathscr{B}}(x)(v^{\widetilde{j}+1})$. Under the LICQ assumption and the assumption that $\widetilde{j}+1$ is active, we have $\hat{v}^\perp=\proj_{\left(V^{\widetilde{j}}(x)\right)^\perp}{v^{\widetilde{j}+1}}(x)\ne 0$. Otherwise, if $\proj_{\left(V^{\widetilde{j}}(x)\right)^\perp}{v^{\widetilde{j}+1}}(x)=0$, then $v^{\widetilde{j}+1}(x)\in V^{\widetilde{j}}(x)$, which implies $v^{\widetilde{j}+1}(x)$ can be expressed as the linear combination of $v^{i}(x)$ where $i\in\mathcal{I}(x)\cap\{1,...,\widetilde{j}\}$. Note that $v^{i}(x)=\lambda_i(x)\nabla_y h_i(x,y^\ast(x))$, and $\lambda_i(x)>0$ for any $i\in\mathcal{I}(x)$. Thus $\nabla_y h_{\widetilde{j}+1}(x,y^\ast(x))$ can be expressed as the linear combination of $\nabla_y h_{i}(x,y^\ast(x))$ where $i\in\mathcal{I}(x)\cap\{1,...,\widetilde{j}\}$. This contradicts the LICQ assumption. 
        
        Hence, 
        \begin{align}\label{eq:limpositive}
            \lim_{t\to 0}{v_t^{\widetilde{j}+1}(x)}^\top{B_t^{\widetilde{j}}(x)}^{-1}v_t^{\widetilde{j}+1}(x)&=(\hat{v}+\hat{v}^\perp)^\top(\hat{w}+\hat{w}^\perp)\nonumber\\
            &=(\hat{v}+\hat{v}^\perp)^\top\hat{w}^\perp\nonumber\\
            &\overset{\text{(i)}}{=}{\hat{v}^\perp}^\top(\mathscr{B}(x))^{-1}(\hat{v}^\perp)\\
            &\overset{\text{(i)}}{>}0,
        \end{align} 
        where (i) is because $\hat{v}^\top\hat{w}^\perp=0$, and (ii) is because ${B_t^{\widetilde{j}}(x)}^{-1}$ is positive definite in $\left(V^{\widetilde{j}}(x)\right)^\perp$. It follows that
        \begin{align*}
           \lim_{t\to 0}(A^{\widetilde{j}+1}_t(x)+B_t(x))^{-1}&=\lim_{t\to 0}{B_t^{\widetilde{j}}(x)}^{-1}-\frac{{B_t^{\widetilde{j}}(x)}^{-1}v_t^{\widetilde{j}+1}(x) {v_t^{\widetilde{j}+1}(x)}^\top {B_t^{\widetilde{j}}(x)}^{-1}}{t^2+{v_t^{\widetilde{j}+1}(x)}^\top{B_t^{\widetilde{j}}(x)}^{-1}v_t^{\widetilde{j}+1}(x)}\\
           &=\widetilde{\mathscr{B}}(x)-\frac{\widetilde{\mathscr{B}}(x)\left(\hat{v}^\perp(x)\right) \left(\widetilde{\mathscr{B}}(x)\left(\hat{v}^\perp(x)\right)\right)^\top}{{\hat{v}^\perp}^\top(\mathscr{B}(x))^{-1}(\hat{v}^\perp)}
        \end{align*}
        exists. We have proved the existence of $\lim_{t\to 0}(A^{\widetilde{j}+1}_t(x)+B_t(x))^{-1}$ when $j=\widetilde{j}+1$ is an active index. 
        
        \noindent\textbf{Case 2: $j=\widetilde{j}+1$ is an inactive index.} In this case, we have
        \begin{align*}
            \lim_{t\to 0}(A_t^{\widetilde{j}+1}(x)+B_t(x))^{-1}&=\lim_{t\to 0}\left(\frac{t}{h^2_{\widetilde{j}+1}(x,y^\ast_t(x))}\nabla_yh_{\widetilde{j}+1}(x,y^\ast_t(x))\left(\nabla_yh_{\widetilde{j}+1}(x,y^\ast_t(x))\right)^\top+B_t^{\widetilde{j}}(x)\right)^{-1}\\
            &=\lim_{t\to 0}\left(tE^{\widetilde{j}+1}_t(x)+B_t^{\widetilde{j}}(x)\right)^{-1},
        \end{align*}
         where $$E^j_t(x):=\frac{1}{h^2_{j}(x,y^\ast_t(x))}\nabla_yh_{j}(x,y^\ast_t(x))\left(\nabla_yh_{j}(x,y^\ast_t(x))\right)^\top.$$
         Note that the following limit exists
         $$\lim_{t\to 0}E^{\widetilde{j}+1}_t(x)=\frac{1}{h^2_{\widetilde{j}+1}(x,y^\ast(x))}\nabla_yh_{\widetilde{j}+1}(x,y^\ast(x))\left(\nabla_yh_{\widetilde{j}+1}(x,y^\ast(x))\right)^\top.$$ 
         Further, by inductive assumption, the following limit exists
         $$\lim_{t\to 0}{B_t^{\widetilde{j}}}^{-1}(x)=\lim_{t\to 0}(A^{\widetilde{j}}_t(x)+B_t(x))^{-1}.$$
         Further, assuming $t$ is sufficiently small, applying Taylor expansion
        \begin{align*}
            (B_t^{\widetilde{j}}(x)+tE^{\widetilde{j}+1}_t(x))^{-1}=&{B_t^{\widetilde{j}}(x)}^{-1}-t{B_t^{\widetilde{j}}(x)}^{-1}E^{\widetilde{j}+1}_t(x){B_t^{\widetilde{j}}(x)}^{-1}\\
            &+t^2{B_t^{\widetilde{j}}(x)}^{-1}E^{\widetilde{j}+1}_t(x){B_t^{\widetilde{j}}(x)}^{-1}E^{\widetilde{j}+1}_t(x){B_t^{\widetilde{j}}(x)}^{-1}-...\\
            =&{B_t^{\widetilde{j}}}^{-1}(x)+\mathcal{O}(t).
        \end{align*}
        Thus, $\lim_{t\to 0}(A_t^{\widetilde{j}+1}(x)+B_t(x))^{-1}=\lim_{t\to 0}\left({B_t^{\widetilde{j}}(x)}^{-1}+\mathcal{O}(t)\right)=G^{\widetilde{j}}(x)$ which is defined in (\ref{eq:twoeq}).
        
        To conclude, we have proved $$\lim_{t\to 0}\left(A^k_t(x)+B_t(x)\right)^{-1}C_t(x)$$ exists regardless of whether $j$ is active or inactive.
    
    \noindent\textbf{Step 2.} Now go back to (\ref{eq:jacobiannotation}), we will show that $\lim_{t\to 0}(A^k_t(x)+B_t(x))^{-1}D_t(x)$ exists. By the notation (\ref{notation:Dt}), we have
        \begin{align*}
            D_t(x)&=\sum_{i=1}^k\frac{1}{t}\left(\frac{t}{-h_j(x,y^\ast_t(x))}\right)^2\nabla_y h_j(x,y_t^\ast(x))\nabla_x h_j(x,y_t^\ast(x))\\
            &=\sum_{i=1}^k\frac{1}{t}\frac{t}{-h_j(x,y^\ast_t(x))}\nabla_y h_j(x,y_t^\ast(x))\left(\frac{t}{-h_j(x,y^\ast_t(x))}\nabla_x h_j(x,y_t^\ast(x))\right)^\top
        \end{align*}
        Note that by Lemma \ref{lem_lowerbound2}
        \begin{align*}
            \lim_{t\to 0}\left(\frac{t}{-h_j(x,y^\ast_t(x))}\nabla_x h_j(x,y_t^\ast(x))\right)^\top=\left(\lambda_i(x)\nabla_x h_j(x,y^\ast(x))\right)^\top,
        \end{align*}
        then it is sufficient to prove that the following limit exists for any $j$:
        \begin{align*}
            \lim_{t\to 0}(A^k_t(x)+B_t(x))^{-1}\frac{1}{t}\frac{t}{-h_j(x,y^\ast_t(x))}\nabla_y h_j(x,y_t^\ast(x)).
        \end{align*}
         Without loss of generality, assume $j=k$. Otherwise, simply swap the positions of the $j$-th and $k$-th constraints. 
         
         If $k$ is an inactive index, then we have
        \begin{align*}
            \lim_{t\to 0}\frac{1}{t}\frac{t}{-h_k(x,y^\ast_t(x))}\nabla_y h_k(x,y^\ast_t(x))=\lim_{t\to 0}\frac{1}{-h_k(x,y^\ast_t(x))}\nabla_y h_k(x,y^\ast_t(x))=\frac{1}{-h_k(x,y^\ast(x))}\nabla_y h_k(x,y^\ast(x)).
        \end{align*}
        It follows that
        \begin{align*}
            &\lim_{t\to 0}(A^k_t(x)+B_t(x))^{-1}\frac{1}{t}\frac{t}{-h_k(x,y^\ast_t(x))}\nabla_y h_k(x,y_t^\ast(x))\\
            =&\lim_{t\to 0}(A^k_t(x)+B_t(x))^{-1}\cdot\lim_{t\to 0}\frac{1}{t}\frac{t}{-h_k(x,y^\ast_t(x))}\nabla_y h_k(x,y_t^\ast(x))\\
            =&\lim_{t\to 0}(A^k_t(x)+B_t(x))^{-1}\frac{1}{-h_k(x,y^\ast(x))}\nabla_y h_k(x,y^\ast(x))\\
            =&G^k(x)\frac{1}{-h_k(x,y^\ast(x))}\nabla_y h_k(x,y^\ast(x)),
        \end{align*}
        where $G^k(x)$ is defined in (\ref{eq:twoeq}).
        
        If $k$ is an active index, then we have
        \begin{align}
            &(A^k_t(x)+B_t(x))^{-1}\frac{1}{t}\frac{t}{-h_k(x,y^\ast_t(x))}\nabla_y h_k(x,y_t^\ast(x))\nonumber\\
            \overset{\text{(i)}}{=}&(A^k_t(x)+B_t(x))^{-1}\frac{1}{t}v^k_t(x)\nonumber\\
            =&\left(\frac{1}{t}v^k_t(x){v^k_t(x)}^\top+B^{k-1}_t(x)\right)^{-1}\frac{1}{t}v^k_t(x)\nonumber\\
            =&\left(v^k_t(x){v^k_t(x)}^\top+tB^{k-1}_t(x)\right)^{-1}v^k_t(x)\nonumber\\
            \overset{\text{(i)}}{=}&\left(tB_t^{k-1}(x)\right)^{-1}v^k_t(x)-\frac{\left(tB_t^{k-1}(x)\right)^{-1}v^k_t(x){v^k_t(x)}^\top \left(tB_t^{k-1}(x)\right)^{-1}v^k_t(x)}{1+{v^k_t(x)}^\top \left(tB_t^{k-1}(x)\right)^{-1}v^k_t(x)}\nonumber\\
            =&\frac{\left(tB_t^{k-1}(x)\right)^{-1}v^k_t(x)}{1+{v^k_t(x)}^\top \left(tB_t^{k-1}(x)\right)^{-1}v^k_t(x)}\nonumber\\
            =&\frac{\left(A_t^{kƒƒ-1}(x)+B_t(x)\right)^{-1}v^k_t(x)}{t+{v^k_t(x)}^\top\left(A_t^{k-1}(x)+B_t(x)\right)^{-1}v^k_t(x)}\label{eq:(A+B)D}.
        \end{align}
        In (i) we plug in $\frac{t}{-h_k(x,y^\ast_t(x))}\nabla_y h_k(x,y_t^\ast(x))=v^k_t(x)$, and (ii) is by the Sherman-Morrison formula. This converges is because: as $t$ converge to $0$, by replacing $\widetilde{j}$ with $k-1$ in (\ref{eq:limpositive}), we obtain that $\lim_{t\to 0}{v^k_t(x)}^\top\left(A_t^{k-1}(x)+B_t(x)\right)^{-1}v^k_t(x)$ is positive.\\

        Finally $\lim_{t\rightarrow0}\nabla_x y^*_t(x)$ exists since
        $$
        \nabla_x y^*_t(x) = (A_t^{k}(x)+B_t(x))^{-1}(C_t(x)+D_t(x))
        $$
        and we proved the convergence of the two parts in the above two steps.
\end{proof}

The following two lemmas are used in the proof of Lemma \ref{lem:uniformconvergeofgradient}.

\begin{lemma}\label{lem:heinethm}
    For a family of vector-valued function $\{f_t(x)\}_{t\in\RR^+}$ defined on a compact set $U$, if $f_t(x)$ point-wisely, but not uniformly, converges to $f(x)$ when $t$ goes to $0$, then there exists a sequence $\{(x_j,t_j)\}_{j=1}^\infty$ such that $x_j$ converges to $\hat{x}$ and $t_j$ converges to $0$ but $\lim_{j\to\infty}f_{t_j}(x_j)$ does not exist.
\end{lemma}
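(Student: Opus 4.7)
The approach will be to combine the failure of uniform convergence with compactness of $U$ to extract a ``bad'' sequence, then interleave it with a ``good'' sequence obtained from pointwise convergence at a suitable accumulation point. First I will invoke the negation of uniform convergence of $f_t \to f$ on $U$: there must exist $\epsilon_0 > 0$, a sequence $t_n \to 0$, and points $y_n \in U$ with $\|f_{t_n}(y_n) - f(y_n)\| \geq \epsilon_0$ for every $n$. Compactness of $U$ then lets me pass to a subsequence along which $y_n \to \hat x$ for some $\hat x \in U$. On the other hand, pointwise convergence at $\hat x$ supplies any sequence $s_n \to 0$ satisfying $f_{s_n}(\hat x) \to f(\hat x)$. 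Interleaving $(y_n, t_n)$ with $(\hat x, s_n)$ will produce a combined sequence $\{(x_j, \tau_j)\}$ whose first component converges to $\hat x$ and whose second converges to $0$; what remains is to show that $f_{\tau_j}(x_j)$ fails to converge.

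The main obstacle is that this interleaved sequence only lacks a limit when the two interleaved subsequences do not share a common limit---yet a priori nothing precludes $f_{t_n}(y_n)$ from also converging to $f(\hat x)$. I plan to resolve this by splitting into two cases. In the favorable case, $f_{t_n}(y_n)$ admits a subsequence that is either unbounded or bounded away from $f(\hat x)$; then the interleaved sequence contains two subsequences with incompatible behavior and so cannot converge. In the unfavorable case $f_{t_n}(y_n) \to f(\hat x)$, applying the reverse triangle inequality to $\|f_{t_n}(y_n) - f(y_n)\| \geq \epsilon_0$ forces $\|f(y_n) - f(\hat x)\| \geq \epsilon_0 / 2$ for all sufficiently large $n$. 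I will then exploit pointwise convergence at each individual $y_n$ to choose a replacement parameter $\tau_n \to 0$ with $\|f_{\tau_n}(y_n) - f(y_n)\| \leq 1/n$, which keeps $f_{\tau_n}(y_n)$ at distance at least $\epsilon_0 / 3$ from $f(\hat x)$ for all large $n$; substituting $\tau_n$ for $t_n$ reduces the unfavorable case to the favorable one.

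The only delicate step is the re-selection of $\tau_n$ in the unfavorable case; beyond this, the argument is a routine compactness-and-subsequence construction. Since no continuity of the limit $f$ or any additional regularity of the family $f_t$ beyond the stated hypotheses is required, I do not anticipate any further obstructions.
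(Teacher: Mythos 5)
Your proposal is correct, but it follows a genuinely different route from the paper. The paper negates the \emph{uniform Cauchy criterion}: for some $\epsilon_0>0$ and every $T_j=1/j$ it extracts a single point $x'_j$ and two parameters $t'_{j,1},t'_{j,2}\leq 1/j$ with $\|f_{t'_{j,1}}(x'_j)-f_{t'_{j,2}}(x'_j)\|\geq\epsilon_0$, passes to a subsequence with $x'_j\to\hat{x}$ by compactness, and interleaves the two parameters at the \emph{repeated} points $x_{2j-1}=x_{2j}=x'_j$; the value sequence is then visibly non-Cauchy, so no case analysis and no reference to the limit function's values is needed. You instead work from the literal negation of uniform convergence to $f$ (points $y_n$ and $t_n\to 0$ with $\|f_{t_n}(y_n)-f(y_n)\|\geq\epsilon_0$), use compactness the same way, and interleave with a ``good'' sequence at $\hat{x}$ itself; the price is the case split, since $f_{t_n}(y_n)$ may well converge to $f(\hat{x})$ when $f$ is discontinuous at $\hat{x}$, and your re-selection step (choosing $\tau_n\leq 1/n$ with $\|f_{\tau_n}(y_n)-f(y_n)\|\leq 1/n$ via pointwise convergence at each $y_n$, so that $\|f_{\tau_n}(y_n)-f(\hat{x})\|\geq\epsilon_0/3$ eventually) correctly repairs exactly that case, and your dichotomy is exhaustive because a bounded sequence not converging to $f(\hat{x})$ always has a subsequence bounded away from it. What each approach buys: the paper's argument is shorter and more symmetric, though it silently relies on the fact that, given pointwise convergence, failure of uniform convergence is equivalent to failure of the uniform Cauchy condition; yours avoids that equivalence entirely and uses only the standard definition, at the cost of the extra case analysis and parameter re-selection. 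Both proofs are sound.
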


\begin{proof}
    By $\epsilon$-$\delta$ Language, $f_t(x)$ does not converges to $f(x)$ uniformly means there exists a constant $\epsilon_0$ such that for any $T$, there exists a point $x$ and $t_1$, $t_2\leq T$ satisfying $\|f_{t_1}(x)-f_{t_2}(x)\|\geq\epsilon_0$. We construct the sequence $(t_j,x_j)$ as follows:
    \begin{itemize}
        \item For any $j\geq 1$, set $T_j=1/j$. There exists $t'_{j,1}$, $t'_{j,2}\leq T_j$ and a point $x'_j$ such that $$\|f_{t'_{j,1}}(x'_j)-f_{t'_{j,2}}(x'_j)\|\geq\epsilon_0;$$
        \item Since $U$ is compact, choose a subsequence of $\{(x'_j,t'_{j,1},t'_{j,2})\}_{j=1}^\infty$, still denoted as $\{(x'_j,t'_{j,1},t'_{j,2})\}_{j=1}^\infty$, such that $\lim_{j\to\infty}x'_j=\hat{x}$ for some point $\hat{x}$;
        \item For any $j\geq 1$, set $x_{2j-1}=x_{2j}=x'_j$ and $t_{2j-1}=t'_{j,1}$, $t_{2j}=t'_{j,2}$. Then we have
        \begin{align}\label{ineq:nonCauchy}
            \|f_{t_{2j-1}}(x_{2j-1})-f_{t_{2j}}(x_{2j})\|\geq\epsilon_0,\quad\forall j\geq 1.
        \end{align}
    \end{itemize}
    It is not hard to see that $\lim_{j\to \infty}x_j=\lim_{j\to \infty}x'_j=\hat{x}_j$. However, $\{f_{t_j}(x_j)\}_{j=1}^\infty$ is not a Cauchy sequence by (\ref{ineq:nonCauchy}). We conclude that $\lim_{j\to\infty}f_{t_j}(x_j)$ does not exist.    
\end{proof}

\begin{lemma}\label{lem:limitofsequence}
    Suppose $\{f_t(x)\}_{t\in\RR^+}$ is a family of matrix-valued functions defined on $U$. If $f_t(x)$ uniformly converges to a continuous function $f(x)$, and $\{(x_j,t_j)\}_{j=1}^\infty$ is a sequence such that $\lim_{j\to\infty}t_j=0$ and $\lim_{j\to\infty}x_j=\hat{x}$ for some $\hat{x}$, then $\lim_{j\to\infty}f_{t_j}(x_j)=f(\hat{x})$. 
\end{lemma}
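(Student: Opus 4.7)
The plan is to bound $\|f_{t_j}(x_j) - f(\hat{x})\|$ using the triangle inequality, splitting it into a piece controlled by uniform convergence of $f_t$ to $f$ and a piece controlled by continuity of $f$. Specifically, I would write
\begin{equation*}
\|f_{t_j}(x_j) - f(\hat{x})\| \;\leq\; \|f_{t_j}(x_j) - f(x_j)\| \;+\; \|f(x_j) - f(\hat{x})\|,
\end{equation*}
and show that each term tends to $0$ as $j \to \infty$.

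For the first term, I would fix an arbitrary $\varepsilon > 0$. By the uniform convergence of $f_t$ to $f$ on $U$, there exists $T > 0$ such that $\sup_{x \in U} \|f_t(x) - f(x)\| \leq \varepsilon/2$ for every $0 < t \leq T$. Since $t_j \to 0$, there is an index $J_1$ with $t_j \leq T$ for all $j \geq J_1$, and in particular $\|f_{t_j}(x_j) - f(x_j)\| \leq \varepsilon/2$ for such $j$ (noting that $x_j \in U$, which follows from the compactness of $U$ and the assumption that the sequence lies in $U$).

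For the second term, I would invoke the continuity of $f$ at $\hat{x}$: there exists $\delta > 0$ such that $\|f(x) - f(\hat{x})\| \leq \varepsilon/2$ whenever $\|x - \hat{x}\| \leq \delta$ with $x \in U$. Since $x_j \to \hat{x}$, there is $J_2$ with $\|x_j - \hat{x}\| \leq \delta$ for all $j \geq J_2$. Combining the two bounds, for every $j \geq \max\{J_1, J_2\}$ we obtain $\|f_{t_j}(x_j) - f(\hat{x})\| \leq \varepsilon$, which proves the claimed limit. The argument is entirely routine; the only subtlety to verify is that the sequence $x_j$ does lie in $U$ (so that uniform convergence applies at each $x_j$) and that $\hat{x} \in U$ (so that $f(\hat{x})$ is defined), both of which follow from $U$ being closed and the hypothesis $x_j \in U$. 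There is no substantive obstacle in this proof.
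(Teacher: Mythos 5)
Your proof is correct and follows essentially the same argument as the paper's: split $\|f_{t_j}(x_j)-f(\hat{x})\|$ by the triangle inequality into a uniform-convergence term and a continuity term, and bound each by $\varepsilon/2$ for large $j$. The only cosmetic difference is your side remark about $x_j,\hat{x}\in U$, which the paper takes for granted since the lemma is applied with $U$ compact and the sequence lying in $U$.
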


\begin{proof}
    By uniformly convergence, for any $\epsilon/2>0$, there exists $T$ such that $\|f_t(x)-f(x)\|\leq\epsilon/2$ for any $0<t\leq T$ and $x\in U$. Since $\lim_{j\to\infty}t_j=0$, we can find $J_1$ such that $t_j\leq T$ for any $j\geq J_1$. Note that $f(x)$ is continuous, for any $\epsilon/2>0$, there exists $\delta$ such that $\|f(x)-f(\hat{x})\|\leq\epsilon/2$ for any $x$ satisfies $\|x-\hat{x}\|\leq\delta$. According to $\lim_{j\to\infty}x_j=\hat{x}$, there exists $T_2$ such that $\|x_j-\hat{x}\|\leq\delta$ for any $0<t\leq T_2$. To conclude, for any $\epsilon>0$, we can find $J=\max\{J_1,J_2\}$ such that for any $j\geq J$ we have
    $$\|f_{t_j}(x_j)-f(\hat{x})\|\leq\|f_{t_j}(x_j)-f(x_j)\|+\|f(x_j)-f(\hat{x})\|\leq\frac{\epsilon}{2}+\frac{\epsilon}{2}=\epsilon.$$
    The proof is complete.
\end{proof}

\begin{lemma}\label{lem:uniformconvergeofgradient}
    Suppose Assumption \ref{assumption:general1} and Assumption \ref{assumption:nonlinear} hold, and $x$ is an \textbf{SCSC} point. Then there exists a compact neighborhood $U$ of $x$ such that $\nabla_x y_t^\ast(x)$ converges uniformly in $U$. Compact neighborhood is a compact set containing an open ball centered at $x$.
\end{lemma}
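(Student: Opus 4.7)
The plan is to upgrade the pointwise convergence established in Lemma \ref{lem_existence} to uniform convergence on a compact neighborhood, using a contradiction argument based on the Heine-style Lemma \ref{lem:heinethm}. First I would construct the neighborhood $U$. Under Assumption \ref{assumption:general1}, Assumption \ref{assumption:nonlinear}, and the SCSC property at $x$, the standard active-set stability result (which follows from LICQ, strong convexity of $g$ in $y$, strict complementarity, and the implicit function theorem applied to the nondegenerate KKT system) yields a compact neighborhood $U$ of $x$ on which: (i) every $x' \in U$ is an \textbf{SCSC} point with the same active index set $\mathcal{I}^\ast(x') = \mathcal{I}^\ast(x)$, (ii) $\lambda_i(x') \geq c$ for $i \in \mathcal{I}^\ast(x)$ and some constant $c>0$ (using Lemma \ref{lem:con_of_multi}), and (iii) $h_i(x', y^\ast(x')) \leq -c$ for $i \notin \mathcal{I}^\ast(x)$. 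In particular, the limit function $F(x') := \lim_{t\to 0} \nabla_x y_t^\ast(x')$ is well-defined and continuous on $U$.

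Next I would set up the contradiction. Suppose that $\nabla_x y_t^\ast(\cdot)$ does not converge uniformly to $F$ on $U$. By Lemma \ref{lem:heinethm}, one extracts a sequence $(x_j, t_j)$ with $x_j \to \hat{x} \in U$, $t_j \to 0$, such that $\lim_{j\to\infty} \nabla_x y_{t_j}^\ast(x_j)$ does not exist. The goal is then to replay the induction of Lemma \ref{lem_existence} along this sequence and exhibit the limit, contradicting the assumption.

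The core of the replay is to show that each of the quantities appearing in the decomposition (\ref{eq:jacobiannotation}) converges along the sequence. Specifically: (a) by Lemma \ref{lem_lowerbound2} the convergence $t/(-h_i(x, y_t^\ast(x))) \to \lambda_i(x)$ is uniform, so combined with Lemma \ref{lem:limitofsequence} and continuity of $\lambda_i$ we get $t_j/(-h_i(x_j, y_{t_j}^\ast(x_j))) \to \lambda_i(\hat{x})$; (b) by Lemma \ref{lem:optimalitygapforpoint} the convergence $y_t^\ast \to y^\ast$ is uniform, so $y_{t_j}^\ast(x_j) \to y^\ast(\hat{x})$; (c) consequently $v^i_{t_j}(x_j) \to v^i(\hat{x})$, $B_{t_j}(x_j) \to B(\hat{x})$, $C_{t_j}(x_j) \to C(\hat{x})$, and the projection onto $V_{t_j}^{\widetilde{j}}(x_j)$ converges to the projection onto $V^{\widetilde{j}}(\hat{x})$ (since LICQ at $\hat{x}$ keeps the Gram matrix $F(\hat{x})^\top F(\hat{x})$ uniformly invertible along the sequence). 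Feeding these into the induction of Lemma \ref{lem_existence} for active indices (via the Sherman–Morrison identity (\ref{eq:inversion})) and for inactive indices (via the Taylor expansion), one recovers $\lim_{j\to\infty} \nabla_x y_{t_j}^\ast(x_j) = F(\hat{x})$, producing the desired contradiction.

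The main obstacle will be verifying that each delicate step of the induction in Lemma \ref{lem_existence} carries through \emph{along sequences} rather than at a fixed $x$. Three points require care: first, showing the key strict positivity $\hat{v}^\perp{}^\top (\mathscr{B}(\hat{x}))^{-1} \hat{v}^\perp > 0$ at $\hat{x}$ persists as a uniform lower bound along $(x_j, t_j)$, which follows because $\hat{x}$ is SCSC with LICQ and $\lambda_{\widetilde{j}+1}(\hat{x}) > 0$; second, controlling the decomposition $v^{\widetilde{j}+1}_{t_j}(x_j) = \hat{v}_{t_j} + \hat{v}_{t_j}^\perp$ so that $\|\hat{v}_{t_j}^\perp\|$ is bounded below (again from uniform LICQ on $U$); and third, handling the compression argument showing $\hat{w}_{t_j} \to 0$, where one now needs the blow-up rate $\|A_{t_j}^{\widetilde{j}}(x_j)\hat{w}_{t_j}\| \to \infty$ to be uniform in $j$, which again reduces to the uniform positive lower bounds guaranteed on $U$. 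Once these uniform versions are in place, the rest of the proof is a direct transcription of the pointwise argument.
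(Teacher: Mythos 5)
Your proposal is correct and follows essentially the same route as the paper: the paper likewise fixes a compact neighborhood with constant active set, positive multipliers, a uniform margin for inactive constraints and preserved LICQ, invokes Lemma \ref{lem:heinethm} to extract a sequence $(x_j,t_j)\to(\hat{x},0)$ along which the limit fails, and then identifies that limit anyway using the uniform convergence of $y_t^\ast$ and of $t/(-h_i(\cdot,y_t^\ast(\cdot)))$ (Lemmas \ref{lem:optimalitygapforpoint}, \ref{lem_lowerbound2}, \ref{lem:limitofsequence}) together with the projection/Sherman--Morrison machinery of Lemma \ref{lem_existence}, reaching the same contradiction. The only cosmetic differences are that the paper runs the decomposition argument directly on the full matrix $\left(A^k_t(x)+B_t(x)\right)^{-1}$ (handling the $D_t$ term by one Sherman--Morrison step) instead of replaying the index-by-index induction, and it never needs your asserted continuity of the limit map $F$ on $U$ — only continuity of the ingredient functions at $\hat{x}$ — so that unproved claim is harmless to your argument.
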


\begin{proof}
     We fix an \textbf{SCSC} point $x_0$ and prove this result at this point. We select a compact neighborhood $U$ of $x_0$ satisfying the following properties:
     \begin{itemize}
         \item The active index set for any point in $U$ is the same as that of $x_0$, and $\lambda_i(x)>0$ for each active index as long as $U$ is small enough, because $\lambda_i(x)$ is continuous, and $x_0$ is an \textbf{SCSC} point;
         \item There exists $H>0$ such that the inequality $h_i(x,y^\ast(x))\leq -H<0$ holds for any inactive index $i$ and any $x$ in $U$. This can be satisfied when $H$ and $U$ are small enough because $\lambda_i(x)$, $h_i(x,y^\ast(x))$ are continuous in $x$, and $x_0$ is an \textbf{SCSC}  point;
         \item There exists a constant $T$ such that the vectors $\nabla_y h_i(x,y^\ast_t(x))$, where $i\in\{1,...,k\}\cap\mathcal{I}^\ast(x_0)$, are linear independent for any $0<t\leq T$. This because the LICQ assumption is satisfied at point $(x,y^\ast(x))$ and $\nabla_y h_i(x,y^\ast_t(x))$ converges to  $\nabla_y h_i(x,y^\ast(x))$ uniformly by lemma \ref{lem_lowerbound2}.
     \end{itemize} 
     Using the notation in lemma \ref{lem_existence}, we have
     $$\nabla_x y^*_t(x) = (A_t^{k}(x)+B_t(x))^{-1}(C_t(x)+D_t(x)).$$
     To prove $\nabla_x y^*_t(x)$ uniformly converges in $U$, it is sufficient to prove that $\left(A^k_t(x)+B_t(x)\right)^{-1}C_t(x)$ and $(A_t(x)+B_t(x))^{-1}D_t(x)$ converge uniformly in $U$ separately. 
     
     Before detailed proof, we give the following uniform convergence which can all be directly obtained from Lemma \ref{lem:optimalitygapforpoint} and \ref{lem_lowerbound2}, i.e. $y^\ast_t(x)$ converges to $y^\ast(x)$ uniformly, and $t/(-h_i(x,y^\ast_t(x)))$ converges to $\lambda_i(x)$ uniformly
         \begin{align}
             \lim_{t\to 0}v_t^j(x)&=\lim_{t\to 0}\frac{t}{-h_j(x,y^\ast_t(x))}\nabla_y h_j(x,y^\ast_t(x))=\lambda_j(x)\nabla_y h_j(x,y^\ast(x));\label{eq:uniformofv}\\
             \lim_{t\to 0}B_t(x)&=\lim_{t\to 0}\nabla^2_{yy} g(x,y_t^\ast(x))+\sum_{i=1}^k\frac{t}{-h_j(x,y^\ast_t(x))}\nabla^2_{yy} h_i(x,y_t^\ast(x))\nonumber\\
             &=\nabla^2_{yy} g(x,y^\ast(x))+\sum_{i=1}^k\lambda_i(x)\nabla^2_{yy} h_i(x,y^\ast(x));\label{eq:uniformofB}\\
             \lim_{t\to 0}C_t(x)&=\lim_{t\to 0}\nabla^2_{yx} g(x,y_t^\ast(x))+\sum_{i=1}^k\frac{t}{-h_j(x,y^\ast_t(x))}\nabla^2_{yx} h_i(x,y_t^\ast(x))\nonumber\\
             &=\nabla^2_{yx} g(x,y^\ast(x))+\sum_{i=1}^k\lambda_i(x)\nabla^2_{yx} h_i(x,y^\ast(x))\label{eq:uniformofC}.
         \end{align}
     
     \noindent\textbf{Step 1:} We will prove $\left(A^k_t(x)+B_t(x)\right)^{-1}C_t(x)$ converges uniformly in $U$ by contradiction. If it does not uniformly converge, we will identify a subsequence $(x_j,t_j)$ such that $x_j$ converges to $\hat{x}\in U$, and $t_j$ converges to $0$, but $\left(A^k_{t_j}(x_j)+B_{t_j}(x_j)\right)^{-1}C_{t_j}(x_j)$ does not converge to $\widetilde{\mathscr{B}}(\hat{x})(C(\hat{x}))$, where $\widetilde{\mathscr{B}}$ is defined in (\ref{eq:linearmap}). However, we will show that for that kind of sequence, we can always have $\lim_{j\to\infty}\left(A^k_{t_j}(x_j)+B_{t_j}(x_j)\right)^{-1}C_{t_j}(x_j)=\widetilde{\mathscr{B}}(\hat{x})(C(\hat{x}))$, which leads a contradiction.
     
     Denoting $C_t(x):=[C_{t,1}(x),...,C_{t,n}(x)]$. Note that
     $$\left(A^k_t(x)+B_t(x)\right)^{-1}C_t(x)=\left[\left(A^k_t(x)+B_t(x)\right)^{-1}C_{t,1}(x),\cdots,\left(A^k_t(x)+B_t(x)\right)^{-1}C_{t,n}(x)\right],$$
     it is sufficient to prove that $\lim_{t\to 0}\left(A^k_t(x)+B_t(x)\right)^{-1}C_{t,i}(x)$ converges uniformly for any $i$. Since $C_t(x)$ converges uniformly by (\ref{eq:uniformofC}), we denote $C_i(x):=\lim_{t\to 0}C_{t,i}(x).$
    
    By our design of $U$, the active index set $\mathcal{I}^\ast(x)=\mathcal{I}^\ast(x_0)$ remains unchanged for any $x$ in $U$. According to (\ref{eq:linearmap}) in the proof of Lemma \ref{lem_existence}, we know $\lim_{t\to 0}\left(A^k_t(x)+B_t(x)\right)^{-1}C_i(x)$ can be computed as follows:
    \begin{align*}
            \lim_{t\to 0}\left(A^k_t(x)+B_t(x)\right)^{-1} C_i(x)=\widetilde{\mathscr{B}}(x)(C_i(x))=\begin{cases}
                0 &\text{ if } C_i(x)\in V^{k}(x_0) \\
                {\mathscr{B}(x)}^{-1}(C_i(x)) &\text{ if }  C_i(x)\in \left(V^{k}(x_0)\right)^\perp.
            \end{cases}
    \end{align*}
    Now we prove that $\left(A^k_t(x)+B_t(x)\right)^{-1}C_{t,i}(x)$ converges to $\widetilde{\mathscr{B}}(x)(C_i(x))$ uniformly for any $i$. If this does not hold for some $i$, by Lemma \ref{lem:heinethm}, there exists a sequence $\{(x_j,t_j)\}_{j=1}^\infty$ such that $0<t_j\leq T$ for any $j$, and $\lim_{j\to\infty}t_j=0$, $\lim_{j\to\infty}x_j=\hat{x}$ for some point $\hat{x}\in U$, but
    \begin{align*}
    \lim_{t\to 0}\left(A^k_{t_j}(x_j)+B_{t_j}(x_j)\right)^{-1}C_{t_j,i}(x_j)
    \end{align*}
    does not exist. We will demonstrate that $\left(A^k_{t_j}(x_j)+B_{t_j}(x_j)\right)^{-1}C_{t_j,i}(x_j)$ has a uniform bound for $j$ so that we can find a subsequence, still denoted as $\{(x_j,t_j)\}_{j=1}^\infty$, such that
    \begin{align}\label{eq:limitnotequal}
        \lim_{j\to\infty}\left(A^k_{t_j}(x_j)+B_{t_j}(x_j)\right)^{-1}C_{t_j,i}(x_j)
    \end{align}
    exists but not equal $\widetilde{\mathscr{B}}(\hat{x})(C_i(\hat{x}))$. Then we will show that the limit of (\ref{eq:limitnotequal}) must be $\widetilde{\mathscr{B}}(\hat{x})(C_i(\hat{x}))$, induces a contradiction.
    
    Note that 
    $$A^k_{t_j}(x_j)+B_{t_j}(x_j)\succeq B_{t_j}(x_j)\succeq \nabla_{yy}^2g(x_j,y_{t_j}^\ast(x_j))$$
    and $g(x,y)$ is $\mu_g$-strongly convex. Thus, we have
    $$\left(A^k_{t_j}(x_j)+B_{t_j}(x_j)\right)^{-1}\preceq \frac{1}{\mu_g}I$$
    is uniformly bounded. Note that $C_{t,i}(x)$ converges to $C_{i}(x)$ uniformly, $C_{t,i}(x)$ can be seen as a continuous map defined on $[0,T]\times\mathcal{X}$. Therefore, $C_{t,i}(x)$ also has a uniform bound for any $t$ and $x$. Therefore, $\left(A^k_{t_j}(x_j)+B_{t_j}(x_j)\right)^{-1}C_{t_j,i}(x_j)$ has a uniform bound for any $j$. 
    
    We choose a subsequence, still denoted as $\{(x_j,t_j)\}_{j=1}^\infty$, such that
    \begin{align*}
        \lim_{j\to\infty}w_{t_j}(x_j):=\lim_{j\to\infty}\left(A^k_{t_j}(x_j)+B_{t_j}(x_j)\right)^{-1}C_{t_j,i}(x_j)=w
    \end{align*}
    but $w\ne\widetilde{\mathscr{B}}(\hat{x})(C_i(\hat{x}))$. 
    
    Decompose $C_{t_j,i}(x_j)=\hat{C}_j+\hat{C}_j^\perp$ and $w_{t_j}(x_j)=\hat{w}_j+\hat{w}_j^\perp$, where $\hat{C}_j$, $\hat{w}_j\in V^k_{t_j}(x_j)$ and $\hat{C}_j^\perp$, $\hat{w}_j^\perp\in\left(V^k_{t_j}(x_j)\right)^\perp$. Here by notation given in (\ref{notation:space})
    $$V^j_t(x)=\mathrm{span}\{\frac{t}{-h_i(x,y^\ast_t(x))}\nabla_y h_i(x,y_t^\ast(x)):i\in\{1,...,j\}\cap \mathcal{I}^\ast(x)\}$$
    is from (\ref{notation:space}). We want to show that $\hat{C}_j$, $\hat{C}_j^\perp$, $\hat{w}_j$ and $\hat{w}_j^\perp$ all converges when $j$ goes to infinity. Note that
    \begin{align*}
        \hat{C}_j&=\proj_{V^k_{t_j}(x_j)}C_{t_j,i}(x_j)\\
        &=F_{t_j}(x_j)\left(F_{t_j}(x_j)^\top F_{t_j}(x_j)\right)^{-1}F_{t_j}(x_j)^\top C_{t_j,i}(x_j)
    \end{align*}
    where $F_{t_j}(x_j)$ is the matrix with columns as the vectors $v^i_{t_j}(x_j)=(t_j/(-h_i(x_j,y^\ast_{t_j}(x_j))))\nabla_y h_i(x_j,y^\ast_{t_j}(x_j))$ for $i\in\mathcal{I}^\ast(x)$. Since we have the uniform convergence by (\ref{eq:uniformofC}), and clearly $C_i(x)$ is continuous in $x$, by Lemma \ref{lem:limitofsequence}, we get $\lim_{j\to\infty}C_{t_j,i}(x_j)=\hat{C}_i(\hat{x})$. It is sufficient to prove 
    $$F_{t_j}(x_j)\left(F_{t_j}(x_j)^\top F_{t_j}(x_j)\right)^{-1}F_{t_j}(x_j)^\top $$
    also converges. According to (\ref{eq:uniformofv}), we know $v^i_{t}(x)$ converges to $\lambda_i(x)\nabla_y h_i(x,y^\ast(x))$ uniformly. Note that $\lambda_i(x)\nabla_y h_i(x,y^\ast(x))$ is continuous in $x$. By Lemma \ref{lem:limitofsequence}, $\lim_{j\to\infty}v^i_{t_j}(x_j)=v^i(\hat{x})$. This implies $\lim_{j\to\infty}F_{t_j}(x_j)=F(\hat{x})$ where $F(\hat{x})$ is the matrix with columns as the vectors $\lambda_i(\hat{x})\nabla_y h_i(\hat{x},y^\ast(\hat{x}))$ for $i\in\mathcal{I}^\ast(x)$. By LICQ assumption $F(\hat{x})^\top F(\hat{x})$ is invertible, we obtain $F_{t_j}(x_j)\left(F_{t_j}(x_j)^\top F_{t_j}(x_j)\right)^{-1}F_{t_j}(x_j)^\top$ converges to $F(\hat{x})\left(F(\hat{x})^\top F(\hat{x})\right)^{-1}F(\hat{x})^\top$ when $j$ goes to infinity. Therefore, 
    \begin{align*}
        \lim_{j\to\infty}\hat{C}_j&=\lim_{j\to\infty}\left(\proj_{V^k_{t_j}(x_j)}C_{t_j,i}(x_j)\right)\\
        &=\lim_{j\to\infty}\left(F_{t_j}(x_j)\left(F_{t_j}(x_j)^\top F_{t_j}(x_j)\right)^{-1}F_{t_j}(x_j)^\top C_{t_j,i}(x_j)\right)\\
        &=\lim_{j\to\infty}\left(F_{t_j}(x_j)\left(F_{t_j}(x_j)^\top F_{t_j}(x_j)\right)^{-1}F_{t_j}(x_j)^\top \right)\cdot\lim_{j\to\infty}\left(C_{t_j,i}(x_j)\right)\\
        &=F(\hat{x})\left(F(\hat{x})^\top F(\hat{x})\right)^{-1}F(\hat{x})^\top C_i(\hat{x})\\
        &=\proj_{V^k(\hat{x})}C_i(\hat{x})\\
        &=:\hat{C}.
    \end{align*}
    By the same method, $\lim_{j\to\infty}\hat{w}_j=:\hat{w}$, $\lim_{j\to\infty}\hat{C}_j^\perp=:\hat{C}^\perp=C_i(\hat{x})-\hat{C}$, and $\lim_{j\to\infty}w_j^\perp=:\hat{w}^\perp=w-\hat{w}$ all exist. 
    
    Next, we will show that
    \begin{align*}
        \lim_{j\to\infty}\left(A^k_{t_j}(x_j)+B_{t_j}(x_j)\right)^{-1}(C_{t_j,i}(x_j))=\lim_{j\to\infty}w_{t_j}(x_j)=\widetilde{\mathscr{B}}(\hat{x})(C_i(\hat{x})),
    \end{align*}
    to contradict our assumption that
     \begin{align*}
        \lim_{j\to\infty}\left(A^k_{t_j}(x_j)+B_{t_j}(x_j)\right)^{-1}(C_{t_j,i}(x_j))\ne \widetilde{\mathscr{B}}(\hat{x})(C_i(\hat{x})).
    \end{align*}
    Note that
    \begin{align*}
        \hat{C}_j+\hat{C}_j^\perp&=C_{t_j,i}(x_j)\\
        &=\left(A^k_{t_j}(x_j)+B_{t_j}(x_j)\right)w_{t_j}(x_j)\\
        &=\left(A^k_{t_j}(x_j)+B_{t_j}(x_j)\right)\left(w_j+w_j^\perp+\right)\\
        &=A^k_{t_j}(x_j)\hat{w}_j+A^k_{t_j}(x_j)\hat{w}j^\perp+B_{t_j}(x_j)\hat{w}_j+B_{t_j}(x_j)\hat{w}_j^\perp.
    \end{align*}
    Similar to the proof of Lemma \ref{lem_existence}, we will prove the following three results:
    \begin{enumerate}
        \item \textbf{Claim: $\lim_{j\to\infty}A^k_{t_j}(x_j) \hat{w}_j^\perp=0$.} 
        
            \textbf{Proof.} This is because 
            \begin{align*}
            \lim_{j\to\infty}A^{k}_{t_j}(x_j)\hat{w}_j^\perp&=\lim_{j\to\infty}\sum_{i=1}^{k}\frac{1}{t}v_{t_j}^i(x_j) {v_{t_j}^i(x_j)}^\top \hat{w}_j^\perp\\
            &\overset{\text{(i)}}{=}\lim_{j\to\infty}\sum_{i\in\{1,...,k\}\setminus\mathcal{I}^\ast(x_0)}\frac{1}{t}v_{t_j}^i(x_j) {v_{t_j}^i(x_j)}^\top \hat{w}_j^\perp\\
            &=\lim_{j\to\infty}\sum_{i\in\{1,...,k\}\setminus\mathcal{I}^\ast(x_0)}\frac{t}{h^2_i(x_j,y_{t_j}^\ast(x_j))}\nabla_y h_i(x_j,y_{t_j}^\ast(x_j)){\nabla_y h_i(x_j,y_{t_j}^\ast(x_j))}^\top \hat{w}_j^\perp\\
            &\overset{\text{(ii)}}{=}0,
            \end{align*}
            where (i) is because $v^j_{t_j}(x_j)\in V^j_t(x)$, $\hat{w}_j^\perp\in (V^j_t(x))^\perp$, and then $v^j_{t_j}(x_j)\hat{w}_j^\perp=0$ for any $i\in\{1,...,k\}\cap\mathcal{I}^\ast(x_0)$, and (ii) is because 
            $$\lim_{j\to\infty}(t_j/h^2_i(x_j,y^\ast_{t_j}(x_j)))=(\lim_{j\to\infty}t_j)/h^2_i(\hat{x},y^\ast(\hat{x}))\leq 0/H^2=0,\quad \forall i\notin\mathcal{I}^\ast(x_0).$$
            Here $H$, we defined before, is the lower bound of $-h_i(x,y^\ast(x))$ with $x$ in $U$ and inactive index $i$;
        \item \textbf{Claim: $\lim_{j\to\infty}\hat{w}_j=0$.}

        \textbf{Proof.} We prove this by contradiction. If $\lim_{j\to \infty}\hat{w}_j=:\zeta\ne0$, it must be that $\zeta\in V^{k}(\hat{x})$ since $\hat{w}_j\in V_{t_j}^{k}(x_j)$ for any $j$. Consequently, 
            $$\lim_{j\to\infty}\|A^{k}_{t_j}(x_j)\hat{w}_j\|\overset{\text{(i)}}{=}\lim_{j\to\infty}\left\|\sum_{i\in\{1,...,k\}\cap\mathcal{I}^\ast(x_0)}\frac{1}{t_j}v_{t_j}^i(x_j) {v_{t_j}^i(x_j)}^\top\hat{w}_j\right\|=\infty,$$
            where (i) is because $\lim_{j\to\infty}(t_j/h^2_i(x_j,y^\ast_{t_j}(x_j)))=0$ for any $i\notin\mathcal{I}^\ast(x_0)$ which we have proven in the first Claim.
            
            By (\ref{eq:uniformofB}), we have $B_t(x)$ converges to $B(x)$ uniformly. Therefore, by the continuity of $B(x)$ in $x$ and Lemma \ref{lem:limitofsequence}, $\lim_{j\to\infty}B_{t_j}(x_j)=B(\hat{x})$,
            implying that $\lim_{j\to\infty}\|A^{k}_{t_j}(x_j) \hat{w}_j^\perp+B_{t_j}(x_j)\hat{w}_j+B_{t_j}(x_j)\hat{w}_j^\perp\|<\infty$. Thus $\lim_{j\to\infty}\|A^{k}_{t_j}(x_j)\hat{w}_j+A^{k}_{t_j}(x_j) \hat{w}_j^\perp+B_{t_j}(x_j)\hat{w}_j+B_{t_j}(x_j)\hat{w}_j^\perp\|=\infty$. However, $\lim_{j\to\infty} \|\hat{C}_j+\hat{C}_j^\perp\|=\|C_i(\hat{x})\|<\infty$ since $C_i(x)$ is a continuous map defined on $\mathcal{X}$. This induces the following contradiction
            \begin{align*}
            \infty>\lim_{j\to\infty} \|\hat{C}_j+\hat{C}_j^\perp\|=\lim_{j\to\infty}\|A^k_{t_j}(x_j)\hat{w}_j+A^k_{t_j}(x_j) \hat{w}_j^\perp+B_{t_j}(x_j)\hat{w}_j+B_{t_j}(x_j)\hat{w}_j^\perp\|=\infty.
            \end{align*}
            Therefore, we conclude $\lim_{j\to\infty}\hat{w}_j=0$;
        \item \textbf{Claim: $\lim_{j\to\infty}B_{t_j}(x_j)\hat{w}_j=0$.}

        \textbf{Proof.} Note that $\lim_{j\to\infty}B_{t_j}(x_j)=B(\hat{x})$ and $\lim_{j\to\infty}\hat{w}_j=0$, we can directly get $\lim_{j\to\infty}B_{t_j}(x_j)\hat{w}_j=0$.
    \end{enumerate}
    Given that $A^k_{t_j}(x_j)\hat{w}_j\in V^k_{t_j}(x_j)$ for any $j$, we can obtain
    \begin{align*}
        \hat{C}^\perp&=\lim_{j\to\infty}\hat{C}_j^\perp\\
        &=\lim_{j\to\infty}\proj_{\left(V^k_{t_j}(x_j)\right)^\perp}\left(A^k_{t_j}(x_j)\hat{w}_j+A^k_{t_j}(x_j)\hat{w}_j^\perp+B_{t_j}(x_j)\hat{w}_j+B_{t_j}(x_j)\hat{w}_j^\perp\right)\\
        &=\proj_{\left(V^k(\hat{x})\right)^\perp}\left(B(\hat{x})\hat{w}^\perp\right),
    \end{align*}
    where $\hat{x}$ is the limit of $x_j$. It follows that
    \begin{align*}
        \lim_{j\to\infty}\left(A^k_{t_j}(x_j)+B_{t_j}(x_j)\right)^{-1}(C_{t_j,i}(x_j))=\lim_{j\to\infty}w_{t_j}(x_j)=\lim_{j\to\infty}\hat{w}_j+\hat{w}_j^\perp=\hat{w}^\perp=\widetilde{\mathscr{B}}(\hat{x})(C_i(\hat{x})).
    \end{align*}
    This contradicts our assumption that
     \begin{align*}
        \lim_{j\to\infty}\left(A^k_{t_j}(x_j)+B_{t_j}(x_j)\right)^{-1}(C_{t_j,i}(x_j))\ne \widetilde{\mathscr{B}}(\hat{x})(C_i(\hat{x})).
    \end{align*}
    So we proved the first part that $\left(A^k_t(x)+B_t(x)\right)^{-1}C_t(x)$ converges uniformly.\\ 

    \noindent\textbf{Step 2:} Now we prove the second part that $(A_t(x)+B_t(x))^{-1}D_t(x)$ converges uniformly. Recall that
    \begin{align*}
        D_t(x)=\sum_{i=1}^k\frac{1}{t}\frac{t^2}{h^2_i(x,y_t^\ast(x))}\nabla_y h_i(x,y_t^\ast(x))\left(\nabla_x h_i(x,y_t^\ast(x))\right)^\top.
    \end{align*}
    We can obtain the uniform convergence of $(t/(-h_i(x,y_t^\ast(x))))\left(\nabla_x h_i(x,y_t^\ast(x))\right)^\top$ similar to (\ref{eq:uniformofv}). $(A_t(x)+B_t(x))^{-1}D_t(x)$ can be rewritten as
    \begin{align*}
        &\left(A^k_t(x)+B_t(x)\right)^{-1}D_t(x)\\
        =&\left((A^k_t(x)+B_t(x))^{-1}\frac{1}{t}\frac{t}{-h_j(x,y^\ast_t(x))}\nabla_y h_j(x,y_t^\ast(x))\right)\left(\frac{t}{-h_j(x,y^\ast_t(x))}\nabla_x h_i(x,y^\ast_t(x))\right)^\top
    \end{align*}
    Then it is sufficient to prove that 
    $$(A^k_t(x)+B_t(x))^{-1}\frac{1}{t}\frac{t}{-h_j(x,y^\ast_t(x))}\nabla_y h_j(x,y_t^\ast(x))$$
    converges uniformly for any $j$. Without loss of generality, in the following proof, we only consider the case $j=k$. Otherwise, simply swap the positions of the $j$-th and $k$-th constraints. 
    
    If $k$ is an inactive index, by the assumption at the beginning that $h_k(x,y^\ast(x))\leq -H$, we have
    $$\frac{1}{t}\frac{t}{-h_k(x,y^\ast_t(x))}\nabla_y h_k(x,y_t^\ast(x))=\frac{1}{-h_k(x,y^\ast_t(x))}\nabla_y h_k(x,y_t^\ast(x))$$
    converges to $(1/(-h_k(x,y^\ast(x))))\nabla_y h_k(x,y^\ast(x))$ uniformly. We can obtain the uniform convergence of the following term
    $$(A^k_t(x)+B_t(x))^{-1}\frac{1}{t}\frac{t}{-h_k(x,y^\ast_t(x))}\nabla_y h_k(x,y_t^\ast(x))$$ 
    by replacing $C_{t,i}(x)$ with $(1/(-h_k(x,y^\ast_t(x))))\nabla_y h_k(x,y_t^\ast(x))$ in Step 1, 
    
    If $k$ is an active index, following (\ref{eq:(A+B)D}) in the proof of lemma \ref{lem_existence}, we have
    \begin{align*}
        &(A^k_t(x)+B_t(x))^{-1}\frac{1}{t}\frac{t}{-h_k(x,y^\ast_t(x))}\nabla_y h_k(x,y_t^\ast(x))\\
        =&\frac{\left(A_t^{k-1}(x)+B_t(x)\right)^{-1}v^k_t(x)}{t+{v^k_t(x)}^\top\left(A_t^{k-1}(x)+B_t(x)\right)^{-1}v^k_t(x)},
    \end{align*}
    where we define $v^k_t(x)$ in (\ref{notation:vjt}) as
    $$v^k_t(x)=\frac{t}{-h_k(x,y^\ast_t(x))}\nabla_y h_k(x,y^\ast_t(x)).$$ 
    We just need to prove $\left(A_t^{k-1}(x)+B_t(x)\right)^{-1}v^k_t(x)$ converges uniformly, and 
    \begin{align*}
        \lim_{t\to 0}{v^k_t(x)}^\top\left(A_t^{k-1}(x)+B_t(x)\right)^{-1}v^k_t(x)\ne 0
    \end{align*}
    for any $x$ in $U$. The first result has already been proven in Step 1 just by replacing $k$ with $k-1$ and $C_{t,i}(x)$ with $v^k_t(x)$. The second result has also been proven in (\ref{eq:limpositive}) by replacing $\widetilde{j}$ with $k-1$. 
    
   To conclude, we proved that $\nabla_x y_t^\ast(x)$ converges uniformly in $U$.
\end{proof}

Next, we prove the relation of Jacobians under the strongly convex setting when $t$ approaches $0$.

\noindent\textbf{Proof of Theorem \ref{thm:mainnonlinear}:} We fix an \textbf{SCSC} point $x_0$ and prove this theorem at this point. It is sufficient to prove that $\lim_{t\to 0}\frac{\partial\left(y_t^\ast(x)\right)_i}{\partial x_j}=\frac{\partial\left(y^\ast(x)\right)_i}{\partial x_j}$ at \textbf{SCSC} point $x_0$ for any $i,j$. By Lemma \ref{lem:optimalitygapforpoint}, we have $y_t^\ast(x_0)$ converges to $y^\ast(x_0)$, and by Lemma \ref{lem:uniformconvergeofgradient} there exists a neighborhood $U$ of $x_0$ such that $\nabla_x y_t^\ast(x)$ converges uniformly in $U$, we can interchange the order of limit and derivatives
    \begin{align*}
        \lim_{t\to 0}\frac{\partial\left(y_t^\ast(x_0)\right)_i}{\partial x_j}=\frac{\partial\lim_{t\to 0}\left(y_t^\ast(x_0)\right)_i}{\partial x_j}=\frac{\partial\left(y^\ast(x_0)\right)_i}{\partial x_j}.
    \end{align*}
    Therefore, we completed the proof.

\section{Convergence Analysis of Algorithms}

\subsection{Proof of Proposition \ref{prop:strongly_convex}}\label{proof:prop:strongly_convex}
Through directly computing the Hessian, we get
    \begin{align*}
    \nabla^2_{yy}\widetilde{g}_t(x,y)&=\nabla^2_{yy}g(x,y)+t\sum_{i=1}^k\left(\frac{\nabla^2_{yy}h_i(x,y)}{-h_i(x,y)}+\frac{\nabla_y h_i(x,y)\nabla_y h_i(x,y)^\top}{h_i^2(x,y)}\right).
    \end{align*}
    
    If $g(x,y)$ is $\mu_g$-strongly convex in $y$, then 
    \begin{align*}
    \nabla^2_{yy}\widetilde{g}_t(x,y)\succeq \mu_g I,
    \end{align*}
    which means $\widetilde{g}_t(x,y)$ is $\mu_g$-strongly convex. 
    
    If $h_i(x,y)$ are linear in $y$ for any $i$, define $A(x)=(\nabla_y h_1(x,y),\cdots,\nabla_y h_k(x,y))$. In the proof in Proposition \ref{prop:diff_hyperfun}, we have proven that $A(x)A(x)^\top$ is positive for any $x$. The smallest eigenvalue $\sigma(x)$ is continuous depends on $x$, then we obtain that $\sigma=\min_{x\in\mathcal{X}}\sigma(x)>0$. It is also easy to see $H=\sup_{i\in\{1,...,k\},x\in\mathcal{X},y\in\mathcal{Y}_x}-h_i(x,y)<\infty$. Therefore
    \begin{align*}
        \nabla^2_{yy}\widetilde{g}_t(x,y)&\succeq t\sum_{i=1}^k\frac{\nabla_y h_i(x,y)\nabla_y h_i(x,y)^\top}{h_i^2(x,y)}\\
        &\succeq t\frac{\sum_{i=1}^k\nabla_y h_i(x,y)\nabla_y h_i(x,y)^\top}{H^2}\\
        &=t\frac{A(x)^\top A(x)}{H^2}\\
        &\succeq t\frac{\sigma}{H^2}I
    \end{align*}
which means $\widetilde{g}_t(x,y)$ is $t\frac{\sigma}{H^2}$-strongly convex.

\subsection{Proof of Theorem \ref{lem:local_M}}\label{proof:lem:local_M}

Denote $\overline{y}=\arg\min_{y\in\mathcal{Y}(x)}\{\max_{i\in\{1,...,k\}}h_i(x,y)\}$. From our assumption 
$$\arg\min_{y\in\mathcal{Y}(x)}\{\max_{i\in\{1,...,k\}}h_i(x,y)\}\leq-d,$$ 
it is clear that $h_i(x,\overline{y})\leq -d$ for any $i$. Our main idea of the proof is to reduce the problem to a one-dimensional case. 

If $y_t^\ast(x)=\overline{y}$, then let $m=d$. Then we directly have $h_i(x,y_t^\ast(x))\leq -d$ for any i.

If $y_t^\ast(x)\ne \overline{y}$. 
Consider the line $y_s=\overline{y}+s\boldsymbol{v}$, where $\boldsymbol{v}=\frac{y_t^\ast(x)-\overline{y}}{\|y_t^\ast(x)-\overline{y}\|}$ and define
\begin{align*}
        \widetilde{g}_x(s)&:=\widetilde{g}_t(x,y_s)\\
        &=g(x,\overline{y}+s\boldsymbol{v})-t\sum_{i=1}^k\log\left(-h_{i}(x,\overline{y}+s\boldsymbol{v})\right).
\end{align*}
 It is not hard to see $s^\ast:=\|y_t^\ast(x)-\overline{y}\|$ is the minimizer of $\widetilde{g}_x(s)$. We evaluate the bound of $h_j(x,y^\ast_t(x))$ for any index $j$ based on the range of values for $\frac{d}{d\boldsymbol{v}}h_{j}(x,y)|_{y=y_{s^\ast}}$, where $\frac{d}{d\boldsymbol{v}}h_{j}(x,y)|_{y=y_{s^\ast}}$ is the directional derivative of $h_j(x,\cdot)$ along direction $\boldsymbol{v}$ at point $y_{s^\ast}$.
 
 \noindent\textbf{Case 1: }If $\frac{d}{d\boldsymbol{v}}h_{j}(x,y)|_{y=y_{s^\ast}}\leq 0$, by convexity of $h_{j}(x,y_s)$ in $s$, $\frac{d}{d\boldsymbol{v}}h_{j}(x,y)|_{y=y_{s}}$ is monotonically increasing in $s$, thus we can get $\frac{d}{d\boldsymbol{v}}h_{j}(x,y)|_{y=y_{s}}\leq 0$ for any $s\in[0,s^\ast]$. Therefore, we have
\begin{align*}
    h_{j}(x,y_{s^\ast})-h_{j}(x,\overline{y})=\int_{0}^{s^\ast}\frac{d}{ds}h_{j}(x,\overline{y}+s\boldsymbol{v})ds\leq 0,
\end{align*}
implying $h_{j}(x,y_{s^\ast})\leq h_j(x,\overline{y})\leq-d$.

\noindent\textbf{Case 2: }If $0<\frac{d}{d\boldsymbol{v}}h_{j}(x,y)|_{y=y_{s^\ast}}\leq \frac{d}{4R}$, where $R$ is from Assumption \ref{assumption:general3}(\ref{assumption:general3(3)}), denote $s^\ast_j=\argmin_{\{s:y_s\in \mathcal{Y}(x)\}} h_j(x,y_{s})$. From the convexity of $h_j(x,y_s)$ in $s$, we know $s^\ast\geq s^\ast_j$. Otherwise, by convexity of $h_{j}(x,y_s)$ in $s$, $\frac{d}{d\boldsymbol{v}}h_{j}(x,y)|_{y=y_{s}}$ is monotonically increasing in $s$, $\frac{d}{d\boldsymbol{v}}h_{j}(x,y)|_{y=y_{s}}\geq 0$ for any $s\in[s^\ast,s^\ast_j]$, implying $h_j(x,y_{s^\ast})\leq h_j(x,y_{s^\ast_j})$. This contradicts $s^\ast_j=\argmin_{\{s:y_s\in \mathcal{Y}(x)\}} h_j(x,y_{s})$. Therefore, we obtain $$h_j(x,y_{s^\ast})-h_j(x,y_{s^\ast_j})=\int_{s_j^\ast}^{s^\ast}\frac{d}{ds}h_{j}(x,\overline{y}+s\boldsymbol{v})ds\overset{\text{(i)}}{\leq}(s^\ast-s^\ast_j)\frac{d}{d\boldsymbol{v}}h_{j}(x,y)|_{y=y_{s^\ast}}\overset{\text{(ii)}}{\leq}\frac{d}{2}. $$ In (i) we used the result that $\frac{d}{ds}h_{j}(x,\overline{y}+s\boldsymbol{v})\leq\frac{d}{d\boldsymbol{v}}h_{j}(x,y)|_{y=y_{s^\ast}}$ for any $s^\ast_j\leq s\leq s^\ast$. This is because, by convexity of $h_j(x,y_s)$ in $s$, $\frac{d}{d\boldsymbol{v}}h_{j}(x,y)|_{y=y_{s}}$ is monotonically increasing in $s$. In (ii) we use the fact that $s^\ast-s^\ast_j\leq2R$ from Assumption \ref{assumption:general3}(\ref{assumption:general3(3)}) and $\frac{d}{d\boldsymbol{v}}h_{j}(x,y)|_{y=y_{s^\ast}}\leq \frac{d}{4R}$. Note that $h_j(x,y_{s^\ast_j})\leq h_j(x,\overline{y})\leq -d$, we have $h_j(x,y^\ast_t(x))=h_j(x,y_{s^\ast})\leq -\frac{d}{2}$.

\noindent\textbf{Case 3: }If $\frac{d}{4R}<\frac{d}{d\boldsymbol{v}}h_{j}(x,y)|_{y=y_{s^\ast}}$, we divide the index set $I=\{1,...,k\}$ into two subsets, $I_1=\{i_1\in I:\frac{d}{d\boldsymbol{v}}h_{i_1}(x,y)|_{y=y_{s^\ast}}>0\}$ and $I_2=\{i_2\in I:\frac{d}{d\boldsymbol{v}}h_{i_2}(x,y)|_{y=y_{s^\ast}}\leq 0\}$. Note that the optimality condition of $\widetilde{g}_x(s)$ is $\frac{d}{ds} \widetilde{g}_x(s)|_{s=s^\ast}=0$, which can be expressed as
\begin{align}\label{eq:balanceequation}
    \frac{d}{d\boldsymbol{v}}g(x,y)|_{y=y_{s^\ast}}+t\sum_{i=1}^k\frac{\frac{d}{d\boldsymbol{v}}h_{i}(x,y)|_{y=y_{s^\ast}}}{-h_{i}(x,y_{s^\ast})}=0.
\end{align} 
Rearranging the terms of (\ref{eq:balanceequation}), note that $-h_j(x,y_{s^\ast})=-h_j(x,y^\ast_t(x))> 0$, we obtain
    \begin{align*}
        \frac{td}{-4Rh_j(x,y_{s^\ast})}<t\frac{\frac{d}{d\boldsymbol{v}}h_{j}(x,y)|_{y=y_{s^\ast}}}{-h_{j}(x,y_{s^\ast})}&=-\frac{d}{d\boldsymbol{v}}g(x,y)|_{y=y_{s^\ast}}-t\sum_{i\in I\setminus\{j\}}\frac{\frac{d}{d\boldsymbol{v}}h_{i}(x,y)|_{y=y_{s^\ast}}}{-h_{i}(x,y_{s^\ast})}\\
        &\overset{\text{(i)}}{\leq} L_g-t\sum_{i_1\in I_1\setminus\{j\}}\frac{\frac{d}{d\boldsymbol{v}}h_{i_1}(x,y)|_{y=y_{s^\ast}}}{-h_{i_1}(x,y_{s^\ast})}-t\sum_{i_2\in I_2\setminus\{j\}}\frac{\frac{d}{d\boldsymbol{v}}h_{i_2}(x,y)|_{y=y_{s^\ast}}}{-h_{i_2}(x,y_{s^\ast})}\\
        &\overset{\text{(ii)}}{\leq} L_g-t\sum_{i_2\in I_2\setminus\{j\}}\frac{\frac{d}{d\boldsymbol{v}}h_{i_2}(x,y)|_{y=y_{s^\ast}}}{-h_{i_2}(x,y_{s^\ast})}\\
        &\overset{\text{(iii)}}{\leq} L_g+\frac{tkL_h}{d}.
    \end{align*}
    In (i) we utilize $-\frac{d}{d\boldsymbol{v}}g(x,y)|_{y=y_{s^\ast}}\leq L_g$ by Assumption \ref{assumption:general2}(\ref{assumption:general2(3)}). (ii) is due to 
    $$-t\sum_{i_1\in I_1\setminus\{j\}}\frac{\frac{d}{d\boldsymbol{v}}h_{i_1}(x,y)|_{y=y_{s^\ast}}}{-h_{i_1}(x,y_{s^\ast})}\leq 0.$$
    In (iii) we use the fact that $I_2$ is designed such that for all $i_2\in I_2$, Case 1 is satisfied, so $-h_{i_2}(x,y_{s^\ast})\leq d$; further we have used the fact that for $i_2\in I_2$, $0<-\frac{d}{d\boldsymbol{v}}h_{i_2}(x,y)|_{y=y_{s^\ast}}\leq L_h$ where the second inequality comes from Assumption \ref{assumption:general2}(\ref{assumption:general2(6)}).  Note that $t\leq T$, we have
    \begin{align*}
        h_j(x,y_{s^\ast})\leq-t\frac{d^2}{4dRL_g+4RTkL_h}.
    \end{align*}

To conclude, the following holds for any $j$
\begin{align*}
    h_j(x,y_{s^\ast})\leq-\min\{t\frac{d^2}{4dRL_g+4RTkL_h},\frac{d}{2}\}.
\end{align*}

It is noteworthy that we do not need the convexity of $g(x,y)$ and $\widetilde{g}_t(x,y)$ in this proof.

\subsection{Proof of Proposition \ref{prop:lipschitz_smooth}}\label{proof:prop:lipschitz_smooth}

Through directly computing the Hessian, we get
    \begin{align*}
    \nabla^2_{yy}\widetilde{g}_t(x,y)&=\nabla^2_{yy}g(x,y)+t\sum_{i=1}^k\left(\frac{\nabla^2_{yy}h_i(x,y)}{-h_i(x,y)}+\frac{\nabla_y h_i(x,y)\nabla_y h_i(x,y)^\top}{h_i^2(x,y)}\right).
    \end{align*}
    Note that we only consider $y\in\mathcal{Y}_m(x)$, i.e. $h_i(x,y)\leq-m$ for any $i$. Therefore, we have the following estimate
    \begin{align*}
    \left\|\nabla^2_{yy}\widetilde{g}_t(x,y)\right\|&\leq\left\|\nabla^2_{yy}g(x,y)\right\|+t\sum_{i=1}^k\left(\frac{\left\|\nabla^2_{yy}h_i(x,y)\right\|}{-h_i(x,y)}+\frac{\left\|\nabla_y h_i(x,y)\right\|^2}{h_i^2(x,y)}\right)\\
    &\overset{\text{(i)}}{\leq}\overline{L}_g+\frac{tk\overline{L}_h}{m}+\frac{tkL_h^2}{m^2},
    \end{align*}
    where (i) is from the assumption that $h_i(x,y)\leq-m$, and Assumption \ref{assumption:general2} that $\left\|\nabla^2_{yy}g(x,y)\right\|\leq\overline{L}_g$, $\left\|\nabla^2_{yy}h_i(x,y)\right\|\leq\overline{L}_h$ and $\left\|\nabla_yh_i(x,y)\right\|\leq L_h$ for any $i$.
\subsection{Proof of Lemma \ref{lem:lipHessian}}\label{proof:lem:lipHessian}

By direct computing

    \resizebox{\textwidth}{!}{
    \begin{minipage}{\textwidth}
    \begin{align}
        &\left\|\nabla^2_{yy}\widetilde{g}_t(x_1,y_1)-\nabla^2_{yy}\widetilde{g}_t(x_2,y_2)\right\|\nonumber\\
        =&\left\|\nabla^2_{yy}g(x_1,y_1)+t\sum_{i=1}^k\left(\frac{\nabla^2_{yy}h_i(x_1,y_1)}{-h_i(x_1,y_1)}+\frac{\nabla_y h_i(x_1,y_1)\nabla_y h_i(x_1,y_1)^\top}{h_i^2(x_1,y_1)}\right)\right.\nonumber\\
        &-\left.\left(\nabla^2_{yy}g(x_2,y_2)+t\sum_{i=1}^k\left(\frac{\nabla^2_{yy}h_i(x_2,y_2)}{-h_i(x_2,y_2)}+\frac{\nabla_y h_i(x_2,y_2)\nabla_y h_i(x_2,y_2)^\top}{h_i^2(x_2,y_2)}\right)\right)\right\|\nonumber\\
        =&\left\|(\nabla^2_{yy}g(x_1,y_1)-\nabla^2_{yy}g(x_2,y_2))+t\sum_{i=1}^k\left(\frac{\nabla^2_{yy}h_i(x_1,y_1)}{-h_i(x_1,y_1)}-\frac{\nabla^2_{yy}h_i(x_2,y_2)}{-h_i(x_2,y_2)}\right)\right.\nonumber\\
        &\left.+t\sum_{i=1}^k\left(\frac{\nabla_y h_i(x_1,y_1)\nabla_y h_i(x_1,y_1)^\top}{h_i^2(x_1,y_1)}-\frac{\nabla_y h_i(x_2,y_2)\nabla_y h_i(x_2,y_2)^\top}{h_i^2(x_2,y_2)}\right)\right\|\nonumber\\
        =&\left\|(\nabla^2_{yy}g(x_1,y_1)-\nabla^2_{yy}g(x_2,y_2))+t\sum_{i=1}^k\frac{h_i(x_1,y_1)\nabla^2_{yy}h_i(x_2,y_2)-h_i(x_2,y_2)\nabla^2_{yy}h_i(x_1,y_1)}{h_i(x_1,y_1)h_i(x_2,y_2)}\right.\nonumber\\
        &\left.+t\sum_{i=1}^k\frac{h_i^2(x_2,y_2)\nabla_y h_i(x_1,y_1)\nabla_y h_i(x_1,y_1)^\top-h_i^2(x_1,y_1)\nabla_y h_i(x_2,y_2)\nabla_y h_i(x_2,y_2)^\top}{h_i^2(x_1,y_1)h_i^2(x_2,y_2)}\right\|.\label{eq:threeterms}
        \end{align}
        \end{minipage}}

        We estimate three terms of (\ref{eq:threeterms}) separately. For the first term, we utilize Assumption \ref{assumption:general2}(\ref{assumption:general2(5)}) and obtain
        \begin{align*}
            \left\|\nabla^2_{yy}g(x_1,y_1)-\nabla^2_{yy}g(x_2,y_2)\right\|\leq\overline{\overline{L}}_{g}\|(x_1,y_1)-(x_2,y_2)\|.
        \end{align*}
        
        For the second, we have

        \resizebox{\textwidth}{!}{
        \begin{minipage}{\textwidth}
        \begin{align}
            &\left\|t\sum_{i=1}^k\frac{h_i(x_1,y_1)\nabla^2_{yy}h_i(x_2,y_2)-h_i(x_2,y_2)\nabla^2_{yy}h_i(x_1,y_1)}{h_i(x_1,y_1)h_i(x_2,y_2)}\right\|\nonumber\\
            \leq&t\sum_{i=1}^k\left\|\frac{h_i(x_1,y_1)\nabla^2_{yy}h_i(x_2,y_2)-h_i(x_2,y_2)\nabla^2_{yy}h_i(x_1,y_1)}{h_i(x_1,y_1)h_i(x_2,y_2)}\right\|\nonumber\\
            =&t\sum_{i=1}^k\left\|\frac{h_i(x_1,y_1)\nabla^2_{yy}h_i(x_2,y_2)-h_i(x_1,y_1)\nabla^2_{yy}h_i(x_1,y_1)+h_i(x_1,y_1)\nabla^2_{yy}h_i(x_1,y_1)-h_i(x_2,y_2)\nabla^2_{yy}h_i(x_1,y_1)}{h_i(x_1,y_1)h_i(x_2,y_2)}\right\|\nonumber\\
            \leq&t\sum_{i=1}^k\left(\left\|\frac{\nabla^2_{yy}h_i(x_2,y_2)-\nabla^2_{yy}h_i(x_1,y_1)}{h_i(x_2,y_2)}\right\|+\left\|\frac{\nabla^2_{yy}h_i(x_1,y_1)\left(h_i(x_1,y_1)-h_i(x_2,y_2)\right)}{h_i(x_1,y_1)h_i(x_2,y_2)}\right\|\right)\nonumber\\
            \overset{\text{(i)}}{\leq}&tk\left(\frac{\overline{\overline{L}}_h}{m}+\frac{\overline{L}_hL_h}{m^2}\right)\|(x_1,y_1)-(x_2,y_2)\|,\label{eq:secondterm}
        \end{align}
        \end{minipage}}
        
        where (i) is because $h_i(x_1,y_1)$, $h_i(x_2,y_2)\leq -m$, $\|\nabla^2_{yy}h_i(x_2,y_2)-\nabla^2_{yy}h_i(x_1,y_1)\|\leq\overline{\overline{L}}_h\|(x_1,y_1)-(x_2,y_2)\|$, $\|\nabla^2_{yy}h_i(x_1,y_1)\|\leq\overline{L}_h$, and $|h_i(x_1,y_1)-h_i(x_2,y_2)|\leq L_h\|(x_1,y_1)-(x_2,y_2)\|$.
        
        For the final term, we have
        \begin{align}
            &\left\|t\sum_{i=1}^k\frac{h_i^2(x_2,y_2)\nabla_y h_i(x_1,y_1)\nabla_y h_i(x_1,y_1)^\top-h_i^2(x_1,y_1)\nabla_y h_i(x_2,y_2)\nabla_y h_i(x_2,y_2)^\top}{h_i^2(x_1,y_1)h_i^2(x_2,y_2)}\right\|\nonumber\\
            \leq&t\sum_{i=1}^k\left\|\frac{h_i^2(x_2,y_2)\nabla_y h_i(x_1,y_1)\nabla_y h_i(x_1,y_1)^\top-h_i^2(x_1,y_1)\nabla_y h_i(x_2,y_2)\nabla_y h_i(x_2,y_2)^\top}{h_i^2(x_1,y_1)h_i^2(x_2,y_2)}\right\|\nonumber\\
            =&t\sum_{i=1}^k\left(\left\|\frac{h_i^2(x_2,y_2)\nabla_y h_i(x_1,y_1)\nabla_y h_i(x_1,y_1)^\top-h_i^2(x_1,y_1)\nabla_y h_i(x_1,y_1)\nabla_y h_i(x_1,y_1)^\top}{h_i^2(x_1,y_1)h_i^2(x_2,y_2)}\right\|\right.\nonumber\\
            &+\left\|\frac{h_i^2(x_1,y_1)\nabla_y h_i(x_1,y_1)\nabla_y h_i(x_1,y_1)^\top-h_i^2(x_1,y_1)\nabla_y h_i(x_1,y_1)\nabla_y h_i(x_2,y_2)^\top}{h_i^2(x_1,y_1)h_i^2(x_2,y_2)}\right\|\nonumber\\
            &+\left.\left\|\frac{h_i^2(x_1,y_1)\nabla_y h_i(x_1,y_1)\nabla_y h_i(x_2,y_2)^\top-h_i^2(x_1,y_1)\nabla_y h_i(x_2,y_2)\nabla_y h_i(x_2,y_2)^\top}{h_i^2(x_1,y_1)h_i^2(x_2,y_2)}\right\|\right)\label{eq:complicatedterm}\\
            \overset{\text{(i)}}{\leq}&\left(2tk\frac{L_h^3}{m^3}+2tk\frac{L_h\overline{L}_h}{m^2}\right)\|(x_1,y_1)-(x_2,y_2)\|.\nonumber
        \end{align}
        In (i), when evaluating the first term, we use the following estimate
        \begin{align*}
            &\left\|\frac{h_i^2(x_2,y_2)\nabla_y h_i(x_1,y_1)\nabla_y h_i(x_1,y_1)^\top-h_i^2(x_1,y_1)\nabla_y h_i(x_1,y_1)\nabla_y h_i(x_1,y_1)^\top}{h_i^2(x_1,y_1)h_i^2(x_2,y_2)}\right\|\\
            =&\left\|\frac{(h_i(x_2,y_2)+h_i(x_1,y_1))(h_i(x_2,y_2)-h_i(x_1,y_1))\nabla_y h_i(x_1,y_1)\nabla_y h_i(x_1,y_1)^\top}{h_i^2(x_1,y_1)h_i^2(x_2,y_2)}\right\|\\
            =&\left\|\frac{(h_i(x_2,y_2)-h_i(x_1,y_1))\nabla_y h_i(x_1,y_1)\nabla_y h_i(x_1,y_1)^\top}{h_i^2(x_1,y_1)h_i(x_2,y_2)}+\frac{(h_i(x_2,y_2)-h_i(x_1,y_1))\nabla_y h_i(x_1,y_1)\nabla_y h_i(x_1,y_1)^\top}{h_i(x_1,y_1)h_i^2(x_2,y_2)}\right\|\\
            \leq&2tk\frac{L_h^3}{m^3}\|(x_1,y_1)-(x_2,y_2)\|.
        \end{align*}
        In conclusion, we have

        \resizebox{\textwidth}{!}{
        \begin{minipage}{\textwidth}
        \begin{align*}
        &\left\|\nabla^2_{yy}\widetilde{g}_t(x_1,y_1)-\nabla^2_{yy}\widetilde{g}_t(x_2,y_2)\right\|\\
        \leq&\left(\overline{\overline{L}}_{g}+tk\left(\frac{\overline{\overline{L}}_h}{m}+\frac{\overline{L}_hL_h}{m^2}+\frac{2L^3_h}{m^3}+\frac{2L_h\overline{L}_h}{m^2}\right)\right)\|(x_1,y_1)-(x_2,y_2)\|.
        \end{align*}
        \end{minipage}}
        
    We can also derive that $\|\nabla^2_{xy}\widetilde{g}_t(x_1,y_1)-\nabla^2_{xy}\widetilde{g}_t(x_2,y_2)\|\leq \overline{\overline{L}}_{\widetilde{g}_t,m}\|(x_1,y_1)-(x_2,y_2)\|$ using the same process.

\subsection{Proof of Lemma \ref{lem:local_lip}}\label{proof:lem:local_lip}
By Assumption \ref{assumption:general2}(\ref{assumption:general2(6)}), i.e. the Lipschitz continuity of $h_i(x,y)$, we have $h_i(x+\Delta x,y)\leq -\frac{d}{2}$ for any $i$ and $x+\Delta x$, where $\|\Delta x\|\leq\frac{d}{2L_h}$. This implies $\min_{y\in\mathcal{Y}(x)}\{\max_{i\in\{1,...,k\}}h_i(x+\Delta x,y)\}\}\leq -\frac{d}{2}$ for any point in the ball $B_{x}(\frac{d}{2L_h})$. According to Theorem \ref{lem:local_M}, we know $h_i(x+\Delta x,y^\ast_t(x+\Delta x))$ has a local upper bound denoted as $-m^{loc}$ in the ball $B_{x}(\frac{d}{2L_h})$. $m^{loc}$ can be computed by replacing $d$ with $\frac{d}{2}$ in Theorem \ref{lem:local_M}. This completes the first part of the proof.

Before proceeding with the proof, we provide the following estimate
\begin{align}
        \left\|\nabla^2_{yx}\widetilde{g}_t(x,y)\right\|&\leq\left\|\nabla^2_{yx}g(x,y)\right\|+t\sum_{i=1}^k\left(\frac{\left\|\nabla^2_{yx}h_i(x,y)\right\|}{-h_i(x,y)}+\frac{\left\|\nabla_y h_i(x,y)\right\|\left\|\nabla_x h_i(x,y)\right\|}{h_i^2(x,y)}\right)\nonumber\\
    &\leq\overline{L}_g+\frac{tk\overline{L}_h}{m^{loc}}+\frac{tkL_h^2}{\left(m^{loc}\right)^2}\label{eq:mixhessian}
    \end{align}
    similar as the proof in Proposition \ref{prop:lipschitz_smooth}.

By direct computation, for any $x_1$, $x_2$ in the ball $B_{x}(\frac{d}{2L_h})$, we have

    \resizebox{\textwidth}{!}{
    \begin{minipage}{\textwidth}
    \begin{align}
        &\left\|\nabla_x \widetilde{\phi}_t(x_1)-\nabla_x \widetilde{\phi}_t(x_2)\right\|\nonumber\\
        =&\left\|\nabla_x f(x_1,y_t^\ast(x_1))-\nabla_{xy}^2\widetilde{g}_t(x_1,y_t^\ast(x_1))(\nabla_{yy}^2\widetilde{g}_t(x_1,y_t^\ast(x_1)))^{-1}\nabla_y f(x_1,y_t^\ast(x_1))\right.\nonumber\\
        &\left.-\nabla_x f(x_2,y_t^\ast(x_2))+\nabla_{xy}^2\widetilde{g}_t(x_2,y_t^\ast(x_2))(\nabla_{yy}^2\widetilde{g}_t(x_2,y_t^\ast(x_2)))^{-1}\nabla_y f(x_2,y_t^\ast(x_2))\right\|\nonumber\\
        \leq&\left\|(\nabla_x f(x_1,y_t^\ast(x_1))-\nabla_x f(x_2,y_t^\ast(x_2)))\right\|\nonumber\\
        &+\left\|\nabla_{xy}^2\widetilde{g}_t(x_2,y_t^\ast(x_2))(\nabla_{yy}^2\widetilde{g}_t(x_2,y_t^\ast(x_2)))^{-1}(\nabla_y f(x_2,y_t^\ast(x_2))-\nabla_y f(x_1,y_t^\ast(x_1)))\right\|\nonumber\\
        &+\left\|\nabla_{xy}^2\widetilde{g}_t(x_2,y_t^\ast(x_2))((\nabla_{yy}^2\widetilde{g}_t(x_2,y_t^\ast(x_2)))^{-1}-(\nabla_{yy}^2\widetilde{g}_t(x_1,y_t^\ast(x_1)))^{-1})\nabla_y f(x_1,y_t^\ast(x_1))\right\|\nonumber\\
        &+\left\|(\nabla_{xy}^2\widetilde{g}_t(x_2,y_t^\ast(x_2))-\nabla_{xy}^2\widetilde{g}_t(x_1,y_t^\ast(x_1)))(\nabla_{yy}^2\widetilde{g}_t(x_1,y_t^\ast(x_1)))^{-1}\nabla_y f(x_1,y_t^\ast(x_1))\right\|\nonumber\\
        \overset{\text{(i)}}{\leq}&\left(\overline{L}_f+\overline{L}_f\frac{1}{\mu_{\widetilde{g}_t}}\left(\overline{L}_g+\frac{tk\overline{L}_h}{m^{loc}}+\frac{tkL_h^2}{(m^{loc})^2}\right)+L_f\left(\frac{1}{\mu_{\widetilde{g}_t}}\right)^2\overline{\overline{L}}_{\widetilde{g}_t,m^{loc}}\left(\overline{L}_g+\frac{tk\overline{L}_h}{m^{loc}}+\frac{tkL_h^2}{(m^{loc})^2}\right)+L_f\frac{1}{\mu_{\widetilde{g}_t}}\overline{\overline{L}}_{\widetilde{g}_t,m^{loc}}\right)\nonumber\\
        &\times\|(x_1,y_t^\ast(x_1))-(x_2,y_t^\ast(x_2))\|\label{eq:finalresult}.
    \end{align}
    \end{minipage}}
    
    In (i) we estimate $\|(\nabla_{yy}^2\widetilde{g}_t(x_2,y_t^\ast(x_2)))^{-1}-(\nabla_{yy}^2\widetilde{g}_t(x_1,y_t^\ast(x_1)))^{-1}\|$ using the following inequality
    \begin{align*}
        \|A^{-1}-B^{-1}\|=\|A^{-1}(A-B)B^{-1}\|\leq\|A^{-1}\|\cdot\|B^{-1}\|\cdot\|A-B\|,
    \end{align*}
    and $\|(\nabla_{yy}^2\widetilde{g}_t(x_2,y_t^\ast(x_2)))^{-1}\|\leq 1/\mu_{\widetilde{g}_t}$ from Proposition \ref{prop:strongly_convex}. 
    
    Now we need to bound $\|(x_1,y_t^\ast(x_1))-(x_2,y_t^\ast(x_2))\|$. Note that $\|y^\ast_t(x_1)-y^\ast_t(x_2)\|\leq\|\nabla_x y_t^\ast(x)\|_{loc}\|x_1-x_2\|$, where $\|\nabla_x y_t^\ast(x)\|_{loc}$ is the upper bound of norm of the Jacobian in the ball $B_x(\frac{d}{2L_h})$. We need to evaluate $\|\nabla_x y_t^\ast(x)\|_{loc}$. By optimal condition of $\widetilde{g}_t(x,y)$, we know $\nabla_y\widetilde{g}_t(x,y^\ast_t(x))$=0. This implies 
    $$0=\nabla_x(\nabla_y\widetilde{g}_t(x,y^\ast_t(x)))=\nabla^2_{xy}\widetilde{g}_t(x,y^\ast_t(x))+(\nabla_x y^\ast_t(x))^\top\nabla^2_{yy} \widetilde{g}_t(x,y^\ast_t(x)).$$
    This following
    \begin{align*}
        \|\nabla_x y_t^\ast(x)\|=&\|\nabla^2_{xy}\widetilde{g}_t(x,y)(\nabla^2_{yy}\widetilde{g}_t(x,y))^{-1}\|\\
        =&\|\nabla^2_{xy}\widetilde{g}_t(x,y)\|\|\nabla^2_{yy}\widetilde{g}_t(x,y)\|^{-1}\\
        \overset{\text{(i)}}{\leq}&\frac{1}{\mu_{\widetilde{g}_t}}\left(\overline{L}_g+\frac{tk\overline{L}_h}{{m^{loc}}}+\frac{tkL_h^2}{(m^{loc})^2}\right),
    \end{align*}
    where in (i) we use $\|(\nabla_{yy}^2\widetilde{g}_t(x_2,y_t^\ast(x_2)))^{-1}\|\leq 1/\mu_{\widetilde{g}_t}$ and 
    \begin{align*}
        \left\|\nabla^2_{yx}\widetilde{g}_t(x,y)\right\|\leq\overline{L}_g+\frac{tk\overline{L}_h}{m^{loc}}+\frac{tkL_h^2}{\left(m^{loc}\right)^2}
    \end{align*}
    Therefore, we obtain
    \begin{align*}
        \|y^\ast_t(x_1)-y^\ast_t(x_2)\|\leq\frac{1}{\mu_{\widetilde{g}_t}}\left(\overline{L}_g+\frac{tk\overline{L}_h}{m^{loc}}+\frac{tkL_h^2}{(m^{loc})^2}\right)\|x_1-x_2\|.
    \end{align*}
    To conclude, we obtain

    \resizebox{\textwidth}{!}{
    \begin{minipage}{\textwidth}
    \begin{align*}
        &\|\nabla_x \widetilde{\phi}_t(x_1)-\nabla_x \widetilde{\phi}_t(x_2)\|\\
        \leq&\left(\overline{L}_f+\overline{L}_f\frac{1}{\mu_{\widetilde{g}_t}}\left(\overline{L}_g+\frac{tk\overline{L}_h}{m^{loc}}+\frac{tkL_h^2}{(m^{loc})^2}\right)+L_f\left(\frac{1}{\mu_{\widetilde{g}_t}}\right)^2\overline{\overline{L}}_{\widetilde{g}_t,m^{loc}}\left(\overline{L}_g+\frac{tk\overline{L}_h}{m^{loc}}+\frac{tkL_h^2}{(m^{loc})^2}\right)+L_f\frac{1}{\mu_{\widetilde{g}_t}}\overline{\overline{L}}_{\widetilde{g}_t,m^{loc}}\right)\nonumber\\
        &\times\|(x_1,y_t^\ast(x_1))-(x_2,y_t^\ast(x_2))\|\\
        \leq&\left(\overline{L}_f+\overline{L}_f\frac{1}{\mu_{\widetilde{g}_t}}\left(\overline{L}_g+\frac{tk\overline{L}_h}{{m^{loc}}}+\frac{tkL_h^2}{(m^{loc})^2}\right)+L_f\left(\frac{1}{\mu_{\widetilde{g}_t}}\right)^2\overline{\overline{L}}_{\widetilde{g}_t,m^{loc}}\left(\overline{L}_g+\frac{tk\overline{L}_h}{m^{loc}}+\frac{tkL_h^2}{(m^{loc})^2}\right)+L_f\frac{1}{\mu_{\widetilde{g}_t}}\overline{\overline{L}}_{\widetilde{g}_t,m^{loc}}\right)\\
        &\times\left(1+\frac{1}{\mu_{\widetilde{g}_t}}\left(\overline{L}_g+\frac{tk\overline{L}_h}{{m^{loc}}}+\frac{tkL_h^2}{(m^{loc})^2}\right)\right)\|x_1-x_2\|.
    \end{align*}
    \end{minipage}}
    
    The proof is completed.

\subsection{Proof of Lemma \ref{lem:lip_global_bound}}\label{proof:lem:lip_global_bound}
Now we apply Lemma \ref{lem:local_lip} to the reference point $x_s$. Replacing $d$ in Lemma \ref{lem:local_lip} with $d_s$, it follows that for any $x$ in the ball $B_{x_s}(d_s/(2L_h))$, we have $h_i(x,y^\ast_t(x))\leq -m^{loc}$. As stated in the first paragraph of proof in Lemma \ref{lem:local_lip}, the constant $-m^{loc}$ can be computed by replacing $d$ in Theorem \ref{lem:local_M} with $d_s/2$. Here we use the notation $m(d_s/2):=m^{loc}$ to emphasize the dependency of $m$ on $d_s$. Since the feasibility check must terminate when $d$ is between $\frac{D}{2}$ to $D$, $d_s$ is greater than $D/2$ for any $s$. From Theorem \ref{lem:local_M}, we know that $m(d)$ is monotonically increasing with respect to $d$. It follows that $m^{loc}=m(d_s/2)\geq m(D/4)=:M^\ast$. Note that
$$\overline{\overline{L}}_{\widetilde{g}_t,m}=\overline{\overline{L}}_{g}+tk\left(\frac{\overline{\overline{L}}_h}{m}+\frac{\overline{L}_hL_h}{m^2}+\frac{2L^3_h}{m^3}+\frac{2L_h\overline{L}_h}{m^2}\right),$$
which is monotonically decreasing with respect to $m$, we have $\overline{\overline{L}}_{\widetilde{g}_t,m^{loc}}\leq \overline{\overline{L}}_{\widetilde{g}_t,M^\ast}$. Therefore, the following holds

    \resizebox{\textwidth}{!}{
    \begin{minipage}{\textwidth}
    \begin{align*}
        \overline{L}^{loc}_{\widetilde{\phi}_t}=&\left(\overline{L}_f+\overline{L}_f\frac{1}{\mu_{\widetilde{g}_t}}\left(\overline{L}_g+\frac{tk\overline{L}_h}{{m^{loc}}}+\frac{tkL_h^2}{(m^{loc})^2}\right)+L_f\left(\frac{1}{\mu_{\widetilde{g}_t}}\right)^2\overline{\overline{L}}_{\widetilde{g}_t,m^{loc}}\left(\overline{L}_g+\frac{tk\overline{L}_h}{m^{loc}}+\frac{tkL_h^2}{(m^{loc})^2}\right)+L_f\frac{1}{\mu_{\widetilde{g}_t}}\overline{\overline{L}}_{\widetilde{g}_t,m^{loc}}\right)\\
        &\times\left(1+\frac{1}{\mu_{\widetilde{g}_t}}\left(\overline{L}_g+\frac{tk\overline{L}_h}{{m^{loc}}}+\frac{tkL_h^2}{(m^{loc})^2}\right)\right)\\
        \leq&\left(\overline{L}_f+\overline{L}_f\frac{1}{\mu_{\widetilde{g}_t}}\left(\overline{L}_g+\frac{tk\overline{L}_h}{{M^\ast}}+\frac{tkL_h^2}{(M^\ast)^2}\right)+L_f\left(\frac{1}{\mu_{\widetilde{g}_t}}\right)^2\overline{\overline{L}}_{\widetilde{g}_t,M^\ast}\left(\overline{L}_g+\frac{tk\overline{L}_h}{M^\ast}+\frac{tkL_h^2}{(M^\ast)^2}\right)+L_f\frac{1}{\mu_{\widetilde{g}_t}}\overline{\overline{L}}_{\widetilde{g}_t,M^\ast}\right)\\
        &\times\left(1+\frac{1}{\mu_{\widetilde{g}_t}}\left(\overline{L}_g+\frac{tk\overline{L}_h}{{M^\ast}}+\frac{tkL_h^2}{(M^\ast)^2}\right)\right)\\
        =:&\overline{L}_{\widetilde{\phi}_t}.
    \end{align*}
    \end{minipage}}
    
    By the design of $m$ in Theorem \ref{lem:local_M}, we have $M^\ast=m(D/4)=\mathcal{O}(t)$. Note that 
    \begin{align}\label{eq:Mast}
        \overline{\overline{L}}_{\widetilde{g}_t,M^\ast}=\overline{\overline{L}}_{g}+tk\left(\frac{\overline{\overline{L}}_h}{M^\ast}+\frac{\overline{L}_hL_h}{\left(M^\ast\right)^2}+\frac{2L^3_h}{\left(M^\ast\right)^3}+\frac{2L_h\overline{L}_h}{\left(M^\ast\right)^2}\right)=\mathcal{O}(1/t^2),
    \end{align}
    we conclude $\overline{L}_{\widetilde{\phi}_t}=\mathcal{O}(1/t^4)$.

\subsection{Proof of Lemma \ref{lemma:approx_error_hypergrad}}\label{proof:lemma:approx_error_hypergrad}

According to Theorem \ref{thm:convergence_rage_alg_2}, we have $\|y^\ast_t(x_s)-\hat{y}_s\|\leq\epsilon$. Note that $h_i(x,y^\ast_t(x))\leq -m_s$ and $h_i(x,\hat{y}_s)\leq -m_s$ for any $i$. By direct computation

    \resizebox{\textwidth}{!}{
    \begin{minipage}{\textwidth}
    \begin{align}
        &\left\|\nabla_x \widetilde{\phi}_t(x_s)-\hat{\nabla}_x\widetilde{\phi}_t(x_s)\right\|\nonumber\\
        =&\left\|\nabla_x f(x_s,y_t^\ast(x_s))-\nabla_{xy}^2\widetilde{g}_t(x_s,y_t^\ast(x_s))(\nabla_{yy}^2\widetilde{g}_t(x_s,y_t^\ast(x_s)))^{-1}\nabla_y f(x_s,y_t^\ast(x_s))\right.\nonumber\\
        &\left.-\nabla_x f(x_s,\hat{y}_s)+\nabla_{xy}^2\widetilde{g}_t(x_s,\hat{y}_s)(\nabla_{yy}^2\widetilde{g}_t(x_s,\hat{y}_s))^{-1}\nabla_y f(x_s,\hat{y}_s)\right\|\nonumber\\
        \leq&\left\|(\nabla_x f(x_s,y_t^\ast(x))-\nabla_x f(x_s,\hat{y}_s))\right\|\nonumber\\
        &+\left\|\nabla_{xy}^2\widetilde{g}_t(x_s,\hat{y}_s)(\nabla_{yy}^2\widetilde{g}_t(x_s,\hat{y}_s))^{-1}(\nabla_y f(x_s,\hat{y}_s)-\nabla_y f(x_s,y_t^\ast(x_s)))\right\|\nonumber\\
        &+\left\|\nabla_{xy}^2\widetilde{g}_t(x_s,\hat{y}_s)((\nabla_{yy}^2\widetilde{g}_t(x_s,\hat{y}_s))^{-1}-(\nabla_{yy}^2\widetilde{g}_t(x_s,y_t^\ast(x_s)))^{-1})\nabla_y f(x_s,y_t^\ast(x_s))\right\|\nonumber\\
        &+\left\|(\nabla_{xy}^2\widetilde{g}_t(x_s,\hat{y}_s)-\nabla_{xy}^2\widetilde{g}_t(x_s,y_t^\ast(x_s)))(\nabla_{yy}^2\widetilde{g}_t(x_s,\hat{y}_s))^{-1}\nabla_y f(x_s,y_t^\ast(x_s))\right\|\nonumber\\
        \leq&\left(\overline{L}_f+\overline{L}_f\frac{1}{\mu_{\widetilde{g}_t}}\left(\overline{L}_g+\frac{tk\overline{L}_h}{m_s}+\frac{tkL_h^2}{m_s^2}\right)+L_f\left(\frac{1}{\mu_{\widetilde{g}_t}}\right)^2\overline{\overline{L}}_{\widetilde{g}_t,m_s}\left(\overline{L}_g+\frac{tk\overline{L}_h}{m_s}+\frac{tkL_h^2}{m_s^2}\right)+L_f\frac{1}{\mu_{\widetilde{g}_t}}\overline{\overline{L}}_{\widetilde{g}_t,m_s}\right)\nonumber\\
        &\times\|(x,y^\ast_t(x_s))-(x,\hat{y}_s)\|\nonumber\\
        \leq&\left(\overline{L}_f+\overline{L}_f\frac{1}{\mu_{\widetilde{g}_t}}\left(\overline{L}_g+\frac{tk\overline{L}_h}{m_s}+\frac{tkL_h^2}{m_s^2}\right)+L_f\left(\frac{1}{\mu_{\widetilde{g}_t}}\right)^2\overline{\overline{L}}_{\widetilde{g}_t,m_s}\left(\overline{L}_g+\frac{tk\overline{L}_h}{m_s}+\frac{tkL_h^2}{m_s^2}\right)+L_f\frac{1}{\mu_{\widetilde{g}_t}}\overline{\overline{L}}_{\widetilde{g}_t,m_s}\right)\epsilon_s\nonumber\\
        =&\overline{L}'_{\widetilde{\phi}_t,m_s}\epsilon_s.\label{eq:approximated gradient}
    \end{align}
    \end{minipage}}
    
\subsection{Proof of Theorem \ref{thm:total_rate}}\label{proof:thm:total_rate}
Before proceeding, we establish the following result 

\begin{lemma}\label{lem:upperboundofapproxgradient}
    The coefficient $\overline{L}'_{\widetilde{\phi}_t,m_s}$ defined in Lemma \ref{lemma:approx_error_hypergrad} has an upper bound 
    
    \resizebox{\textwidth}{!}{$\overline{L}'_{\widetilde{\phi}_t}=\overline{L}_f+\overline{L}_f\frac{1}{\mu_{\widetilde{g}_t}}\left(\overline{L}_g+\frac{tk\overline{L}_h}{M}+\frac{tkL_h^2}{M^2}\right)+L_f\left(\frac{1}{\mu_{\widetilde{g}_t}}\right)^2\overline{\overline{L}}_{\widetilde{g}_t,M}\left(\overline{L}_g+\frac{tk\overline{L}_h}{M}+\frac{tkL_h^2}{M^2}\right)+L_f\frac{1}{\mu_{\widetilde{g}_t}}\overline{\overline{L}}_{\widetilde{g}_t,M}.$}
    Here $M$ is from Corollary  \ref{remark:upper_bound_of_h_at_approximate_point}, and  $\overline{\overline{L}}_{\widetilde{g}_t,M}$ can be computed by replacing $m$ from Lemma \ref{lem:lipHessian} with $M$.
\end{lemma}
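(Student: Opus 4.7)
The plan is to prove this by a direct monotonicity-plus-substitution argument, since the only thing changing between $\overline{L}'_{\widetilde{\phi}_t,m_s}$ and $\overline{L}'_{\widetilde{\phi}_t}$ is the value of $m$ appearing in the denominators. First, I would invoke Corollary \ref{remark:upper_bound_of_h_at_approximate_point}, which states that the quantity $m_s$ produced by Algorithm \ref{algo:fo_lower_2} at every outer iteration $s$ satisfies $m_s \geq M > 0$, where $M$ is the uniform lower bound obtained by replacing $d$ with $D/2$ in Theorem \ref{lem:local_M}, and $D$ is the Slater-slack constant from Remark \ref{remark:non-emptyness}.

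Next I would establish the key monotonicity. Inspecting the formula for $\overline{\overline{L}}_{\widetilde{g}_t,m}$ in Lemma \ref{lem:lipHessian}, the variable $m$ enters only through $1/m$, $1/m^2$, and $1/m^3$ multiplied by nonnegative constants (with a leading constant $\overline{\overline{L}}_g$ that does not depend on $m$); hence $m \mapsto \overline{\overline{L}}_{\widetilde{g}_t,m}$ is monotonically nonincreasing for $m>0$. The same observation applies to the bracketed factor $\overline{L}_g + tk\overline{L}_h/m + tkL_h^2/m^2$ which appears twice in the definition of $\overline{L}'_{\widetilde{\phi}_t,m_s}$ in Lemma \ref{lemma:approx_error_hypergrad}.

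Finally, I would assemble these pieces. The coefficients that multiply the two monotonically nonincreasing factors in $\overline{L}'_{\widetilde{\phi}_t,m_s}$, namely $\overline{L}_f/\mu_{\widetilde{g}_t}$, $L_f/\mu_{\widetilde{g}_t}^2$, and $L_f/\mu_{\widetilde{g}_t}$, are all nonnegative and independent of $m_s$, and the additive term $\overline{L}_f$ is constant. Consequently $m \mapsto \overline{L}'_{\widetilde{\phi}_t,m}$ is itself monotonically nonincreasing on $(0,\infty)$. Combining this with $m_s \geq M$ yields
\begin{align*}
\overline{L}'_{\widetilde{\phi}_t,m_s} \;\leq\; \overline{L}'_{\widetilde{\phi}_t,M} \;=\; \overline{L}'_{\widetilde{\phi}_t},
\end{align*}
which is exactly the claimed bound. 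There is no serious technical obstacle in this lemma; the only point requiring care is to verify that every appearance of $m_s$ in the expressions of Lemma \ref{lem:lipHessian} and Lemma \ref{lemma:approx_error_hypergrad} is indeed through negative powers with nonnegative multipliers, so that replacing $m_s$ by the smaller $M$ can only enlarge the overall quantity. Once that algebraic check is recorded, the conclusion follows immediately from Corollary \ref{remark:upper_bound_of_h_at_approximate_point}.
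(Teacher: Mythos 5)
Your proposal is correct and follows essentially the same route as the paper: use $m_s\geq M$ from Corollary \ref{remark:upper_bound_of_h_at_approximate_point}, note that $m$ enters $\overline{\overline{L}}_{\widetilde{g}_t,m}$ and the factor $\overline{L}_g+tk\overline{L}_h/m+tkL_h^2/m^2$ only through negative powers with nonnegative coefficients, and substitute $M$ for $m_s$ termwise. Your explicit statement of the direction of monotonicity (nonincreasing in $m$) is in fact cleaner than the paper's wording, but the argument is the same.
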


\begin{proof}
    From Corollary \ref{remark:upper_bound_of_h_at_approximate_point}, we have $m_s\geq M$. In view of Lemma \ref{lemma:approx_error_hypergrad}, $\overline{\overline{L}}_{\widetilde{g}_t,m}$ is monotonically increases with respect to $m$, so $\overline{\overline{L}}_{\widetilde{g}_t,m_s}\leq\overline{\overline{L}}_{\widetilde{g}_t,M}$. Therefore, we obtain

    \resizebox{\textwidth}{!}{
    \begin{minipage}{\textwidth}
    \begin{align*}
        \overline{L}'_{\widetilde{\phi}_t,m_s}&=\overline{L}_f+\overline{L}_f\frac{1}{\mu_{\widetilde{g}_t}}\left(\overline{L}_g+\frac{tk\overline{L}_h}{m_s}+\frac{tkL_h^2}{m_s^2}\right)+L_f\left(\frac{1}{\mu_{\widetilde{g}_t}}\right)^2\overline{\overline{L}}_{\widetilde{g}_t,m_s}\left(\overline{L}_g+\frac{tk\overline{L}_h}{m_s}+\frac{tkL_h^2}{m_s^2}\right)+L_f\frac{1}{\mu_{\widetilde{g}_t}}\overline{\overline{L}}_{\widetilde{g}_t,m_s}\\
        &\leq\overline{L}_f+\overline{L}_f\frac{1}{\mu_{\widetilde{g}_t}}\left(\overline{L}_g+\frac{tk\overline{L}_h}{M}+\frac{tkL_h^2}{M^2}\right)+L_f\left(\frac{1}{\mu_{\widetilde{g}_t}}\right)^2\overline{\overline{L}}_{\widetilde{g}_t,M}\left(\overline{L}_g+\frac{tk\overline{L}_h}{M}+\frac{tkL_h^2}{M^2}\right)+L_f\frac{1}{\mu_{\widetilde{g}_t}}\overline{\overline{L}}_{\widetilde{g}_t,M}\\
        &=\overline{L}'_{\widetilde{\phi}_t}.
    \end{align*}
    \end{minipage}}
    
    This complete the proof.
\end{proof}

Next, we provide the proof of Theorem \ref{thm:total_rate}. For any point $x_s\in\mathcal{X}$, the lower-level algorithm will guarantee (by Theorem \ref{thm:convergence_rage_alg_2}):
    \begin{align}\label{eq:approxerror}
        \|\hat{y}_s - y_{t}^*(x_s)\|\leq\epsilon_s
    \end{align}
    within $\mathcal{O}(\kappa\log(1/\epsilon_s))$ numbers of lower-level gradient oracles, where $\hat{y}_s$ is the output of the lower-level algorithm. According to Lemma \ref{lemma:approx_error_hypergrad}, $1/\epsilon_s=4\overline{L}'_{\widetilde{\phi}_t,m_s}/\epsilon\leq 4\overline{L}'_{\widetilde{\phi}_t}/\epsilon$, the iteration numbers of lower-level solver is at most $\mathcal{O}(\kappa\log(4\overline{L}'_{\widetilde{\phi}_t}/\epsilon))$.

    To proceed with the proof, we first give an upper bound of $\|\hat{\nabla}_x\widetilde{\phi}_t(x_s)\|$. Note that
    \begin{align*}
        \hat{\nabla}_x\widetilde{\phi}_t(x_s)=\nabla_x f(x_s,\hat{y}_s)-\nabla_{xy}^2\widetilde{g}_t(x_s,\hat{y}_s)(\nabla_{yy}^2\widetilde{g}_t(x_s,\hat{y}_s))^{-1}\nabla_y f(x_s,\hat{y}_s),
    \end{align*}
    and we have (by Assumption \ref{assumption:general2} and Proposition \ref{prop:strongly_convex})
    \begin{equation*}
    \begin{aligned}
        &\|\nabla_x f(x_s,\hat{y}_s)\|\leq L_f,\ \|\nabla_y f(x_s,\hat{y}_s)\|\leq L_f,\ \|\nabla_y h(x_s,\hat{y}_s)\|\leq L_h; \\
        &\|\nabla_{xy}^2\widetilde{g}_t(x_s,\hat{y}_s)\|\leq\overline{L}_g,\ \|\nabla_{xy}^2\widetilde{h}_t(x_s,\hat{y}_s)\|\leq\overline{L}_h,\ \|(\nabla_{yy}^2\widetilde{g}_t(x_s,\hat{y}_s))^{-1}\|\leq 1/\mu_{\widetilde{g}_t}.
    \end{aligned}
    \end{equation*}
    Further, note that
    $$
\nabla^2_{xy}\widetilde{g}_t(x_s,\hat{y}_s)=\nabla^2_{xy}g(x_s,\hat{y}_s)+t\sum_{i=1}^k\left(\frac{\nabla^2_{xy}h_i(x_s,\hat{y}_s)}{-h_i(x_s,\hat{y}_s)}+\frac{\nabla_x h_i(x_s,\hat{y}_s)\nabla_y h_i(x_s,\hat{y}_s)^\top}{h_i^2(x_s,\hat{y}_s)}\right).
    $$
    In view of Corollary  \ref{remark:upper_bound_of_h_at_approximate_point} we know $h_i(x_s,\hat{y}_s)\leq -M$, thus
    $$
    \|\nabla^2_{xy}\widetilde{g}_t(x_s,\hat{y}_s)\|\leq \overline{L}_g+ t k \frac{\overline{L}_h}{M} + t k \frac{L_h^2}{M^2}.
    $$
    Combining the previous estimates, we reach to the conclusion that
    \begin{equation*}
        \|\hat{\nabla}_x\widetilde{\phi}_t(x_s)\| \leq L_f + \frac{L_f}{\mu_{\widetilde{g}_t}}\left(\overline{L}_g+ t k \frac{\overline{L}_h}{M} + t k \frac{L_h^2}{M^2}\right).
    \end{equation*}
    
    From Lemma \ref{lem:local_lip}, we know that $\widetilde{\phi}_t$ is locally Lipschitz smooth with parameter $\overline{L}_{\widetilde{\phi}_t,s}$ for any $x$ such that $\|x - x_{s}\|\leq d_s/(2 L_h)$. Since we take $\eta_s=\min\{ d_s/(2L_h) \cdot 1/\|\hat{\nabla}_x\widetilde{\phi}_t(x_s)\|, 1/\overline{L}_{\widetilde{\phi}_t,s}\}$ so that $\|x_{s+1} - x_{s}\|\leq d_s/(2 L_h)$, therefore we know from Lipschitzness that
    \begin{equation*}
        \widetilde{\phi}_t(x_{s+1}) - \widetilde{\phi}_t(x_{s}) \leq \nabla \widetilde{\phi}_t(x_s)^\top (x_{s+1} - x_{s}) + \frac{\overline{L}_{\widetilde{\phi}_t, s}}{2}\|x_{s+1} - x_{s}\|^2
    \end{equation*}
    where $\overline{L}_{\widetilde{\phi}_t, s}$ is specified by Lemma \ref{lem:local_lip}, i.e.
    \begin{align*}
    \overline{L}_{\widetilde{\phi}_t, s} =& (\overline{L}_f+\overline{L}_f\frac{1}{\mu_{\widetilde{g}_t}}(\overline{L}_g+\frac{tk\overline{L}_h}{m_{s}}+\frac{tkL_h^2}{{m_{s}}^2})+L_f(\frac{1}{\mu_{\widetilde{g}_t}})^2\overline{\overline{L}}_{\widetilde{g}_t,m_{s}}(\overline{L}_g+\frac{tk\overline{L}_h}{m_{s}}+\frac{tkL_h^2}{{m_{s}}^2})+L_f\frac{1}{\mu_{\widetilde{g}_t}}\overline{\overline{L}}_{\widetilde{g}_t,m_{s}})\\
    &\times(1+\frac{1}{\mu_{\widetilde{g}_t}}(\overline{L}_g+\frac{tk\overline{L}_h}{{m_{s}}}+\frac{tkL_h^2}{{m_{s}}^2}))
    \end{align*}
    where $m_{s}$ and $d_s$ are outputs of Algorithm \ref{algo:fo_lower_2}, also $\overline{L}_{\widetilde{\phi}_t, s}$ is upper bounded by $\overline{L}_{\widetilde{\phi}_t}$ (see Lemma \ref{lem:lip_global_bound}).
    
    Therefore we have
    \begin{equation}\label{eq:thm_main_temp1}
    \begin{aligned}
        \widetilde{\phi}_t(x_{s+1}) - \widetilde{\phi}_t(x_{s}) &\leq \nabla_x \widetilde{\phi}_t(x_s)^\top (x_{s+1} - x_{s}) + \frac{\overline{L}_{\widetilde{\phi}_t, s}}{2}\|x_{s+1} - x_{s}\|^2 \\
        &\overset{\text{(i)}}{\leq} (\frac{\overline{L}_{\widetilde{\phi}_t, s}}{2} - \frac{1}{\eta_s}) \|x_{s+1} - x_{s}\|^2 + (\nabla \widetilde{\phi}_t(x_s) - \hat{\nabla} \widetilde{\phi}_t(x_s))^\top (x_{s+1} - x_{s}) \\
        &\leq (\frac{\overline{L}_{\widetilde{\phi}_t, s}}{2} - \frac{1}{\eta_s}) \|x_{s+1} - x_{s}\|^2 + \eta_s\|\nabla \widetilde{\phi}_t(x_s) - \hat{\nabla} \widetilde{\phi}_t(x_s)\|^2 + \frac{1}{4 \eta_s}\|x_{s+1} - x_{s}\|^2 \\
        &\overset{\text{(ii)}}{\leq} -\frac{1}{4 \eta_s}\|x_{s+1} - x_{s}\|^2 + \eta_s\left(\frac{\epsilon}{4}\right)^2
    \end{aligned}
    \end{equation}
    where (i) is by the property of projection, namely
    $$
    (x_s-\eta_s\hat{\nabla}_x\widetilde{\phi}_t(x_s)-x_{s+1})^\top (x-x_{s+1})\leq 0,\ \forall x\in\mathcal{X}
    $$
    and taking $x=x_s$ gives
    $$
    \hat{\nabla}_x\widetilde{\phi}_t(x_s)^\top (x_{s+1}-x_{s})\leq -\frac{1}{\eta_s}\|x_s-x_{s+1}\|^2.
    $$
    In (ii) we utilize the following result from Lemma \ref{lemma:approx_error_hypergrad}
    $$\|\nabla_x \widetilde{\phi}_t(x_s) - \hat{\nabla} \widetilde{\phi}_t(x_s)\|\leq \epsilon_s \overline{L}'_{\widetilde{\phi}_t,m_s}\leq\frac{\epsilon}{4}.$$
    
    Now suppose 
    $$
    \frac{1}{\eta_s^2}\|x_{s+1} - x_{s}\|^2 \leq \frac{\epsilon^2}{4},
    $$
    then the proof is already finished. Otherwise we have $-\frac{1}{4 \eta_s}\|x_{s+1} - x_{s}\|^2 + \eta_s \left(\frac{\epsilon}{4}\right)^2\leq 0$, and by \eqref{eq:thm_main_temp1} we know that the algorithm is a descent algorithm. Re-arranging the terms we get:
    $$
    \frac{1}{\eta_s^2}\|x_{s+1} - x_{s}\|^2\leq \frac{4(\widetilde{\phi}_t(x_{s}) - \widetilde{\phi}_t(x_{s+1}))}{\eta_s} + \frac{\epsilon^2}{4}.
    $$
    
    It remains to lower bound $\eta_s$ so that the right-hand side of the above inequality is upper bounded. Note that we have defined
    $$
    \eta_s=\min\{1, \frac{d_s}{2L_h} \frac{1}{\|\hat{\nabla}_x\widetilde{\phi}_t(x_s)\|}, \frac{1}{\overline{L}_{\widetilde{\phi}_t, s}}\}
    $$
    where
    $$
    \frac{d_s}{2L_h} \frac{1}{\|\hat{\nabla}_x\widetilde{\phi}_t(x_s)\|}\geq \frac{D}{4L_h}\frac{1}{L_f + \frac{L_f}{\mu_{\widetilde{g}_t}}\left(\overline{L}_g+ t k \frac{\overline{L}_h}{M} + t k \frac{L_h^2}{M^2}\right)}
    $$
    ($D$ is a constant given in Remark \ref{remark:non-emptyness}, and $d_s\geq D/2$ from Corollary  \ref{remark:upper_bound_of_h_at_approximate_point}) and
    $$
    \frac{1}{\overline{L}_{\widetilde{\phi}_t, s}}\geq \frac{1}{\overline{L}_{\widetilde{\phi}_t}}
    $$
    where $\overline{L}_{\widetilde{\phi}_t}$ is given in Lemma \ref{lem:lip_global_bound}. Thus we have
    $$
    \frac{1}{\eta_s^2}\|x_{s+1} - x_{s}\|^2\leq \frac{4(\widetilde{\phi}_t(x_{s}) - \widetilde{\phi}_t(x_{s+1}))}{\zeta} + \frac{\epsilon^2}{4}.
    $$
    where 
    \begin{equation}\label{eq:eta_lower_bound}
        \zeta = \min\left\{1, \frac{D}{4L_h}\frac{1}{L_f + \frac{L_f}{\mu_{\widetilde{g}_t}}\left(\overline{L}_g+ t k \frac{\overline{L}_h}{M} + t k \frac{L_h^2}{M^2}\right)}, \frac{1}{\overline{L}_{\widetilde{\phi}_t}}\right\}.
    \end{equation}
    By setting $\max_{x\in\mathcal{X}}\widetilde{\phi}_t(x)=\widetilde{\Phi}<+\infty$ and $S=\frac{64\widetilde{\Phi}}{5\zeta\epsilon^2}=\mathcal{O}(\frac{1}{\zeta\epsilon^2})$, we derive $$
    \min_{s=0,...,S-1}\frac{1}{\eta_s}\|x_s - x_{s+1}\|\leq\frac{3\epsilon}{4}
    $$ by telescoping sum for $s=0,...,S$.
    Then 
    \begin{align*}
        &\min_{s=0,...,S-1}\frac{1}{\eta_s}\|x_{s} - \proj_{\mathcal{X}}(x_s-\eta_s \nabla_x\widetilde{\phi}_t(x_s))\|\\
        \leq& \min_{s=0,...,S-1}\frac{1}{\eta_s}\left(\|x_{s} - \proj_{\mathcal{X}}(x_s-\eta_s \nabla_x\widetilde{\phi}_t(x_s))\|\right.\\
        &+\left.\|\proj_{\mathcal{X}}(x_s-\eta_s \nabla_x\widetilde{\phi}_t(x_s)) - \proj_{\mathcal{X}}(x_s-\eta_s \nabla_x\widetilde{\phi}_t(x_s))\|\right)\\
        \leq&\frac{3\epsilon}{4}+\max_{s=0,...,S-1}\frac{1}{\eta_s}\|\proj_{\mathcal{X}}(\eta_s \hat{\nabla}_x\widetilde{\phi}_t(x_s)-\eta_s \nabla_x\widetilde{\phi}_t(x_s))\|\\
        \leq&\frac{3\epsilon}{4}+\max_{s=0,...,S-1}\|\hat{\nabla}_x\widetilde{\phi}_t(x_s)-\nabla_x\widetilde{\phi}_t(x_s)\|\\
        \leq&\frac{3\epsilon}{4}+\frac{\epsilon}{4}\\
        =&\epsilon.
    \end{align*}
    Next, we prove the number of iterations for the inner loop is $\widetilde{\mathcal{O}}(t^{-0.5})$. Recall that, from the beginning of the proof, the number of iterations for the inner loop is at most $\mathcal{O}(\kappa\log(4\overline{L}'_{\widetilde{\phi}_t}/\epsilon))$ from Theorem \ref{thm:convergence_rage_alg_2}, where $\kappa=\sqrt{\overline{L}_{\widetilde{g}_t,m_s}/\mu_{\widetilde{g}_t}}$. Since $\overline{L}_{\widetilde{g}_t,m_s}$ can be computed by replacing $m$ in Proposition \ref{prop:lipschitz_smooth} with $m_s$, and $m_s=\mathcal{O}(t)$ in view of Theorem \ref{lem:local_M}, we have $\overline{L}_{\widetilde{g}_t,m_s}=\mathcal{O}(t^{-1})$, implying $\kappa=\mathcal{O}(t^{-0.5})$. According to Lemma \ref{lemma:approx_error_hypergrad}, $\overline{L}'_{\widetilde{\phi}_t}$ is polynomial in $t$. We conclude that $\mathcal{O}(\kappa\log(4\overline{L}'_{\widetilde{\phi}_t}/\epsilon))=\widetilde{\mathcal{O}}(t^{-0.5})$.

\subsection{Explaination for Remark \ref{remark:dependence}}\label{explainationforremark:dependence}

This is because, by the beginning of the proof for Theorem \ref{thm:total_rate}, the number of iterations for the inner loop is at most $\mathcal{O}(\kappa\log(4\overline{L}'_{\widetilde{\phi}_t}/\epsilon))$, where $\kappa=(\overline{L}_{\widetilde{g}_t,m_s}/\mu_{\widetilde{g}_t})^{1/2}$. Since $\overline{L}_{\widetilde{g}_t,m_s}$ can be computed by replacing $m$ in Proposition \ref{prop:lipschitz_smooth} with $m_s$, and $m_s=\mathcal{O}(t)$ by Theorem \ref{lem:local_M}, we have $\overline{L}_{\widetilde{g}_t,m_s}=\mathcal{O}(t^{-1})$, implying $\kappa=\mathcal{O}(t^{-0.5})$. According to Lemma \ref{lem:upperboundofapproxgradient}, $\overline{L}'_{\widetilde{\phi}_t}$ is polynomial in $t$. We conclude that $\mathcal{O}(\kappa\log(4\overline{L}'_{\widetilde{\phi}_t}/\epsilon))=\widetilde{\mathcal{O}}(t^{-0.5})$.

\subsection{Proof of Theorem \ref{thm:asymptotic}}\label{proof:thm:asymptotic}

By Theorem \ref{thm:total_rate} we know that for each given $t_i$ 
    \begin{equation}\label{eq:thm_asymptotic_temp1}
        \frac{1}{\eta_{t_i}}\|x_{i} - \proj_{\mathcal{X}}(x_i-\eta_{t_i} \nabla_x\widetilde{\phi}_{t_i}(x_i))\|\leq\epsilon_i
    \end{equation}
    where $\eta_{t_i}$ is the stepsize $\eta_s$ as specified in Theorem \ref{thm:total_rate}. Note that $\eta_{t_i}\leq 1$, also since \eqref{eq:thm_asymptotic_temp1} is decreasing when $\eta_{t_i}$ increases, see \citet[Lemma 2.3.1]{bertsekas1997nonlinear}, we conclude that:
    $$
    \|x_{i} - \proj_{\mathcal{X}}(x_i- \nabla_x\widetilde{\phi}_{t_i}(x_i))\|\leq\epsilon_i
    $$

    Now that $i\rightarrow \infty$ and $x_i\rightarrow x^*$, since $x^*$ is \textbf{SCSC} point, there exists a neighborhood of $x^*$ where all points are \textbf{SCSC} points (from Lemma \ref{lem:con_of_multi} we know the multiplier is continuous and from Definition \ref{def:scsc_point} we know \textbf{SCSC} means the multiplier is positive, which implies an open set). Therefore we could without loss of generality say that $x_i$ are also \textbf{SCSC} points. Consequently Theorem \ref{thm:mainlinear} and \ref{thm:mainnonlinear} imply
    $$
    \nabla_x\widetilde{\phi}_{t_i}(x_i)\rightarrow \nabla_x\phi(x^*).
    $$
    Now if we take $i\rightarrow\infty$, we know $x_i\rightarrow x^*$, $\epsilon_i\rightarrow 0$. By
    $$
    \|x_{i} - \proj_{\mathcal{X}}(x_i- \nabla_x\widetilde{\phi}_{t_i}(x_i))\|\leq\epsilon_i
    $$
    we conclude that
    $$
    x^* = \proj_{\mathcal{X}}(x^*- \nabla_x\phi(x^*))
    $$
    which means that $x^*$ is the stationary point of $\phi$.

\subsection{Proof of Lemma \ref{lem:boundednessofJabobian}}\label{proof:lem:boundednessofJabobian}

By direct computation, we derive

    \resizebox{\textwidth}{!}{
    \begin{minipage}{\textwidth}
    \begin{align*}
        \nabla_x y_t^\ast(x)=&-\left(\nabla^2_{yy} g(x,y^\ast_t(x))+\sum_{i=1}^k\frac{1}{t}\frac{t^2}{h^2_i(x,y_t^\ast(x))}\nabla_y h_i(x,y_t^\ast(x))\left(\nabla_y h_i(x,y_t^\ast(x))\right)^\top+\sum_{i=1}^k\frac{t\nabla^2_{yy} h_i(x,y_t^\ast(x))}{-h_i(x,y_t^\ast(x))}\right)^{-1}\\
        &\times\left(\nabla^2_{yx} g(x,y^\ast_t(x))+\sum_{i=1}^k\frac{1}{t}\frac{t^2}{h^2_i(x,y_t^\ast(x))}\nabla_y h_i(x,y_t^\ast(x))\left(\nabla_x h_i(x,y_t^\ast(x))\right)^\top+\sum_{i=1}^k\frac{t\nabla^2_{yx} h_i(x,y_t^\ast(x))}{-h_i(x,y_t^\ast(x))}\right).
    \end{align*}
    \end{minipage}}
    
    Denote
    \begin{align}
        A_t(x,y)=&\nabla^2_{yy} g(x,y)+\sum_{i=1}^k\frac{t\nabla^2_{yy} h_i(x,y)}{-h_i(x,y)},\ y\in \text{int}\mathcal{Y}(x)\label{notation:improve1}\\
        B_t(x,y)=&\nabla^2_{yx} g(x,y)+\sum_{i=1}^k\frac{t\nabla^2_{yx} h_i(x,y)}{-h_i(x,y)},\ y\in \text{int}\mathcal{Y}(x)\label{notation:improve2}\\
        v^i_t(x,y)=&\frac{t}{-h_i(x,y)}\nabla_y h_i(x,y),\ y\in \text{int}\mathcal{Y}(x)\label{notation:improve3}\\
        u^i_t(x,y)=&\frac{t}{-h_i(x,y)}\nabla_x h_i(x,y),\ y\in \text{int}\mathcal{Y}(x),\label{notation:improve4}
    \end{align}
    where $\text{int}\mathcal{Y}(x)$ is the interior of $\mathcal{Y}(x)$. These terms are well-defined only when $y\in \text{int}\mathcal{Y}(x)$ because $h_i(x,y)$ may be $0$ for some index $i$ when $y\in \mathcal{Y}(x)\setminus\text{int}\mathcal{Y}(x)$. Note that $y^\ast_t(x)\in\text{int}\mathcal{Y}(x)$, then
    \begin{align}
        \nabla_x y_t^\ast(x)=&\left(A_t(x,y_t^\ast(x))+\sum_{i=1}^k\frac{1}{t}v^i_t(x,y_t^\ast(x))\left(v^i_t(x,y_t^\ast(x))\right)^\top\right)^{-1}\nonumber\\
         &\times\left(B_t(x,y_t^\ast(x))+\sum_{i=1}^k\frac{1}{t}v^i_t(x,y_t^\ast(x))\left(u^i_t(x,y_t^\ast(x))\right)^\top\right).\label{eq:Jacobian!}
    \end{align}
    
    We first show that 
    $$\left\|\left(A_t(x,y_t^\ast(x))+\sum_{i=1}^k\frac{1}{t}v^i_t(x,y_t^\ast(x))\left(v^i_t(x,y_t^\ast(x))\right)^\top\right)^{-1}B_t(x,y_t^\ast(x))\right\|$$ 
    is uniformly bounded. By Corollary \ref{remark:upper_bound_of_h_at_approximate_point}, $h_i(x,y_t^\ast(x))\leq -M$, where $M=\min\{M_1t,M_2\}$ for some constant $M_1$ and $M_2$. This implies 
    \begin{align}\label{eq:boundoft/-h}
        t/(-h_i(x,y^\ast_t(x)))\leq\max\{1/M_1,1/M_2\}
    \end{align} 
    for any $i$ since $0<t\leq 1$. By Assumption \ref{assumption:general2}(\ref{assumption:general2(4)})(\ref{assumption:general2(7)}), we have $\|\nabla^2_{yx} g(x,y^\ast_t(x))\|\leq \overline{L}_g$ and $\|\nabla^2_{yx} h_i(x,y_t^\ast(x))\|\leq\overline{L}_h$. Thus 
    $$\left\|B_t(x,y^\ast_t(x))\right\|=\left\|\nabla^2_{yx} g(x,y^\ast_t(x))+\sum_{i=1}^k\frac{t\nabla^2_{yx} h_i(x,y^\ast_t(x))}{-h_i(x,y^\ast_t(x))}\right\|\leq\overline{L}_g+k\overline{L}_h\max\{1/M_1,1/M_2\}.$$
    Note that
    \begin{align}\label{eq:A+tvvleqmug}
        \left(A_t(x,y_t^\ast(x))+\sum_{i=1}^k\frac{1}{t}v^i_t(x,y_t^\ast(x))\left(v^i_t(x,y_t^\ast(x))\right)^\top\right)^{-1}\preceq(\nabla^2_{yy}g(x,y_t^\ast(x)))^{-1}\preceq\frac{1}{\mu_{g}}I,
    \end{align}
    we conclude that 
    
    \resizebox{\textwidth}{!}{$\left\|\left(A_t(x,y_t^\ast(x))+\sum_{i=1}^k\frac{1}{t}v^i_t(x,y_t^\ast(x))\left(v^i_t(x,y_t^\ast(x))\right)^\top\right)^{-1}B_t(x,y_t^\ast(x))\right\|\leq \frac{1}{\mu_{g}}\left(\overline{L}_g+k\overline{L}_h\max\{1/M_1,1/M_2\}\right).$}
    
    To prove the uniform boundedness of $\|\nabla_x y_t^\ast(x)\|$, it is sufficient to show that 
    
    \resizebox{\textwidth}{!}{
    \begin{minipage}{\textwidth}
    \begin{align}\label{equation:keyterm}
        \left\|\left(A_t(x,y_t^\ast(x))+\sum_{i=1}^k\frac{1}{t}v^i_t(x,y_t^\ast(x))\left(v^i_t(x,y_t^\ast(x))\right)^\top\right)^{-1}\frac{1}{t}v^{\hat{j}}_t(x,y_t^\ast(x))\left(u^{\hat{j}}_t(x,y_t^\ast(x))\right)^\top\right\|
    \end{align}
    \end{minipage}}
    is uniformly bounded for any fixed ${\hat{j}}$. We will prove this on the interval $(0, \widetilde{T}]$ and then on $[\widetilde{T}, 1]$ for some constant $\widetilde{T}$. The value of $\widetilde{T}$ will be determined at the end of the proof. We prove the uniform boundedness on $(0, \widetilde{T}]$ by two steps: denoting 
    \begin{align}
    \mathcal{X}_1=\{x\in\mathcal{X}:h_{\hat{j}}(x,y^\ast(x))\text{ active}\},    
    \end{align}
    we will first show that $\nabla_x y_t^\ast(x)$ is uniformly bounded on an open neighborhood of $\mathcal{X}_1$, and then demonstrate that $\nabla_x y_t^\ast(x)$ is also bounded outside this open neighborhood. 
    
    We declare that in this proof, all mentions of openness refer to openness in the sense of the subspace topology.\\

   \noindent\textbf{Step 1:} 
   We first show the uniform boundedness of (\ref{equation:keyterm}) on an open neighborhood of $\mathcal{X}_1$ as follows: we will prove that for any $x$ in $\mathcal{X}_1$, there exists an open neighborhood $W_x$ of $x$ such that (\ref{equation:keyterm}) is bounded above in this open neighborhood $W_x$. These open neighborhoods form an open cover of the compact set $\mathcal{X}_1$, so we can select a finite subcover $\{W_i\}_{i=1}^p$. We will show the uniform boundedness of (\ref{equation:keyterm}) on $\bigcup_{i=1}^p W_{i}$.\\
   
   Now we provide the detailed proof. Set $-h_{\hat{j}}(x,y_t^\ast(x))=\alpha$, then

   \resizebox{\textwidth}{!}{
    \begin{minipage}{\textwidth}
    \begin{align}
        &\left(A_t(x,y_t^\ast(x))+\sum_{i=1}^k\frac{1}{t}v^i_t(x,y_t^\ast(x))\left(v^i_t(x,y_t^\ast(x))\right)^\top\right)^{-1}\frac{1}{t}v^{\hat{j}}_t(x,y_t^\ast(x))\left(u^{\hat{j}}_t(x,y_t^\ast(x))\right)^\top\nonumber\\
        \overset{\text{(i)}}{=}&\left(A_t(x,y_t^\ast(x))+\sum_{i=1}^k\frac{1}{t}v^i_t(x,y_t^\ast(x))\left(v^i_t(x,y_t^\ast(x))\right)^\top\right)^{-1}\frac{1}{t}v^{\hat{j}}_t(x,y_t^\ast(x))\left(\frac{t}{-h_i(x,y^\ast_t(x))}\nabla_x h_{\hat{j}}(x,y_t^\ast(x))\right)^\top\nonumber\\
        \overset{\text{(ii)}}{=}&\left(A_t(x,y_t^\ast(x))+\sum_{i=1}^k\frac{1}{t}v^i_t(x,y_t^\ast(x))\left(v^i_t(x,y_t^\ast(x))\right)^\top\right)^{-1}\alpha^{-1}v^{\hat{j}}_t(x,y_t^\ast(x))\left(\nabla_x h_{\hat{j}}(x,y_t^\ast(x))\right)^\top.\label{eq:(A+vv)(tvu)}
    \end{align}
    \end{minipage}}
    In (i) we plug in $u^{\hat{j}}_t(x,y_t^\ast(x)=\frac{t}{-h_i(x,y^\ast_t(x))}\nabla_x h_{\hat{j}}(x,y_t^\ast(x))$, and in (ii) we utilize $-h_{\hat{j}}(x,y_t^\ast(x))=\alpha$. Note that $\|\nabla_x h_{\hat{j}}(x,y_t^\ast(x))\|$ is uniformly bounded above from Assumption \ref{assumption:general2}(\ref{assumption:general2(6)}), then it is sufficient to prove that the following term has uniform bound
    \begin{align*}
        \left\|\left(A_t(x,y_t^\ast(x))+\sum_{i=1}^k\frac{1}{t}v^i_t(x,y_t^\ast(x))\left(v^i_t(x,y_t^\ast(x))\right)^\top\right)^{-1}\alpha^{-1}v^{\hat{j}}_t(x,y_t^\ast(x))\right\|.
    \end{align*} 
    Denote 
    $$C_t(x,y_t^\ast(x))=A_t(x,y_t^\ast(x))+\sum_{i\ne {\hat{j}},i=1}^k\frac{1}{t}v^i_t(x,y_t^\ast(x))\left(v^i_t(x,y_t^\ast(x))\right)^\top.$$
    It is clear that $C_t(x,y_t^\ast(x))\succeq \mu_g I$ is positive definite. We have

    \resizebox{\textwidth}{!}{
    \begin{minipage}{\textwidth}
    \begin{align}
        &\left\|\left(A_t(x,y_t^\ast(x))+\sum_{i=1}^k\frac{1}{t}v^i_t(x,y_t^\ast(x))\left(v^i_t(x,y_t^\ast(x))\right)^\top\right)^{-1}\alpha^{-1}v^{\hat{j}}_t(x,y_t^\ast(x))\right\|\nonumber\\
        =&\left\|\left(C_t(x,y_t^\ast(x))+\frac{1}{t}v^{\hat{j}}_t(x,y_t^\ast(x))\left(v^{\hat{j}}_t(x,y_t^\ast(x))\right)^\top\right)^{-1}\alpha^{-1}v^{\hat{j}}_t(x,y_t^\ast(x))\right\|\nonumber\\
        \overset{\text{(i)}}{=}&\left\|\alpha^{-1}\left(C_t^{-1}(x,y_t^\ast(x)) v^{\hat{j}}_t(x,y_t^\ast(x))-\frac{C_t^{-1}(x,y_t^\ast(x))v^{\hat{j}}_t(x,y_t^\ast(x))\left(v^{\hat{j}}_t(x,y_t^\ast(x))\right)^\top C_t^{-1}(x,y_t^\ast(x)) v^{\hat{j}}_t(x,y_t^\ast(x))}{t+\left(v^{\hat{j}}_t(x,y_t^\ast(x))\right)^\top C_t^{-1}(x,y_t^\ast(x))v^{\hat{j}}_t(x,y_t^\ast(x))}\right)\right\|\nonumber\\
        =&\left\|\alpha^{-1}\frac{tC_t(x,y_t^\ast(x))^{-1} v^{\hat{j}}_t(x,y_t^\ast(x))}{t+\left(v^{\hat{j}}_t(x,y_t^\ast(x))\right)^\top C_t^{-1}(x,y_t^\ast(x))v^{\hat{j}}_t(x,y_t^\ast(x))}\right\|\nonumber\\
        \overset{\text{(ii)}}{=}&\left\|t^2\alpha^{-2}\frac{C_t^{-1}(x,y_t^\ast(x))\nabla_y h_{\hat{j}}(x,y^\ast_t(x))}{t+t^2\alpha^{-2}\left(\nabla_y h_{\hat{j}}(x,y^\ast_t(x))\right)^\top C_t^{-1}(x,y_t^\ast(x))\nabla_y h_{\hat{j}}(x,y^\ast_t(x))}\right\|\nonumber\\
        \leq&\frac{\|C_t^{-1}(x,y_t^\ast(x))\nabla_y h_{\hat{j}}(x,y^\ast_t(x))\|}{\left(\nabla_y h_{\hat{j}}(x,y^\ast_t(x))\right)^\top C_t^{-1}(x,y_t^\ast(x))\nabla_y h_{\hat{j}}(x,y^\ast_t(x))},\label{eq:C/hCh}
    \end{align}
    \end{minipage}}
    where (i) is from the Sherman-Morrison formula, and in (ii) we plug in 
    $$v^{\hat{j}}_t(x,y_t^\ast(x))=\frac{t}{-h_{\hat{j}}(x,y^\ast_t(x))}\nabla_y h_{\hat{j}}(x,y^\ast_t(x))=t\alpha^{-1}\nabla_y h_{\hat{j}}(x,y^\ast_t(x))$$
    Next we need to prove that $\left(\nabla_y h_{\hat{j}}(x,y^\ast_t(x))\right)^\top C_t^{-1}(x,y_t^\ast(x))\nabla_y h_{\hat{j}}(x,y^\ast_t(x))$ has a positive lower bound. Denote
   \begin{align}
       \mathcal{I}^\ast(x)&=\{i:h_i(x,y^\ast(x))\text{ active}\}\label{notation:activeset}\\
       V(x,y;z)&=\mathrm{span}\{\nabla_y h_i(x,y):i\in\mathcal{I}^\ast(z),\ i\ne {\hat{j}}\}.\label{notation:spanspace}
   \end{align}
    For further analysis, we decompose 
    \begin{align*}
        C_t^{-1}(x,y_t^\ast(x))\nabla_y h_{\hat{j}}(x,y^\ast_t(x))=:w_t(x)+w_t(x)^\perp,
    \end{align*}
    where $w_t(x)\in V(x,y^\ast_t(x);x_0)$ and $w_t(x)^\perp\in (V(x,y^\ast_t(x);x_0))^\perp$ for some point $x_0$. Then we have
    \begin{align}
        &\left(\nabla_y h_{\hat{j}}(x,y^\ast_t(x))\right)^\top C_t^{-1}(x,y^\ast_t(x))\nabla_y h_{\hat{j}}(x,y^\ast_t(x)\nonumber\\
        =&\left<\nabla_y h_{\hat{j}}(x,y^\ast_t(x)),w_t(x)+w_t(x)^\perp\right>\nonumber\\
        =&\left<\left(A_t(x,y^\ast_t(x))+\sum_{i\ne {\hat{j}},i=1}^k\frac{1}{t}v^{\hat{j}}_t\left(v^{\hat{j}}_t(x,y^\ast_t(x))\right)^\top\right)\left(w_t(x)+w_t(x)^\perp\right),w_t(x)+w_t(x)^\perp\right>\label{equation:(A+vv)w,w}.
    \end{align}
    According to (\ref{notation:improve1}), we have \begin{align}\label{equation:scofA}
        A_t(x,y^\ast_t(x))+\sum_{i\ne {\hat{j}},i=1}^k\frac{1}{t}v^{\hat{j}}_t\left(v^{\hat{j}}_t(x,y^\ast_t(x))\right)^\top\succeq \nabla^2_{yy}g(x,y^\ast_t(x))\succeq\mu_g I.
    \end{align}
    We aim to estimate the lower bound of $\|w_t(x)+w_t(x)^\perp\|$. It is difficult to obtain a global bound for $\|w_t(x)+w_t(x)^\perp\|$ in $\mathcal{X}_1$. This because that the active set $\mathcal{I}^\ast(x)$ changes as $x_0$ varies, and if we fix the point $x_0$, it's hard to analyze $C^{-1}_t(x,y^\ast_t(x))$. So we want to show that for any $x_0\in\mathcal{X}_1$, there exists an open neighborhood $W_{x_0}$ of $x_0$ such that $\|w_t(x)+w_t(x)^\perp\|$ is lower bounded on $W_{x_0}$.
    
    To construct this open neighborhood, we embed $\mathcal{X}_1$ into $\{(x,y):x\in\mathcal{X},\ y\in\mathcal{Y}(x)\}$ by the following embedding map
   \begin{align}
       \iota:\mathcal{X}_1&\to\{(x,y):x\in\mathcal{X},\ y\in\mathcal{Y}(x)\}\label{eq:embedmap}\\
       x&\mapsto(x,y^\ast(x)).\nonumber
   \end{align}
  
   Note that $\hat{j}\in\mathcal{I}^\ast(x)$ if $x\in\mathcal{X}_1$, by the LICQ assumption, we know the following holds for any $x\in\mathcal{X}_1$
   $$\|\proj_{V^\perp(x,y^\ast(x);x)}\nabla_y h_{\hat{j}}(x,y^\ast(x))\|>0.$$
   It is clear that $\|\proj_{V^\perp(x,y^\ast(x);x)}\nabla_y h_{\hat{j}}(x,y^\ast(x))\|$ is lower semicontinuous in $x$. Therefore, by compactness of $\mathcal{X}_1$, there exists a constant $\delta>0$ such that the following holds for any $x\in\mathcal{X}_1$
   $$\|\proj_{V^\perp(x,y^\ast(x);x)}\nabla_y h_{\hat{j}}(x,y^\ast(x))\|\geq\delta.$$
   For any $x_0$ in $\mathcal{X}_1$, we will show that we can find a neighborhood $U_{x_0}$ of $(x_0,y^\ast(x_0))$ in $\{(x,y):x\in\mathcal{X},\ y\in\mathcal{Y}(x)\}$ such that
   \begin{enumerate}
       \item For any $i\notin\mathcal{I}^\ast(x_0)$ and $(x,y)\in U_{x_0}$, we have $h_i(x,y)\leq -H_{x_0}$ for some constant $H_{x_0}>0$;
       \item The vectors $\{\nabla_y h_i(x,y):i\in\mathcal{I}^\ast(x_0)\}$ are linear independent;
       \item There exists a constant $\delta_{x_0}>0$ such that $\|\proj_{V^\perp(x,y;x_0)}\nabla_y h_{\hat{j}}(x,y)\|\geq\delta_{x_0}$ holds for any $(x,y)\in U_{x_0}$;
       \item There exists a constant $r_{x_0}$ such that for any $x\in\pi_x(U_{x_0})$, which is the projection of $U_{x_0}$ onto the $x$ component, we have $\{x\}\times\left(B_{y^\ast(x)}(r_{x_0})\cap\mathcal{Y}(x)\right)\subset U_{x_0}$, where $B_{y^\ast(x)}(r_{x_0})$ is a ball in $\mathcal{Y}(x)$ centered at $y^\ast(x)$ with radius $r_{x_0}$.
   \end{enumerate}
   \begin{figure}[htbp]
\centering

\tikzset{every picture/.style={line width=0.75pt}} 

\begin{tikzpicture}[x=0.75pt,y=0.75pt,yscale=-1,xscale=1]

\draw    (100,128) .. controls (142.27,148.07) and (222.27,148.07) .. (266.27,125.07) ;
\draw  [fill={rgb, 255:red, 0; green, 0; blue, 0 }  ,fill opacity=1 ] (169.27,142.77) .. controls (169.27,141.39) and (170.39,140.27) .. (171.77,140.27) .. controls (173.15,140.27) and (174.27,141.39) .. (174.27,142.77) .. controls (174.27,144.15) and (173.15,145.27) .. (171.77,145.27) .. controls (170.39,145.27) and (169.27,144.15) .. (169.27,142.77) -- cycle ;
\draw    (262.93,162.93) .. controls (254.77,156.69) and (251.68,153.68) .. (250.12,138.85) ;
\draw [shift={(249.93,136.93)}, rotate = 84.81] [color={rgb, 255:red, 0; green, 0; blue, 0 }  ][line width=0.75]    (10.93,-3.29) .. controls (6.95,-1.4) and (3.31,-0.3) .. (0,0) .. controls (3.31,0.3) and (6.95,1.4) .. (10.93,3.29)   ;
\draw  [draw opacity=0] (141.91,139.88) .. controls (143.43,123.97) and (157.25,111.95) .. (173.36,112.81) .. controls (188.92,113.64) and (201.08,126.18) .. (201.74,141.44) -- (171.77,142.77) -- cycle ; \draw   (141.91,139.88) .. controls (143.43,123.97) and (157.25,111.95) .. (173.36,112.81) .. controls (188.92,113.64) and (201.08,126.18) .. (201.74,141.44) ;  
\draw [color={rgb, 255:red, 255; green, 0; blue, 0 }  ,draw opacity=1 ][line width=0.75]  [dash pattern={on 4.5pt off 4.5pt}]  (149.93,121.93) .. controls (166.43,125.43) and (180.43,125.93) .. (194.43,123.43) ;
\draw [color={rgb, 255:red, 255; green, 0; blue, 0 }  ,draw opacity=1 ] [dash pattern={on 4.5pt off 4.5pt}]  (149.93,121.93) .. controls (148.93,127.43) and (147.93,132.93) .. (147.93,140.43) ;
\draw [color={rgb, 255:red, 255; green, 0; blue, 0 }  ,draw opacity=1 ] [dash pattern={on 4.5pt off 4.5pt}]  (194.43,123.43) .. controls (194.93,128.43) and (194.93,134.93) .. (195.93,141.93) ;
\draw  [dash pattern={on 0.84pt off 2.51pt}]  (99.5,117.5) .. controls (141.77,137.57) and (221.77,137.57) .. (265.77,114.57) ;
\draw    (258.43,95.93) .. controls (257.96,105.43) and (254.32,108.62) .. (246.67,117.91) ;
\draw [shift={(245.43,119.43)}, rotate = 308.99] [color={rgb, 255:red, 0; green, 0; blue, 0 }  ][line width=0.75]    (10.93,-3.29) .. controls (6.95,-1.4) and (3.31,-0.3) .. (0,0) .. controls (3.31,0.3) and (6.95,1.4) .. (10.93,3.29)   ;
\draw    (182.93,83.43) .. controls (171.93,103.43) and (169.43,123.93) .. (171.77,140.27) ;
\draw    (160.43,70.43) .. controls (159.97,79.78) and (166.93,84.33) .. (174.78,88.96) ;
\draw [shift={(176.43,89.93)}, rotate = 210.47] [color={rgb, 255:red, 0; green, 0; blue, 0 }  ][line width=0.75]    (10.93,-3.29) .. controls (6.95,-1.4) and (3.31,-0.3) .. (0,0) .. controls (3.31,0.3) and (6.95,1.4) .. (10.93,3.29)   ;
\draw   (25,149.72) .. controls (25,82.22) and (104.12,27.5) .. (201.72,27.5) .. controls (299.31,27.5) and (378.43,82.22) .. (378.43,149.72) .. controls (378.43,217.22) and (299.31,271.93) .. (201.72,271.93) .. controls (104.12,271.93) and (25,217.22) .. (25,149.72) -- cycle ;
\draw    (162.77,141.9) .. controls (162.37,134.93) and (162.37,129.43) .. (163.37,124.43) ;
\draw    (143.87,109.93) .. controls (145.27,118.82) and (151.92,123.33) .. (159.72,127.96) ;
\draw [shift={(161.37,128.93)}, rotate = 210.47] [color={rgb, 255:red, 0; green, 0; blue, 0 }  ][line width=0.75]    (10.93,-3.29) .. controls (6.95,-1.4) and (3.31,-0.3) .. (0,0) .. controls (3.31,0.3) and (6.95,1.4) .. (10.93,3.29)   ;
\draw  [line width=0.75]  (196.87,141.93) .. controls (199.41,141.72) and (200.58,140.35) .. (200.37,137.82) -- (200.37,137.82) .. controls (200.07,134.19) and (201.19,132.27) .. (203.73,132.07) .. controls (201.19,132.27) and (199.77,130.56) .. (199.48,126.93)(199.62,128.57) -- (199.48,126.93) .. controls (199.27,124.39) and (197.9,123.22) .. (195.37,123.43) ;

\draw (143,150.9) node [anchor=north west][inner sep=0.75pt]  [font=\scriptsize]  {$\left( x_{0} ,y^{\ast }( x_{0})\right)$};
\draw (245,165.4) node [anchor=north west][inner sep=0.75pt]  [font=\scriptsize]  {$\left\{\left( x,y^{\ast }( x)\right) :x\in \mathcal{X}_{1}\right\}$};
\draw (228.5,76.9) node [anchor=north west][inner sep=0.75pt]  [font=\scriptsize]  {$\left\{\left( x,y_{t}^{\ast }( x)\right) :x\in \mathcal{X}_{1}\right\}$};
\draw (116,54.4) node [anchor=north west][inner sep=0.75pt]  [font=\scriptsize]  {$\{( x_{0} ,y) :y\in \mathcal{Y}( x_{0})\}$};
\draw (146,232.9) node [anchor=north west][inner sep=0.75pt]    {$\mathbb{R}^{d_{x}} \times \mathbb{R}^{d_{y}}$};
\draw (86,96.4) node [anchor=north west][inner sep=0.75pt]  [font=\scriptsize]  {$\{x\}\times(B_{y^{\ast }( x)} r( x_{0}) \cap \mathcal{Y}( x))$};
\draw (205,123.4) node [anchor=north west][inner sep=0.75pt]  [font=\scriptsize]  {$r_{x_{0}}$};

\end{tikzpicture}

\caption{Find a spherical neighborhood that satisfies the first three properties, as shown by the solid hemisphere in the figure, then find a tubular neighborhood within it, as shown by the red dashed tube in the figure.}
\end{figure}
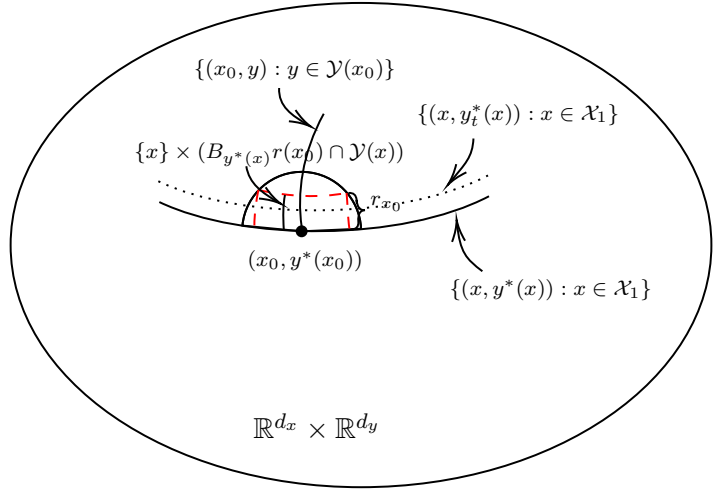
   The first property can be directly obtained from the continuity of the constraint functions $h_i(x,y)$ and the fact that $h_i(x,y^\ast(x))\ne 0$ for any $i\notin\mathcal{I}^\ast(x_0)$. The second property holds because $\{\nabla_y h_i(x_0,y^\ast(x_0)):i\in\mathcal{I}^\ast(x_0)\}$ are linear independent by LICQ assumption. This property ensures that $V(x,y;x_0)$ and $V^\perp(x,y;x_0)$ are non-degenerate on $U_{x_0}$, so $\proj_{V^\perp(x,y;x_0)}\nabla_y h_{\hat{j}}(x,y)$ is continuous with respect to $(x,y)$. Note that $$\|\proj_{V^\perp(x_0,y^\ast(x_0);x_0)}\nabla_y h_{\hat{j}}(x_0,y^\ast(x_0))\|\geq\delta,$$ 
   by continuity, the third property can also be satisfied. To show the fourth, note that we find a spherical neighborhood that satisfies the first three conditions. Therefore, within this spherical neighborhood, we can identify a tubular neighborhood, thus also satisfying the fourth property easily. 

   Denote $W_{x_0}=\pi_x(U_{x_0})$. The projection map is an open map, so $W_{x_0}$ is open. we will prove that $\|w_t(x)+w_t(x)^\perp\|$ is lower bounded on $W_{x_0}$. We consider the case that $t\leq(r_{x_0}^2\mu_g)/(2k)$. By Lemma \ref{lem:optimalitygapforpoint}, we have $\|y^\ast(x)-y_t^\ast(x)\|\leq \sqrt{2kt/\mu_g}$, which means $\|y^\ast(x)-y_t^\ast(x)\|\leq r_{x_0}$ and thus $(x,y^\ast_t(x))\in U_{x_0}$ by the fourth property of the open neighborhood. Then we obtain
    \begin{align*}
        &\nabla_y h_{\hat{j}}(x,y^\ast_t(x))\\
        =&\left(A_t(x,y_t^\ast(x))+\sum_{i\notin \mathcal{I}^\ast(x_0),i\ne {\hat{j}}}\frac{1}{t}v^i_t(x,y_t^\ast(x))\left(v^i_t(x,y_t^\ast(x))\right)^\top\right.\\
        &+\left.\sum_{i\in \mathcal{I}^\ast(x_0),i\ne {\hat{j}}}\frac{1}{t}v^i_t(x,y_t^\ast(x))\left(v^i_t(x,y_t^\ast(x))\right)^\top\right)(w_t(x)+w_t(x)^\perp)\\
        \overset{\text{(i)}}{=}&\left(A_t(x,y_t^\ast(x))+\sum_{i\notin \mathcal{I}^\ast(x_0),i\ne {\hat{j}}}\frac{1}{t}v^i_t(x,y_t^\ast(x))\left(v^i_t(x,y_t^\ast(x))\right)^\top\right)(w_t(x)+w_t(x)^\perp)\\
        &+\sum_{i\in \mathcal{I}^\ast(x_0),i\ne {\hat{j}}}\frac{1}{t}v^i_t(x,y_t^\ast(x))\left(v^i_t(x,y_t^\ast(x))\right)^\top w_t(x),
    \end{align*}
    where (i) is because 
    $$\sum_{i\in \mathcal{I}^\ast(x_0),i\ne {\hat{j}}}\frac{1}{t}v^i_t(x,y_t^\ast(x))\left(v^i_t(x,y_t^\ast(x))\right)^\top w_t(x)^\perp=0.$$ 
    On the other hand, we have $$\nabla_y h_{\hat{j}}(x,y^\ast_t(x))=\proj_{V(x,y_t^\ast(x);x_0)}\nabla_y h_{\hat{j}}(x,y^\ast_t(x))+\proj_{V^\perp(x,y_t^\ast(x);x_0)}\nabla_y h_{\hat{j}}(x,y^\ast_t(x)).$$ 
    Note that $\sum_{i\in \mathcal{I}^\ast(x_0),i\ne {\hat{j}}}\frac{1}{t}v^{i}_t(x,y_t^\ast(x))\left(v^{i}_t(x,y_t^\ast(x))\right)^\top w_t(x)\in V(x,y^\ast_t(x);x_0)$, we have 
    \begin{align*}
        &\proj_{V^\perp(x,y_t^\ast(x);x_0)}\nabla_y h_{\hat{j}}(x,y^\ast_t(x))\\
        =&\proj_{V^\perp(x,y_t^\ast(x);x_0)}\left[\left(A_t(x,y_t^\ast(x))+\sum_{i\notin \mathcal{I}^\ast(x_0),i\ne {\hat{j}}}\frac{1}{t}v^{\hat{j}}_t(x,y_t^\ast(x))\left(v^{\hat{j}}_t(x,y_t^\ast(x))\right)^\top\right)(w_t(x)+w_t(x)^\perp)\right],
    \end{align*}
    which means 
    \begin{align}
        &\left\|\left(A_t(x,y_t^\ast(x))+\sum_{i\notin \mathcal{I}^\ast(x_0),i\ne {\hat{j}}}\frac{1}{t}v^{\hat{j}}_t(x,y_t^\ast(x))\left(v^{\hat{j}}_t(x,y_t^\ast(x))\right)^\top\right)(w_t(x)+w_t(x)^\perp)\right\|\nonumber\\
        \geq&\left\|\proj_{V^\perp(x,y_t^\ast(x);x_0)}\left[\left(A_t(x,y_t^\ast(x))+\sum_{i\notin \mathcal{I}^\ast(x_0),i\ne {\hat{j}}}\frac{1}{t}v^{\hat{j}}_t(x,y_t^\ast(x))\left(v^{\hat{j}}_t\right)^\top\right)(w_t(x)+w_t(x)^\perp)\right]\right\|\nonumber\\
        =&\left\|\proj_{V^\perp(x,y_t^\ast(x);x_0)}\nabla_y h_{\hat{j}}(x,y^\ast_t(x))\right\|\nonumber\\
        \overset{\text{(i)}}{\geq}&\delta_{x_0}.\label{eq:(A+vv)(w_t(x)+w_t(x))}
    \end{align}
    where in (i) we use the third property of the open neighborhood. By (\ref{eq:boundoft/-h}), $t/(-h_i(x,y^\ast_t(x)))$ is uniformly bounded. Combining Assumption \ref{assumption:general2}(\ref{assumption:general2(4)})(\ref{assumption:general2(7)}), it is not hard to see that the maximum eigenvalue of $A_t(x,y^\ast_t(x))=\nabla^2_{yy} g(x,y^\ast_t(x))+\sum_{i=1}^k\frac{t\nabla^2_{yy} h_i(x,y_t^\ast(x))}{-h_i(x,y_t^\ast(x))}$ has upper bounded denoted as $\lambda_{max}$. According to the first property that for any $i\notin\mathcal{I}(x_0)$ and $(x,y)\in U_{x_0}$, $h_i(x,y)\leq -H_{x_0}$ holds for some constant $H_{x_0}>0$, we obtain that 
    \begin{align*}
        &\left\|A_t(x,y^\ast_t(x))+\sum_{i\notin \mathcal{I}^\ast(x_0),i\ne {\hat{j}}}\frac{1}{t}v^{\hat{j}}_t(x,y^\ast_t(x))\left(v^{\hat{j}}_t(x,y^\ast_t(x))\right)^\top\right\|\\
        =&\left\|A_t(x,y^\ast_t(x))+\sum_{i\notin \mathcal{I}^\ast(x_0),i\ne {\hat{j}}}\frac{1}{t}\frac{t}{-h_i(x,y^\ast_t(x))}\nabla_y h_i(x,y^\ast_t(x))\left(\frac{t}{-h_i(x,y^\ast_t(x))}\nabla_y h_i(x,y^\ast_t(x))\right)^\top\right\|\\
        =&\left\|A_t(x,y^\ast_t(x))+\sum_{i\notin \mathcal{I}^\ast(x_0),i\ne {\hat{j}}}\frac{t}{(-h_i(x,y^\ast_t(x)))^2}\nabla_y h_i(x,y^\ast_t(x))\left(\nabla_y h_i(x,y^\ast_t(x))\right)^\top\right\|\\
        \leq&\lambda_{max}+\frac{kL^2_h}{H_{x_0}^2}.
    \end{align*}
    Combing (\ref{eq:(A+vv)(w_t(x)+w_t(x))}), we obtain $\|w_t(x)+w_t(x)^\perp\|\geq \delta_{x_0}/(\lambda_{max}+\frac{kL^2_h}{H_{x_0}^2})$. Now we proved the lower boundedness of $\|w_t(x)+w_t(x)^\perp\|$ on $W_{x_0}$. 
    By (\ref{equation:(A+vv)w,w}) and (\ref{equation:scofA}), we obtain the lower boundedness of
    $$\left(\nabla_y h_{\hat{j}}(x,y^\ast_t(x))\right)^\top C_t^{-1}(x,y^\ast_t(x))\nabla_y h_{\hat{j}}(x,y^\ast_t(x).$$
    According to (\ref{eq:C/hCh}), this means the following term is upper bounded on $W_{x_0}$
    \begin{align}\label{eq:byproduct}
    \left\|\left(A_t(x,y_t^\ast(x))+\sum_{i=1}^k\frac{1}{t}v^i_t(x,y_t^\ast(x))\left(v^i_t(x,y_t^\ast(x))\right)^\top\right)^{-1}\alpha^{-1}v^{\hat{j}}_t(x,y_t^\ast(x))\right\|.
    \end{align}
    Recalling (\ref{eq:(A+vv)(tvu)}), this further implies the upper boundedness of the following term on $W_{x_0}$
    \begin{align}\label{eq:finalresult2}
        \left\|\left(A_t(x,y_t^\ast(x))+\sum_{i=1}^k\frac{1}{t}v^i_t(x,y_t^\ast(x))\left(v^i_t(x,y_t^\ast(x))\right)^\top\right)^{-1}\frac{1}{t}v^{\hat{j}}_t(x,y_t^\ast(x))\left(u^{\hat{j}}_t(x,y_t^\ast(x))\right)^\top\right\|.
    \end{align}
    Note that $\{W_{x_0}\}_{x_0\in\mathcal{X}_1}$ forms an open cover of $\mathcal{X}_1$ in $\mathcal{X}$. Since $\mathcal{X}_1$ is compact, we can choose a finite subcover, denoted as $\{W_i\}_{i=1}^p$ where $W_i=W_{x_0}$ for some $x_0$ in $\mathcal{X}_1$. Denote $T_i=(r_i^2\mu_g)/(2k)$, $r_i=r_{x_0}$ from the fourth property and $G_i$ be the upper bound of norm of (\ref{eq:Jacobian!}) when $0<t\leq T_i$. Take $T^\ast=\min\{T_i\}$,  $G^\ast=\max\{G_i\}$. It is not hard to see that the norm of (\ref{eq:Jacobian!}) is upper bounded by $G^\ast$ for any $0<t\leq T^\ast$ and $x$ in $\bigcup_{i=1}^p W_i$.\\

    \noindent\textbf{Step 2:} Consider $\mathcal{X}_2:=\mathcal{X}\setminus \bigcup_{i=1}^p W_i$ which is compact in $\mathcal{X}$. Since $\bigcup_{i=1}^p W_i$ has covered $\mathcal{X}_1$, it is easy to see that $h_{\hat{j}}(x,y^\ast(x))$ has a negative upper bound in $\mathcal{X}_2$, denoted as $-Q$. We assume that $t\leq (\mu_g Q^2)/(4kL_h^2)$. By Lemma \ref{lem:optimalitygapforpoint}, we have $\|y^\ast_t(x)-y^\ast(x)\|\leq\sqrt{2kt/\mu_g}\leq Q/(2L_h)$. Thus $|h_{\hat{j}}(x,y^\ast_t(x))-h_{\hat{j}}(x,y^\ast(x))|\leq Q/2$, which means $h_{\hat{j}}(x,y^\ast_t(x))\leq-Q/2$. Then the following term is uniformly bounded for any $x\in\mathcal{X}_2$ and $0<t\leq (\mu_g Q^2)/(4kL_h^2)$:

    \resizebox{\textwidth}{!}{
    \begin{minipage}{\textwidth}
    \begin{align}
        &\left\|\left(A_t(x,y^\ast_t(x))+\sum_{i=1}^k\frac{1}{t}v^i_t(x,y^\ast_t(x))\left(v^i_t(x,y^\ast_t(x))\right)^\top\right)^{-1}\frac{1}{t}v^{\hat{j}}_t(x,y^\ast_t(x))\left(u^{\hat{j}}_t(x,y^\ast_t(x))\right)^\top\right\|\nonumber\\
        =&\left\|\left(A_t(x,y^\ast_t(x))+\sum_{i=1}^k\frac{1}{t}v^i_t(x,y^\ast_t(x))\left(v^i_t(x,y^\ast_t(x))\right)^\top\right)^{-1}\frac{t}{h^2_{\hat{j}}(x,y^\ast_t(x))}\nabla_y h_{\hat{j}}(x,y^\ast_t(x))\left(\nabla_x h_{\hat{j}}(x,y^\ast_t(x))\right)^\top\right\|\nonumber\\
        \overset{\text{(i)}}{\leq}&\frac{1}{\mu_g}\frac{t}{h^2_{\hat{j}}(x,y_t^\ast(x))}\left\|\nabla_y h_{\hat{j}}(x,y_t^\ast(x))\right\|\cdot\left\|\nabla_x h_{\hat{j}}(x,y_t^\ast(x))\right\|\nonumber\\
        \overset{\text{(ii)}}{\leq}&\frac{4L_h^2}{\mu_g Q^2}.\label{eq:boundterm}
    \end{align}
    \end{minipage}}
    (i) is because (\ref{eq:A+tvvleqmug}) and $v^{\hat{j}}_t(x,y^\ast_t(x))=\frac{t}{-h_{\hat{j}}(x,y^\ast_t(x))}\nabla_y h_{\hat{j}}(x,y^\ast_t(x))$. In (ii), we utilize $t\leq 1$, $h_{\hat{j}}(x,y^\ast_t(x))\leq-Q/2$ and $\|\nabla_y h_{\hat{j}}(x,y^\ast_t(x))\|\leq L_h$. Furthermore, we obtain (\ref{eq:Jacobian!}) has bounded norm.\\
    
    To conclude, we have proved that $\|\nabla_x y^\ast_t(x)\|$ is upper bounded for any $x$ in $\mathcal{X}$ and $0<t\leq\widetilde{T}:=\min\{ T^\ast, (\mu_g Q^2)/(4kL_h^2)\}$. If $\widetilde{T}$ is less than $1$, see $\|\nabla_x y^\ast_t(x)\|$ as a continuous function defined of the compact set $\mathcal{X}\times [\widetilde{T},1]$, so it has an upper bound. Therefore, we proved that $\|\nabla_x y^\ast_t(x)\|$ is upper bounded for any $x\in\mathcal{X}$ and $0<t\leq 1$.

\subsection{Proof of Lemma \ref{cor:bound1}}\label{proof:cor:bound1}

Let $-h_{\hat{j}}(x,y^\ast_t(x))=\alpha$ and use the notations (\ref{notation:improve1})-(\ref{notation:improve4}), we obtain
\begin{align}
    &\left\|(\nabla_{yy} \widetilde{g}_t(x))^{-1}\frac{t\nabla_y h_{\hat{j}}(x,y^\ast_t(x))}{(-h_{\hat{j}} (x,y^\ast_t(x)))^2}\right\|\nonumber\\
    =&\left\|\left(A_t(x,y_t^\ast(x))+\sum_{i=1}^k\frac{1}{t}v^i_t(x,y_t^\ast(x))\left(v^i_t(x,y_t^\ast(x))\right)^\top\right)^{-1}\alpha^{-1}v^{\hat{j}}_t(x,y_t^\ast(x))\right\|.\label{eq:sameterm}
\end{align}
We have proved this term is uniformly bounded on $\bigcup_{i=1}^p W_i$, as detailed from (\ref{eq:byproduct}) to the end of Step 1 in Section \ref{proof:lem:boundednessofJabobian}. For $\mathcal{X}\setminus\bigcup_{i=1}^p W_i$, we can assume $h_{\hat{j}}(x,y^\ast(x))\leq -Q$. Then
\begin{align}
    &\left\|(\nabla_{yy} \widetilde{g}_t(x))^{-1}\frac{t\nabla_y h_{\hat{j}}(x,y^\ast_t(x))}{(-h_{\hat{j}} (x,y^\ast_t(x)))^2}\right\|\nonumber\\
    =&\left\|\left(A_t(x,y^\ast_t(x))+\sum_{i=1}^k\frac{1}{t}v^i_t(x,y^\ast_t(x))\left(v^i_t(x,y^\ast_t(x))\right)^\top\right)^{-1}\frac{t}{h^2_{\hat{j}}(x,y^\ast_t(x))}\nabla_y h_{\hat{j}}(x,y^\ast_t(x))\right\|\nonumber
\end{align}
is bounded by the same process of (\ref{eq:boundterm}).

\subsection{Proof of Lemma \ref{cor:bound2}}\label{proof:cor:bound2}

Following the argument at the end of Step 1 in proof of Lemma \ref{lem:boundednessofJabobian}, we select an open neighborhood $\{W_i\}_{i=1}^p$ of $\mathcal{X}_1$. 

If $x\in\bigcup_{i=1}^p W_i$, denote $\beta_1=\min_i\{r_i\}$. According to the fourth property of the open neighborhood, $\|y-y^\ast(x)\|\leq r_i$ for any $i$ means $y$ is in the $W_i$ for any $i$. By replicating the proof in Step 1 and simply substituting $y^\ast_t(x)$ with $y$, we can obtain $\|(\nabla_{yy}\widetilde{g}_t(x,y))^{-1}\nabla_{yx}\widetilde{g}_t(x,y)\|\leq G^\ast$ for any $y\in\text{int}\mathcal{Y}(x)$ such that $\|y-y^\ast(x)\|\leq\beta_1$. 

If $x\in\mathcal{X}\setminus\bigcup_{i=1}^p W_i$, denote $\beta_2=Q/(2L_h)$. Similar to Step 2 in proof of Lemma \ref{lem:boundednessofJabobian}, we can also find that $\|(\nabla_{yy}\widetilde{g}_t(x,y))^{-1}\nabla_{yx}\widetilde{g}_t(x,y)\|$ is upper bounded for any $y\in\text{int}\mathcal{Y}(x)$ such that $\|y-y^\ast(x)\|\leq\beta_2$. Define $\beta=\min\{\beta_1,\beta_2\}$, we complete the proof.

\subsection{Proof of Lemma \ref{lem:improvedglobalupperboundLip}}\label{proof:lem:improvedglobalupperboundLip}

The proof is similar to that of Lemma \ref{lem:local_lip} and Lemma \ref{lem:lip_global_bound}. First we aim to prove the local Lipschitz smoothness of $\widetilde{\phi}_t(x)$. Following (\ref{eq:finalresult}), for any $x_1$, $x_2$ satisfies $\|x_1-x\|$, $\|x_2-x\|\leq\frac{d}{2L_h}$, we need to estimate
\begin{align}
    &\|\nabla_x\widetilde{\phi}_t(x_1)-\nabla_x\widetilde{\phi}_t(x_2)\|\nonumber\\
    \leq&\left\|(\nabla_x f(x_1,y_t^\ast(x_1))-\nabla_x f(x_2,y_t^\ast(x_2)))\right\|\label{eq:term1}\\
    &+\left\|\nabla_{xy}^2\widetilde{g}_t(x_2,y_t^\ast(x_2))(\nabla_{yy}^2\widetilde{g}_t(x_2,y_t^\ast(x_2)))^{-1}(\nabla_y f(x_2,y_t^\ast(x_2))-\nabla_y f(x_1,y_t^\ast(x_1)))\right\|\label{eq:term2}\\
    &+\left\|\nabla_{xy}^2\widetilde{g}_t(x_2,y_t^\ast(x_2))((\nabla_{yy}^2\widetilde{g}_t(x_2,y_t^\ast(x_2)))^{-1}-(\nabla_{yy}^2\widetilde{g}_t(x_1,y_t^\ast(x_1)))^{-1})\nabla_y f(x_1,y_t^\ast(x_1))\right\|\label{eq:term3}\\
    &+\left\|(\nabla_{xy}^2\widetilde{g}_t(x_2,y_t^\ast(x_2))-\nabla_{xy}^2\widetilde{g}_t(x_1,y_t^\ast(x_1)))(\nabla_{yy}^2\widetilde{g}_t(x_1,y_t^\ast(x_1)))^{-1}\nabla_y f(x_1,y_t^\ast(x_1))\right\|.\label{eq:term4}
\end{align}
For (\ref{eq:term1}), by Assumption \ref{assumption:general2}(\ref{assumption:general2(2)}), we have
$$\left\|(\nabla_x f(x_1,y_t^\ast(x_1))-\nabla_x f(x_2,y_t^\ast(x_2)))\right\|\leq\overline{L}_f\|(x_1,y^\ast_t(x_1))-(x_2,y^\ast_t(x_2))\|.$$
We evaluate (\ref{eq:term2}), (\ref{eq:term3}) and (\ref{eq:term4}) separately.\\

\noindent\textbf{The term (\ref{eq:term2}): }
According to Lemma \ref{lem:boundednessofJabobian}, $\|\nabla_{xy}^2\widetilde{g}_t(x,y_t^\ast(x))(\nabla_{yy}^2\widetilde{g}_t(x,y_t^\ast(x)))^{-1}\|\leq J_1$. Combining the Lipschitz smoothness of $f(x,y)$ from Assumption\ref{assumption:general2}(\ref{assumption:general2(2)}).
Therefore 
\begin{align*}
    &\left\|\nabla_{xy}^2\widetilde{g}_t(x_2,y_t^\ast(x_2))(\nabla_{yy}^2\widetilde{g}_t(x_2,y_t^\ast(x_2)))^{-1}(\nabla_y f(x_2,y_t^\ast(x_2))-\nabla_y f(x_1,y_t^\ast(x_1)))\right\|\\
    \leq&J_1\overline{L}_f\|(x_1,y^\ast_t(x_1))-(x_2,y^\ast_t(x_2))\|.
\end{align*}\\

\noindent\textbf{The term (\ref{eq:term3}): } By Assumption \ref{assumption:general2}(\ref{assumption:general2(1)}), $\|\nabla_y f(x,y)\|\leq L_f$. To estimate $$\left\|\nabla_{xy}^2\widetilde{g}_t(x_2,y_t^\ast(x_2))((\nabla_{yy}^2\widetilde{g}_t(x_2,y_t^\ast(x_2)))^{-1}-(\nabla_{yy}^2\widetilde{g}_t(x_1,y_t^\ast(x_1)))^{-1})\nabla_y f(x_1,y_t^\ast(x_1))\right\|,$$
It is sufficient to evaluate the following term
\begin{align}
    &\left\|\nabla_{xy}^2\widetilde{g}_t(x_2,y_t^\ast(x_2))((\nabla_{yy}^2\widetilde{g}_t(x_2,y_t^\ast(x_2)))^{-1}-(\nabla_{yy}^2\widetilde{g}_t(x_1,y_t^\ast(x_1)))^{-1})\right\|\nonumber\\
    \overset{\text{(i)}}{=}&\left\|\nabla_{xy}^2\widetilde{g}_t(x_2,y_t^\ast(x_2))(\nabla_{yy}^2\widetilde{g}_t(x_2,y_t^\ast(x_2)))^{-1}(\nabla_{yy}^2\widetilde{g}_t(x_2,y_t^\ast(x_2))-\nabla_{yy}^2\widetilde{g}_t(x_1,y_t^\ast(x_1)))(\nabla_{yy}^2\widetilde{g}_t(x_1,y_t^\ast(x_1)))^{-1}\right\|\nonumber\\
     \overset{\text{(ii)}}{\leq}&J_1\|(\nabla_{yy}^2\widetilde{g}_t(x_1,y_t^\ast(x_1)))^{-1}(\nabla_{yy}^2\widetilde{g}_t(x_2,y_t^\ast(x_2))-\nabla_{yy}^2\widetilde{g}_t(x_1,y_t^\ast(x_1)))\|,\label{eq:similarterm}
\end{align}
where (i) is due to $A^{-1}-B^{-1}=-A^{-1}(A-B)B^{-1}$, and (ii) is from Lemma \ref{lem:boundednessofJabobian}, which says $\|\nabla_{xy}^2\widetilde{g}_t(x,y_t^\ast(x))(\nabla_{yy}^2\widetilde{g}_t(x,y_t^\ast(x)))^{-1}\|\leq J_1$. To estimate (\ref{eq:similarterm}), we directly compute as following 

\resizebox{\textwidth}{!}{
\begin{minipage}{\textwidth}
\begin{align}
    &\|(\nabla_{yy}^2\widetilde{g}_t(x_1,y_t^\ast(x_1)))^{-1}(\nabla_{yy}^2\widetilde{g}_t(x_2,y_t^\ast(x_2))-\nabla_{yy}^2\widetilde{g}_t(x_1,y_t^\ast(x_1)))\|\nonumber\\
    =&\left\|(\nabla_{yy}^2\widetilde{g}_t(x_1,y_t^\ast(x_1)))^{-1}\left[\nabla^2_{yy}g(x_1,y^\ast_t(x_1))-\nabla^2_{yy}g(x_2,y^\ast_t(x_2))\right.\right.\label{eq:threeterm1}\\
    &\left.\left.+t\sum_{i=1}^k\frac{h_i(x_1,y^\ast_t(x_1))\nabla^2_{yy}h_i(x_2,y^\ast_t(x_2))-h_i(x_2,y^\ast_t(x_2))\nabla^2_{yy}h_i(x_1,y^\ast_t(x_1))}{h_i(x_1,y^\ast_t(x_1))h_i(x_2,y^\ast_t(x_2))}\right.\right.\label{eq:threeterm2}\\
        &\left.\left.+t\sum_{i=1}^k\frac{h_i^2(x_2,y^\ast_t(x_2))\nabla_y h_i(x_1,y^\ast_t(x_1))\nabla_y h_i(x_1,y^\ast_t(x_1))^\top-h_i^2(x_1,y^\ast_t(x_1))\nabla_y h_i(x_2,y^\ast_t(x_2))\nabla_y h_i(x_2,y^\ast_t(x_2))^\top}{h_i^2(x_1,y^\ast_t(x_1))h_i^2(x_2,y^\ast_t(x_2))}\right]\right\|\label{eq:threeterm3}.
\end{align}
\end{minipage}}
We respectively evaluate (\ref{eq:threeterm1}), (\ref{eq:threeterm2}), and (\ref{eq:threeterm3}).

For (\ref{eq:threeterm1}) and (\ref{eq:threeterm2}), we use Lipschitz continuity of $\nabla_{yy}^2g(x,y)$ from Assumption \ref{assumption:general2}(\ref{assumption:general2(5)}), the result that $h_i(x_2,y^\ast_t(x_2))\leq -m^{loc}$ from Lemma \ref{lem:local_lip}, and the same process in (\ref{eq:secondterm}). Note that  $\nabla_{yy}^2\widetilde{g}(x,y)\succeq\mu_g I$, we get
\begin{align*}
    &\left\|(\nabla_{yy}^2\widetilde{g}_t(x_1,y_t^\ast(x_1)))^{-1}\left[\nabla^2_{yy}g(x_1,y^\ast_t(x_1))-\nabla^2_{yy}g(x_2,y^\ast_t(x_2))\right.\right.\\
    &\left.\left.+t\sum_{i=1}^k\frac{h_i(x_1,y^\ast_t(x_1))\nabla^2_{yy}h_i(x_2,y^\ast_t(x_2))-h_i(x_2,y^\ast_t(x_2))\nabla^2_{yy}h_i(x_1,y^\ast_t(x_1))}{h_i(x_1,y^\ast_t(x_1))h_i(x_2,y^\ast_t(x_2))}\right]\right\|\\
    \leq&\frac{1}{\mu_g}\left\|\nabla^2_{yy}g(x_1,y^\ast_t(x_1))-\nabla^2_{yy}g(x_2,y^\ast_t(x_2))\right\|\\
    &+\frac{1}{\mu_g}\left\|t\sum_{i=1}^k\frac{h_i(x_1,y^\ast_t(x_1))\nabla^2_{yy}h_i(x_2,y^\ast_t(x_2))-h_i(x_2,y^\ast_t(x_2))\nabla^2_{yy}h_i(x_1,y^\ast_t(x_1))}{h_i(x_1,y^\ast_t(x_1))h_i(x_2,y^\ast_t(x_2))}\right\|\\
    \leq&\frac{1}{\mu_g}\left(\overline{\overline{L}}_g+tk\left(\frac{\overline{\overline{L}}_h}{m^{loc}}+\frac{\overline{L}_hL_h}{\left(m^{loc}\right)^2}\right)\right)\|(x_1,y^\ast_t(x_1))-(x_2,y^\ast_t(x_2))\|
\end{align*}

For (\ref{eq:threeterm3}), Using the same method of adding and subtracting like terms as in (\ref{eq:complicatedterm}), and applying the triangle inequality, we obtain

 \begin{align}
            &\left\|(\nabla_{yy}^2\widetilde{g}_t(x_1,y_t^\ast(x_1)))^{-1}t\sum_{i=1}^k\left(\frac{h_i^2(x_2,y^\ast_t(x_2))\nabla_y h_i(x_1,y^\ast_t(x_1))\nabla_y h_i(x_1,y^\ast_t(x_1))^\top}{h_i^2(x_1,y^\ast_t(x_1))h_i^2(x_2,y^\ast_t(x_2))}\right.\right.\nonumber\\
            &\left.\left.-\frac{h_i^2(x_1,y^\ast_t(x_1))\nabla_y h_i(x_2,y^\ast_t(x_2))\nabla_y h_i(x_2,y^\ast_t(x_2))^\top}{h_i^2(x_1,y^\ast_t(x_1))h_i^2(x_2,y^\ast_t(x_2))}\right)\right\|\nonumber\\
            \leq&t\sum_{i=1}^k\left[\left\|(\nabla_{yy}^2\widetilde{g}_t(x_1,y_t^\ast(x_1)))^{-1}\left(\frac{h_i^2(x_2,y^\ast_t(x_2))\nabla_y h_i(x_1,y^\ast_t(x_1))\nabla_y h_i(x_1,y^\ast_t(x_1))^\top}{h_i^2(x_1,y^\ast_t(x_1))h_i^2(x_2,y^\ast_t(x_2))}\right.\right.\right.\nonumber\\
            &\left.\left.-\frac{h_i^2(x_2,y^\ast_t(x_2))\nabla_y h_i(x_1,y^\ast_t(x_1))\nabla_y h_i(x_2,y^\ast_t(x_2))^\top}{h_i^2(x_1,y^\ast_t(x_1))h_i^2(x_2,y^\ast_t(x_2))}\right)\right\|\nonumber\\
            &+\left\|(\nabla_{yy}^2\widetilde{g}_t(x_1,y_t^\ast(x_1)))^{-1}\left(\frac{h_i^2(x_2,y^\ast_t(x_2))\nabla_y h_i(x_1,y^\ast_t(x_1))\nabla_y h_i(x_2,y^\ast_t(x_2))^\top}{h_i^2(x_1,y^\ast_t(x_1))h_i^2(x_2,y^\ast_t(x_2))}\right.\right.\nonumber\\
            &\left.\left.-\frac{h_i^2(x_1,y^\ast_t(x_1))\nabla_y h_i(x_1,y^\ast_t(x_1))\nabla_y h_i(x_2,y^\ast_t(x_2))^\top}{h_i^2(x_1,y^\ast_t(x_1))h_i^2(x_2,y^\ast_t(x_2))}\right)\right\|\nonumber\\
            &+\left.\left\|(\nabla_{yy}^2\widetilde{g}_t(x_1,y_t^\ast(x_1)))^{-1}\left(\frac{h_i^2(x_1,y^\ast_t(x_1))\nabla_y h_i(x_1,y^\ast_t(x_1))\nabla_y h_i(x_2,y^\ast_t(x_2))^\top}{h_i^2(x_1,y^\ast_t(x_1))h_i^2(x_2,y^\ast_t(x_2))}\right.\right.\right.\nonumber\\
            &\left.\left.\left.-\frac{h_i^2(x_1,y^\ast_t(x_1))\nabla_y h_i(x_2,y^\ast_t(x_2))\nabla_y h_i(x_2,y^\ast_t(x_2))^\top}{h_i^2(x_1,y^\ast_t(x_1))h_i^2(x_2,y^\ast_t(x_2))}\right)\right\|\right]\nonumber\\
            \overset{\text{(i)}}{\leq}&\frac{tkL_h\overline{L}_h}{\mu_g\left(m^{loc}\right)^2}\|(x_1,y_t^\ast(x_1))-(x_2,y_t^\ast(x_2))\|.\nonumber\\
            &+t\sum_{i=1}^k\left\|(\nabla_{yy}^2\widetilde{g}_t(x_1,y_t^\ast(x_1)))^{-1}\left(\frac{h_i^2(x_2,y^\ast_t(x_2))\nabla_y h_i(x_1,y^\ast_t(x_1))\nabla_y h_i(x_2,y^\ast_t(x_2))^\top}{h_i^2(x_1,y^\ast_t(x_1))h_i^2(x_2,y^\ast_t(x_2))}\right.\right.\nonumber\\
            &-\left.\left.\frac{h_i^2(x_1,y^\ast_t(x_1))\nabla_y h_i(x_1,y^\ast_t(x_1))\nabla_y h_i(x_2,y^\ast_t(x_2))^\top}{h_i^2(x_1,y^\ast_t(x_1))h_i^2(x_2,y^\ast_t(x_2))}\right)\right\|\nonumber\\
            &+\frac{tkL_h\overline{L}_h}{\mu_g\left(m^{loc}\right)^2}\|(x_1,y_t^\ast(x_1))-(x_2,y_t^\ast(x_2))\|.\nonumber\\
            =&t\sum_{i=1}^k\left\|(\nabla_{yy}^2\widetilde{g}_t(x_1,y_t^\ast(x_1)))^{-1}\left(\frac{h_i^2(x_2,y^\ast_t(x_2))\nabla_y h_i(x_1,y^\ast_t(x_1))\nabla_y h_i(x_2,y^\ast_t(x_2))^\top}{h_i^2(x_1,y^\ast_t(x_1))h_i^2(x_2,y^\ast_t(x_2))}\right.\right.\nonumber\\
            &-\left.\left.\frac{h_i^2(x_1,y^\ast_t(x_1))\nabla_y h_i(x_1,y^\ast_t(x_1))\nabla_y h_i(x_2,y^\ast_t(x_2))^\top}{h_i^2(x_1,y^\ast_t(x_1))h_i^2(x_2,y^\ast_t(x_2))}\right)\right\|\label{eq:thefinalterm!}\\
            &+2\frac{tkL_h\overline{L}_h}{\mu_g\left(m^{loc}\right)^2}\|(x_1,y_t^\ast(x_1))-(x_2,y_t^\ast(x_2))\|.\nonumber
        \end{align}
    In (i) we used the result that $h_i(x_1,y^\ast_t(x_1)),\ h_i(x_2,y^\ast_t(x_2))\leq -m^{loc}$ from Lemma \ref{lem:local_lip}, Lipschitz smoothness of $h_i(x,y)$ from Assumption \ref{assumption:general2}(\ref{assumption:general2(7)}), boundedness of $\nabla_y h_i(x,y)$ from \ref{assumption:general2}(\ref{assumption:general2(6)}), and $\nabla_{yy}^2 \widetilde{g}_t(x,y^\ast_t(x))\succeq \mu_g I$. Finally, we need to evaluate (\ref{eq:thefinalterm!}), i.e.
    \begin{align}
        &t\sum_{i=1}^k\left\|\left(\nabla_{yy}\widetilde{g}_t(x_1,y^\ast_t(x_1))\right)^{-1}\left(\frac{(-h_i(x_2,y^\ast_t(x_2)))^2\nabla_yh_i(x_1,y^\ast_t(x_1))(\nabla_yh_i(x_2,y^\ast_t(x_2)))^\top}{(h_i(x_1,y^\ast_t(x_1)h_i(x_2,y^\ast_t(x_2)))^2}\right.\right.\nonumber\\
        &\left.\left.-\frac{(-h_i(x_1,y^\ast_t(x_1)))^2\nabla_yh_i(x_1,y^\ast_t(x_1))(\nabla_yh_i(x_2,y^\ast_t(x_2)))^\top}{(h_i(x_1,y^\ast_t(x_1)h_i(x_2,y^\ast_t(x_2)))^2}\right)\right\|\nonumber\\
        =&t\sum_{i=1}^k\left\|\left(\nabla_{yy}\widetilde{g}_t(x_1,y^\ast_t(x_1))\right)^{-1}\left(\frac{(h_i(x_1,y^\ast_t(x_1))-h_i(x_2,y^\ast_t(x_2)))\nabla_yh_i(x_1,y^\ast_t(x_1))(\nabla_yh_i(x_2,y^\ast_t(x_2)))^\top}{h^2_i(x_1,y^\ast_t(x_1)h_i(x_2,y^\ast_t(x_2))}\right.\right.\nonumber\\
        &\left.\left.+\frac{(h_i(x_1,y^\ast_t(x_1))-h_i(x_2,y^\ast_t(x_2)))\nabla_yh_i(x_1,y^\ast_t(x_1))(\nabla_yh_i(x_2,y^\ast_t(x_2)))^\top}{h_i(x_1,y^\ast_t(x_1)h^2_i(x_2,y^\ast_t(x_2))}\right)\right\|.\label{eq:thelastterm!!}
    \end{align}
    From Lemma \ref{cor:bound2}, we have
    \begin{align*}
        &t\sum_{i=1}^k\left\|\left(\nabla_{yy}\widetilde{g}_t(x_1,y^\ast_t(x_1))\right)^{-1}\left(\frac{(h_i(x_1,y^\ast_t(x_1))-h_i(x_2,y^\ast_t(x_2)))\nabla_yh_i(x_1,y^\ast_t(x_1))(\nabla_yh_i(x_2,y^\ast_t(x_2)))^\top}{h^2_i(x_1,y^\ast_t(x_1)h_i(x_2,y^\ast_t(x_2))}\right)\right\|\\
        \leq& J_2\frac{kL_h^2}{m^{loc}}\|(x_1,y_t^\ast(x_1))-(x_2,y_t^\ast(x_2))\|.
    \end{align*}
    For the second term in (\ref{eq:thelastterm!!}), we observe that
    \begin{align*}
        &\sum_{i=1}^k\left\|\left(\nabla_{yy}\widetilde{g}_t(x_1,y^\ast_t(x_1))\right)^{-1}t\left(\frac{(h_i(x_1,y^\ast_t(x_1))-h_i(x_2,y^\ast_t(x_2)))\nabla_yh_i(x_1,y^\ast_t(x_1))(\nabla_yh_i(x_2,y^\ast_t(x_2)))^\top}{h_i(x_1,y^\ast_t(x_1)h^2_i(x_2,y^\ast_t(x_2))}\right)\right\|\\
        =&\sum_{i=1}^k\left\|\left(\nabla_{yy}\widetilde{g}_t(x_1,y^\ast_t(x_1))\right)^{-1}t\left(\frac{(h_i(x_1,y^\ast_t(x_1))-h_i(x_2,y^\ast_t(x_2)))\nabla_yh_i(x_1,y^\ast_t(x_1))(\nabla_yh_i(x_2,y^\ast_t(x_2)))^\top}{h^2_i(x_1,y^\ast_t(x_1)h_i(x_2,y^\ast_t(x_2))}\right.\right.\\
        &\times\left.\left.\frac{h_i(x_1,y^\ast_t(x_1)}{h_i(x_2,y^\ast_t(x_2))}\right)\right\|\\
        \leq&J_2\frac{kL_h^2}{m^{loc}}\left|\frac{-h_i(x_1,y^\ast_t(x_1))}{-h_i(x_2,y^\ast_t(x_2))}\right|\|(x_1,y_t^\ast(x_1))-(x_2,y_t^\ast(x_2))\|\\
        \overset{\text{(i)}}{\leq}&J_2\frac{kL_h^2}{m^{loc}}\left|\frac{-h_i(x,y^\ast_t(x))+\frac{m}{2}}{-h_i(x,y^\ast_t(x))-\frac{m}{2}}\right|\|(x_1,y_t^\ast(x_1))-(x_2,y_t^\ast(x_2))\|\\
        =&J_2\frac{kL_h^2}{m^{loc}}\left|1+\frac{m}{-h_i(x,y^\ast_t(x))-\frac{m}{2}}\right|\|(x_1,y_t^\ast(x_1))-(x_2,y_t^\ast(x_2))\|\\
        \leq&J_2\frac{kL_h^2}{m^{loc}}\left(1+\left|\frac{m}{m-\frac{m}{2}}\right|\right)\|(x_1,y_t^\ast(x_1))-(x_2,y_t^\ast(x_2))\|\\
        \leq&J_2\frac{3kL_h^2}{m^{loc}}\|(x_1,y_t^\ast(x_1))-(x_2,y_t^\ast(x_2))\|,
    \end{align*}
    where (i) is because $h_i(x_1,y^\ast_t(x_1)),\ h_i(x_2,y^\ast_t(x_2)),\ h_i(x,y^\ast_t(x))\leq 0$, $h_i(x,y^\ast_t(x))$ is $L_h(1+J_1)$ Lipschitz continuous, and $\|x-x_1\|,\ \|x-x_2\|\leq m/(2L_h(1+J_1))$. To summarize, we get 
    \begin{align*}
    &\left\|\nabla_{xy}^2\widetilde{g}_t(x_2,y_t^\ast(x_2))((\nabla_{yy}^2\widetilde{g}_t(x_2,y_t^\ast(x_2)))^{-1}-(\nabla_{yy}^2\widetilde{g}_t(x_1,y_t^\ast(x_1)))^{-1})\nabla_y f(x_1,y_t^\ast(x_1))\right\|\\
    \leq&L_fJ_1\left(\frac{\overline{\overline{L}}_g}{\mu_g}+\frac{tk\overline{\overline{L}}_h}{\mu_gm^{loc}}+\frac{tk\overline{L_h}L_h}{\mu_g\left(m^{loc}\right)^2}+2\frac{tkL_h\overline{L}_h}{\mu_g\left(m^{loc}\right)^2}+J_2\frac{kL_h^2}{m^{loc}}+J_2\frac{3kL_h^2}{m^{loc}}\right)\|(x_1,y_t^\ast(x_1))-(x_2,y_t^\ast(x_2))\|.
    \end{align*}

\noindent\textbf{The term (\ref{eq:term4}): }This term can be evaluated directly by Lemma \ref{lem:lipHessian} and Lemma \ref{lem:boundednessofJabobian} as follows:
\begin{align*}
    &\left\|(\nabla_{xy}^2\widetilde{g}_t(x_2,y_t^\ast(x_2))-\nabla_{xy}^2\widetilde{g}_t(x_1,y_t^\ast(x_1)))(\nabla_{yy}^2\widetilde{g}_t(x_1,y_t^\ast(x_1)))^{-1}\nabla_y f(x_1,y_t^\ast(x_1))\right\|\\
    \leq&\overline{\overline{L}}_{\widetilde{g}_t,m^{loc}}J_1\|(x_1,y_t^\ast(x_1))-(x_2,y_t^\ast(x_2))\|.
\end{align*}

Note that $\|(x_1,y_t^\ast(x_1))-(x_2,y_t^\ast(x_2))\|\leq\|\nabla_x y^\ast_t(x)\|\|x_1-x_2\|\leq J_1\|x_1-x_2\|$ from Lemma \ref{lem:boundednessofJabobian}. Setting
\begin{align*}
    \overline{L}^{loc}_{\widetilde{\phi}_t}=&J_1\left(\overline{L}_f+\overline{L}_fJ_1+L_fJ_1\left(\frac{\overline{\overline{L}}_g}{\mu_g}+\frac{tk\overline{\overline{L}}_h}{\mu_gm^{loc}}+\frac{tk\overline{L_h}L_h}{\mu_g\left(m^{loc}\right)^2}\right.\right.\\
&\left.\left.+2\frac{tkL_h\overline{L}_h}{\mu_g\left(m^{loc}\right)^2}+J_2\frac{kL_h^2}{m^{loc}}+J_2\frac{3kL_h^2}{m^{loc}}\right)+\overline{\overline{L}}_{\widetilde{g}_t,m^{loc}}J_1\right),
\end{align*}
we conclude
    \begin{align*}
        \|\nabla_x\widetilde{\phi}_t(x_1)-\nabla_x\widetilde{\phi}_t(x_2)\|\leq\overline{L}^{loc}_{\widetilde{\phi}_t}\|x_1-x_2\|.
    \end{align*}

Following the proof of Lemma \ref{lem:lip_global_bound} around (\ref{eq:Mast}), $m^{loc}$ has a lower bound $M^\ast=m(D/4)=\mathcal{O}(t)$. By design of $\overline{\overline{L}}_{\widetilde{g}_t,m^{loc}}$ in Lemma \ref{eq:liphessian!}, we know that $\overline{\overline{L}}_{\widetilde{g}_t,m^{loc}}\leq\overline{\overline{L}}_{\widetilde{g}_t,M^\ast}=\mathcal{O}(1/t)$,  so $\overline{L}^{loc}_{\widetilde{\phi}_t}$ has an upper bound
\begin{align*}
    \overline{L}_{\widetilde{\phi}_t}=&J_1\left(\overline{L}_f+\overline{L}_fJ_1+L_fJ_1\left(\frac{\overline{\overline{L}}_g}{\mu_g}+\frac{tk\overline{\overline{L}}_h}{\mu_g M^\ast}+\frac{tk\overline{L_h}L_h}{\mu_g\left(M^\ast\right)^2}\right.\right.\\
&\left.\left.+2\frac{tkL_h\overline{L}_h}{\mu_g\left(M^\ast\right)^2}+J_2\frac{kL_h^2}{M^\ast}+J_2\frac{3kL_h^2}{M^\ast}\right)+\overline{\overline{L}}_{\widetilde{g}_t,M^\ast}J_1\right)\\
    =&\mathcal{O}(\frac{1}{t}).
\end{align*}

\subsection{Proof of Theorem \ref{thm:total_rate2}}\label{proof:thm:total_rate2}

The proof is almost identical to the proof of Theorem \ref{thm:total_rate}. The only noteworthy point is that $\hat{\nabla}_x\tilde{\phi}_t(x_s)$ now is upper bounded. This can be proved as follows: due to Lemma \ref{lem:optimalitygapforpoint}, $\|y^\ast(x_s)-y^\ast_t(x_s)\|\leq\beta/2$. The lower-level algorithm will guarantee (by Theorem \ref{thm:convergence_rage_alg_2}) $\|\hat{y}_s-y^\ast_t(x_s)\|\leq\beta/2$, which means $\|y^\ast(x_s)-\hat{y}_s\|\leq\beta$. By Lemma \ref{cor:bound2}, $\|(\nabla_{yy}\widetilde{g}_t(x_s,\hat{y}_s))^{-1}\nabla_{yx}\widetilde{g}_t(x_s,\hat{y}_s)\|\leq J_3$, so
    \begin{align*}
        \|\hat{\nabla}_x\tilde{\phi}_t(x_s)\|&=\|\nabla_x f(x_s,\tilde{y}_{s})-\nabla_{xy}^2\widetilde{g}_(x_s,\tilde{y}_{s})(\nabla_{yy}^2\widetilde{g}_(x_s,\tilde{y}_{s}))^{-1}\nabla_y f(x_s,\tilde{y}_{s})\|\\
        &\leq\|\nabla_x f(x_s,\tilde{y}_{s})\|+\|\nabla_{xy}^2\widetilde{g}_(x_s,\tilde{y}_{s})(\nabla_{yy}^2\widetilde{g}_(x_s,\tilde{y}_{s}))^{-1}\|\|\nabla_y f(x_s,\tilde{y}_{s})\|\\
        &\leq L_f(1+J_3).
    \end{align*}
    The remaining steps to complete the proof are analogous to those in Theorem \ref{thm:total_rate}.

\end{document}